\newcommand{\E}{\mathscr{E}}
\newcommand{\Y}{\mathscr{Y}}
\newcommand{\G}{\mathscr{G}}
\renewcommand{\H}{\mathscr{H}}
\newcommand{\Lprime}{\mathscr{L}}
\newcommand{\s}{\mathscr{S}}
\newcommand{\I}{\mathscr{I}}
\newcommand{\M}{\mathscr{M}}
\renewcommand{\O}{\mathscr{O}}
\newcommand{\R}{\mathscr{R}}
\newcommand{\T}{\mathbb{T}}
\newcommand{\U}{\mathscr{U}}
\newcommand{\V}{\mathscr{V}}
\newcommand{\W}{\mathscr{W}}
\newcommand{\comma}{,}
\newcommand{\leb}{[}
\newcommand{\reb}{]}
\newcommand{\set}{\mathscr{S}\text{et}}
\newcommand{\cat}{\mathscr{C}\text{at}}
\newcommand{\Univ}{\mathscr{U}\text{niv}}
\newcommand{\Arr}{\mathscr{A}\text{rr}}
\newcommand{\colim}{\mathrm{colim}}
\newcommand{\ds}{\displaystyle}
\newcommand{\Fr}{\mathscr{F}\text{r}}
\newcommand{\Sub}{\mathscr{S}\text{ub}}
\newcommand{\Sq}{\mathscr{S}\text{q}}
\newcommand{\Sep}{\mathscr{S}\text{ep}}
\newcommand{\Path}{\mathscr{P}\text{ath}}
\newcommand{\Kan}{\mathscr{K}\text{an}}
\newcommand{\Diag}{\mathscr{D}\text{iag}}
\newcommand{\Uni}{\mathscr{U}\text{ni}}
\newcommand{\Los}{\L\text{o}\acute{s}}
\newcommand{\Adj}{\mathscr{A}\text{dj}}
\newcommand{\Tw}{\mathscr{T}\text{w}}
\newcommand{\Emb}{\mathscr{E}\text{mb}}
\newcommand{\comsq}[8]{
  \begin{tikzcd}[row sep=0.5in, column sep=0.5in]
    #1 \arrow[r, "#5"] \arrow[d, "#6"']
    \pgfmatrixnextcell #2 \arrow[d, "#7"] \\
    #3 \arrow[r, "#8"]
    \pgfmatrixnextcell #4
  \end{tikzcd}
}
\newcommand{\pbsq}[8]{
  \begin{tikzcd}[row sep=0.5in, column sep=0.5in]
    #1 \arrow[r, "#5"] \arrow[d, "#6"'] \arrow[dr, phantom, "\ulcorner", very near start]
    \pgfmatrixnextcell #2 \arrow[d, "#7"] \\
    #3 \arrow[r, "#8"']
    \pgfmatrixnextcell #4
  \end{tikzcd}
}
\newcommand{\liftsq}[8]{
  \begin{tikzcd}[row sep=0.5in, column sep=0.5in]
    #1 \arrow[r, "#5"] \arrow[d, "#6"']
    \pgfmatrixnextcell #2 \arrow[d, "#7"] \\
    #3 \arrow[r, "#8"] \arrow[ur, dashed]
    \pgfmatrixnextcell #4
  \end{tikzcd}
}
\newcommand{\simpset}[7]{
 \begin{tikzcd}[row sep=0.5in, column sep=0.5in]
   #1 \arrow[r, shorten >=1ex,shorten <=1ex]
   \pgfmatrixnextcell #2 
   \arrow[l, shift left=1.2, "#5"] \arrow[l, shift right=1.2, "#4"'] 
   \arrow[r, shift right, shorten >=1ex,shorten <=1ex ] \arrow[r, shift left, shorten >=1ex,shorten <=1ex] 
   \pgfmatrixnextcell #3 
   \arrow[l] \arrow[l, shift left=2, "#7"] \arrow[l, shift right=2, "#6 "'] 
   \arrow[r, shorten >=1ex,shorten <=1ex] \arrow[r, shift left=2, shorten >=1ex,shorten <=1ex] 
   \arrow[r, shift right=2, shorten >=1ex,shorten <=1ex]
   \pgfmatrixnextcell \cdots 
   \arrow[l, shift right=1] \arrow[l, shift left=1] \arrow[l, shift right=3] \arrow[l, shift left=3] 
 \end{tikzcd}
}
\newcommand{\adjun}[4]{
 \tikzcdset{row sep/normal=0.5in}
 \tikzcdset{column sep/normal=0.5in}
\begin{tikzcd}
 #1  \arrow[r, shift left=1.4, "#3"] \pgfmatrixnextcell
 #2 \arrow[l, shift left=1, "#4"] 
\end{tikzcd}
}
\newtheorem{theone}{Theorem}[section]
\newtheorem*{thetwo}{Theorem}
\newtheorem{lemone}[theone]{Lemma}
\newtheorem{propone}[theone]{Proposition}
\newtheorem{corone}[theone]{Corollary}
\newtheorem*{cortwo}{Corollary}
\theoremstyle{definition}
\newtheorem{defone}[theone]{Definition}
\newtheorem{exone}[theone]{Example}
\newtheorem{notone}[theone]{Notation}
\theoremstyle{remark}
\newtheorem{remone}[theone]{Remark}
\newtheorem{intone}[theone]{Intuition}
\newtheorem{queone}[theone]{Question}
\def\@seccntformat#1{%
  \expandafter\ifx\csname c@#1\endcsname\c@section\else
  \csname the#1\endcsname\quad
  \fi}
\title{An Elementary Approach to Truncations}
\author{Nima Rasekh}
\date{January 2020}
\address{{\'E}cole Polytechnique F{\'e}d{\'e}rale de Lausanne, SV BMI UPHESS, Station 8, CH-1015 Lausanne, Switzerland}
\email{nima.rasekh@epfl.ch}
\begin{document}

\begin{abstract}

We study truncated objects using elementary methods. 
Concretely, we use universes and the resulting natural number object to define internal truncation levels
and prove they behave similar to standard truncated objects.
\par 
Moreover, we take an elementary approach to localizations, giving various equivalent conditions that characterize localizations 
and constructing a localization out of a sub-universe of local objects via an internal right Kan extension.
We then use this general approach, as well as an inductive approach, to construct truncation functors. 
\par 
We use the resulting truncation functors to prove classical results about truncations, such as Blakers-Massey theorem, 
in the elementary setting. We also give examples of non-presentable $(\infty,1)$-categories where the elementary approach can be used 
to define and compute truncations.
\par 
Finally, we turn around and use truncations to study elementary $(\infty,1)$-toposes and show how they can help us better understand 
subobject classifiers and universes.
\end{abstract}

\maketitle
\addtocontents{toc}{\protect\setcounter{tocdepth}{1}}

\tableofcontents

 \section{Introduction}\label{Introduction}
 
 \subsection{Motivation}\label{Motivation}
 One powerful tool in algebraic topology are truncations. They allow us to simplify the structure of a space 
 and help us focus on certain aspects of a space by giving us a filtration.
 For that reason, truncations have been generalized to many other settings and in particular 
 to presentable $(\infty,1)$-categories \cite{SY19} and Grothendieck $(\infty,1)$-toposes \cite{Re05}, \cite{Lu09}.
 However, there has not been an extensive effort to generalize these tools for categories that are not presentable 
 and/or don't have infinite colimits. 
 \par 
 The goal of this work is to take an important step in that direction. We take an {\it elementary} 
 approach to truncations: Define and study all aspects of truncations without ever resorting to any infinite construction 
 which would require us to develop a theory of sets.
 \par 
 We achieve this primarily by using {\it universes} (also known as object classifiers) and {\it natural number objects}.
 A natural number object allows us to use induction and in particular gives us an internal grading, whereas a universe 
 allows us to argue about collections of object internally, without using infinite colimits. 
 We call a category which has such a structure an {\it elementary $(\infty,1)$-topos} \cite{Ra18b} in analogy to elementary toposes, 
 which have been studied in logic for quite some time.
 \par 
 This paper is part of a larger project whose aim it is to expand the application and frontiers of homotopy theory
 and prove everything using elementary methods.
 We already give many applications of such elementary methods in this paper: 
 We use universes to construct a localization functor as an {\it internal right Kan extension} 
 (Subsection \ref{Subsec:Constructing Localizations via Universes}).
 We also present an elementary proof of the Blakers-Massey theorem (Section \ref{Sec Blakers Massey Theorem for Modalities}).
 Moreover, we construct and compute truncations in a non-presentable $(\infty,1)$-category which does not 
 have infinite colimits and observe how it has non-standard truncation levels that cannot be found in 
 presentable $(\infty,1)$-categories (Subsection \ref{Subsec:Non Standard Truncations}).
 
 \subsection{Main Results} \label{Main Results}
  We will break down the major results of this work into different groups:
  \begin{enumerate}
   \item Constructing Truncations 
   \item Results about Truncations
   \item Applications of the Existence of Truncations
  \end{enumerate}
 
  {\bf Constructing Truncations:}
  We give two different methods of constructing $(-1)$-trunctions, each using different initial assumptions.
  
  \begin{thetwo} (Theorem \ref{The Neg Trunc from Join})
   Let $A$ be any object and let $\{ inl: A^{\ast n} \to A^{\ast n+1} \}_{n:\mathbb{N}}$ be the sequential diagram described in 
   Remark \ref{Rem Join Truncation Map}. 
   Then the sequential colimit $A^{\ast \infty}$ is the $(-1)$-truncation of $A$.
  \end{thetwo}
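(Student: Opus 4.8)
The plan is to verify that $A^{\ast\infty}$, together with the canonical map $\eta\colon A\to A^{\ast\infty}$ coming from the sequential diagram of Remark~\ref{Rem Join Truncation Map}, satisfies the two properties characterising the $(-1)$-truncation: that $A^{\ast\infty}$ is $(-1)$-truncated, and that for every $(-1)$-truncated object $Z$ the restriction map $\mathrm{Map}(A^{\ast\infty},Z)\to\mathrm{Map}(A,Z)$ is an equivalence. Together these identify $\eta$ with the $(-1)$-truncation of $A$; alternatively, combining them with the $((-1)\text{-connected},(-1)\text{-truncated})$ factorisation system exhibits $A\xrightarrow{\eta}A^{\ast\infty}\to 1$ as the factorisation of $A\to 1$.

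First I would settle the mapping-out property, which is the formal half. By construction $A^{\ast(n+1)}=A^{\ast n}\ast A$ is the pushout of $A^{\ast n}\leftarrow A^{\ast n}\times A\to A$, so $\mathrm{Map}(-,Z)$ turns it into a pullback $\mathrm{Map}(A^{\ast(n+1)},Z)\simeq\mathrm{Map}(A^{\ast n},Z)\times_{\mathrm{Map}(A^{\ast n}\times A,Z)}\mathrm{Map}(A,Z)$. Since $Z$ is $(-1)$-truncated, $\mathrm{Map}(W,Z)$ is $(-1)$-truncated for every $W$ (apply $\mathrm{Map}(W,-)$ to the diagonal of $Z$), and any morphism between $(-1)$-truncated objects admitting morphisms in both directions is an equivalence. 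Applying this to $\mathrm{Map}(A^{\ast n}\times A,Z)$ and $\mathrm{Map}(A,Z)$ — with a projection one way and, using the map $A\to A^{\ast n}$ from the diagram, a section the other — collapses the pullback onto $\mathrm{Map}(A^{\ast n},Z)$, the comparison being restriction along $A^{\ast n}\to A^{\ast(n+1)}$. Induction then shows restriction along $A\to A^{\ast n}$ is an equivalence for all $n$, and since $A^{\ast\infty}=\mathrm{colim}_n A^{\ast n}$ we conclude $\mathrm{Map}(A^{\ast\infty},Z)\simeq\lim_n\mathrm{Map}(A^{\ast n},Z)\simeq\mathrm{Map}(A,Z)$.

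For the substantive half I would reduce ``$A^{\ast\infty}$ is $(-1)$-truncated'' to ``the projection $A^{\ast\infty}\times A^{\ast\infty}\to A^{\ast\infty}$ is an equivalence''. This projection is the base change of $A^{\ast\infty}\to 1$ along $A^{\ast\infty}\to 1$, and since pullback functors preserve colimits and finite limits, base change commutes with the iterated join construction; hence the projection is precisely the iterated \emph{fibrewise} join construction applied, in the slice over $A^{\ast\infty}$, to the object $A\times A^{\ast\infty}\to A^{\ast\infty}$. That map is an effective epimorphism: $\eta$ factors through it via $(\mathrm{id}_A,\eta)$, and $\eta$ itself is an effective epimorphism, which I would prove by showing $\eta$ is left orthogonal to every monomorphism — given a lifting problem against a monomorphism $m$, one builds the lift stage by stage along the sequential diagram $\{A^{\ast n}\to A^{\ast(n+1)}\}$, using at each stage the pushout presentation of the join and the essentially unique lifting of homotopies against $m$, and observing via the pushout $2$-cell that the newly attached copy of $A$ is identified with the composite along the diagram. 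It then remains to prove the sublemma that the fibrewise join construction of an effective epimorphism $f\colon E\to B$ is an equivalence onto $B$: when $f$ admits a section, the section together with the pushout $2$-cell of each join exhibits every transition map of the fibrewise diagram as factoring through the terminal object of the slice, so the sequential colimit is terminal by cofinality; the general case follows by base changing $f$ along itself — so that the diagonal $E\to E\times_B E$ supplies a section — and then descending the resulting equivalence along the effective epimorphism $f$.

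The step I expect to be the main obstacle is precisely the $(-1)$-truncatedness: making it work uses in an essential way that the sequential colimit and the joins are stable under base change, that effective epimorphisms satisfy descent, and, throughout, that one control the associativity coherences of the iterated join so that the various inclusions $A^{\ast n}\to A^{\ast m}$ and the defining diagram agree up to coherent homotopy — routine but pervasive bookkeeping. If an earlier section has already recorded that the iterated fibrewise join computes the $(-1)$-image of an arbitrary morphism (equivalently, the sublemma above together with the orthogonality of $A\to A^{\ast\infty}$ to monomorphisms), then the present statement is just the special case of that for the morphism $A\to 1$.
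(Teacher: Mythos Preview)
Your mapping-out argument matches the paper's (the two lemmas immediately preceding the theorem carry out exactly this stagewise reduction and passage to the colimit). For $(-1)$-truncatedness, however, the paper takes a different and more self-contained route: by Lemma~\ref{Lemma Idempotent Join} it suffices that $A^{\ast\infty}\to A^{\ast\infty}\ast A^{\ast\infty}$ be an equivalence, and this is obtained by a direct manipulation of sequential colimits. The shifted sequence $A\ast A\to(A\ast A)\ast A\to\cdots$ has colimit both $A^{\ast\infty}$ (shifting does not change a sequential colimit) and $A\ast A^{\ast\infty}$ (join commutes with the colimit, Lemma~\ref{Lemma Join Sequence with constant}); iterating gives $A^{\ast\infty}\simeq A^{\ast n}\ast A^{\ast\infty}$ for all $n$, and comparing the level-wise equivalent sequences $\{A^{\ast n}\}$ and $\{A^{\ast n}\ast A^{\ast\infty}\}$ then yields $A^{\ast\infty}\simeq A^{\ast\infty}\ast A^{\ast\infty}$.

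Your fibrewise-join sublemma has a circularity at the final descent step. The assertion that an equivalence can be detected after pullback along an effective epimorphism is exactly Proposition~\ref{Prop Equiv Local}, and the proof recorded there \emph{uses} the present theorem: it identifies the codomain of the $(-1)$-connected map with the sequential colimit of the fibrewise join sequence, which is precisely what Theorem~\ref{The Neg Trunc from Join} supplies. At this point in the development no $(-1)$-truncation functor or image factorisation is yet available, so ``descend along $f$'' has no independent justification here; you would need either to supply a direct proof of locality of equivalences from the subobject classifier alone, or to reorder the paper's dependencies so that the construction of Section~\ref{Sec:Constructing Truncations} comes first. The paper's argument sidesteps the issue entirely because it never appeals to locality --- only to commutation of joins with sequential colimits and to the idempotency criterion of Lemma~\ref{Lemma Idempotent Join}.
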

 
 \begin{cortwo} (Corollary \ref{Cor:Neg One Trunc via SOC})
  Let $\E$ be an $(\infty,1)$-category satisfying the condition of Remark \ref{Rem:Intro Condition for Localization Section} 
  which has a subobject classifier $\Omega$. 
  Then the functor 
  $$\T_\Omega: \E \to \tau_{-1}\E$$ 
  is the $(-1)$-truncation functor.
 \end{cortwo}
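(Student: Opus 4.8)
The plan is to obtain the statement as an application of the construction of localizations from sub-universes of local objects developed in Subsection~\ref{Subsec:Constructing Localizations via Universes}: given a sub-universe $\U$ of $\E$ whose classified objects form a class of local objects and which satisfies the condition of Remark~\ref{Rem:Intro Condition for Localization Section}, the internal right Kan extension $\T_\U\colon\E\to\U$ is the localization functor onto that class, i.e. it is left adjoint to the inclusion of the full subcategory of classified objects, with unit inverted by $\T_\U$. So the whole proof reduces to three identifications: (i) the subobject classifier $\Omega$ is a sub-universe of $\E$; (ii) the objects it classifies are exactly the $(-1)$-truncated objects of $\E$, so that the target of $\T_\Omega$ is precisely $\tau_{-1}\E$; and (iii) $\tau_{-1}\E$ is a class of local objects. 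Granting (i)--(iii), $\T_\Omega$ is the localization onto $\tau_{-1}\E$, and since the $(-1)$-truncation functor is by definition the left adjoint to $\tau_{-1}\E\hookrightarrow\E$, uniqueness of adjoints identifies $\T_\Omega$ with it.

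For (i) and (ii) I would unwind the definition of a subobject classifier: the universal monomorphism $\mathrm{true}\colon 1\to\Omega$ exhibits $\Omega$ as an object classifier for the class of all monomorphisms, every mono arising as a pullback of $\mathrm{true}$ in an essentially unique way. The class of monomorphisms is closed under base change, composition, and the remaining stability properties required of a sub-universe, so $\Omega$ together with $\mathrm{true}$ genuinely defines a sub-universe of $\E$. The objects it classifies are the fibers of $\mathrm{true}$, namely the monomorphisms into the terminal object $1$; these are exactly the subterminal objects, and a subterminal object is precisely a $(-1)$-truncated object. Every $(-1)$-truncated object occurs this way, so the full subcategory of objects classified by $\Omega$ is all of $\tau_{-1}\E$, and $\T_\Omega$ lands in $\tau_{-1}\E$ and is essentially surjective onto it.

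For (iii) I would recall that $\tau_{-1}\E$ is the class of objects local with respect to the effective epimorphisms (equivalently the $(-1)$-connected maps), which is the standard presentation of $(-1)$-truncation as a localization: a map $X\to Y$ is an effective epimorphism exactly when it induces an equivalence on mapping spaces into every subterminal object, since mapping into a subterminal object only detects ``whether the domain is inhabited.'' The step I expect to be the main obstacle is checking the remaining technical hypotheses under which the right Kan extension of Subsection~\ref{Subsec:Constructing Localizations via Universes} is a reflection: that $\tau_{-1}\E$ is closed under the internal dependent products that construction uses — equivalently that $\Omega$ is an exponential ideal, so that $B^A$ is $(-1)$-truncated whenever $B$ is, for every $A$ in $\E$ — together with the condition of Remark~\ref{Rem:Intro Condition for Localization Section}, which is part of the hypothesis of the corollary. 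With these in place the general theorem applies and yields that $\T_\Omega\colon\E\to\tau_{-1}\E$ is left adjoint to the inclusion with $(-1)$-connected unit, which is exactly the assertion that $\T_\Omega$ is the $(-1)$-truncation functor. One may optionally record that the unit $X\to\T_\Omega X$ then coincides with the classical ``image of $X\to 1$'' description, which is a formal consequence of the characterizations of localizations established earlier.
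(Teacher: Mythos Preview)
Your proposal is correct and follows the same route as the paper: apply the general machinery of Subsection~\ref{Subsec:Constructing Localizations via Universes} (specifically Proposition~\ref{Prop:Sub Univ gives Trunc}, equivalently condition~(5) of Theorem~\ref{The:Localizations}) with $\U^{loc}=\Omega$. The paper treats the corollary as immediate from that proposition, and your steps (i)--(ii) together with the exponential-ideal check are exactly what is needed to invoke it: $\Omega$ classifies all monomorphisms, monomorphisms into $1$ are precisely the $(-1)$-truncated objects, and $g_*f$ is mono whenever $f$ is (this is the ``ideal sub-universe'' condition in the paper's language).

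Your step (iii), however, is unnecessary and slightly misdirected. The input to Proposition~\ref{Prop:Sub Univ gives Trunc} is only that the sub-universe be \emph{ideal}---closed under $g_*$---which is your exponential-ideal check. You do not need to separately exhibit $\tau_{-1}\E$ as a class of objects local with respect to effective epimorphisms; that description is an \emph{output} of the construction (it is the factorization-system side, condition~(6) of Theorem~\ref{The:Localizations}), not a hypothesis you must verify in advance. So (i), (ii), and the ideal/exponential-ideal check already suffice; you can drop (iii) entirely.
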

 
 We can generalize this the previous construction to an $A$-localization.
 
 \begin{cortwo} (Corollary \ref{Cor:A Localization Construction})
  Let $\E$ be an $(\infty,1)$-category satisfying the condition of Remark \ref{Rem:Intro Condition for Localization Section}
  with locally Cartesian closed universe $\U$ that classifies $S$.
  Let $A$ be an object and $\U^{loc_A}$ be the sub-universe of $A$-local objects.
  Then $\T_{\U^{loc_A}}: \E^S \to \E^S$ induces a localization functor 
  $$\T_{\U^{loc_A}}: \E^S \to (\E^{loc_A})^S$$
 \end{cortwo}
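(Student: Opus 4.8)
The plan is to realize this as an instance of the general construction of a localization from a sub-universe of local objects, the same construction (via an internal right Kan extension) that was used to prove Corollary~\ref{Cor:Neg One Trunc via SOC}. Under that strategy the only genuinely new input is the existence, for an arbitrary object $A$, of a sub-universe $\U^{loc_A} \hookrightarrow \U$ whose classified small objects are precisely the $S$-small $A$-local objects; once this is available, the statement follows by unwinding definitions and quoting the general theorem.

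First I would construct $\U^{loc_A}$. Because $\U$ is locally Cartesian closed, the internal power $X^A$ of the universal $S$-small object is available as a family over $\U$, together with the restriction map $X \to X^A$ induced by $A \to 1$; and an $S$-small object is $A$-local exactly when the corresponding instance of this map is an equivalence. So $\U^{loc_A}$ should be carved out of $\U$ as the subobject on which $X \to X^A$ becomes an equivalence, using that $\U$, being a universe, lets us internalize the property of being an equivalence (equivalently, it is closed under the relevant path objects). One then checks that the universal family of $\U$ restricted along $\U^{loc_A} \hookrightarrow \U$ is again a universe: the classified maps are stable under base change because $A$-locality is a fiberwise condition, and descent holds because that condition can be checked on fibers, using universality of colimits together with the descent already available in $\U$. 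By construction the small objects it classifies are exactly the $S$-small objects that are $A$-local, that is, the objects of $(\E^{loc_A})^S$.

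With $\U^{loc_A}$ at hand I would apply the general localization-via-sub-universe result, exactly as in the proof of Corollary~\ref{Cor:Neg One Trunc via SOC}, to the sub-universe $\U' = \U^{loc_A}$. This yields that $\T_{\U^{loc_A}} : \E^S \to \E^S$ is a reflective localization whose unit at each object exhibits the reflection onto the full subcategory of $\E^S$ spanned by the $\U^{loc_A}$-small objects. Every $\U^{loc_A}$-small object is $S$-small, since $\U^{loc_A}$ is a sub-universe of $\U$, and is $A$-local; conversely every $S$-small $A$-local object is $\U^{loc_A}$-small. Hence that full subcategory is precisely $(\E^{loc_A})^S$, and corestricting the codomain of $\T_{\U^{loc_A}}$ produces the asserted localization functor $\T_{\U^{loc_A}} : \E^S \to (\E^{loc_A})^S$.

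The main obstacle is the sub-universe step. One must verify that ``$A$-locality'' really is expressible as a subobject of $\U$ --- this is where both local Cartesian closedness (to form $X^A$ over $\U$) and the ambient universe structure (to detect fiberwise equivalences) are needed --- and, more delicately, that the resulting collection of classified maps is closed under base change and satisfies descent, so that $\U^{loc_A}$ is a genuine sub-universe rather than merely a subobject of $\U$. Once this is established, identifying the essential image with $(\E^{loc_A})^S$ and recognizing the corestriction as a localization functor is routine.
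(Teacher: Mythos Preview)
Your overall plan---construct $\U^{loc_A}$ and then invoke the general localization-from-a-sub-universe machinery (Proposition~\ref{Prop:Sub Univ gives Trunc}, equivalently condition~(5) of Theorem~\ref{The:Localizations})---is exactly the paper's approach. However, you have misidentified the hypothesis that needs to be checked.

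You describe the ``main obstacle'' as verifying that $\U^{loc_A}$ is a genuine sub-universe: stability of the classified maps under base change and descent. But these properties are automatic for \emph{any} $(-1)$-truncated subobject of a universe (see the theorem quoted from \cite{Ra18a} at the start of Section~\ref{Sec:Free Universes}), so this is not where the content lies. What Proposition~\ref{Prop:Sub Univ gives Trunc} actually requires is that $\U^{loc_A}$ be an \emph{ideal} sub-universe: for every $f$ classified by $\U^{loc_A}$ and every $g$ classified by $\U$, the pushforward $g_*f$ is again classified by $\U^{loc_A}$. This is precisely the condition that makes step~(1) of that proposition go through (so that $\T_{\U^{loc_A}}$ lands in the local objects), and you do not mention it.

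The paper's proof consists entirely of this verification: given $f:Z\to Y$ that is $A$-local and $g:Y\to X$ in $S$, one shows $g_*f$ is $A$-local via the adjunction
\[
\E_{/X}(h, g_*f) \simeq \E_{/Y}(g^*h, f),
\]
noting that the right-hand side is $A$-local since $f$ is. Once you replace your base-change/descent check by this ideal-sub-universe check, your argument matches the paper's.
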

  
 This specializes to $n$-truncations.
 
 \begin{cortwo} (Corollary \ref{Cor:Truncation Functor Construction})
  Let $\E$ be an $(\infty,1)$-category satisfying the condition of Remark \ref{Rem:Intro Condition for Localization Section}
  with locally Cartesian closed universe $\U$ that classifies $S$.
  Then $\T_{\U^{\leq n}}: \E^S \to \E^S$ induces a localization functor 
  $$\T_{\U^{\leq n }}: \E^S \to \tau_n\E^S$$
 \end{cortwo}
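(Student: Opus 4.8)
The plan is to realize $n$-truncation as an instance of $A$-localization by taking $A$ to be the sphere $S^{n+1}$, and then to invoke Corollary~\ref{Cor:A Localization Construction} verbatim. First I would construct the objects $S^{k}$ internally: using the natural number object to drive the recursion, set $S^{0}=\ast\amalg\ast$ and $S^{k+1}=\Sigma S^{k}$, the suspension, which is available because the relevant pushouts are among the colimits guaranteed by the standing hypotheses of Remark~\ref{Rem:Intro Condition for Localization Section}. This yields, for every $n$, an object $S^{n+1}$ together with the canonical map $S^{n+1}\to\ast$, whose associated class of local objects is by definition $\E^{loc_{S^{n+1}}}$, and to which Corollary~\ref{Cor:A Localization Construction} applies with $A=S^{n+1}$.

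The heart of the argument is then the identification
$$\U^{\leq n}\;=\;\U^{loc_{S^{n+1}}},$$
i.e.\ that an object classified by $\U$ is $n$-truncated precisely when it is $S^{n+1}$-local. I would prove this by induction on $n$. For the base case $n=-1$ one checks that the map $X=\map(\ast,X)\to\map(S^{0},X)\simeq X\times X$ induced by $S^{0}\to\ast$ is the diagonal, which is an equivalence exactly when $X$ is $(-1)$-truncated. For the inductive step one uses the suspension--loop adjunction together with the elementary path-object characterization of truncatedness: $X$ is $S^{k+1}$-local iff $X\to\map(\Sigma S^{k},X)\simeq\map(S^{k},\Omega_{(-)}X)$ is an equivalence, which by the inductive hypothesis unwinds to the statement that all path objects of $X$ are $(k-1)$-truncated, i.e.\ that $X$ is $k$-truncated. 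Once the two sub-universes are identified, $\tau_n\E=\E^{loc_{S^{n+1}}}$ as full subcategories, the functor $\T_{\U^{\leq n}}$ coincides with $\T_{\U^{loc_{S^{n+1}}}}$, and Corollary~\ref{Cor:A Localization Construction} immediately produces the localization $\T_{\U^{\leq n}}:\E^{S}\to\tau_n\E^{S}$.

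The main obstacle will be the elementary proof of the equivalence ``$n$-truncated'' $=$ ``$S^{n+1}$-local'': in the presentable setting this is classical, but here it cannot be imported and must be reproven internally, which is exactly the purpose of the induction above and the reason the path-object description of truncatedness and the suspension--loop adjunction need to be in place beforehand. A secondary point requiring care is that $S^{n+1}$-locality be classified inside $\U$, so that $\U^{loc_{S^{n+1}}}$ genuinely forms a sub-universe; this, however, should already be subsumed in the machinery established for Corollary~\ref{Cor:A Localization Construction}, so that once the sphere is constructed and the key identification is proven, the present corollary is a direct specialization.
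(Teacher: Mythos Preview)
Your approach is exactly the paper's: specialize Corollary~\ref{Cor:A Localization Construction} to $A=S^{n+1}$ and identify $\U^{\leq n}$ with $\U^{loc_{S^{n+1}}}$. The only discrepancy is that you treat the identification ``$n$-truncated $=$ $S^{n+1}$-local'' as the main obstacle and propose an inductive argument via the suspension--loop adjunction and path objects; in the paper this step is a tautology. The internal definition of $n$-truncated (Definition~\ref{Def:Truncation NNO}) is precisely that $X^{S^{n+1}}\to X$ is an equivalence, which is verbatim the definition of $S^{n+1}$-local (Definition~\ref{Def:A local obj}); this is recorded as Example~\ref{Ex:Trunc Sn local}, and the corollary then follows in one line. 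Your induction would rederive Proposition~\ref{Prop Diag Trunc NNO}, which is interesting in its own right but not needed here.
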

 
 However, we can also define $n$-truncations inductively:
 
 \begin{thetwo} (Theorem \ref{The:Constructing Trunctions Inductively})
   Let $n$ be a natural number. We can define the $(n+1)$-truncation functor 
   \begin{center}
    \adjun{\E}{\tau_{n+1}\E}{\tau_{n+1}}{i}
   \end{center}
   inductively by $n$-truncating the loop objects.
  \end{thetwo}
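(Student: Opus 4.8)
The plan is to argue by induction on $n$, taking the base case (the $0$-truncation, or, if one prefers to start one step lower, the $(-1)$-truncation) from the earlier constructions --- for instance from the join construction of Theorem~\ref{The Neg Trunc from Join} together with a single application of the inductive step below at level $n=-1$. So assume the $n$-truncation adjunction $\tau_n \dashv i_n$ has been built and has its universal property, not only on $\E$ but on each slice $\E_{/A}$ (which is automatic, as slices of $\E$ are again of the same kind and truncation is stable under slicing); the goal is to produce $\tau_{n+1}$.

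The conceptual input is the internal characterization of $(n+1)$-truncatedness: an object $X$ is $(n+1)$-truncated exactly when its diagonal $\Delta_X \colon X \to X \times X$ is an $n$-truncated morphism, equivalently when for every generalized element $a \colon A \to X$ the loop object $\Omega_a X = a \times_X a \in \E_{/A}$ is $n$-truncated --- and, crucially, this characterization is stable under base change. This I would read off from the earlier material on internal truncation levels; failing a verbatim statement, it follows by unwinding the definition of an $n$-truncated morphism via its fibres and relative diagonals.

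Granting $\tau_n$ on slices, I would carve out of the universe $\U$ (classifying $S$) the sub-universe $\U^{\le n+1} \hookrightarrow \U$ of $(n+1)$-truncated objects by literally "$n$-truncating the loop objects" of the universal family: from the universal map $\pi \colon \widetilde{\U} \to \U$ form the relative diagonal $\widetilde{\U} \to \widetilde{\U} \times_\U \widetilde{\U}$, which is classified by a map $c \colon \widetilde{\U} \times_\U \widetilde{\U} \to \U$; let $\U^{\le n+1}$ be the locus in $\U$ over which $c$ factors through $\U^{\le n} \hookrightarrow \U$, this locus being obtained by pulling back the subobject "$c$ lands in $\U^{\le n}$" of $\widetilde{\U} \times_\U \widetilde{\U}$ and pushing it down to $\U$ along the dependent product of $\widetilde{\U} \times_\U \widetilde{\U} \to \U$ (so as to impose the condition on \emph{all} pairs of points of every fibre). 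Base-change stability of the characterization then shows $\U^{\le n+1}$ is a sub-universe whose classified maps are exactly the $(n+1)$-truncated ones in $S$, and feeding it into the earlier construction of a localization out of a sub-universe of local objects (Subsection~\ref{Subsec:Constructing Localizations via Universes}, under the standing hypothesis of Remark~\ref{Rem:Intro Condition for Localization Section}) yields a functor $\T_{\U^{\le n+1}} \colon \E^S \to \E^S$ with a unit, which the general theory identifies with a reflective localization onto the $(n+1)$-truncated objects --- that is, with the asserted adjunction $\tau_{n+1} \dashv i$ onto $\tau_{n+1}\E^S$. An equivalent, more hands-on route avoids the universe: iterate the one-step operation "replace the path object of $X$ over $X \times X$ by its $n$-truncation and realize the resulting (pre-)groupoid object" and pass to the sequential colimit of the natural tower.

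The step I expect to be the genuine obstacle is verifying --- with no appeal to homotopy groups or other non-elementary tools --- that the output is really $(n+1)$-truncated and that the unit has the right universal property. On the universe route this is exactly the base-change stability of the loop-object characterization, which makes membership in $\U^{\le n+1}$ a genuinely fibrewise condition cut out by a subobject of $\U$; once that is secured, the rest is the already-established dictionary between sub-universes of local objects and reflective localizations. On the hands-on route the difficulty migrates to showing the sequential colimit simultaneously (a) produces an object all of whose loop objects are $n$-truncated --- the loops must "stabilize", which relies on the elementary fact that forming loop objects commutes with this particular sequential colimit --- and (b) is orthogonal to the $(n+1)$-truncated objects, which then follows from the characterization of localization maps recorded earlier. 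Either way, the one delicate point is the interaction of $\tau_n$ on loop objects with the ambient (co)limits; everything else is formal.
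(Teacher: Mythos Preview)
Your Route~1 is correct and would yield a truncation functor, but it is a genuinely different argument from the paper's. You carve out $\U^{\le n+1}\hookrightarrow\U$ by the condition that the relative diagonal of the universal family lands in $\U^{\le n}$, and then hand the resulting ideal sub-universe to the general localization machinery of Subsection~\ref{Subsec:Constructing Localizations via Universes}. The paper instead gives an explicit formula for $\tau_{n+1}(X)$: it is the image of the ``undirected Yoneda'' embedding $X\hookrightarrow(\U^{\le n})^X$ obtained by sending each point to its $n$-truncated path-object functor (Theorem~\ref{The:Yoneda Lemma} supplies the embedding), and this image is $(n+1)$-truncated because $(\U^{\le n})^X$ is (Theorem~\ref{The:U Leq n is n plus trunc}). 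The unit is then shown to be $(n+1)$-connected by checking its diagonal is $n$-connected and invoking Proposition~\ref{Prop Loop n Conn N One Conn}. So the paper literally $n$-truncates the loop objects in the construction of $\tau_{n+1}(X)$, whereas you only use that characterization to cut out the target subcategory and leave the construction to the abstract $\T_{\U^{\le n+1}}$ machine. Your approach is more modular; the paper's buys an explicit description of the localization and of its unit.

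There is, however, one genuine gap. You run an \emph{external} induction on standard natural numbers (``assume $\tau_n$ has been built \ldots produce $\tau_{n+1}$''), but the statement concerns an arbitrary natural number $n:1\to\mathbb{N}$ in the internal sense, and the paper explicitly cares about the non-standard ones (see the remark immediately after Corollary following the theorem, and Subsection~\ref{Subsec:Non Standard Truncations}). The paper handles this by packaging the inductive step as an endomorphism $\Sep:\U^\U\to\U^\U$ (Proposition~\ref{Prop:Sep Construction}) and using the initiality of $\mathbb{N}$ to obtain a single map $(\tau_n)^{core}:\mathbb{N}\to\U^\U$ defining all levels at once; the correctness proof (Lemma~\ref{Lemma:TauN Core Truncation}) is then an \emph{internal} induction via the Peano characterization (Theorem~\ref{The:Peano NNO}). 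Your external induction only reaches the standard truncation levels, so as written it does not establish the theorem in the generality intended. Your Route~2 is too sketchy to assess: ``realize the resulting pre-groupoid object'' and the claimed commutation of loop objects with the tower would each need substantial elementary justification.
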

  
 {\bf Results about Truncations:}
 We can use our understanding of universes to give elementary proofs for results which had originally  been 
 proven using spaces before (\cite{Re05}, \cite{SY19}). 
 
 \begin{cortwo} (Corollary \ref{Cor:Elementary Equiv})
  Let $X$ an object and $n < m$. Then we have an equivalence of categories 
  $$ \tau_n (\eta_X)^*: \tau_n(\E_{/\tau_m X}) \to \tau_n(\E_{/X})$$
 \end{cortwo}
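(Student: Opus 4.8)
The plan is to first reduce the statement to a fact about pullback functors and then check full faithfulness and essential surjectivity directly, exploiting the interplay between $n$-connected and $n$-truncated maps. Using the identification, established earlier, of $\tau_n(\E_{/Y})$ with the full subcategory of $\E_{/Y}$ spanned by the $n$-truncated maps into $Y$ (applied to $Y=X$ and $Y=\tau_m X$), the functor in question is the restriction of the pullback functor $(\eta_X)^*\colon\E_{/\tau_m X}\to\E_{/X}$: this is legitimate because $n$-truncated maps are stable under base change, so the outer $\tau_n$ is redundant. It then suffices to show this restriction is fully faithful and essentially surjective.

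For full faithfulness I would fix $n$-truncated objects $Z,W$ of $\E_{/\tau_m X}$. Via the adjunction $(\eta_X)_!\dashv(\eta_X)^*$ between $\E_{/X}$ and $\E_{/\tau_m X}$, the comparison map $\map_{\E_{/\tau_m X}}(Z,W)\to\map_{\E_{/X}}((\eta_X)^*Z,(\eta_X)^*W)$ induced by $(\eta_X)^*$ is identified with precomposition by the counit $\varepsilon_Z\colon(\eta_X)_!(\eta_X)^*Z\to Z$, which is exactly the base change of $\eta_X\colon X\to\tau_m X$ along $Z\to\tau_m X$. Since $\eta_X$ is $m$-connected (being the unit of the $m$-truncation) and $m$-connected maps are closed under base change, $\varepsilon_Z$ is $m$-connected, hence $n$-connected since $n<m$. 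As $W$ is $n$-truncated and $n$-connected maps are left orthogonal to $n$-truncated objects, $W$ is local with respect to $\varepsilon_Z$, so the comparison map is an equivalence; this gives full faithfulness.

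For essential surjectivity I would take an $n$-truncated $p\colon Z\to X$ and factor $\eta_X\circ p\colon Z\to\tau_m X$ through the $(n\text{-connected},n\text{-truncated})$ factorization system as $Z\xrightarrow{c}Z'\xrightarrow{t}\tau_m X$, so that $Z'$ lies in $\tau_n(\E_{/\tau_m X})$. The pair $(p,c)$ induces a map $\phi\colon Z\to X\times_{\tau_m X}Z'=(\eta_X)^*Z'$ over $X$, and I claim $\phi$ is an equivalence. First, the projection $X\times_{\tau_m X}Z'\to X$ is a base change of $t$, hence $n$-truncated, and its composite with $\phi$ is $p$, which is $n$-truncated; by right cancellation for $n$-truncated maps, $\phi$ is $n$-truncated. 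Second, the projection $X\times_{\tau_m X}Z'\to Z'$ is a base change of $\eta_X$, hence $m$-connected, hence $(n+1)$-connected since $n<m$, and its composite with $\phi$ is $c$, which is $n$-connected; by the cancellation property for connectivity (a map that becomes $n$-connected after composing with an $(n+1)$-connected map is itself $n$-connected), $\phi$ is $n$-connected. A map that is simultaneously $n$-connected and $n$-truncated is an equivalence, so $(\eta_X)^*Z'\simeq Z$ over $X$, and $Z$ lies in the essential image.

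The only external inputs are the usual stability and cancellation properties of $n$-connected and $n$-truncated maps (available since these classes form an orthogonal factorization system, together with the cross-level cancellation lemma for connectivity) and the fact that the unit of the $m$-truncation is $m$-connected; the hypothesis $n<m$ is used precisely at the two points where an $m$-connected map is treated as an $(n+1)$- or $n$-connected one. The step I expect to require the most care is making sure that all of these statements, and the orthogonality used for full faithfulness, may be invoked inside the slices $\E_{/X}$ and $\E_{/\tau_m X}$ — which comes down to the fact that connectivity and truncatedness of a morphism do not depend on the slice in which it is viewed.
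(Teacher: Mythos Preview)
Your argument is correct and takes a genuinely different route from the paper. The paper derives this corollary from Lemma~\ref{Lemma Tau n f equiv}, whose proof is a representability argument: one identifies $\tau_n(\E_{/Y})$ with $\map(Y,\U^{\leq n}_\bullet)$, and Theorem~\ref{The:U Leq n is n plus trunc} shows that the sub-universe $\U^{\leq n}_\bullet$ of $n$-truncated objects is itself $(n{+}1)$-truncated, so any $m$-connected map with $m>n$ induces an equivalence on maps into it. Your direct full-faithfulness/essential-surjectivity argument avoids universes entirely and works purely at the level of the $(n\text{-connected},\,n\text{-truncated})$ factorization system, which is conceptually more elementary. One caution: the cross-level cancellation you invoke for essential surjectivity (if $g\circ\phi$ is $n$-connected and $g$ is $(n{+}1)$-connected then $\phi$ is $n$-connected) is true and standard, but the only cancellation of this shape that the paper records---the corollary closing Subsection~\ref{Subsec:Universe of Truncated Objects}---is itself deduced \emph{from} Corollary~\ref{Cor:Elementary Equiv} via Theorem~\ref{The:Tau n F Equiv then f N Min One Conn}. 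So within the paper's logical order your external input would need an independent proof, which is routine (induction on $n$ using the diagonal criterion for connectivity, cf.\ Proposition~\ref{Prop Loop n Conn N One Conn}) but not already supplied by the text.
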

 
 Having a well-defined $n$-truncation functor we can use elementary methods to prove the Blakers-Massey theorem.
 
 \begin{cortwo} (Corollary \ref{Cor:BMT Truncations})
  (Classical Blakers-Massey Theorem)
  Let us assume we have a pushout square such that $f$ is $m$-connected and $g$ 
  is $n$-connected.
 \begin{center}
  \begin{tikzcd}[row sep=0.5in, column sep=0.5in]
    Z \arrow[d, "f"] \arrow[r, "g"] & Y \arrow[d,"k"] \\
    X \arrow[r, "h"] & W \arrow[ul, phantom, "\ulcorner", very near start] 
  \end{tikzcd}
 \end{center}
 Then, the gap map $(f,g): Z \to X \times_W Y$ is $(m+n)$-connected.
 \end{cortwo}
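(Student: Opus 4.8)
\emph{Proof proposal.} The plan is to obtain the classical statement as a special case of the Blakers-Massey theorem for modalities from Section~\ref{Sec Blakers Massey Theorem for Modalities}, applied to the $(m+n)$-truncation modality. Recall that for a modality with left class $\mathscr{L}$ and a pushout square as in the statement, that theorem concludes that the gap map $(f,g)\colon Z \to X \times_W Y$ lies in $\mathscr{L}$ once the pushout-product $\Delta_f \,\square\, \Delta_g$ of the fiberwise diagonals $\Delta_f \colon Z \to Z \times_X Z$ and $\Delta_g \colon Z \to Z \times_Y Z$ lies in $\mathscr{L}$. Since the $(m+n)$-connected maps are exactly the left class of a modality --- the $(m+n)$-truncation modality, which exists by the earlier construction of the truncation functors, so that in particular $(m+n)$-connected maps are closed under composition and cobase change --- it suffices to show that $\Delta_f \,\square\, \Delta_g$ is $(m+n)$-connected whenever $f$ is $m$-connected and $g$ is $n$-connected.

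First I would show that the fiberwise diagonal of an $m$-connected map is $(m-1)$-connected. Working relatively over $X$, the map $\Delta_f$ arises by applying the loop-object construction to $f$ fiberwise, so that its ``fiber'' is the loop object of the fiber of $f$; for $m \geq 0$ the fiber of $f$ is connected, hence an honest loop object, and looping carries $m$-connected objects to $(m-1)$-connected ones by the way the elementary truncation functor interacts with loop objects. For $m = -1$ the assertion that $\Delta_f$ is $(-2)$-connected is vacuous, since every object is $(-2)$-truncated. Symmetrically $\Delta_g$ is $(n-1)$-connected.

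Next I would invoke the connectivity estimate for pushout-products: the pushout-product of an $a$-connected map with a $b$-connected map is $(a+b+2)$-connected. This is a standard consequence of the interaction of truncation with pushouts, and in the elementary setting it is available from the pushout-product machinery developed in Section~\ref{Sec Blakers Massey Theorem for Modalities}. Taking $a = m-1$ and $b = n-1$ shows that $\Delta_f \,\square\, \Delta_g$ is $(m+n)$-connected, since $(m-1) + (n-1) + 2 = m+n$, and the modal Blakers-Massey theorem then gives that the gap map $(f,g)\colon Z \to X \times_W Y$ is $(m+n)$-connected, as asserted.

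The main obstacle is not the logical skeleton of the argument but the careful bookkeeping of truncation levels: the $-1$ shift in the diagonal step and the $+2$ shift in the pushout-product step must be reconciled with the precise definition of ``$k$-connected map'' adopted earlier, with the degenerate cases $m = -1$ or $n = -1$ checked separately, and the auxiliary construction (pushout-product versus fiberwise join) must be the one actually appearing in the modal statement. A secondary point is confirming that, in the elementary setting, the $(m+n)$-connected maps genuinely form the left class of a modality, so that the modal Blakers-Massey theorem applies without change; this is precisely where the earlier construction of the truncation functors is needed.
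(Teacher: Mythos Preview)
Your proof is correct and follows exactly the route the paper takes (implicitly, by deferring to \cite{ABFJ17}, Section~4.2): one applies Theorem~\ref{The Blakers Massey} to the $(m+n)$-truncation modality (available by Proposition~\ref{Prop Conn Trunc Modality}) after verifying that $\Delta_f$ is $(m-1)$-connected, $\Delta_g$ is $(n-1)$-connected, and that the pushout-product of an $a$-connected map with a $b$-connected map is $(a+b+2)$-connected. One small slip: in your $m=-1$ edge case you write that ``every object is $(-2)$-truncated,'' which is false---only the final object is; the correct observation is that every map is $(-2)$-\emph{connected} (equivalently, the only $(-2)$-truncated maps are equivalences), which makes the claim about $\Delta_f$ vacuous as you intend.
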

 
 We can also use it to study $n$-truncations in non-presentable $(\infty,1)$-categories such as filter quotients.
 
 \begin{cortwo} (Corollary \ref{Cor:Trunc Obj filter product})
  Let $\E$ be an elementary $(\infty,1)$-topos such that $1$ has two subobjects.
  Moreover, let $I$ be a set and $\Phi$ a filter on $P(I)$ and  
   fix a natural number $(a_i)_{i \in I}$ in $(\mathbb{N})_{i \in I}$.
  Then an object $(X_i)_{i \in I}$ is $(n_i)_{i \in I}$-truncated if and only if
  $$\{ i: X_i \text{ is } n_i-\text{truncated} \} \in \Phi$$
 \end{cortwo}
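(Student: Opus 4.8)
The plan is to view this corollary as an instance of a \L o\'s-style transfer principle for the filter quotient $\prod_\Phi \E$, whose objects are the $I$-indexed families $(X_i)_{i\in I}$ and whose morphisms are $\Phi$-equivalence classes of families of morphisms. Three structural facts about $\prod_\Phi \E$ do the work. First, $\prod_\Phi \E$ is again a (nondegenerate) elementary $(\infty,1)$-topos, and its finite limits, its universe $\U$, and its natural number object are all computed pointwise modulo $\Phi$; in particular a global section of the natural number object is represented by a family $(n_i)_{i\in I}$ of natural numbers — this is the internal truncation level $(a_i)_{i\in I}$ of the statement — and a global section of the universe by a family of objects. Second, a morphism $(f_i)_{i\in I}$ of $\prod_\Phi \E$ is an equivalence if and only if $\{\, i : f_i \text{ is an equivalence}\,\} \in \Phi$. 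Third, the hypothesis that $1$ has two subobjects is exactly what rules out degeneracy and guarantees these pointwise descriptions; without it the filter quotient could collapse and ``$(n_i)_{i\in I}$-truncated'' would not even be meaningful.

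Next I would recall, from the earlier sections, the internal characterization of truncatedness at an internal level. For a global section $n\colon 1 \to \mathbb{N}$, an object $X$ is $n$-truncated exactly when its classifying map $1 \to \U$ factors through the sub-universe $\U^{\leq n}$ of $n$-truncated objects; and $\U^{\leq n}$, together with this factorization condition, is itself built functorially from the universe and the natural number object by recursion along $\mathbb{N}$ (base cases: $(-2)$-truncated means equivalent to $1$, $(-1)$-truncated means the diagonal is a monomorphism; inductive step: lower the level by passing to loop objects, cf. Theorem \ref{The:Constructing Trunctions Inductively}). The essential point is that this whole description lives inside the structure — finite limits, the universe, the natural number object, and the notion of equivalence — that the filter quotient handles pointwise.

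Now I would simply run this characterization inside $\prod_\Phi \E$ on the object $(X_i)_{i\in I}$ and the internal level represented by $(n_i)_{i\in I}$. Since all the ingredients are computed pointwise modulo $\Phi$, the sub-universe $(\prod_\Phi \U)^{\leq (n_i)}$ and the factorization condition reduce to the families $(\U^{\leq n_i})_{i\in I}$ and the corresponding factorizations in $\E$; and whether the comparison witnessing the factorization is an equivalence is, by the second fact, controlled by $\{\, i : X_i \text{ is } n_i\text{-truncated in } \E\,\}$ lying in $\Phi$. Reading the characterization backwards in $\E$ identifies this set with $\{\, i : X_i \text{ is } n_i\text{-truncated}\,\}$, which is the claim. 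Equivalently, one can go through Corollary \ref{Cor:Truncation Functor Construction}: the $(n_i)_{i\in I}$-truncation functor on $\prod_\Phi \E$ is the pointwise family of $n_i$-truncation functors, and $(X_i)_{i\in I} \to \tau_{(n_i)}(X_i)_{i\in I}$ is an equivalence iff it is so on a $\Phi$-large set of indices.

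The main obstacle — and the only real content beyond bookkeeping — is justifying that passage to the filter quotient commutes with the construction $n \mapsto \U^{\leq n}$, i.e. that the quotient functor $\E^I \to \prod_\Phi \E$ is logical enough: it preserves finite limits, the universe, and the natural number object, and it detects equivalences along $\Phi$. Preservation of finite limits and of the universe is routine for filter quotients; the subtle part is the natural number object and the recursion performed along it, since in $\prod_\Phi \E$ that recursion must be ``unbounded'' across the family $(n_i)_{i\in I}$. Matching this internal recursion with the family of external recursions is precisely what the pointwise description of the natural number object of $\prod_\Phi \E$ supplies, so once that description is in place — which is where the hypothesis on the subobjects of $1$ is used — the corollary follows.
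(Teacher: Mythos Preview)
Your argument is correct, but it takes a longer route than the paper does. The paper derives the corollary immediately from Theorem~\ref{The:Trunc Obj filter quotient}, whose proof uses only Definition~\ref{Def:Truncation NNO}: an object $X$ is $n$-truncated in $\E_\Phi$ exactly when the single map $X^{S^{n+1}} \to X$ is an equivalence there, and by \cite[Theorem~3.7]{Ra20} this happens iff $U \times X^{S^{n+1}} \to U \times X$ is an equivalence in $\E$ for some $U \in \Phi$, i.e.\ iff $U \times X$ is $n$-truncated in $\E$. Specialising to $\E^I$ and unwinding what $U$ and $(n_i)$ mean pointwise gives the statement. By contrast, you route the argument through the sub-universe $\U^{\leq n}$ (or equivalently the truncation functor of Corollary~\ref{Cor:Truncation Functor Construction}), which forces you to verify that the entire recursive construction of $\U^{\leq n}$ commutes with $P_\Phi$. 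That verification does go through, but the ``main obstacle'' you flag is largely self-imposed: the only recursive ingredient the paper needs is the internal sphere $S^{n+1}$ itself, and its preservation under $P_\Phi$ is immediate from Theorem~\ref{The:Filter Quotient EHT}. What your approach buys is a template that would adapt to other modalities defined via a sub-universe; what the paper's buys is a two-line proof that avoids $\U^{\leq n}$ and $\tau_n$ altogether.
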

 
 {\bf Applications of the Existence of Truncations:}
 We will use $(-1)$-truncations in our study of elementary $(\infty,1)$-toposes.
 First, we use it to prove that we can construct free universes.
 
 \begin{thetwo} (Theorem \ref{The:Smallest Universe EHT})
  Let $\E$ satisfy the conditions of Remark \ref{Rem:E Conditions Free Universe Section}.
  Then the following equivalent:
  \begin{enumerate}
   \item $\E$ has sufficient universes.
   \item There exists a functor 
   $$\Fr_\U: \O_\E \xrightarrow{ \ \simeq \ } \E_{/ \Univ}$$
   that assigns to each morphism its free universe.
  \end{enumerate}
 \end{thetwo}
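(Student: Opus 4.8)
The plan is to prove the two implications separately. The implication $(2)\Rightarrow(1)$ is immediate once one unwinds the definitions: an object of $\O_\E$ is a morphism $f$ of $\E$, and $\Fr_\U(f)$ is by construction a universe over the codomain of $f$ that classifies $f$, so every morphism of $\E$ is classified by some universe, which is precisely what it means for $\E$ to have sufficient universes. The work is in the converse, $(1)\Rightarrow(2)$, and this is where I would use the $(-1)$-truncation functor built earlier (Corollary~\ref{Cor:Neg One Trunc via SOC}) to carry out what is naively an infinite intersection.

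For $(1)\Rightarrow(2)$ I would fix a morphism $f\colon Y\to X$, use sufficient universes to choose some universe $\U$ over $X$ classifying $f$, and define $\Fr_\U(f)$ to be the smallest sub-universe of $\U$ that still classifies $f$. To make this elementary, first note that by the conditions of Remark~\ref{Rem:E Conditions Free Universe Section} the sub-universes of $\U$ are themselves classified by an object $\Sub_\U$ of $\E$. After applying the $(-1)$-truncation to the relevant mapping object, the predicate ``classifies $f$'' becomes a proposition and hence cuts out an honest subobject $\Sub_\U^{f}\hookrightarrow\Sub_\U$; since $\Sub_\U^{f}$ is an object of $\E$, I can form the internal meet $\Fr_\U(f):=\bigwedge_{\U'\colon\Sub_\U^{f}}\U'$ in the poset of sub-universes of $\U$. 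It then remains to check that $\Fr_\U(f)$ is again a universe (the universe axioms being stable under this bounded meet), that it classifies $f$, that it is the least sub-universe doing so, and that it is independent of the chosen ambient $\U$ — any two choices being compared inside a common larger universe, using sufficient universes once more.

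Functoriality and the equivalence should then be formal. A morphism $f\to f'$ in $\O_\E$ is a commuting square exhibiting $f$ as a preimage of $f'$ along a base map; pulling sub-universes back along that base map and invoking the universal property of the meet produces the natural comparison $\Fr_\U(f)\to\Fr_\U(f')$. For the inverse of $\Fr_\U$ I would send the classifying map $X\to\Univ$ of a relative universe $\V$ to its generic morphism, regarded as a morphism over $X$; the two round-trips are equivalent to the identity because the generic morphism of $\Fr_\U(f)$ is $f$ by construction, while $\V$ is tautologically the smallest sub-universe of itself classifying its own generic morphism, so $\Fr_\U$ applied to that morphism returns $\V$.

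The hard part will be the elementary replacement of the would-be infinite intersection: namely (a) showing that ``$\U'$ classifies $f$'' can be taken to be a proposition, so that the relevant $\U'$ assemble into a genuine object of $\E$ — this is exactly the point at which the $(-1)$-truncation is indispensable — and (b) showing that the resulting bounded meet of sub-universes again satisfies the universe axioms. Granting these two points, the remaining verifications, including that $\Fr_\U$ is an equivalence of $(\infty,1)$-categories, are routine.
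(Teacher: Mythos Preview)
Your direction $(2)\Rightarrow(1)$ is fine and matches the paper's implicit argument. The issue is with $(1)\Rightarrow(2)$: you are working much too hard, and the machinery you propose may not even be available under the stated hypotheses.

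The paper's construction is far more direct. Given $f\colon A\to B$ classified by some map $\ulcorner f\urcorner\colon B\to\U$, the free universe is simply the \emph{image} of this classifying map, i.e.\ $\tau_{-1}^{\U}(B)$, formed using the $(-1)$-truncation functor in $\E_{/\U}$. This produces a factorization
\[
B \twoheadrightarrow \tau_{-1}^{\U}(B) \hookrightarrow \U,
\]
and the pullback of $p_\U$ along the mono $\tau_{-1}^{\U}(B)\hookrightarrow\U$ is again univalent (this is the theorem cited from \cite{Ra18a}). So $\tau_{-1}^{\U}(B)$ is automatically a universe, it classifies $f$ because the classifying map factors through it, and it is the smallest such sub-universe by the universal property of the image factorization. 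Both of your ``hard parts'' (a) and (b) evaporate: no predicate needs to be truncated, no internal meet needs to be formed, and the result is a universe for free. Functoriality and independence of the ambient $\U$ then follow from functoriality of $(-1)$-truncation and the fact that image factorizations are unique; the paper packages this by showing the resulting functor $\E_{/\U}\to\Arr(\E)\times_\E\Sub(\U)$ is an embedding, and then passes to the filtered colimit over $\Univ$.

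Your proposed route also has a genuine gap: you posit an object $\Sub_\U$ of $\E$ classifying sub-universes of $\U$, but the hypotheses of Remark~\ref{Rem:E Conditions Free Universe Section} only guarantee a $(-1)$-truncation functor, not a subobject classifier or power objects, so there is no reason such an object exists internally. Even granting it, the internal meet you would compute is precisely the image of $\ulcorner f\urcorner$, so you would be reconstructing the paper's answer by a detour.
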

 
 Finally, we use $(-1)$-truncations to give an alternative description of subobject classifiers.
 
 \begin{thetwo} 
  (Theorem \ref{The:EHT SOC egal Prop Resize})
  Let $\E$ be $(\infty,1)$-category, which satisfies all conditions of Definition \ref{Def:EHT} except for the existence of 
  subobject classifiers. Then the following are equivalent:
  \begin{enumerate}
   \item $\E$ is an elementary $(\infty,1)$-topos.
   \item There exists a universe $\U$ that satisfies propositional resizing.
  \end{enumerate}
 \end{thetwo}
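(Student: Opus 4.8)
The plan is to prove the two implications separately; the object that mediates both directions is $\U^{\leq -1}$, the sub‑universe of $(-1)$-truncated objects sitting inside a universe $\U$. It is available here: by \ref{Cor:Truncation Functor Construction} — or, more elementarily, because $\U$ already furnishes a natural number object, and an NNO together with finite colimits produces the sequential colimits needed for the join construction \ref{The Neg Trunc from Join} — one can form $\U^{\leq -1}$ and the restriction $q \colon \tilde\U^{\leq -1} \to \U^{\leq -1}$ of the universal family $p \colon \tilde\U \to \U$, and $q$ is a monomorphism because all of its fibers are $(-1)$-truncated. Two facts will be used repeatedly: a morphism of $\E$ is a monomorphism precisely when all of its fibers are $(-1)$-truncated, and consequently a map $X \to \U$ factors (necessarily uniquely) through $\U^{\leq -1}$ precisely when the morphism into $X$ that it classifies is a monomorphism. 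I read ``$\U$ satisfies propositional resizing'' as: every monomorphism of $\E$ is $\U$-small; by the two facts just stated this is the same as asking that $q$ be a subobject classifier.

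\emph{From $(2)$ to $(1)$.} Suppose $\U$ is such a universe. Given any monomorphism $m \colon Y \hookrightarrow X$, it is $\U$-small, hence a pullback of $p$ along some $X \to \U$; since $m$ is a monomorphism this map factors through $\U^{\leq -1}$, so $m$ is a pullback of $q$. Conversely every map $X \to \U^{\leq -1}$ yields a monomorphism into $X$ by pulling back $q$, and the two operations are mutually inverse because classifying maps into a universe are unique. Hence $q$ is a subobject classifier for $\E$, and since $\E$ was assumed to satisfy every clause of Definition \ref{Def:EHT} apart from the existence of a subobject classifier, it now satisfies all of them, i.e.\ it is an elementary $(\infty,1)$-topos.

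\emph{From $(1)$ to $(2)$.} Let $\E$ be an elementary $(\infty,1)$-topos with subobject classifier $\mathrm{true} \colon 1 \hookrightarrow \Omega$. Because $\E$ has enough universes (Definition \ref{Def:EHT}), there is a universe $\U$ such that $\mathrm{true}$ is $\U$-small, say a pullback of $p$ along some $c \colon \Omega \to \U$. For an arbitrary monomorphism $m \colon Y \hookrightarrow X$ with classifying map $\chi \colon X \to \Omega$, pasting the pullback squares witnessing $Y \simeq X \times_\Omega 1$ and $1 \simeq \Omega \times_\U \tilde\U$ exhibits $Y$ as a pullback of $p$ along $c \circ \chi$; thus $m$ is $\U$-small. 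As $m$ was arbitrary, $\U$ satisfies propositional resizing. (Tracing through, $c$ together with the passage from a monomorphism to its $\U^{\leq -1}$-classifier induces an equivalence $\U^{\leq -1} \simeq \Omega$ of subobject classifiers, which is the resizing statement in its ``$q$ is a subobject classifier'' form.)

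The step I expect to need the most care is foundational rather than computational: verifying that $\U^{\leq -1}$ and its universal property are available under the hypotheses of $(2) \Rightarrow (1)$, where a subobject classifier is \emph{not} assumed — one must check that the truncation functors of \ref{Cor:Truncation Functor Construction} and the construction of $\U^{\leq -1}$ use only finite limits, finite colimits, and the universe $\U$ (hence its natural number object and the resulting sequential colimits), never a subobject classifier. A secondary point, should one prefer to phrase propositional resizing in the ``every $(-1)$-truncated object of $\E$ is $\U$-small'' form familiar from homotopy type theory, is that one must then upgrade that objectwise assertion to the statement about monomorphisms as morphisms, which is exactly where the descent/locality properties built into the notion of a universe are invoked.
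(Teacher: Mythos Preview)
Your proposal is correct and follows essentially the same approach as the paper: for $(2)\Rightarrow(1)$ you show $\U^{\leq -1}$ is a subobject classifier, and for $(1)\Rightarrow(2)$ you pick a universe classifying $1\to\Omega$ and paste pullback squares, which is exactly the paper's argument (combining Theorem~\ref{The:Prop Resize egal SOC} with Theorem~\ref{The:EHT SOC egal Prop Resize}). Your foundational caution about constructing $\U^{\leq -1}$ without a subobject classifier is well-placed and is precisely what the paper addresses in Remark~\ref{Rem:E Conditions Propositional Resizing}, invoking the join construction of Theorem~\ref{The Neg One Adj} rather than Corollary~\ref{Cor:Neg One Trunc via SOC}.
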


 \subsection{Outline}\label{Outline}
  As some section can be read quite independently from each other, we present a diagram of the dependencies of various sections, 
  as a guide for a reader who is only interested in a specific section or result.
  
  \begin{itemize}
   \item $\Rightarrow$: Strong dependency
   \item $\rightarrow$: weak dependency
  \end{itemize}
  
  \begin{center}
   \begin{tikzcd}[row sep=0.3in, column sep=0.2in]
    & & & & \ref{Subsec:Truncated Objects} \arrow[d, Rightarrow] & & & \ref{Subsec:Def of Join} \arrow[dr, Rightarrow] & & \ref{Subsec:Sequential Colimits via NNO} \arrow[dl, Rightarrow] \\
    \ref{Subsec:Review Filter Hypercomp} \arrow[d, Rightarrow] &  & & & \ref{Subsec:Connected Objects} \arrow[d, Rightarrow] \arrow[rrrr] & & & & \ref{Subsec:Join and Trunc}  \arrow[dr, Rightarrow] \arrow[dl, Rightarrow] \arrow[dddlll, Rightarrow, bend left=30] \arrow[dddddlll, Rightarrow, bend left=30] \arrow[ddddllllllll, bend right=20]& \\
    \ref{Subsec:Los Truncation} \arrow[d, Rightarrow] & & & & \ref{Subsec:NNO and Truncations} \arrow[llll, Rightarrow] \arrow[d, Rightarrow] & & &  \ref{Sec:Propositional Resizing} & &  \ref{Sec:Free Universes}\\
    \ref{Subsec:Non Standard Truncations} & & & & \ref{Subsec:Prop of Trunc Functors} \arrow[dr, Rightarrow] \arrow[dl, Rightarrow] & & & & & \\
    & & & \ref{Subsec:Universe of Truncated Objects} \arrow[ddlll, Rightarrow] \arrow[uurrrr, bend right=20] & & \ref{Subsec:Neg One Conn} \arrow[d, Rightarrow] \arrow[ddlllll, Rightarrow] & & & &\\ 
    \ref{Subsec:Constructing Localizations via Universes} \arrow[d, Rightarrow] \arrow[rrrrr, bend right=10] & & & & & \ref{Subsec:Modalities} \arrow[d, Rightarrow] & & & & \\
    \ref{Subsec:Inductive Construction of Truncations} & & & & & \ref{Subsec:BMT for Modalities}\arrow[d, Rightarrow] & & & & \\
     & & & & & \ref{Subsec:BMT for Truncations} & & & &
   \end{tikzcd}
  \end{center}
  
  We now give an overview of the various sections:
  \par 
  {\bf Section \ref{Sec Truncated and Connected Objects} :}
  This section is a general introduction to the concept of truncated and connected objects in locally Cartesian closed $(\infty,1)$-categories.
  Subsection \ref{Subsec:Truncated Objects} focuses on truncated objects, whereas Subsection \ref{Subsec:Connected Objects} 
  focuses on connected objects. The goal is to see what kind of results we can prove about truncated objects with minimal assumptions on our 
  $(\infty,1)$-category. The work in this section is heavily motivated by the study of truncated objects in \cite{Re05}.
  \par 
  {\bf Section \ref{Sec:The Join Construction} :}
  The goal of this section is to give an explicit construction of $(-1)$-truncations via an internal sequential colimit of joins. 
  For that reason, we first review the join construction in Subsection \ref{Subsec:Def of Join}. 
  Then we review the notion of internal sequential colimits via natural number objects. That is the goal of Subsection 
  \ref{Subsec:Sequential Colimits via NNO}. Finally, we combine those two results to construct $(-1)$-truncations in 
  Subsection \ref{Subsec:Join and Trunc}.
  This section is motivated by the work of Egbert Rijke in homotopy type theory \cite{Ri17}.
  \par 
  {\bf Section \ref{Sec Truncation Functors}:}
  The goal of this section is to study a general internal truncation functor in an $(\infty,1)$-category. 
  As a first step we redefine truncated objects internally via a natural number object, which is what we do in 
  Subsection \ref{Subsec:NNO and Truncations}.
  In Subsection \ref{Subsec:Prop of Trunc Functors}, 
  we then assume the existence of a truncation functor and use it to further study truncated and connected objects.
  We then apply our new understanding of such truncations. First, we use it 
  in Subsection \ref{Subsec:Neg One Conn} to study $(-1)$-connected maps.
  Then we use it in Subsection \ref{Subsec:Universe of Truncated Objects} to study the subuniverse of $n$-truncated objects.
  \par 
  {\bf Section \ref{Sec:Constructing Truncations} :}
  In this section we prove the existence of truncations. In Subsection \ref{Subsec:Constructing Localizations via Universes}
  we take a very general approach and give several equivalent characterizations of a localization functor and, in particular, 
  give a general recipe for constructing localizations via an internal right Kan extension,
  which then also applies to truncations. In Subsection \ref{Subsec:Inductive Construction of Truncations} we give a more familiar 
  argument and construct truncations inductively as a lower-level truncations of the loop objects, 
  which is motivated by the work of Rijke \cite{Ri17}.
  \par 
  {\bf Section \ref{Sec Blakers Massey Theorem for Modalities} :}
  In this section we adapt the proof in \cite{ABFJ17} to show that every modality satisfies the Blakers-Massey theorem.
  We start in Subsection \ref{Subsec:Modalities} by reviewing basic properties, and in particular locality, of modalities.
  In Subsection \ref{Subsec:BMT for Modalities} we then confirm that the proof given in \cite{ABFJ17} still holds in our setting. 
  Finally, in Subsection \ref{Subsec:BMT for Truncations} we apply this result to the modality of trunctions.
  \par 
  {\bf Section \ref{Sec:Filter Quotients and Truncations} :}
  In this section we apply our results about truncations to a special class of elementary $(\infty,1)$-toposes: filter quotients.
  In Subsection \ref{Subsec:Review Filter Hypercomp} we review the filter quotient construction. 
  In Subsection \ref{Subsec:Los Truncation} we give a classification result for truncated objects in a filter quotient, 
  which is motivated by $\Los$'s theorem for ultraproducts. Finally, in Subsection \ref{Subsec:Non Standard Truncations} 
  we apply these results to one specific $(\infty,1)$-category and illustrate how the existence of non-standard natural numbers 
  leads to non-standard truncation levels, which are essential in its study.
  \par 
  {\bf Section \ref{Sec:Free Universes} :}
  This section is an application of the existence of $(-1)$-truncations in elementary $(\infty,1)$-topos theory.
  We apply the $(-1)$-truncations to prove that we can always functorially assign each map 
  to the ``smallest" universe in an elementary $(\infty,1)$-topos.
  \par 
  {\bf Section \ref{Sec:Propositional Resizing} :}
  This section is another application of $(-1)$-truncations to elementary $(\infty,1)$-toposes. 
  We prove that the existence of a subobject classifier is equivalent to a property on a universe in the topos.
  \par 
  {\bf Appendix \ref{Sec:HTT}:}
  In this appendix we give a quick overview of the main $(\infty,1)$-topos theoretic ideas we use throughout.
  For more details see \cite{Ra18b}.
  \par 
  {\bf Appendix \ref{Sec:Appendix NNO}:}
  In this appendix we give a quick review of natural number objects and their basic properties.
  For more details see \cite{Ra18c}.

 \subsection{Background} \label{Background}
 We work in the context of $(\infty,1)$-categories and so use the language of $(\infty,1)$-categories freely throughout.
 The preferred model of the author is complete Segal spaces \cite{Re01}, but all results here hold in any model of $(\infty,1)$-categories
 that has a theory of limits, adjunctions and Cartesian fibrations, such as quasi-categories, or any other $\infty$-cosmos \cite{RV16}. 
 The only exception is Section \ref{Sec:Filter Quotients and Truncations}, where we also mention Kan enriched categories.
 \par 
 We primarily use the theory of elementary $(\infty,1)$-toposes and their universes as introduced in \cite{Ra18b}.
 We have summarized some of the key definitions and results in Appendix \ref{Sec:HTT}.
 \par 
 Moreover, we make extensive use of natural number objects and the fact that they exist in an elementary $(\infty,1)$-topos \cite{Ra18c}.
 We have summarized the main definitions about natural number objects in Appendix \ref{Sec:Appendix NNO}.
 \par 
 Many statements and theorem in Sections \ref{Sec Truncated and Connected Objects} and \ref{Sec Truncation Functors}
 are inspired by \cite{Re05} and thus it is a helpful source for better understanding the material here.
 \par 
 Section \ref{Sec:The Join Construction} is directly motivated by the analogous result in \cite{Ri17}.
 The same holds for the theorem and part of the proof in Subsection \ref{Subsec:Inductive Construction of Truncations}.
 \par 
 Section \ref{Sec Blakers Massey Theorem for Modalities} is a direct adaptation of work in \cite{ABFJ17} 
 and can only be read in tandem with their excellent work.
 \par 
 Section \ref{Sec:Filter Quotients and Truncations} depends on work in \cite{Ra20}. In particular the example in 
 Subsection \ref{Subsec:Non Standard Truncations} has been studied there in far greater detail.
 \par 
 There are other sources that we use in a more limited way. 
 For example we use the hypercompletions (Section \ref{Sec:Filter Quotients and Truncations}) 
 as studied in \cite{Lu09} and propositional resizing (Section \ref{Sec:Propositional Resizing}) as introduced in \cite{UF13}.

 \subsection{Notation} \label{Notation}
  In order to make the results as broad as possible, we will specify the necessary assumptions on $\E$ at the beginning of each section. 
 We denote the final object by $1$ and initial object by $\emptyset$. 
 We denote the $(\infty,1)$-category of spaces by $\s$, but the Kan enriched category of spaces by $\Kan$. 
 In order to avoid any confusion, we denote the final object in $\s$ by $*$.
 For more details on topos theoretic notation see Appendix \ref{Sec:HTT}.
  
 \subsection{Acknowledgements} \label{Subsec Acknowledgements}
  This work benefitted from many interesting conversations. 
  I want thank Charles Rezk for helping me better understand truncations and in particular 
  giving me a new perspective on his result \cite[Lemma 8.6]{Re05}, which resulted in 
  an elementary proof for Theorem \ref{The:U Leq n is n plus trunc}.
  I also want to thank Georg Biedermann for helpful conversations about their work in \cite{ABFJ17}.  
  I want to thank Lior Yanovski for pointing me to the work by him and Tomer Schlank on truncations \cite{SY19}.
  I also would like to thank Jonas Frey for making me aware of a cool method to construct $(-1)$-truncations in type theory,
  which was my motivation for the work in Subsection \ref{Subsec:Constructing Localizations via Universes}.
  I also want to thank Raffael Stenzel for telling me about the connection between $(-1)$-truncations and universes,
  which led to the construction of free universes in Section \ref{Sec:Free Universes}.
  I also want to thank Asaf Horev for giving me insight into my own result and explaining to me the connection between my ideas 
  and right Kan extensions, which I have tried to capture in Intuition \ref{Int:Internal Right Kan Extension}.
  I want to also thank Peter Lumsdaine for suggesting a ``$\Los$'s theorem for truncations" (Theorem \ref{The:Trunc Obj filter quotient}).
  Finally, I would like to thank the Max-Planck-Institut f{\"u}r Mathematik for its hospitality and financial support.
 
 \section{Truncated and Connected Objects} \label{Sec Truncated and Connected Objects}
 This section is a gentle introduction to the concept of truncated and connected objects in a $(\infty,1)$-category.
 We want to study the properties of truncated objects that can be proven with very few assumptions on our category.
 \par
 In Subsection \ref{Subsec:Truncated Objects} we discuss truncated objects and in Subsection \ref{Subsec:Connected Objects} 
 we discuss their counter-parts, namely connected objects.
 
 \begin{remone} \label{Rem:Trunc and Conn Obj Cat Conditions}
  Throughout this section $\E$ has following properties:
  \begin{enumerate}
   \item Finite limits and colimits
   \item Locally Cartesian closed (Definition \ref{Def:LCCC})
  \end{enumerate}
 This implies that 
 \begin{enumerate}
  \item[(3)] Colimits are universal (Definition \ref{Def:Colimits Universal},
   Lemma \ref{Lemma:LCCC Colimits Universal}).
 \end{enumerate}
 \end{remone}
 
 \begin{notone}
  As we make many analogies to spaces in this section we denote the $n$-sphere in $\s$ as $S^n$ and the $n$-sphere in $\E$ 
  as $S^n_\E$ (Definition \ref{Def Sphere in E}).
 \end{notone}

 \subsection{Truncated Objects} \label{Subsec:Truncated Objects}
 In this subsection we define and study truncated objects. 
 Let us first review truncated spaces.
 
 \begin{defone}
  Let $S^{-1} = \emptyset$, the empty space, and $S^n$ ($n \geq 0$) be defined inductively by the pushout square
  \begin{center}
   \begin{tikzcd}[row sep=0.5in, column sep=0.5in]
    S^{n-1} \arrow[d] \arrow[r] & * \arrow[d]  \\
    * \arrow[r] & S^n \arrow[ul, phantom, "\ulcorner", very near start]
   \end{tikzcd}
  \end{center}
 \end{defone}

 \begin{defone}
  Let $n \geq -2$. A space $X$ is $n$-{\it truncated} if 
  $$Map(*,X) \to Map(S^{n+1},X)$$
  is an equivalence of spaces.
 \end{defone}
 
 \begin{exone}
  A space is $(-2)$-truncated if it is contractible. Moreover, it is $(-1)$-truncated if it is empty or contractible.
 \end{exone}
 
 There is a relative version of this definition.
 
 \begin{defone}
  A map of spaces $f: Y \to X$ is $n$-{\it truncated} if every fiber is $n$-truncated.
 \end{defone}

 The goal is to generalize this concept to an arbitrary $(\infty,1)$-category.
 
 \begin{defone}
  An object $X$ in $\E$ is $n$-truncated if for every object $Y$ the mapping object 
  $Map_\E(Y,X)$ is an $n$-truncated space.
 \end{defone}
 
 \begin{exone}
  An object $X$ is $(-2)$-truncated if $Map(Y,X)$ is contractible. 
  This is means $X$ is equivalent to the final object $1$.
 \end{exone}
 
 The definition has a relative version.
 
 \begin{defone}
  A map $f: Y \to X$ is $n$-truncated if it is $n$-truncated as an object in $\E_{/X}$.
 \end{defone}
 
 We have following alternative characterization of $n$-truncated maps.
 
 \begin{lemone} \label{Lemma Yoneda Truncation Cond}
  A map $f: Y \to X$ is $n$-truncated if for every object $Z$ the map of spaces 
  $f_*: Map(Z, Y) \to Map(Z, X)$ is $n$-truncated.
 \end{lemone}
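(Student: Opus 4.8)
The plan is to reduce the statement to two standard facts: first, that a map of spaces is $n$-truncated precisely when all of its fibers are $n$-truncated (this is exactly the definition of an $n$-truncated map of spaces recalled above); and second, that hom-spaces in the slice $(\infty,1)$-category $\E_{/X}$ are computed as fibers, which is a general fact about over-categories. Having these in hand, the lemma becomes a matter of matching the two notions of $n$-truncation through the relevant mapping spaces.

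Concretely, I would begin by recording the identification
\[
  Map_{\E_{/X}}(g\colon W\to X,\ f\colon Y\to X)\ \simeq\ Map_\E(W,Y)\times_{Map_\E(W,X)}\{g\},
\]
so that $Map_{\E_{/X}}(g,f)$ is the fiber of $f_*\colon Map_\E(W,Y)\to Map_\E(W,X)$ over the point $g\in Map_\E(W,X)$. This is the usual description of mapping spaces in an over-category, and it is what makes the two notions of $n$-truncation talk to one another.

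For the implication from $f$ being $n$-truncated to the stated condition, I would fix an object $Z$ and an arbitrary point $g\colon Z\to X$ of $Map_\E(Z,X)$. By the displayed formula the fiber of $f_*$ over $g$ is $Map_{\E_{/X}}(g,f)$, which is an $n$-truncated space because $g$ is an object of $\E_{/X}$ and $f$ is $n$-truncated as an object of $\E_{/X}$. Since every fiber of $f_*\colon Map_\E(Z,Y)\to Map_\E(Z,X)$ is then $n$-truncated, the map $f_*$ is an $n$-truncated map of spaces. Conversely, assuming the condition, every object of $\E_{/X}$ has the form $g\colon W\to X$, and the displayed formula exhibits $Map_{\E_{/X}}(g,f)$ as a fiber of the $n$-truncated map $f_*\colon Map_\E(W,Y)\to Map_\E(W,X)$; hence $Map_{\E_{/X}}(g,f)$ is $n$-truncated, and as $g$ was arbitrary, $f$ is $n$-truncated as an object of $\E_{/X}$, i.e. $f$ is an $n$-truncated map.

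I do not anticipate a genuine obstacle here: once the slice-category mapping space is identified with a fiber of $f_*$, everything else is a direct unwinding of the definitions of $n$-truncated objects and of $n$-truncated maps of spaces. The only point that deserves a sentence of care is the matching of quantifiers — "for every object $Z$ and every point $g\in Map_\E(Z,X)$" on the one hand, and "for every object of $\E_{/X}$" on the other — which agree because objects of $\E_{/X}$ are exactly such pairs $(Z,g)$.
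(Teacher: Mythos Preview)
Your argument is correct and is precisely the standard unwinding one expects: identify $Map_{\E_{/X}}(g,f)$ with the fiber of $f_*$ over $g$, and then use that a map of spaces is $n$-truncated exactly when each fiber is. The paper does not supply a proof for this lemma at all---it is stated as a direct reformulation of the definitions---so your write-up simply makes explicit what the paper leaves to the reader.
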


 \begin{exone}
  A map $f$ is $(-2)$-truncated if and only if it is an equivalence in $\E$.
 \end{exone}

 The two definitions are related by the following lemma.
 
 \begin{lemone}
  An object $X$ is $n$-truncated if and only if $X \to 1$ is $n$-truncated.
 \end{lemone}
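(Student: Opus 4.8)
The plan is to reduce the statement to Lemma \ref{Lemma Yoneda Truncation Cond} and the definition of an $n$-truncated object, using only the fact that in any model of $(\infty,1)$-categories the mapping space $\map_\E(Z,1)$ is contractible for every $Z$ (since $1$ is final). First I would unwind the definition of "$X \to 1$ is $n$-truncated": by the relative definition this means $X \to 1$, regarded as an object of $\E_{/1} \simeq \E$, is an $n$-truncated object; and since $\E_{/1} \simeq \E$ canonically, that is literally the assertion that $X$ is an $n$-truncated object of $\E$. So at the level of unwinding definitions there is essentially nothing to prove, but to be safe I would instead route the argument through the Yoneda-style criterion to avoid invoking the equivalence $\E_{/1} \simeq \E$ as a black box.

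Concretely, for the forward direction I would assume $X$ is $n$-truncated, so that $\map_\E(Y,X)$ is an $n$-truncated space for every $Y$. To show $X \to 1$ is $n$-truncated, by Lemma \ref{Lemma Yoneda Truncation Cond} it suffices to check that for every object $Z$ the map of spaces $\map_\E(Z,X) \to \map_\E(Z,1)$ is an $n$-truncated map of spaces. Since $\map_\E(Z,1) \simeq *$ is contractible, a map into it is $n$-truncated precisely when its source (the fiber, up to equivalence) is an $n$-truncated space; and $\map_\E(Z,X)$ is $n$-truncated by hypothesis. For the converse, assume $X \to 1$ is $n$-truncated. Again by Lemma \ref{Lemma Yoneda Truncation Cond}, $\map_\E(Z,X) \to \map_\E(Z,1) \simeq *$ is an $n$-truncated map for every $Z$, which forces $\map_\E(Z,X)$ itself to be an $n$-truncated space; as $Z$ was arbitrary, $X$ is $n$-truncated by definition.

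The only genuinely needed input beyond the cited lemma is the elementary fact that a map of spaces $p\colon E \to B$ with $B$ contractible is $n$-truncated if and only if $E$ is an $n$-truncated space — equivalently, that fibers of $p$ over the (unique up to contractible choice) point of $B$ recover $E$ up to equivalence. I expect this to be the only place where one must be slightly careful, and it is standard: it follows from the definition of an $n$-truncated map of spaces as one with $n$-truncated fibers, together with the fact that $E \to B \simeq *$ exhibits $E$ as the fiber. I do not anticipate any real obstacle; the content of the lemma is purely organizational, establishing the compatibility between the absolute and relative notions of truncatedness so that later sections may pass freely between "$X$ is $n$-truncated" and "$X \to 1$ is $n$-truncated."
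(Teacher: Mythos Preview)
Your proposal is correct. The paper states this lemma without proof, evidently regarding it as immediate from the definition: since a map $f:Y\to X$ is declared $n$-truncated when it is $n$-truncated as an object of $\E_{/X}$, and $\E_{/1}\simeq\E$, the assertion is literally a tautology. Your first paragraph already identifies this, and your alternative route through Lemma~\ref{Lemma Yoneda Truncation Cond} together with the contractibility of $\map_\E(Z,1)$ is a perfectly valid (and more explicit) verification of the same fact.
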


 Notice this definition also implies following important result.
 
 \begin{lemone} \label{Lemma Trunc Base Change}
  $n$-truncated maps are closed under base change.
 \end{lemone}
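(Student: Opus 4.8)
The plan is to reduce the statement to the corresponding fact about maps of spaces, using the Yoneda-style criterion of Lemma \ref{Lemma Yoneda Truncation Cond}. So suppose $f\colon Y\to X$ is $n$-truncated and let $g\colon W\to X'$ be its base change along some map $u\colon X'\to X$, so that $W\simeq X'\times_X Y$. Fix an arbitrary object $Z$ of $\E$. Since the representable functor $Map_\E(Z,-)\colon\E\to\s$ preserves finite limits, and in particular pullbacks, applying it to the defining pullback square produces a pullback square of spaces; that is, $g_*\colon Map_\E(Z,W)\to Map_\E(Z,X')$ is the base change of $f_*\colon Map_\E(Z,Y)\to Map_\E(Z,X)$ along $u_*\colon Map_\E(Z,X')\to Map_\E(Z,X)$.

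Next I would invoke the stability of $n$-truncated maps of spaces under base change. This is immediate from the definition via fibers: a fiber of the pulled-back map $g_*$ over a point $p$ of $Map_\E(Z,X')$ is canonically equivalent to the fiber of $f_*$ over the image $u_*p$ in $Map_\E(Z,X)$, and that fiber is $n$-truncated because $f$ is (Lemma \ref{Lemma Yoneda Truncation Cond}). Hence $g_*$ is an $n$-truncated map of spaces for every $Z$, and applying Lemma \ref{Lemma Yoneda Truncation Cond} in the reverse direction shows that $g$ is $n$-truncated, as desired.

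A more conceptual variant, which I may present instead, is to observe that base change along $u$ is the functor $u^*\colon \E_{/X}\to \E_{/X'}$, which is right adjoint to post-composition $u_!\colon \E_{/X'}\to \E_{/X}$ (an adjunction that holds because $\E$ has pullbacks). Since a right adjoint preserves $n$-truncated objects --- from the equivalence $Map(V,u^*b)\simeq Map(u_!V,b)$ one reads off that the relevant mapping spaces remain $n$-truncated --- and since "$f$ is $n$-truncated" means precisely that $f$ is an $n$-truncated object of $\E_{/X}$, the object $u^*f=g$ is $n$-truncated in $\E_{/X'}$, i.e.\ $g$ is an $n$-truncated map. I do not expect a genuine obstacle in either route: the only real content is the stability of $n$-truncated maps of spaces under pullback, which is classical and follows directly from the description of such maps by their fibers.
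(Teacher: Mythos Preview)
Your proposal is correct. The paper does not actually give a proof of this lemma: it merely notes that ``this definition also implies'' the result and states it without further argument. Both of your routes --- the Yoneda-style reduction via Lemma~\ref{Lemma Yoneda Truncation Cond} and the observation that $u^*$ is a right adjoint preserving $n$-truncated objects --- are valid ways of unpacking what the paper leaves implicit, and either would be an acceptable fleshing-out.
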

 
 \begin{remone} 
  This does not hold for cobase change. In the pushout diagram of spaces
  \begin{center}
   \begin{tikzcd}[row sep=0.5in, column sep=0.5in]
     S^1 \arrow[r] \arrow[d] & 1 \arrow[d] \\
     1 \arrow[r] & S^2 \arrow[ul, phantom, "\ulcorner", very near start]
   \end{tikzcd}
  \end{center}
  the map $S^1 \to 1$ is $1$-truncated. But the map $1 \to S^2$ is clearly not $1$-truncated 
  as the homotopy fiber over each point in $S^2$ is $\Omega S^2$, which is not $1$-truncated \cite{Gr69}.
 \end{remone}
 
 Our next goal is to give an alternative characterization of $n$-truncated maps.

 \begin{defone} \label{Def Sphere in E}
  Let $S^{-1}_\E = \emptyset_\E$, the initial object, and $S^n_\E$ ($n \geq 0$) be defined inductively by the pushout square
  \begin{center}
   \begin{tikzcd}[row sep=0.5in, column sep=0.5in]
    S^{n-1}_\E \arrow[d] \arrow[r] & 1 \arrow[d]  \\
    1 \arrow[r] & S^n_\E \arrow[ul, phantom, "\ulcorner", very near start]
   \end{tikzcd}
  \end{center}
  We call these objects the {\it $n$-spheres in $\E$}. Notice they come with a well-defined basepoint $1 \to S^n_\E$.
 \end{defone}

 \begin{lemone} \label{Lemma:Ext Sphere vs Int Spheres}
  Let $Z$ and $X$ be two objects in $\E$ and $n \geq -1$. Then we have an equivalence 
  $$Map_\E(Z \times S^n_\E, X) \simeq Map_{\s}(S^n, Map_\E(Z,X))$$
 \end{lemone}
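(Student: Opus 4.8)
The plan is to prove this by induction on $n$, using the inductive definition of $S^n_\E$ as a pushout and the fact that $Map_\E(-,X)$ sends colimits to limits, together with the hypothesis that colimits in $\E$ are universal (so that $Z \times (-)$ preserves the relevant pushouts).

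First I would handle the base case $n = -1$. Here $S^{-1}_\E = \emptyset_\E$, so $Z \times S^{-1}_\E \simeq Z \times \emptyset_\E$; since colimits are universal, $Z \times \emptyset_\E \simeq \emptyset_\E$, whence $Map_\E(Z \times S^{-1}_\E, X) \simeq Map_\E(\emptyset_\E, X) \simeq *$. On the other side, $S^{-1} = \emptyset$ in $\s$, so $Map_\s(\emptyset, Map_\E(Z,X)) \simeq *$ as well, giving the equivalence.

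For the inductive step, suppose the equivalence holds for $n-1$ (with $n \geq 0$). Apply the functor $Z \times (-)$ to the defining pushout square for $S^n_\E$; because colimits in $\E$ are universal, this yields a pushout square
\begin{center}
 \begin{tikzcd}[row sep=0.4in, column sep=0.4in]
  Z \times S^{n-1}_\E \arrow[d] \arrow[r] & Z \arrow[d] \\
  Z \arrow[r] & Z \times S^n_\E \arrow[ul, phantom, "\ulcorner", very near start]
 \end{tikzcd}
\end{center}
Now apply $Map_\E(-, X)$, which carries this pushout to a pullback square of spaces, exhibiting $Map_\E(Z \times S^n_\E, X)$ as the pullback of $Map_\E(Z,X) \leftarrow Map_\E(Z \times S^{n-1}_\E, X) \to Map_\E(Z,X)$. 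Using the inductive hypothesis to rewrite the middle term as $Map_\s(S^{n-1}, Map_\E(Z,X))$ and identifying the two legs with the basepoint inclusions, this pullback is exactly the space of maps out of the pushout $* \cup_{S^{n-1}} * = S^n$ into $Map_\E(Z,X)$, i.e. $Map_\s(S^n, Map_\E(Z,X))$. This closes the induction.

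The main obstacle is bookkeeping rather than conceptual: one must check that the two structure maps $Z \to Z \times S^n_\E$ appearing after applying $Z \times (-)$ are genuinely the images of the basepoint inclusions $1 \to S^n_\E$ (so that on mapping spaces they become the two basepoint inclusions $* \to Map_\s(S^n,-)$ matching the $\s$-side pushout), and that the equivalence of the inductive step is natural enough in $Z$ and $X$ that these identifications are compatible. This amounts to tracking basepoints carefully through the pushout descriptions and invoking naturality of the ``colimits to limits'' property of $Map_\E(-,X)$; I expect no real difficulty, only care.
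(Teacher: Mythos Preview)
Your proposal is correct and follows essentially the same approach as the paper's proof: induction on $n$, using universality of colimits to identify $Z \times S^n_\E$ as a pushout and then applying $Map_\E(-,X)$ to obtain a pullback square in spaces. Your write-up is in fact slightly more careful than the paper's (you spell out the base case and flag the basepoint bookkeeping), but the argument is the same.
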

 
 \begin{proof}
  We will prove it by induction. If $n = -1$ then the result is clear. Let us assume it holds for $n$.
  As colimits in $\E$ are universal the object $Z \times S^{n+1}$ is the pushout of the following diagram
  \begin{center}
   \begin{tikzcd}[row sep=0.5in, column sep=0.5in]
    Z \times S^{n}_\E \arrow[d] \arrow[r] & Z \arrow[d]  \\
    Z \arrow[r] & Z \times S^{n+1}_\E \arrow[ul, phantom, "\ulcorner", very near start]
   \end{tikzcd}
  \end{center}
  Taking mapping spaces gives us following pullback square.
  \begin{center}
   \pbsq{Map_\E(Z \times S^{n+1}_\E, X)}{Map_\E(Z,X)}{Map_\E(Z,X)}{Map_\E(Z \times S^n_\E, X)}{}{}{}{}
  \end{center}
  Using our induction assumption we can rewrite this pullback square as follows.
  \begin{center}
   \pbsq{Map_\E(Z \times S^{n+1}_\E, X)}{Map_{\s}(*,Map_\E(Z,X))}{Map_{\s}(*,Map_\E(Z,X))}{Map_{\s}(S^n, Map_\E(Z, X))}{}{}{}{}
  \end{center}
  This implies that 
  $$Map_\E(Z \times S^{n+1}_\E, X) \xrightarrow{ \ \ \simeq \ \ } Map_{\s}( \Sigma S^n ,  Map_\E(Z, X)) 
  \xrightarrow{ \ \ \simeq \ \ } Map_{\s}(S^{n+1}, Map_\E(Z, X))$$
  This finishes our induction step.
 \end{proof}

 \begin{lemone} \label{Lemma Ntrun with Sn condition}
  An object $X$ in $\E$ is $n$-truncated if and only if for every object $Z$ the induced map 
  $$ Map_\E( Z \times S^{n+1}_\E , X) \to Map_\E(Z , X)$$
  is an equivalence of spaces.
 \end{lemone}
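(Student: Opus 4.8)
The plan is to derive this immediately from Lemma~\ref{Lemma:Ext Sphere vs Int Spheres} and the definitions, so that no new geometric input is needed. First I would unwind what the two sides of the asserted equivalence mean. By definition, $X$ is $n$-truncated exactly when for every object $Z$ the space $Map_\E(Z,X)$ is $n$-truncated, which (by the definition of an $n$-truncated space) says that the map $Map_{\s}(*,Map_\E(Z,X))\to Map_{\s}(S^{n+1},Map_\E(Z,X))$ induced by $S^{n+1}\to *$ is an equivalence. Since $*\to S^{n+1}\to *$ is the identity, this map and the map $Map_{\s}(S^{n+1},Map_\E(Z,X))\to Map_{\s}(*,Map_\E(Z,X))$ induced by the basepoint $*\to S^{n+1}$ form a section--retraction pair, so one is an equivalence iff the other is; hence $X$ is $n$-truncated iff for every $Z$ the basepoint-restriction map $Map_{\s}(S^{n+1},Map_\E(Z,X))\to Map_{\s}(*,Map_\E(Z,X))$ is an equivalence.

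Next I would match this basepoint-restriction map with the map appearing in the statement. Applying Lemma~\ref{Lemma:Ext Sphere vs Int Spheres} with $n+1\geq -1$ gives $Map_{\s}(S^{n+1},Map_\E(Z,X))\simeq Map_\E(Z\times S^{n+1}_\E,X)$, while trivially $Map_{\s}(*,Map_\E(Z,X))\simeq Map_\E(Z,X)\simeq Map_\E(Z\times 1,X)$. Under these identifications the external basepoint $*\to S^{n+1}$ corresponds to the internal basepoint $1\to S^{n+1}_\E$ of Definition~\ref{Def Sphere in E}, and hence the basepoint-restriction map is identified with the map $Map_\E(Z\times S^{n+1}_\E,X)\to Map_\E(Z,X)$ induced by $Z\cong Z\times 1\xrightarrow{\mathrm{id}_Z\times\mathrm{pt}} Z\times S^{n+1}_\E$. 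Combining the two steps yields the claimed equivalence of conditions.

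The one point that needs care is the compatibility of the equivalence of Lemma~\ref{Lemma:Ext Sphere vs Int Spheres} with the basepoint inclusions, i.e.\ that the identification $Map_{\s}(S^{n+1},Map_\E(Z,X))\simeq Map_\E(Z\times S^{n+1}_\E,X)$ carries the map induced by $*\to S^{n+1}$ to the map induced by $1\to S^{n+1}_\E$. This is not literally part of that lemma's statement, but it follows by rerunning its inductive proof while tracking the basepoints: at each stage the equivalence is assembled from the defining pushout square of $S^{n+1}_\E$ and the associated pullback square of mapping spaces, both of which are natural in the evident sense, and the inclusion of the basepoint $1\to S^{n+1}_\E$ is one of the two structure maps of that pushout. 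So the main (and essentially only) obstacle is this bookkeeping; everything else is a formal rewriting of the definitions.
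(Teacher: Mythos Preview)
Your proof is correct and follows essentially the same approach as the paper: both reduce the claim to Lemma~\ref{Lemma:Ext Sphere vs Int Spheres} by identifying the map $Map_\E(Z\times S^{n+1}_\E,X)\to Map_\E(Z,X)$ with the basepoint-restriction map $Map_\s(S^{n+1},Map_\E(Z,X))\to Map_\s(*,Map_\E(Z,X))$. You are simply more explicit than the paper about the basepoint compatibility and the section--retraction argument, but the substance is the same.
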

 
 \begin{proof}
  By the previous lemma we have a diagram where the vertical map is always an equivalence.
  \begin{center}
  \begin{tikzcd}[row sep=0.3in, column sep=0.5in]
   Map_{\s}(*,Map_\E( Z \times S^{n+1}_\E , X)) \arrow[dd, "\simeq"] \arrow[dr] & \\
   & Map_{\s}(*,Map_\E(Z , X)) \\
   Map_{\s}(S^{n+1}, Map_\E(Z, X)) \arrow[ur] &
  \end{tikzcd}
  \end{center}
  So, the top map is an equivalence if and only if the bottom map is an equivalence.
  However, the bottom map is an equivalence if and only if $Map_\E(Z,X)$ is $n$-truncated.
 \end{proof}

 \begin{corone}
  An object $X$ is $n$-truncated if and only if the map $X^{S^{n+1}_\E} \to X$ is an equivalence in $\E$.
 \end{corone}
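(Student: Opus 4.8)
The plan is to deduce this directly from Lemma \ref{Lemma Ntrun with Sn condition} by an application of the Yoneda lemma, using the exponential adjunction available since $\E$ is locally Cartesian closed. First I would fix the meaning of the map $X^{S^{n+1}_\E} \to X$: it is the map induced by precomposition with the basepoint $1 \to S^{n+1}_\E$ from Definition \ref{Def Sphere in E}, i.e. the image under $X^{(-)}$ of $1 \to S^{n+1}_\E$, identifying $X^1 \simeq X$.

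The key step is the natural chain of equivalences, for every object $Z$,
$$Map_\E(Z, X^{S^{n+1}_\E}) \simeq Map_\E(Z \times S^{n+1}_\E, X),$$
coming from the exponential adjunction in the Cartesian closed $(\infty,1)$-category $\E$ (or, fiberwise, from local Cartesian closedness applied to $1$). Under this identification, the map $Map_\E(Z, X^{S^{n+1}_\E}) \to Map_\E(Z, X)$ induced by $X^{S^{n+1}_\E} \to X$ corresponds to the map $Map_\E(Z \times S^{n+1}_\E, X) \to Map_\E(Z, X)$ induced by $\mathrm{id}_Z \times (1 \to S^{n+1}_\E)\colon Z \simeq Z \times 1 \to Z \times S^{n+1}_\E$. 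This naturality square is the one small bookkeeping point to check carefully, but it is formal.

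Now I would invoke the Yoneda lemma: a map in $\E$ is an equivalence if and only if it induces an equivalence on $Map_\E(Z, -)$ for every object $Z$. Hence $X^{S^{n+1}_\E} \to X$ is an equivalence if and only if $Map_\E(Z \times S^{n+1}_\E, X) \to Map_\E(Z, X)$ is an equivalence of spaces for every $Z$, which by Lemma \ref{Lemma Ntrun with Sn condition} happens precisely when $X$ is $n$-truncated. I do not anticipate a genuine obstacle here; the only thing requiring care is matching up the basepoint-induced map $X^{S^{n+1}_\E} \to X$ with the map $Map_\E(Z \times S^{n+1}_\E, X) \to Map_\E(Z, X)$ of the preceding lemma, and making sure the adjunction equivalences are natural in $Z$ so that Yoneda applies.
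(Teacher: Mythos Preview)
Your proposal is correct and is exactly the argument the paper has in mind: the corollary is stated without proof immediately after Lemma \ref{Lemma Ntrun with Sn condition}, and the intended derivation is precisely the Yoneda-plus-exponential-adjunction step you outline. There is nothing to add.
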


 \begin{exone}
  $X$ is $(-1)$-truncated if and only if the diagonal map $\Delta: X \to X^{S^0_\E} \simeq X \times X$ is an equivalence.
 \end{exone}

 This example generalizes in the appropriate way.
 
 \begin{propone} \label{Prop Diag Trunc}
  $X$ is $n$-truncated if and only if $\Delta: X \to X \times X$ is $(n-1)$-truncated.
 \end{propone}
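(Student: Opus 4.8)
The plan is to reduce the statement to the corresponding fact about spaces by means of the Yoneda-style characterization of truncatedness; throughout I take $n \geq -1$, as for $n = -2$ the right-hand side is not defined (it would ask the diagonal to be $(-3)$-truncated), and the example preceding the proposition is exactly the case $n = -1$.

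First I would invoke Lemma \ref{Lemma Yoneda Truncation Cond}: the diagonal $\Delta \colon X \to X \times X$ is $(n-1)$-truncated if and only if for every object $Z$ the induced map of spaces
$$\Delta_* \colon \mathrm{Map}_\E(Z, X) \longrightarrow \mathrm{Map}_\E(Z, X \times X)$$
is $(n-1)$-truncated. Since $\mathrm{Map}_\E(Z, X \times X) \simeq \mathrm{Map}_\E(Z, X) \times \mathrm{Map}_\E(Z, X)$ and, under this equivalence, $\Delta_*$ is carried to the diagonal of the space $W := \mathrm{Map}_\E(Z, X)$ (because $p_i \circ \Delta = \mathrm{id}$), the condition becomes: for every $Z$, the diagonal $W \to W \times W$ is an $(n-1)$-truncated map of spaces. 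On the other hand, by the definition of $n$-truncated object, $X$ is $n$-truncated precisely when $W = \mathrm{Map}_\E(Z, X)$ is an $n$-truncated space for every $Z$. So the proposition follows once we know the purely space-level statement that $W$ is $n$-truncated if and only if its diagonal $W \to W \times W$ is $(n-1)$-truncated.

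For that last step I would argue as follows: a map of spaces is $(n-1)$-truncated exactly when each of its fibers is, and the fiber of $\Delta_W$ over a point $(a,b)$ is the path space $\{a\} \times_W \{b\}$, which is empty when $a,b$ lie in different components and is a loop space of $W$ otherwise. Hence $\Delta_W$ is $(n-1)$-truncated iff every path space of $W$ is $(n-1)$-truncated, which is the standard recursive description of $W$ being $n$-truncated; alternatively one can derive this by applying Lemma \ref{Lemma Ntrun with Sn condition} (or its corollary) inside $\s$ together with $S^{n+1} \simeq \Sigma S^{n}$. The only place requiring care is the base level $n = -1$, where "$(-2)$-truncated" means "equivalence": there the claim reduces to the observation already recorded in the example above, namely that $W \to W \times W$ being an equivalence forces $W$ to be empty or contractible, and conversely. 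I expect the main (and essentially only) obstacle to be this low-level bookkeeping at $n=-1$; for $n \geq 0$ the argument is routine.
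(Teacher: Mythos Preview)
Your proof is correct, but it takes a genuinely different route from the paper's. You reduce everything to spaces via Lemma~\ref{Lemma Yoneda Truncation Cond}, identifying $\Delta_*$ with the diagonal of the mapping space $W=\mathrm{Map}_\E(Z,X)$, and then invoke the standard fact in $\s$ that a space is $n$-truncated iff its diagonal is $(n-1)$-truncated. The paper instead works internally with the sphere characterization (Lemma~\ref{Lemma Ntrun with Sn condition Relative}): it unwinds $\Delta$ being $(n-1)$-truncated as a pullback condition on mapping spaces, adjoins and rearranges until the relevant colimit is $S^n_\E \coprod_{S^0_\E \times S^n_\E} S^0_\E \simeq S^{n+1}_\E$, and reads off $n$-truncatedness of $X$ directly.

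Your approach is shorter and more transparent. The paper's approach, however, buys something the external argument cannot: it carries over verbatim to the internal truncation levels of Subsection~\ref{Subsec:NNO and Truncations}, where $n$ may be a non-standard natural number and there is no external sphere $S^n$ in $\s$ to reduce to (cf.\ the remark following Proposition~\ref{Prop Diag Trunc NNO}). Since one of the paper's themes is precisely that elementary, internal arguments survive in settings where space-based ones do not, this is a deliberate trade-off.
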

 
 \begin{proof}
  Let $Z$ be an arbitrary object. $\Delta: X \to X \times X$ is $(n-1)$-truncated if and only if the map 
  $$Map_{X \times X}(Z, X) \to Map_{X \times X}(Z \times S^n_\E,X)$$
  is an equivalence. 
  This is equivalent to 
  $$Map(Z, X) \times_{Map(Z,X \times X)} * \to Map(Z \times S^n_\E,X) \times_{Map(Z \times S^n_\E,X \times X)} *$$
  which is equivalent to the diagram 
  \begin{center}
   \pbsq{Map(Z,X)}{Map(Z,X \times X)}{Map(Z \times S^n_\E, X)}{Map(Z \times S^n_\E, X \times X)}{}{}{}{}
  \end{center}
  being a pullback square.
  By adjoining the right hand column this is equivalent to 
  \begin{center}
   \pbsq{Map(Z,X)}{Map(Z \coprod Z,X )}{Map(Z \times S^n_\E, X)}{Map(Z \times S^n_\E \coprod Z \times S^n_\E, X)}{}{}{}{}
  \end{center}
  being a pullback square.
  Using the fact that $Map(-,X)$ takes colimits to limits, this is equivalent to 
  $$Map(Z,X) \to Map(Z \times S^n_\E \coprod_{Z \times S^n_\E \coprod Z \times S^n_\E} (Z \coprod Z), X)$$
  being an equivalence. 
  Using the fact that colimits are universal, this is equivalent to 
  $$Map(Z,X) \to Map(Z \times (S^n_\E \coprod_{S^0_\E \times S^n_\E} S^0_\E), X)$$
  being an equivalence. However, $S^n_\E \coprod_{S^0_\E \times S^n_\E} S^0_\E \simeq S^{n+1}_\E$ and so this is equivalent to 
  $$Map(Z,X) \to Map(Z \times S^{n+1}_\E, X)$$
  being an equivalence, which is the definition of $X$ being $n$-truncated.
 \end{proof}

 These results have relative versions which can be proven analogously.
 
 \begin{lemone} \label{Lemma Ntrun with Sn condition Relative}
  A map $Y \to X$ is $n$-truncated if and only if for every map $ p:Z \to X$ the induced map 
  $$ Map_{/X}( Z \times S_\E^{n+1}, Y) \to Map_{/X}(Z, Y)$$
  is an equivalence of spaces. Here we take the map  $p \pi_1 : Z \times S^{n+1}_\E \to X$. 
 \end{lemone}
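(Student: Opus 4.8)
The plan is to reduce the statement to the absolute case by passing to the slice $(\infty,1)$-category $\E_{/X}$ and invoking Lemma \ref{Lemma Ntrun with Sn condition} there. First I would check that $\E_{/X}$ satisfies the standing hypotheses of Remark \ref{Rem:Trunc and Conn Obj Cat Conditions}: it has finite limits (computed as in $\E$, with terminal object $\mathrm{id}_X$) and finite colimits (computed in $\E$ through the forgetful functor $\E_{/X}\to\E$), and it is locally Cartesian closed because every slice of $\E_{/X}$ is again a slice of $\E$, hence Cartesian closed; therefore colimits in $\E_{/X}$ are universal (Lemma \ref{Lemma:LCCC Colimits Universal}). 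Since $Y\to X$ is, by definition, $n$-truncated exactly when the object $(Y\to X)$ of $\E_{/X}$ is $n$-truncated, Lemma \ref{Lemma Ntrun with Sn condition} applied to $\E_{/X}$ says that this holds if and only if, for every object $(Z\to X)$ of $\E_{/X}$, the basepoint-induced map
\[
 Map_{\E_{/X}}\!\big((Z\to X)\times S^{n+1}_{\E_{/X}},\, Y\big)\longrightarrow Map_{\E_{/X}}\!\big((Z\to X),\, Y\big)
\]
is an equivalence of spaces.

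It then remains to identify these mapping spaces with the ones in the statement, and the key point is that $S^{k}_{\E_{/X}}\simeq (X\times S^{k}_\E\xrightarrow{\pi_1} X)$ for all $k\geq -1$. I would prove this by induction: the base $S^{-1}_{\E_{/X}}=(\emptyset\to X)$ is immediate since $X\times\emptyset\simeq\emptyset$ by universality of colimits, and for the inductive step the pushout defining $S^{k+1}_{\E_{/X}}$ is computed in $\E$ as the pushout of $X\xleftarrow{\pi_1} X\times S^{k}_\E\xrightarrow{\pi_1} X$, which—again by universality of colimits—is $X\times S^{k+1}_\E$ with structure map $\pi_1$. Consequently, for $(Z\xrightarrow{p} X)$ the product in $\E_{/X}$ (a pullback over $X$) satisfies
\[
 (Z\xrightarrow{p} X)\times S^{n+1}_{\E_{/X}}\;\simeq\; Z\times_X(X\times S^{n+1}_\E)\;\simeq\;\big(Z\times S^{n+1}_\E\xrightarrow{\,p\pi_1\,} X\big),
\]
and the basepoint-induced map becomes precomposition with $\langle\mathrm{id},\mathrm{pt}\rangle: Z\to Z\times S^{n+1}_\E$ over $X$. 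Plugging this back into the previous paragraph identifies the condition of Lemma \ref{Lemma Ntrun with Sn condition} for $\E_{/X}$ with the asserted condition, which finishes the proof.

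Alternatively—and this is presumably what ``proven analogously'' refers to—one can argue directly, copying the proofs of Lemmas \ref{Lemma:Ext Sphere vs Int Spheres} and \ref{Lemma Ntrun with Sn condition} with $Map_{/X}(-,Y)$ in place of $Map_\E(-,X)$: as colimits in $\E_{/X}$ are computed in $\E$ and are universal, $\big(Z\times S^{n+1}_\E\xrightarrow{p\pi_1} X\big)$ is the pushout in $\E_{/X}$ of $Z\xleftarrow{\pi_1} Z\times S^{n}_\E\xrightarrow{\pi_1} Z$, so applying $Map_{/X}(-,Y)$ (which sends colimits to limits) and inducting on $n$ gives an equivalence $Map_{/X}(Z\times S^{n+1}_\E,Y)\simeq Map_{\s}(S^{n+1},Map_{/X}(Z,Y))$ lying over $Map_{/X}(Z,Y)$; the map $Map_{/X}(Z\times S^{n+1}_\E,Y)\to Map_{/X}(Z,Y)$ is then an equivalence for all $(Z\xrightarrow{p} X)$ precisely when $Map_{/X}(Z,Y)$ is $n$-truncated for every such $Z$, which is the definition of $Y\to X$ being $n$-truncated. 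Either route is almost entirely formal; I expect the only real obstacle to be the careful bookkeeping of the structure maps to $X$ when transporting the sphere and product constructions from $\E$ to $\E_{/X}$.
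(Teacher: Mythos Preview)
Your proposal is correct and matches the paper's approach: the paper does not give a separate proof but simply states that ``these results have relative versions which can be proven analogously,'' and both of your routes---passing to the slice $\E_{/X}$ and invoking Lemma \ref{Lemma Ntrun with Sn condition}, or rerunning the argument of Lemmas \ref{Lemma:Ext Sphere vs Int Spheres} and \ref{Lemma Ntrun with Sn condition} with $Map_{/X}(-,Y)$---are precisely the intended elaborations of that remark. Your careful identification of $S^{k}_{\E_{/X}}\simeq (X\times S^{k}_\E\to X)$ is the one nontrivial bookkeeping step, and you handle it correctly.
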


 \begin{corone}
  A map $Y \to X$ is $n$-truncated if and only if the map  
   $$[Y \to X]^{[X \times S^{n+1}_\E \to X]} \to [Y \to X]$$
   is an equivalence in $\E_{/X}$. Here the left hand side is the 
   internal mapping object in $\E_{/X}$.
 \end{corone}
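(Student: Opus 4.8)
The plan is to run, inside the slice $\E_{/X}$, exactly the Yoneda‑and‑adjunction argument that proves the non‑relative Corollary ``$X$ is $n$‑truncated iff $X^{S^{n+1}_\E}\to X$ is an equivalence'', using Lemma \ref{Lemma Ntrun with Sn condition Relative} in place of Lemma \ref{Lemma Ntrun with Sn condition}. Since $\E$ is locally Cartesian closed, so is each slice $\E_{/X}$; in particular $\E_{/X}$ is Cartesian closed, so the internal mapping object $[Y\to X]^{[X\times S^{n+1}_\E\to X]}$ exists and is characterised by a natural equivalence $Map_{/X}\bigl(W,[Y\to X]^{[X\times S^{n+1}_\E\to X]}\bigr)\simeq Map_{/X}\bigl(W\times_X(X\times S^{n+1}_\E),Y\bigr)$ for $W\to X$ in $\E_{/X}$.

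First I would note that for any $p\colon Z\to X$ the pullback of the projection $X\times S^{n+1}_\E\to X$ along $p$ is $Z\times S^{n+1}_\E$ with structure map $p\pi_1$, i.e. $Z\times_X(X\times S^{n+1}_\E)\simeq Z\times S^{n+1}_\E$ in $\E_{/X}$ (this is just the definition of the product and requires nothing beyond finite limits). Substituting into the exponential adjunction gives, naturally in $Z\to X$,
$$Map_{/X}\bigl(Z,[Y\to X]^{[X\times S^{n+1}_\E\to X]}\bigr)\;\simeq\;Map_{/X}\bigl(Z\times S^{n+1}_\E,Y\bigr),\qquad Map_{/X}\bigl(Z,[Y\to X]\bigr)=Map_{/X}(Z,Y).$$
Next I would check that, under these two identifications, the map on mapping spaces induced by the restriction map $[Y\to X]^{[X\times S^{n+1}_\E\to X]}\to[Y\to X]$ corresponds precisely to restriction along the basepoint section $Z\simeq Z\times 1\hookrightarrow Z\times S^{n+1}_\E$, that is, to the map $Map_{/X}(Z\times S^{n+1}_\E,Y)\to Map_{/X}(Z,Y)$ of Lemma \ref{Lemma Ntrun with Sn condition Relative}. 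Concretely, the restriction map is $[Y\to X]^{(-)}$ applied to the section $[X\to X]\to[X\times S^{n+1}_\E\to X]$ of the terminal object of $\E_{/X}$ determined by the basepoint $1\to S^{n+1}_\E$, and one must verify that its exponential transpose is exactly this basepoint restriction. This bookkeeping — tracking the basepoint $1\to S^{n+1}_\E$ through the adjunction — is the only step requiring genuine care; everything else is formal.

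Granting that identification, the conclusion is immediate. By Lemma \ref{Lemma Ntrun with Sn condition Relative}, $Y\to X$ is $n$‑truncated iff $Map_{/X}(Z\times S^{n+1}_\E,Y)\to Map_{/X}(Z,Y)$ is an equivalence for every $Z\to X$; by the previous step this holds iff $Map_{/X}(Z,-)$ carries $[Y\to X]^{[X\times S^{n+1}_\E\to X]}\to[Y\to X]$ to an equivalence for every $Z\to X$; and by the Yoneda lemma in $\E_{/X}$ this holds iff that map is itself an equivalence in $\E_{/X}$. I expect the main obstacle to be precisely the compatibility check in the third paragraph (matching the transpose of the restriction map with the basepoint restriction, and being careful that the structure map to $X$ is $p\pi_1$ as in Lemma \ref{Lemma Ntrun with Sn condition Relative}); the rest is the same Yoneda argument already used for the absolute statement.
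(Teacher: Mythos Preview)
Your proposal is correct and follows exactly the approach the paper intends: the paper does not write out a proof for this corollary but simply remarks that ``these results have relative versions which can be proven analogously,'' meaning one runs the Yoneda-plus-exponential-adjunction argument of the absolute corollary inside $\E_{/X}$ using Lemma~\ref{Lemma Ntrun with Sn condition Relative} in place of Lemma~\ref{Lemma Ntrun with Sn condition}. Your identification $Z\times_X(X\times S^{n+1}_\E)\simeq Z\times S^{n+1}_\E$ with structure map $p\pi_1$ and the subsequent Yoneda step are precisely this analogy made explicit.
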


 \begin{lemone}
  A map $A \to B$ is $n$-truncated if and only if the map $\Delta: A \to A \times_B A$ is $(n-1)$-truncated.
  In particular, a map $A \to B$ is $(-1)$-truncated if and only if $A \to A \times_B A$ is an equivalence.
 \end{lemone}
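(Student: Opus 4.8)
The plan is to deduce this from Proposition \ref{Prop Diag Trunc} by passing to the slice $(\infty,1)$-category $\E_{/B}$. First I would verify that $\E_{/B}$ again satisfies the standing hypotheses of this section: it has finite limits (computed from those of $\E$), it has finite colimits (these are computed as in $\E$, since a cocone in $\E_{/B}$ is the same as a cocone in $\E$ together with a compatible map to $B$), and it is locally Cartesian closed, since every slice of $\E_{/B}$ is of the form $(\E_{/B})_{/(C \to B)} \simeq \E_{/C}$, which is Cartesian closed because $\E$ is locally Cartesian closed. So Proposition \ref{Prop Diag Trunc} may be applied inside $\E_{/B}$.

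Next I would unwind the definitions. By definition the map $f: A \to B$ is $n$-truncated exactly when $A$, regarded as an object of $\E_{/B}$, is $n$-truncated; the product of $(A \to B)$ with itself inside $\E_{/B}$ is the pullback $A \times_B A$ together with its canonical map to $B$; and the diagonal of $\E_{/B}$ at this object is precisely the relative diagonal $\Delta: A \to A \times_B A$. Applying Proposition \ref{Prop Diag Trunc} inside $\E_{/B}$ then gives that $f$ is $n$-truncated if and only if $\Delta: A \to A \times_B A$ is $(n-1)$-truncated as an object of $(\E_{/B})_{/(A \times_B A)}$. Under the canonical equivalence $(\E_{/B})_{/(A \times_B A)} \simeq \E_{/(A \times_B A)}$, which matches up $(n-1)$-truncated objects on the two sides, this says exactly that $\Delta$ is an $(n-1)$-truncated map, which is the assertion. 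Alternatively, one could copy the proof of Proposition \ref{Prop Diag Trunc} verbatim, using Lemma \ref{Lemma Ntrun with Sn condition Relative} and its corollary in place of their absolute counterparts.

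For the last sentence I would specialize to $n = -1$: then $f$ is $(-1)$-truncated if and only if $\Delta: A \to A \times_B A$ is $(-2)$-truncated, and a map is $(-2)$-truncated precisely when it is an equivalence in $\E$, as recorded in the example following Lemma \ref{Lemma Yoneda Truncation Cond}. The one place that needs genuine care is the bookkeeping identifying ``truncated object of the iterated slice $(\E_{/B})_{/(A \times_B A)}$'' with ``truncated map into $A \times_B A$''; once the equivalence $(\E_{/B})_{/(C \to B)} \simeq \E_{/C}$ and its compatibility with truncation levels is in hand, the rest is a direct invocation of Proposition \ref{Prop Diag Trunc}.
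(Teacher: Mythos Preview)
Your proposal is correct and matches the paper's intent: the paper does not give an explicit proof here but simply remarks that ``these results have relative versions which can be proven analogously,'' and your argument by passing to the slice $\E_{/B}$ (or, as you note, by rerunning the proof of Proposition~\ref{Prop Diag Trunc} with Lemma~\ref{Lemma Ntrun with Sn condition Relative}) is exactly the standard way to make that precise. The bookkeeping you flag about $(\E_{/B})_{/(A\times_B A)}\simeq \E_{/(A\times_B A)}$ is routine and your treatment of it is adequate.
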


 One implication of these results is the comparison between $(-1)$-truncated maps and monos.
 
 \begin{propone} \label{Prop Conn Eq Mono}
  A map $f: A \to B$ is mono if and only if it is $(-1)$-truncated.
 \end{propone}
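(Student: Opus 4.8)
The plan is to reduce the statement to the preceding lemma, since a monomorphism in an $(\infty,1)$-category is by definition a map $f: A \to B$ whose diagonal $\Delta_f: A \to A\times_B A$ is an equivalence. First I would recall this definition explicitly, so that the equivalence we must establish becomes: $f$ is $(-1)$-truncated if and only if $\Delta_f$ is an equivalence. The right-hand condition is already identified in the lemma immediately above: a map is $(-1)$-truncated precisely when $\Delta_f: A \to A\times_B A$ is an equivalence. Hence there is essentially nothing left to prove — the two conditions are literally the same — and the corollary is a matter of matching up terminology.

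If one prefers to avoid invoking the diagonal characterization of monomorphisms directly, an alternative route goes through the Yoneda-style criterion of Lemma \ref{Lemma Yoneda Truncation Cond}. There, $f$ is $n$-truncated iff for every object $Z$ the map of spaces $f_*: Map(Z,A) \to Map(Z,B)$ is $n$-truncated. Specializing to $n=-1$, a $(-1)$-truncated map of spaces is exactly a map that is injective on path components with contractible fibers over its image — that is, a monomorphism of spaces, equivalently an inclusion of a union of connected components. So $f$ is $(-1)$-truncated iff $f_*: Map(Z,A)\to Map(Z,B)$ is a monomorphism of spaces for every $Z$, and since limits and mapping spaces detect equivalences of diagonals levelwise, this is equivalent to $\Delta_f: A \to A\times_B A$ becoming an equivalence after applying $Map(Z,-)$ for all $Z$, i.e. to $\Delta_f$ being an equivalence in $\E$. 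Either way one lands on the definition of $f$ being mono.

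The only point requiring a word of care — and the closest thing to an ``obstacle'' — is fixing conventions: one must make sure the ambient notion of monomorphism in $\E$ is the $(\infty,1)$-categorical one (diagonal an equivalence, equivalently $f$ is $(-1)$-truncated as an object of $\E_{/B}$), rather than a strict $1$-categorical cancellation property, since the latter is not the correct notion in the homotopical setting. Once this is pinned down, the proof is a one-line appeal to the preceding lemma together with the example identifying $(-2)$-truncated maps with equivalences in $\E$.
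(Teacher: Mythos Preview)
Your proposal is correct. Both of your routes work, and in fact your second route (via Lemma~\ref{Lemma Yoneda Truncation Cond}) is essentially the paper's approach: the paper shows directly, for each object $C$, that the map $Map(C,A)\to Map(C,A)\times_{Map(C,B)}Map(C,A)$ is an equivalence if and only if each fiber $Fib_g$ of $f_*$ satisfies $Fib_g\simeq Fib_g\times Fib_g$, i.e.\ is empty or contractible. So the paper unpacks the mapping-space condition fiberwise rather than invoking the diagonal characterization of monomorphisms as a black box.

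Your first route is slightly more efficient: once you adopt the $(\infty,1)$-categorical definition of monomorphism as ``$\Delta_f$ is an equivalence,'' the preceding lemma (a map is $(-1)$-truncated iff its diagonal is $(-2)$-truncated, i.e.\ an equivalence) makes the proposition a tautology. The paper's version has the minor advantage of being self-contained about what ``mono'' means in terms of mapping spaces, whereas your version presupposes the reader already knows that the diagonal definition is the correct homotopical one --- a point you rightly flag as the only thing requiring care.
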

 
 \begin{proof}
  Assume $f$ is $(-1)$-truncated and fix an object $C$. Then we have following pullback diagram 
  \begin{center}
   \pbsq{Map(C,A)}{Map(C,A)}{Map(C,A)}{Map(C,B)}{}{}{}{}
  \end{center}
  Let us now fix a map $f: C \to B$ and the denote the fiber over $g$ by $Fib_g$. Then the pullback gives us an equivalence 
  $$Fib_g \xrightarrow{ \ \ \simeq \ \ } Fib_g \times Fib_g$$
  This implies that $Fib_g$ is $(-1)$-truncated which exactly implies that $f$ is mono.
  \par 
  On the other hand, if $f$ is mono, then for any map $g:C \to B$ the space $Fib_g$ is either empty or contractible 
  which implies that that the square above is a pullback square.
 \end{proof}

 \subsection{Connected Objects} \label{Subsec:Connected Objects}
 In this subsection we define and study connected objects in $\E$ (Remark \ref{Rem:Trunc and Conn Obj Cat Conditions}).
 The goal is to show how they complement truncated objects.
 First, we again review the concept of connected spaces.
 
 \begin{defone}
  A space $X$ is $n$-{\it connected} if its $n$-truncation is contractible.
  This means that $\pi_k(X)$ is trivial for $k \leq n$.
 \end{defone}

 This definition also has a relative version
 
 \begin{defone}
  A map of spaces $Y \to X$ is $n$-{\it connected} if the homotopy fiber over each point $x \in X$ is 
  $n$-connected.
 \end{defone}
 
 We cannot generalize this definition to an arbitrary locally Cartesian category as there is no way to deduce 
 the existence of a truncation functor. 
 However, in Section \ref{Sec Truncation Functors} we will focus elementary $(\infty,1)$-toposes, 
 where we can construct truncation functors and in particular prove the analogue to the definition above  
 (Proposition \ref{Prop Conn iff Tau Final Object}).
 Thus, we will now give another formulation of connected maps of spaces.
 
 \begin{lemone}
  A map of spaces $Y \to X$ is $n$-connected if and only if for every $n$-truncated map $Z \to X$
  the induced map 
  $$Map_{/X}(X,Z) \to Map_{/X}(Y,Z)$$
  is an equivalence of spaces.
 \end{lemone}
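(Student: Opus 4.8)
The plan is to pass to the fibers of $f$ and reduce both implications to the universal property of the truncation functor $\tau_n \colon \s \to \s_{\leq n}$, using that a space $A$ is $n$-connected exactly when $\tau_n A \simeq *$. Set up notation by straightening: $\s_{/X} \simeq \mathrm{Fun}(X,\s)$, under which $f\colon Y \to X$ corresponds to the functor $F\colon X \to \s$ with $F(x) = \mathrm{fib}_x(f)$, and an $n$-truncated map $Z \to X$ corresponds to a functor $G\colon X \to \s_{\leq n}$. With these identifications $Map_{/X}(X,Z) \simeq \lim_{x\in X} G(x)$, while $Map_{/X}(Y,Z) \simeq Map_{\mathrm{Fun}(X,\s)}(F,G) \simeq \lim_{x\in X} Map_\s(F(x),G(x))$, and the map in the statement is the limit over $x \in X$ of the maps $G(x) = Map_\s(*,G(x)) \to Map_\s(F(x),G(x))$ given by restriction along $F(x)\to *$.

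For the forward direction, assume $f$ is $n$-connected, so each $F(x)$ is $n$-connected and hence $\tau_n F(x) \simeq *$. Fix an $n$-truncated $Z \to X$, i.e.\ a functor $G$ as above. For each $x$ the terminal map $F(x)\to *$ agrees with the composite $F(x)\to \tau_n F(x) \xrightarrow{\ \simeq\ } *$ of the unit with an equivalence, and since $G(x)$ is $n$-truncated, restriction along the unit $F(x)\to \tau_n F(x)$ is an equivalence $Map_\s(\tau_n F(x),G(x)) \xrightarrow{\ \simeq\ } Map_\s(F(x),G(x))$ by the universal property of $\tau_n$. Hence $G(x)\to Map_\s(F(x),G(x))$ is an equivalence for every $x$, and taking limits over $x\in X$ shows the map of the statement is an equivalence.

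For the converse, assume the map is an equivalence for every $n$-truncated $Z \to X$, and fix $x_0 \in X$ with inclusion $j\colon * \to X$. For an $n$-truncated space $W$ let $Z_W \to X$ be the map classified by the right Kan extension $j_* W \colon X \to \s$; concretely $(j_*W)(x)\simeq W^{Map_X(x_0,x)}$, which is $n$-truncated since $n$-truncated spaces are closed under powers, so $Z_W \to X$ is $n$-truncated and the hypothesis applies to it. Using $j^* \dashv j_*$ together with $j^*F = F(x_0)$, one computes $Map_{/X}(X,Z_W) \simeq \lim_X j_*W \simeq W$ and $Map_{/X}(Y,Z_W) \simeq Map_{\mathrm{Fun}(X,\s)}(F,j_*W) \simeq Map_\s(F(x_0),W)$, and under these identifications the map of the statement becomes restriction along $F(x_0)\to *$. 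Thus $W \to Map_\s(F(x_0),W)$ is an equivalence for every $n$-truncated $W$. Taking $W = \tau_n F(x_0) =: T$, this map factors as $T = Map_\s(*,T) \to Map_\s(T,T) \to Map_\s(F(x_0),T)$, where the first arrow is restriction along $T\to *$ and the second, restriction along the unit $F(x_0)\to T$, is an equivalence by the universal property of $\tau_n$; hence $T = Map_\s(*,T) \to Map_\s(T,T)$ is an equivalence, which forces $\mathrm{id}_T$ to be homotopic to a constant map and therefore $T \simeq *$. Since $x_0$ was arbitrary, every fiber of $f$ is $n$-connected, i.e.\ $f$ is $n$-connected.

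The only genuine obstacle is bookkeeping: threading the straightening equivalence and the adjunction $j^* \dashv j_*$ so as to see that the comparison map is, fiberwise, precisely restriction along $F(x)\to *$. Once that is in place, the forward direction is immediate from the universal property of $\tau_n$, and the converse works because $j_*$ produces, for each point $x_0$, an $n$-truncated test object over $X$ that isolates the single fiber $F(x_0)$. The edge case $n=-2$ needs no extra care: every map is $(-2)$-connected and every $(-2)$-truncated $Z \to X$ is an equivalence (so both sides hold trivially), and the argument above specializes to it.
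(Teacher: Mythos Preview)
The paper does not actually supply a proof of this lemma: it is stated without argument as a known fact about spaces, serving only to motivate Definition~\ref{Def NConn Map} for general $\E$. Your proof is correct and self-contained. The straightening identification $\s_{/X}\simeq\mathrm{Fun}(X,\s)$ reduces the forward direction cleanly to the pointwise universal property of $\tau_n$, and in the converse the right Kan extension $j_*W$ is exactly the right test object to isolate a single fiber. One cosmetic point: the right Kan extension formula should read $(j_*W)(x)\simeq W^{Map_X(x,x_0)}$, but since $X$ is an $\infty$-groupoid this agrees with what you wrote and nothing in the argument is affected.
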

 
 This equivalent definition can be easily generalized to a definition for connected maps 
 in $\E$.
 
 \begin{defone} \label{Def NConn Map}
  A map $Y \to X$ in  $\E$ is $n$-{\it connected} if for every $n$-truncated map $ Z \to X$
  the induced map 
    $$Map_{/X}(X,Z) \to Map_{/X}(Y,Z)$$
  is an equivalence of spaces.
 \end{defone}
 
 The definition has following special case.
 
 \begin{defone}
  An object $Y$ is $n$-connected if for every $n$-truncated object $Z$ the 
  induced map 
   $$Map(1 ,Z) \to Map(Y,Z)$$
  is an equivalence of spaces.
 \end{defone}
 
 \begin{exone}
  Lemma \ref{Lemma Ntrun with Sn condition} tells us that the object $S^{n+1}_\E$ is $n$-connected.
 \end{exone}
 
 In particular we can generalize Lemma \ref{Lemma Ntrun with Sn condition Relative} using $n$-connected maps.
 
 \begin{lemone}
  A map $Y \to X$ is $n$-truncated if for every map $Z \to X$ and $n$-connected object $W$ the induced map
  $$Map_{/X}(Z \times W, Y) \to Map_{/X}(Z,Y)$$
  is an equivalence.
 \end{lemone}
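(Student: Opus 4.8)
The plan is to show that this is an immediate specialization of Lemma~\ref{Lemma Ntrun with Sn condition Relative}, obtained by choosing $W$ to be a sphere. Recall that the example following Definition~\ref{Def NConn Map} — which is a direct consequence of Lemma~\ref{Lemma Ntrun with Sn condition} — tells us that $S^{n+1}_\E$ is an $n$-connected object. So the hypothesis of the lemma applies in particular to $W = S^{n+1}_\E$.

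Carrying this out: fix an arbitrary map $p \colon Z \to X$. Applying the hypothesis with this $Z$ and with $W = S^{n+1}_\E$ yields that the induced map
$$Map_{/X}(Z \times S^{n+1}_\E, Y) \to Map_{/X}(Z, Y)$$
is an equivalence of spaces, where $Z \times S^{n+1}_\E$ carries the structure map $p\pi_1 \colon Z \times S^{n+1}_\E \to X$ (this is the same structure map used in Lemma~\ref{Lemma Ntrun with Sn condition Relative}, so the two statements line up verbatim). Since $Z \to X$ was arbitrary, this is exactly the condition that characterizes $n$-truncated maps in Lemma~\ref{Lemma Ntrun with Sn condition Relative}. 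Hence $Y \to X$ is $n$-truncated.

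There is essentially no obstacle here: the real content has already been distilled into the two inputs we invoke, namely that the $(n{+}1)$-sphere in $\E$ is $n$-connected and that mapping out of products with $S^{n+1}_\E$ detects $n$-truncatedness. The only point requiring a moment's care is to confirm that the structure map on $Z \times S^{n+1}_\E$ used in the hypothesis of the present lemma agrees with the one in Lemma~\ref{Lemma Ntrun with Sn condition Relative}, which it does. (The converse implication also holds and can be proven along the lines of Lemma~\ref{Lemma Ntrun with Sn condition Relative} together with the defining property of $n$-connected objects, but it is not needed for the stated ``if''.)
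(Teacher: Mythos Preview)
Your proposal is correct and matches the paper's implicit argument: the paper states this lemma without proof, immediately after observing that $S^{n+1}_\E$ is $n$-connected and explicitly framing the lemma as a generalization of Lemma~\ref{Lemma Ntrun with Sn condition Relative}, so the intended proof is exactly the specialization $W = S^{n+1}_\E$ that you carry out.
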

 
 The condition of being truncated gets progressively more relaxed, in other words being $(-2)$-truncated is 
 the strictest possible truncation conditions. On the other being connected get progressively more restrictive. 
 Thus, we can ask ourselves what happens in the infinite case.
 
 \begin{defone} \label{Def:Infinite Connected Map}
  We say a map $f$ is $\infty$-{\it connected} if it is $n$-connected for all $n$. 
 \end{defone}

 Notice every equivalence is trivially $n$-connected, but the opposite does not hold. 
 For an example for $\infty$-connected maps that are not equivalences see \cite[11.3]{Re05}. 
 
 We showed that $n$-truncated maps are preserved by base change. Similarly we can show that $n$-connected maps 
 are preserved by cobase change.

 \begin{propone} \label{Prop n Conn coBase Change}
  Let $X \to Y$ be $n$-connected. Then for any map $X \to A$ the induced map 
  $A \to Y \coprod_X A$ is also $n$-connected.
 \end{propone}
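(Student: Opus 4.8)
Write $B = Y\coprod_X A$, let $g\colon X\to Y$ denote the given map, and let $i\colon Y\to B$ and $j\colon A\to B$ be the two legs of the pushout square. Unwinding Definition~\ref{Def NConn Map}, we must show that for every $n$-truncated map $p\colon Z\to B$ the restriction map $j^*\colon Map_{/B}(B,Z)\to Map_{/B}(A,Z)$ is an equivalence of spaces. The plan is to feed the pushout square defining $B$ into the limit-preserving functor $Map_{/B}(-,Z)$. For that I first record that the terminal object $B$ of $\E_{/B}$ is the pushout of $Y\leftarrow X\to A$ computed \emph{in} $\E_{/B}$: the forgetful functor $\E_{/B}\to\E$ admits a right adjoint (product with $B$), hence preserves the pushout, and it is conservative, so the canonical map from the slice-pushout to $B$ is an equivalence. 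Since $Map_{/B}(-,Z)$ sends colimits to limits, applying it to the pushout square produces a pullback square
\begin{center}
 \pbsq{Map_{/B}(B,Z)}{Map_{/B}(A,Z)}{Map_{/B}(Y,Z)}{Map_{/B}(X,Z)}{j^*}{}{}{g^*}
\end{center}
whose horizontal maps are restriction along $j$ and along $g$ respectively.

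The next step is to identify the bottom row with mapping spaces over $Y$. Pulling $p$ back along $i$ yields a map $p_Y\colon Z_Y\to Y$ which is $n$-truncated by Lemma~\ref{Lemma Trunc Base Change}. The universal property of this pullback gives natural equivalences $Map_{/B}(Y,Z)\simeq Map_{/Y}(Y,Z_Y)$ and $Map_{/B}(X,Z)\simeq Map_{/Y}(X,Z_Y)$ --- a map $Y\to Z$ over $B$ is the same as a section of $p_Y$, and a map $X\to Z$ over $B$ along $ig$ is the same as a lift of $g$ against $p_Y$ --- under which the bottom map of the square becomes $g^*\colon Map_{/Y}(Y,Z_Y)\to Map_{/Y}(X,Z_Y)$. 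Since $g$ is $n$-connected and $p_Y$ is $n$-truncated, Definition~\ref{Def NConn Map} tells us this map is an equivalence. A pullback square with an equivalence along its bottom edge has an equivalence along its top edge, so $j^*$ is an equivalence; as $p$ was an arbitrary $n$-truncated map into $B$, this is exactly the statement that $A\to Y\coprod_X A$ is $n$-connected.

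I expect the only subtle point to be the first one, verifying that the pushout square remains a pushout in the slice $\E_{/B}$ so that $Map_{/B}(-,Z)$ may be applied to it; the rest is formal bookkeeping with slice categories, and, somewhat pleasantly, the argument uses nothing beyond the stability of $n$-truncated maps under base change --- in particular not universality of colimits.
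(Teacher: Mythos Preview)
Your proof is correct and follows essentially the same route as the paper's: both arguments observe that the pushout square remains a pushout in $\E_{/B}$, convert it via $Map_{/B}(-,Z)$ into a pullback square of mapping spaces, and then use the pullback-stability of $n$-truncated maps together with the $n$-connectedness of $X\to Y$ to show one leg (hence the other) is an equivalence. Your write-up is a bit more explicit about why the pushout survives into the slice (via the forgetful functor being a conservative left adjoint), but the logical skeleton is identical.
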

 
 \begin{proof}
  Let $Z \to A \coprod_X Y$ be an $n$-truncated map. A pushout diagram in $\E$ is still a pushout diagram in the over category. 
  This gives us following equivalence 
  $$Map_{/A \coprod_X Y}(A \coprod_X Y, Z) \simeq Map_{/A \coprod_X Y}(A,Z) \underset{Map_{/A \coprod_X Y}(X,Z)}{\times} Map_{/A \coprod_X Y}(Y,Z)$$
  Now we have following commutative square.
  \begin{center}
   \begin{tikzcd}[row sep=0.5in, column sep=0.5in]
    Map_{/A \coprod_X Y}(Y,Z) \arrow[r, "\simeq"] \arrow[d] & Map_Y(Y, Z \underset{A \coprod_X Y}{\times} Y) \arrow[d, "\simeq"] \\  
    Map_{/A \coprod_X Y}(X,Z) \arrow[r, "\simeq"] & Map_Y(X,Z \underset{A \coprod_X Y}{\times} Y )
   \end{tikzcd}
  \end{center}
  The horizontal maps are equivalences because of the adjunction. The right vertical map is an equivalence because $X \to Y $ is $n$-connected and 
  $Z \times_{A \coprod_X Y} Y$ is $n$-truncated, by Lemma \ref{Lemma Trunc Base Change}. This implies that the left hand map is also an equivalence.
  Finally, we have following pullback square.
  \begin{center}
   \pbsq{Map_{/A \coprod_X Y}(A,Z) \underset{Map_{/A \coprod_X Y}(X,Z)}{\times} Map_{/A \coprod_X Y}(Y,Z)}{Map_{/A \coprod_X Y}(Y,Z)}{
   Map_{/A \coprod_X Y}(A,Z)}{Map_{/A \coprod_X Y}(X,Z)}{}{}{\simeq}{}
  \end{center}
  The fact that the right hand vertical map is an equivalence implies that the left hand vertical map is also an equivalence, which is exactly 
  the result we wanted.
 \end{proof}

 \begin{remone}
  In fact $n$-connected maps are also preserved by base change. We will prove it in Corollary \ref{Cor N Conn Base Change},
  but before that we need to develop more theory and define truncation functors.
 \end{remone}
 
 Connected maps also behave well with respect to composition.
 
 \begin{propone} \label{Prop Conn fg}
  Let $f: Y \to X$ and $g: Z \to Y$.
  \begin{enumerate}
   \item If $f$ and $g$ are $n$-connected then $fg$ is $n$-connected.
   \item If $g$ and $fg$ are $n$-connected then $f$ is $n$-connected.
  \end{enumerate}
 \end{propone}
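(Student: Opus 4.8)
The plan is to verify both statements directly from Definition \ref{Def NConn Map}, reducing everything to the mapping-space level in overcategories. For part (1), let $h \colon W \to X$ be any $n$-truncated map; I need to show that the restriction $\map_{/X}(X,W) \to \map_{/X}(Z,W)$ is an equivalence. The idea is to factor this restriction through the intermediate object $Y$, i.e.\ to write it as the composite
\begin{center}
\begin{tikzcd}[row sep=0.3in, column sep=0.4in]
\map_{/X}(X,W) \arrow[r] & \map_{/X}(Y,W) \arrow[r] & \map_{/X}(Z,W)
\end{tikzcd}
\end{center}
where the maps are induced by $f\colon Y\to X$ and $g\colon Z\to Y$. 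Since $f$ is $n$-connected and $h$ is $n$-truncated, the first map is an equivalence straight from the definition. For the second map I would pull the whole picture back along $f$: the key observation is that $\map_{/X}(Z,W) \simeq \map_{/Y}(Z, W\times_X Y)$ (and similarly $\map_{/X}(Y,W)\simeq \map_{/Y}(Y, W\times_X Y)$) via the base-change adjunction along $f$, where I use that $g$ is a morphism over $Y$. By Lemma \ref{Lemma Trunc Base Change} the map $W\times_X Y \to Y$ is still $n$-truncated, so since $g$ is $n$-connected the induced map $\map_{/Y}(Y, W\times_X Y)\to \map_{/Y}(Z, W\times_X Y)$ is an equivalence. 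Hence both factors are equivalences and so is the composite, proving $fg$ is $n$-connected.

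For part (2), suppose $g$ and $fg$ are $n$-connected; I want $f$ to be $n$-connected. Fix an $n$-truncated map $h\colon W\to X$. In the same factorization as above, the composite $\map_{/X}(X,W)\to\map_{/X}(Z,W)$ is an equivalence because $fg$ is $n$-connected, and the second map $\map_{/X}(Y,W)\to\map_{/X}(Z,W)$ is an equivalence by the base-change argument of part (1) using that $g$ is $n$-connected (and $W\times_X Y\to Y$ is $n$-truncated). Therefore by two-out-of-three the first map $\map_{/X}(X,W)\to\map_{/X}(Y,W)$ is an equivalence, which is exactly the statement that $f$ is $n$-connected.

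The one point that needs care — and what I'd regard as the main obstacle — is the base-change identification $\map_{/X}(Z,W)\simeq\map_{/Y}(Z,W\times_X Y)$ together with its naturality in the $Z$-variable: I must check that, under this equivalence, the map induced by $g\colon Z\to Y$ on the left corresponds to the map induced by $g$ on the right, so that the factorization is genuinely compatible. This is the adjunction $f_! \dashv f^*$ between $\E_{/Y}$ and $\E_{/X}$ applied to objects equipped with maps to $W$, and it is exactly the kind of bookkeeping that appeared in the proof of Proposition \ref{Prop n Conn coBase Change}; once it is set up cleanly, both parts follow formally by two-out-of-three. An alternative, essentially equivalent route is to argue fiberwise in $\s$ using Lemma \ref{Lemma Yoneda Truncation Cond} and the classical two-out-of-three for $n$-connected maps of spaces, but the overcategory argument keeps everything internal to $\E$ and parallels the rest of the section.
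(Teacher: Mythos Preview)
Your proposal is correct and follows essentially the same route as the paper: factor the restriction map through $Y$, use the base-change adjunction $f_!\dashv f^*$ together with Lemma \ref{Lemma Trunc Base Change} to identify the second factor with the map $g^*$ in $\E_{/Y}$, and then conclude by two-out-of-three. The paper packages this into a single commutative diagram rather than writing out the factorization in prose, but the content is identical.
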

 
 \begin{proof}
  Let $W$ be a $n$-truncated object over $X$. We have following diagram
  \begin{center}
   \begin{tikzcd}[row sep=0.3in, column sep=0.1in]
    & & Map_{/X}(X,W) \arrow[dl, "f^*"] \arrow[dr, "(fg)^*"] & & \\
    & Map_{/X}(Y,W) \arrow[dl, "\simeq"] & & Map_{/X}(Z,W) \arrow[dr, "\simeq"] & \\
    Map_{/Y}(Y, W \underset{X}{\times} Y) \arrow[rrrr, "g^*", "\simeq"']  & & & & Map_{/Y}(Z, W \underset{X}{\times} Y) 
   \end{tikzcd}
  \end{center}
   If we assume that $f$ and $g$ are $n$-connected then $f^*$ and $g^*$ are equivalences which implies that
   $(fg)^*$ is an equivalence which means that $fg$ is $n$-connected.
   On the other hand, if we assume that $g$ and $fg$ are $n$-connected then $g^*$ and $(fg)^*$ are equivalences 
   which implies that $f^*$ is an equivalence which means that $f$ is $n$-connected.
 \end{proof}

 Truncated and connected objects do not intersect in a non-trivial way.
 
 \begin{lemone} \label{Lemma:N Trunc Conn Contr}
  Let $Y$ be an object that is $n$-truncated and $n$-connected. Then $Y$ is the final object.
 \end{lemone}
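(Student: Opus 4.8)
The plan is to prove that an object $Y$ which is both $n$-truncated and $n$-connected must be the final object $1$, using only the definitions of truncated and connected objects together with the characterization of truncated maps via spheres developed in Subsection \ref{Subsec:Truncated Objects}. The key observation is that being $n$-connected as an object means precisely that for every $n$-truncated object $Z$ the map $Map(1,Z) \to Map(Y,Z)$ is an equivalence; applying this to $Z = Y$ itself, since $Y$ is $n$-truncated by hypothesis, will give us that $Y$ ``looks contractible'' from the point of view of maps out of it.

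**The main argument.**
First I would apply the definition of $n$-connected object with the test object $Z = Y$, which is legitimate because $Y$ is $n$-truncated. This yields that the map
$$Map(1,Y) \to Map(Y,Y)$$
is an equivalence of spaces. In particular, the identity map $\mathrm{id}_Y \in Map(Y,Y)$ is in the image, so there is a map $u : 1 \to Y$ such that the composite $Y \to 1 \xrightarrow{u} Y$ is homotopic to $\mathrm{id}_Y$. This already shows $Y$ is a retract of $1$: we have $Y \to 1 \to Y$ equal to the identity. Next I would show the other composite $1 \xrightarrow{u} Y \to 1$ is also the identity, which is automatic since $1$ is the final object and there is a unique map $1 \to 1$. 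So $u : 1 \to Y$ exhibits $1$ as a retract of $Y$ and vice versa — but more is needed, since a retract relation alone does not force an equivalence without idempotent-completeness-style arguments. Instead, I would upgrade this: the equivalence $Map(1,Y) \xrightarrow{\simeq} Map(Y,Y)$ tells us $Map(Y,Y)$ is equivalent to $Map(1,Y)$, and I want to conclude $Map(Z,Y) \to \ast$ for all $Z$, i.e. $Y \simeq 1$.

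**Finishing via truncated-map characterization.**
The cleanest route is to show $Y \to 1$ is $(-2)$-truncated, which by the Example following Lemma \ref{Lemma Ntrun with Sn condition} means exactly that $Y \to 1$, hence $Y$, is equivalent to $1$. To that end I would argue inductively on the truncation level, or more directly: since $Y$ is $n$-connected, for \emph{every} $n$-truncated object $Z$ the map $Map(1,Z) \to Map(Y,Z)$ is an equivalence. Apply this not just to $Z = Y$ but to $Z = Y \times Y$, $Z = Y^{S^k_\E}$ and all finite limits built from $Y$ — all of these are $n$-truncated because $n$-truncated objects are closed under limits (limits of $n$-truncated spaces are $n$-truncated, applied levelwise to mapping spaces). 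Then the section $u : 1 \to Y$ becomes a two-sided inverse: the composite $Y \xrightarrow{\simeq \text{?}} 1 \xrightarrow{u} Y$ is $\mathrm{id}_Y$ as shown, and to see $u$ composed the other way (after the unique $Y \to 1$) equals the identity on $Y$ we compare in $Map(Y,Y)$, where the equivalence $Map(1,Y)\simeq Map(Y,Y)$ forces $Map(Y,Y)$ to be the \emph{contractible} space $Map(1,Y)$ — wait, $Map(1,Y)$ need not be contractible a priori.

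Let me correct the endgame: the right statement is that $Map(1,Y)\xrightarrow{\simeq}Map(Y,Y)$ and, dually, I should test connectedness against $Z=1$ (which is $n$-truncated for all $n$), giving $Map(1,1)\xrightarrow{\simeq}Map(Y,1)$ — but both sides are already contractible, so that is vacuous. The genuine input must be the interplay of \emph{both} hypotheses. Here is the fix: since $Y$ is $n$-truncated, $Y\to 1$ is an $n$-truncated map, so by Lemma \ref{Lemma Trunc Base Change} every base change is $n$-truncated; in particular the projection $Y\times Y \to Y$ is $n$-truncated, so $Y\times Y$ is $n$-truncated, and inductively every finite power $Y^{\times k}$ and every $Y^{S^k_\E}$ is $n$-truncated. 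Now use $n$-connectedness of $Y$ against $Z = Y$: we get a point $u\colon 1\to Y$ with $u\circ !_Y \simeq \mathrm{id}_Y$ where $!_Y\colon Y\to 1$. I would then show $u$ is an equivalence directly: its ``inverse'' is $!_Y$, and $!_Y\circ u = \mathrm{id}_1$ trivially; the composite $u\circ !_Y\simeq \mathrm{id}_Y$ is exactly what we extracted. Hence $!_Y\colon Y\to 1$ is an equivalence, i.e. $Y$ is the final object.

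**Main obstacle.**
The subtle point — and the step I expect to require the most care — is extracting an \emph{actual} retraction $u\circ !_Y \simeq \mathrm{id}_Y$ rather than merely a weak retract, and knowing that this together with $!_Y\circ u=\mathrm{id}_1$ genuinely forces $!_Y$ to be an equivalence. This is fine because a map with a two-sided inverse (up to homotopy) is an equivalence, and $!_Y$ is left inverse to $u$ (trivially, into $1$) and right inverse to $u$ (by the connectedness argument applied to $Z=Y$). So the heart of the proof is the single application of $n$-connectedness with test object $Y$ itself, legitimized by the $n$-truncatedness hypothesis; everything else is formal. I would also remark that this immediately recovers, for spaces, the familiar fact that an $n$-connected $n$-truncated space is contractible, and for maps the relative version follows by passing to slices $\E_{/X}$.
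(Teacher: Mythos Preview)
Your proposal is correct and, despite the detours in the middle paragraphs, lands on essentially the same argument as the paper: apply $n$-connectedness of $Y$ with test object $Z=Y$ (legitimate since $Y$ is $n$-truncated) to get $Map(1,Y)\simeq Map(Y,Y)$, lift $\mathrm{id}_Y$ to a point $u\colon 1\to Y$ with $u\circ !_Y\simeq \mathrm{id}_Y$, and observe that together with the trivial $!_Y\circ u\simeq \mathrm{id}_1$ this makes $!_Y$ an equivalence. The paper states this in two lines; your excursions into $Y\times Y$ and $Y^{S^k_\E}$ are unnecessary and could be cut.
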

 
 \begin{proof}
  As $Y$ is $n$-connected we know that we have an equivalence 
  $$Map(1,Z) \simeq Map(Y,Z)$$
  for every $n$-truncated object $Z$. But $Y$ itself is $n$-truncated, which means we have an equivalence.
  $$Map(1,Y) \simeq Map(Y,Y)$$
  This means that every map $Y \to Y$ will factor through the final map $Y \to 1$. If we apply this to the identity map we get
  a retract 
  $$Y \to 1 \to Y$$
  which proves that $Y$ is equivalent to the final object.
 \end{proof}

 This result also has an interesting relative version.
 
 \begin{corone} \label{Cor:N Trunc Conn Equiv}
  Let $f: Y \to X$ be a map that is $n$-truncated and $n$-connected. Then $f$ is an equivalence.
 \end{corone}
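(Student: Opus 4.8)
The plan is to transport Lemma \ref{Lemma:N Trunc Conn Contr} to the slice category $\E_{/X}$. First I would check that $\E_{/X}$ itself satisfies the standing hypotheses of Remark \ref{Rem:Trunc and Conn Obj Cat Conditions}: it has finite limits, it has finite colimits (these are created by the forgetful functor $\E_{/X} \to \E$), and it is locally Cartesian closed since $(\E_{/X})_{/(Z \to X)} \simeq \E_{/Z}$ and $\E$ is locally Cartesian closed. Hence all the results of this section, in particular Lemma \ref{Lemma:N Trunc Conn Contr}, apply verbatim inside $\E_{/X}$.

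Next I would unwind the two hypotheses on $f \colon Y \to X$ into statements about the object $(f \colon Y \to X)$ of $\E_{/X}$. By definition a map is $n$-truncated exactly when it is $n$-truncated as an object of $\E_{/X}$, so that hypothesis needs no work. For connectedness, recall that the terminal object of $\E_{/X}$ is $\mathrm{id}_X \colon X \to X$, that the $n$-truncated objects of $\E_{/X}$ are precisely the $n$-truncated maps $Z \to X$, and that $Map_{\E_{/X}}(-, Z) = Map_{/X}(-, Z)$. Under these identifications, the condition that $(Y \to X)$ be an $n$-connected object of $\E_{/X}$ — namely that $Map_{\E_{/X}}(1, Z) \to Map_{\E_{/X}}(Y, Z)$ be an equivalence for every $n$-truncated object $Z$ — is literally Definition \ref{Def NConn Map} for $f$. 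So $f$, viewed in $\E_{/X}$, is both $n$-truncated and $n$-connected.

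Finally I would apply Lemma \ref{Lemma:N Trunc Conn Contr} in $\E_{/X}$ to conclude that $(f \colon Y \to X)$ is the final object of $\E_{/X}$, i.e. that $f$ is equivalent to $\mathrm{id}_X$, which is exactly the assertion that $f$ is an equivalence in $\E$.

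I do not expect a genuine obstacle here; the only point requiring care is the bookkeeping that the slice $\E_{/X}$ inherits the hypotheses of Remark \ref{Rem:Trunc and Conn Obj Cat Conditions} (in particular that it has finite colimits, created from $\E$) so that Lemma \ref{Lemma:N Trunc Conn Contr} is legitimately available, together with the routine but necessary translation between ``$n$-connected map into $X$'' and ``$n$-connected object of $\E_{/X}$''. If one prefers to avoid citing the lemma in a slice, the same retract argument as in the proof of Lemma \ref{Lemma:N Trunc Conn Contr} can be reproduced directly: $n$-connectedness of $f$ gives $Map_{/X}(X, Y) \simeq Map_{/X}(Y, Y)$ since $Y \to X$ is itself $n$-truncated, so the identity of $(Y\to X)$ factors through the terminal object $\mathrm{id}_X$, exhibiting $(Y \to X)$ as a retract of $\mathrm{id}_X$ and hence an equivalence.
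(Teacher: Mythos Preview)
Your proposal is correct and is exactly the argument the paper intends: the corollary is stated without proof as the ``relative version'' of Lemma~\ref{Lemma:N Trunc Conn Contr}, i.e.\ one simply applies that lemma inside $\E_{/X}$. Your careful verification that $\E_{/X}$ inherits the hypotheses of Remark~\ref{Rem:Trunc and Conn Obj Cat Conditions} and your translation of the connectedness condition are the right bookkeeping, and the alternative direct retract argument you sketch at the end is essentially a transcription of the proof of Lemma~\ref{Lemma:N Trunc Conn Contr} into the slice.
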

 
 \begin{remone}
  For the case $n=-1$ this generalizes the classical result for elementary $1$-toposes, where every map that is 
  epi and mono is an isomorphism. That is known as being a {\it balanced category}.
 \end{remone}

 Before we finish this section, we state an alternative characterization of $n$-connected maps. 
 The original proof is stated with presentable categories, however, it it is not used in the proof 
 and so we will refrain from the repeating the proof and refer the reader to the right source.
 
 \begin{propone} \label{Prop Trunc Conn Lift}
  \cite[Proposition 8.10]{Re05}
  Let $f: X \to Y$ be a map. The following are equivalent.
  \begin{enumerate}
   \item For all $n$-truncated maps $g: W \to Z$ the diagram 
   \begin{center}
    \pbsq{Map(Y,W)}{Map(Y,Z)}{Map(X,W)}{Map(X,Z)}{g_*}{f^*}{f^*}{g_*}
   \end{center}
   is a homotopy pullback diagram of spaces. 
   \item $f$ is $n$-connected.
  \end{enumerate}
  \end{propone}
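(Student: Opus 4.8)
The plan is to prove the two implications separately, in both cases reducing the homotopy pullback/connectedness condition to a fiberwise statement and invoking the standard adjunction $Map_{/Z}(A,W)\simeq Map_{/Y}(A,W\times_Z Y)$, valid for any $A\to Y$ and any $\psi:Y\to Z$ in a category with finite limits.

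\emph{From $(1)$ to $(2)$.} This is the easy direction. Let $g:W\to Y$ be an arbitrary $n$-truncated map over $Y$; I would apply condition $(1)$ with $Z:=Y$ and this same $g$, obtaining the homotopy pullback square with top map $g_*:Map(Y,W)\to Map(Y,Y)$, bottom map $g_*:Map(X,W)\to Map(X,Y)$, and vertical maps $f^*$. Taking horizontal homotopy fibers over $\mathrm{id}_Y\in Map(Y,Y)$ and over its image $f\in Map(X,Y)$, the fiber of the top map over $\mathrm{id}_Y$ is by definition $Map_{/Y}(Y,W)$, and the fiber of the bottom map over $f$ is $Map_{/Y}(X,W)$ (where $X$ carries the structure map $f$). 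Since the square is a homotopy pullback, the induced comparison of these fibers — which is precisely $f^*:Map_{/Y}(Y,W)\to Map_{/Y}(X,W)$ — is an equivalence. As $g$ ranges over all $n$-truncated maps over $Y$, this is exactly the content of Definition \ref{Def NConn Map}, so $f$ is $n$-connected.

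\emph{From $(2)$ to $(1)$.} This is the substantive direction. Let $g:W\to Z$ be $n$-truncated. A commutative square of spaces is a homotopy pullback iff for every point $\psi$ of the top-right corner the induced map of horizontal homotopy fibers over $\psi$ and over its image is an equivalence; so I would fix $\psi\in Map(Y,Z)$ and compare $\mathrm{fib}_\psi\bigl(g_*:Map(Y,W)\to Map(Y,Z)\bigr)$ with $\mathrm{fib}_{\psi f}\bigl(g_*:Map(X,W)\to Map(X,Z)\bigr)$. The first fiber is $Map_{/Z}(Y,W)$ with $Y\to Z$ via $\psi$, which the pullback–adjunction identifies with $Map_{/Y}(Y,\psi^*W)$ where $\psi^*W:=W\times_Z Y$; the second fiber is $Map_{/Z}(X,W)$ with $X\to Z$ via $\psi f$, which identifies with $Map_{/Y}(X,\psi^*W)$ where $X\to Y$ via $f$. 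Under these identifications the comparison map becomes $f^*:Map_{/Y}(Y,\psi^*W)\to Map_{/Y}(X,\psi^*W)$. Since $\psi^*W\to Y$ is a base change of $g$, it is $n$-truncated by Lemma \ref{Lemma Trunc Base Change}, and hence $f^*$ is an equivalence because $f$ is $n$-connected (Definition \ref{Def NConn Map}). Therefore the square in $(1)$ is a homotopy pullback.

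The hard part will be the bookkeeping in the second implication: one must check that the two identifications of the horizontal fiber — first with the relative mapping space $Map_{/Z}(-,W)$, then with $Map_{/Y}(-,\psi^*W)$ — are natural enough that the resulting comparison map genuinely agrees with $f^*$ for the correct structure maps on $X$ and $Y$. Everything else is formal manipulation of homotopy pullbacks and the base-change stability of $n$-truncated maps. (Since this statement, together with a full proof, already appears as \cite[Proposition 8.10]{Re05} and the presentability hypothesis there is never used, one may alternatively just cite that source.)
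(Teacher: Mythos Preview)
Your proposal is correct and is essentially the standard argument. The paper itself does not supply a proof: it simply cites \cite[Proposition 8.10]{Re05} and remarks that the presentability hypothesis there plays no role, so your write-up in fact provides more detail than the paper does, while following the same route one finds in Rezk's notes (fiberwise reduction plus the adjunction $Map_{/Z}(A,W)\simeq Map_{/Y}(A,\psi^*W)$ and stability of $n$-truncated maps under base change).
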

 
  \begin{remone}
   The proposition is basically saying that if $g$ is $n$-truncated
   then $f$ is $n$-connected if and only if the space of lifts of the square below is contractible.
   \begin{center}
    \liftsq{X}{W}{Y}{Z}{}{f}{g}{}
   \end{center}
  \end{remone}

 We can use this lemma to prove some moderate stability for connectivity that we will use later on 
 (see Proposition \ref{Prop Pb of epimono is epimono}).
 
 \begin{lemone} \label{Lemma Product by A is conn}
  Let $f:X \to Y$ be $n$-connected and $A$ an object. Then the map $A \times X \to A \times Y$ is also $n$-connected.
 \end{lemone}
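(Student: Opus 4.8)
The plan is to deduce this from the lifting-property characterization of $n$-connected maps in Proposition \ref{Prop Trunc Conn Lift}, which phrases $n$-connectedness as an absolute (non-sliced) condition on squares of mapping spaces. By that proposition it suffices to show: for every $n$-truncated map $g : W \to Z$ of $\E$, the square
\begin{center}
 \pbsq{Map(A \times Y, W)}{Map(A \times Y, Z)}{Map(A \times X, W)}{Map(A \times X, Z)}{}{}{}{}
\end{center}
is a homotopy pullback of spaces.

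Since $\E$ is locally Cartesian closed, it is in particular Cartesian closed, so the internal mapping objects $W^A$ and $Z^A$ exist in $\E$ and the adjunction $(-\times A)\dashv(-)^A$ supplies natural equivalences $Map(A \times X, W) \simeq Map(X, W^A)$, and likewise for the other three corners. Under these equivalences the square above is identified with the square
\begin{center}
 \pbsq{Map(Y, W^A)}{Map(Y, Z^A)}{Map(X, W^A)}{Map(X, Z^A)}{}{}{}{}
\end{center}
associated to the map $f : X \to Y$ and the map $g^A : W^A \to Z^A$; here one uses that the adjunction isomorphism sends postcomposition with $g$ to postcomposition with $g^A$ and sends precomposition with $A \times f$ to precomposition with $f$.

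It remains to check that $g^A : W^A \to Z^A$ is $n$-truncated. By Lemma \ref{Lemma Yoneda Truncation Cond} it is enough to verify that for every object $C$ the map of spaces $Map(C, W^A) \to Map(C, Z^A)$ is $n$-truncated; but via the adjunction this is the map $Map(C \times A, W) \to Map(C \times A, Z)$, which is $n$-truncated because $g$ is, again by Lemma \ref{Lemma Yoneda Truncation Cond}. Thus $g^A$ is $n$-truncated, and since $f$ is $n$-connected, Proposition \ref{Prop Trunc Conn Lift} applied to $f$ shows the second square is a homotopy pullback, hence so is the first. Since $g$ was an arbitrary $n$-truncated map, Proposition \ref{Prop Trunc Conn Lift} (in the other direction) yields that $A \times X \to A \times Y$ is $n$-connected. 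The only real care needed is bookkeeping: confirming that the adjunction equivalences are natural enough to identify the two squares together with all their structure maps (not merely the corner spaces), and that the Cartesian closed structure genuinely produces $W^A, Z^A$ as objects of $\E$; I do not expect any obstacle beyond this.
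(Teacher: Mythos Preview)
Your proof is correct and follows essentially the same route as the paper: reduce via Proposition~\ref{Prop Trunc Conn Lift} to a pullback square, adjoin to replace $g$ by $g^A$, and then verify that $g^A$ is $n$-truncated. The only difference is in that last verification: you invoke Lemma~\ref{Lemma Yoneda Truncation Cond} (the Yoneda-style criterion), whereas the paper uses Lemma~\ref{Lemma Ntrun with Sn condition Relative} (the internal-sphere criterion) and checks $map_{Z^A}(B \times S^{n+1}_\E, W^A) \to map_{Z^A}(B, W^A)$ is an equivalence after adjoining; your route is arguably a bit cleaner since it avoids the sphere machinery, but both are one-line applications of an existing lemma and the overall argument is the same.
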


 \begin{proof}
  Let $g: W \to Z$ be $n$-truncated. We need to prove that the square 
  \begin{center}
    \pbsq{Map(A \times Y,W)}{Map(A \times Y,Z)}{Map(A \times X,W)}{Map(A \times X,Z)}{g_*}{f^*}{f^*}{g_*}
   \end{center}
   is a pullback square, which by adjunction is equivalent to 
   \begin{center}
    \pbsq{Map(Y,W^A)}{Map(Y,Z^A)}{Map(X,W^A)}{Map(X,Z^A)}{(g^A)_*}{f^*}{f^*}{(g^A)_*}
   \end{center}
   for which it suffices to prove that $g^A: W^A \to Z^A$ is $n$-truncated.
   \par 
   By Lemma \ref{Lemma Ntrun with Sn condition Relative} it suffices to prove that for any map $B \to Z^A$, the map 
   $$(g^A)_*: map_{Z^A}(B \times S^{n+1}_\E,W^A) \to map_{Z^A}(B,W^A)$$
   is an equivalence.
   Using adjunction this is equivalent to 
   $$g_*: map_Z(B \times A \times S^{n+1}_\E, W) \to map_Z(B \times A, W)$$
   being an equivalence, which immediately follows from the fact that $g$ is $n$-truncated.
 \end{proof}

 In order to further study truncated and connected maps we need to be able to truncate objects internally.
 In a presentable $(\infty,1)$-category we can construct truncated objects via the existence of infinite 
 colimits and the small object argument.
 However, we are not in the presentable setting and thus cannot use such techniques. 
 \par 
 Fortunately, there is a way to get around that. For that we need to make a technical digression and study the 
 join construction.

  \section{The Join Construction} \label{Sec:The Join Construction}
  In order to further our study of truncations, we need to be able to construct truncated objects via a 
  truncation functor. Usually truncations are defined via the small object argument, but we cannot use such arguments 
  because the categories in question are not presentable.
  \par 
  For that reason, we have to always give explicit constructions of the trunction functors. 
  In this section we will take a first important step by giving an explicit construction of the $(-1)$-truncation functor. 
  \par 
  There is a concrete way to construct $(-1)$-truncations in a Grothendieck $(\infty,1)$-topos which 
  is motivated by the $\check{C}$ech nerve.
  For a given object $X$ we form the simplicial object 
  \begin{center}
   \simpset{X}{X \times X}{X \times X \times X}{\pi_1}{\pi_2}{}{}
  \end{center}
  and take the colimit of the simplicial diagram \cite[Proposition 7.8]{Re05} \cite[Proposition 6.2.3.4]{Lu09}.
  However, this is an infinite colimit which does not exist in our setting.
  \par 
  On the other hand, we can prove that we have natural number objects in our setting (see Appendix \ref{Sec:Appendix NNO} for more details).
  Combining natural number objects with the existence of universes, we can define and compute internal sequential colimits \cite[Subsection 4.4]{Ra18c}. 
  The goal is thus to find a way to replace the simplicial diagram 
  by a sequential diagram. This has been done successfully by Egbert Rijke in the context of homotopy type theory where he replaces 
  a simplicial diagram of products with a sequential diagram of joins \cite{Ri17}.
  \par 
  Thus, our goal is to adapt his argument to our setting of $(\infty,1)$-categories. For that reason, we first have to study the join. 
  This will be done in Subsection \ref{Subsec:Def of Join}. We will then review sequential diagrams and internal sequential colimits in 
  an elementary $(\infty,1)$-topos, which we do in Subsection \ref{Subsec:Sequential Colimits via NNO}. 
  Finally, we use a sequential colimit of joins to construct the $(-1)$-truncation, 
  which is the goal of Subsection \ref{Subsec:Join and Trunc}.
  
  \begin{remone}
   In this section $\E$ is an $(\infty,1)$-category that satisfies following conditions:
   \begin{enumerate}
    \item It has finite limits and colimits.
    \item It is locally Cartesian closed (Definition \ref{Def:LCCC}).
    \item It has a subobject classifier (Definition \ref{Def:SOC}).
    \item It has sufficient universes $\U$ (Definition \ref{Def:Sufficient Universes})
    that are closed under finite limits and colimits (Definition \ref{Def:Closed Universes}).
   \end{enumerate}
    those conditions also imply
   \begin{enumerate}
    \item[(5)] Colimits are universal (Definition \ref{Def:Colimits Universal}, Lemma \ref{Lemma:LCCC Colimits Universal})
    \item[(6)] It has a natural number object (Theorem \ref{The:EHT has NNO}).
   \end{enumerate}
  \end{remone}
 
  \begin{remone}
   The material in this section is independent from the previous section, and we will only make use of the definition of a 
   $(-1)$-truncated object, namely an object $X$ such that the diagonal 
   $$\Delta_X: X \to X \times X$$
   is an equivalence.
  \end{remone}

  \subsection{The Definition of Join} \label{Subsec:Def of Join}
   In this subsection we define the join of two objects and give some basic properties.
   
   \begin{defone} \label{Def Join}
     Let $A$ and $B$ be two objects in $\E$. We define the {\it join} $A \ast B$ in $\E$ by the following diagram.
   \begin{center}
    \begin{tikzcd}[row sep=0.5in, column sep=0.5in]
     A \times B \arrow[d] \arrow[r] & B \arrow[d] \\ 
     A \arrow[r] &  \ds A \ast B \arrow[ul, phantom, "\ulcorner", very near start]
    \end{tikzcd}
   \end{center}
   \end{defone}
 
  \begin{notone}
   Notice this definition directly generalizes to  $\E_{/X}$ where we can define the join 
   of two maps $f \ast g$ for two maps $f: A \to X$ and $g: B \to X$.
  \end{notone}

  We now want to establish some basic results about the join construction that will be necessary later on.
  
  \begin{theone}
   Let $p: Y \to X$ be an object in $\E_{/X}$ and assume we have two maps $f: A \to X$ and $g: B \to X$. Then we have an equivalence
    $$p^*f \ast p^*g \simeq p^*(f \ast g) .$$
  \end{theone}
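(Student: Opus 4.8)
The plan is to exploit the fact that $p^* \colon \E_{/X} \to \E_{/Y}$ preserves finite colimits, being a left adjoint when colimits are universal. Recall that $p^*$ has a right adjoint $p_*$ (this is exactly local Cartesian closedness, Definition \ref{Def:LCCC}), so $p^*$ preserves all colimits, and in particular the pushout that defines the join. Concretely, the join $f \ast g$ in $\E_{/X}$ is by Definition \ref{Def Join} the pushout of the span $f \times_X g \leftarrow (A \times_X B) \to g$ (products and the corner object taken in $\E_{/X}$), where the two legs are the projections $A \times_X B \to A$ and $A \times_X B \to B$ viewed over $X$.

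First I would observe that $p^*$ commutes with the formation of the relevant pullbacks: for $f \colon A \to X$ and $g \colon B \to X$ we have $p^*(A \times_X B) \simeq p^*A \times_Y p^*B$, since $p^*$ is a right adjoint on the level of the underlying categories (pullback along $p$) and hence preserves the finite limit $A \times_X B$; equivalently this is the standard pasting-of-pullbacks argument. Under this identification the projection $A \times_X B \to A$ over $X$ is sent to the projection $p^*A \times_Y p^*B \to p^*A$ over $Y$, and similarly for the other leg. So $p^*$ carries the defining span of $f \ast g$ to the defining span of $p^*f \ast p^*g$.

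Next, since colimits in $\E$ are universal (condition (3) of Remark \ref{Rem:Trunc and Conn Obj Cat Conditions}, via Lemma \ref{Lemma:LCCC Colimits Universal}), the pushout square defining $f \ast g$ in $\E_{/X}$ remains a pushout after applying $p^*$; equivalently, base change along $p$ preserves pushouts. Applying $p^*$ to the pushout square
\begin{center}
 \begin{tikzcd}[row sep=0.4in, column sep=0.4in]
  A \times_X B \arrow[d] \arrow[r] & B \arrow[d] \\
  A \arrow[r] & A \ast_X B \arrow[ul, phantom, "\ulcorner", very near start]
 \end{tikzcd}
\end{center}
therefore yields a pushout square in $\E_{/Y}$ with corners $p^*A \times_Y p^*B$, $p^*B$, $p^*A$, and $p^*(f \ast g)$, and the three outer terms together with the maps among them are exactly the defining data of the join $p^*f \ast p^*g$. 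By uniqueness of pushouts we obtain the desired equivalence $p^*f \ast p^*g \simeq p^*(f \ast g)$, and one checks this is natural/compatible with the structure maps to $Y$.

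The only genuinely delicate point is bookkeeping across categories: one must keep straight whether a given product or pushout is being formed in $\E$, in $\E_{/X}$, or in $\E_{/Y}$, and verify that $p^*$ as a functor between slice categories really does match the "base change along $p$" functor on underlying objects well enough that both the limit-preservation (for the corner $A \times_X B$) and the colimit-preservation (for the pushout) apply simultaneously. Once the identification $p^*(A \times_X B) \simeq p^*A \times_Y p^*B$ over $Y$ is pinned down compatibly with the two projections, the rest is a formal consequence of $p^*$ preserving the relevant (co)limits.
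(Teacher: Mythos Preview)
Your proposal is correct and follows essentially the same approach as the paper: both argue that pulling back along $p$ preserves the defining pushout square of the join (the paper invokes ``colimits are universal,'' you invoke the equivalent fact that $p^*$ is a left adjoint), together with the identification $p^*(A \times_X B) \simeq p^*A \times_Y p^*B$. You are simply more explicit about the limit-preservation step for the corner object, which the paper leaves implicit.
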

  
  \begin{proof}
   If we pullback the diagram above by the map $p: Y \to X$ we get
   \begin{center}
    \begin{tikzcd}[row sep=0.5in, column sep=0.5in]
     p^*A \underset{Y}{\times} p^*B \arrow[d] \arrow[r] & p^*B \arrow[d] \arrow[rdd, bend left = 20, "p^*g"] & \\
     p^*A \arrow[r] \arrow[rrd, bend right = 20, "p^*f"'] &  \ds p^*A \underset{Y}{\ast} p^*B \arrow[dr, dashed, "p^*(f \ast g)"'] \arrow[ul, phantom, "\ulcorner", very near start] & \\
     & & Y
    \end{tikzcd}
   \end{center}
   As colimits are universal, taking pullback preserves pushout diagrams which means that 
   $p^*(f \ast g)$ is the join of $p^*f$ and $p^*g$ giving us the desired result.
  \end{proof}

  \begin{lemone}
   The join of two $(-1)$-truncated objects is $(-1)$-truncated.
  \end{lemone}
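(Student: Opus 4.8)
The plan is to verify that $A \ast B$ is $(-1)$-truncated in the sense recalled above, i.e.\ that the diagonal $\Delta \colon A\ast B \to (A\ast B)\times(A\ast B)$ is an equivalence. Since the first projection $\pi_1$ is a retraction of $\Delta$, it suffices to show that $\pi_1 \colon (A\ast B)\times(A\ast B) \to A\ast B$ is an equivalence; its inverse is then necessarily $\Delta$.

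The main step is the following computation: if $A$ is $(-1)$-truncated, then the projection $(A\ast B)\times A \to A\ast B$ is an equivalence, and in fact it is carried by an equivalence $(A\ast B)\times A \simeq A$ to the canonical map $A \to A\ast B$. To see this, multiply the pushout square defining $A\ast B$ by $A$; as colimits are universal, $(A\ast B)\times A$ is the pushout of the span $A\times A \leftarrow A\times B\times A \to B\times A$. Because $\Delta_A$ is an equivalence, it identifies $A\times A$ with $A$ and $A\times B\times A$ with $A\times B$, and under these identifications the span becomes $A \xleftarrow{\ \pi\ } A\times B \xrightarrow{\ \simeq\ } B\times A$, whose right leg is the symmetry equivalence. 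The pushout of a span one of whose legs is an equivalence is the codomain of the other leg, so $(A\ast B)\times A \simeq A$; the same argument applies to $B$, and applying it twice gives $(A\ast B)\times A\times B \simeq A\times B$ as well.

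Finally, expand $(A\ast B)\times(A\ast B)$ the same way: by universality it is the pushout of $(A\ast B)\times A \leftarrow (A\ast B)\times A\times B \to (A\ast B)\times B$. Substituting the three equivalences just obtained — and checking that they are compatible with the projections onto $A\ast B$ and with the legs of the span — turns this into the pushout of the original defining span $A \leftarrow A\times B \to B$, that is, into $A\ast B$ itself, in such a way that $\pi_1$ becomes the identity of $A\ast B$. Hence $\pi_1$ is an equivalence, which finishes the proof. The only real work here is bookkeeping: one must track carefully which structure maps and projections the successive equivalences intertwine, so that the self-map of $A\ast B$ produced at the end is genuinely the identity and not some other endomorphism; everything else is universality of colimits together with the single hypothesis that $\Delta_A$ and $\Delta_B$ are equivalences.
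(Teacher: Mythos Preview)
Your proof is correct, but takes a substantially different route from the paper's.

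The paper dispatches the lemma in one line: since $A$, $B$, and $A\times B$ are all $(-1)$-truncated, they are subobjects of $1$, and the paper simply invokes the fact that subobjects of $1$ are closed under pushouts. This closure is available in the ambient setting (an $(\infty,1)$-category with subobject classifier and universes closed under finite colimits), but the paper does not spell out why.

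Your argument, by contrast, is a hands-on calculation that uses nothing beyond universality of colimits and the hypothesis that $\Delta_A$ and $\Delta_B$ are equivalences. You decompose $(A\ast B)\times(A\ast B)$ as a pushout, simplify each piece via $(A\ast B)\times A\simeq A$ and $(A\ast B)\times B\simeq B$, and recover the defining pushout of $A\ast B$ in such a way that $\pi_1$ becomes the identity. The bookkeeping you flag is genuinely the delicate part, but it goes through: once $A$ and $B$ are $(-1)$-truncated, the mapping spaces into them are subsingletons, so the legs of the reduced span are forced to be the projections $\pi_1,\pi_2$, and the induced self-map of $A\ast B$ is the one coming from the pushout's own structure maps, hence the identity.

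What each approach buys: the paper's proof is shorter but leans on a structural fact about the topos (closure of $\Sub(1)$ under pushouts) that is not proved there. Your proof is longer but entirely self-contained, working in any locally Cartesian closed $(\infty,1)$-category with finite colimits --- in particular it does not need the subobject classifier or the universe axioms that the paper's section assumes.
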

  
  \begin{proof}
   We know that $A \times B$ is also $(-1)$-truncated,
   and so the result follows from the fact that subobjects of $1$ are closed under pushouts.
  \end{proof}
  
  In fact we have a certain inverse.
  
  \begin{lemone} \label{Lemma Idempotent Join}
   An object $A$ is $(-1)$-truncated if and only if the map $A \to A \ast A$ is an equivalence.
  \end{lemone}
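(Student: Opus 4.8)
The plan is to reduce both implications to short diagram chases in the defining pushout square for $A\ast A$, using only the working description of $(-1)$-truncatedness recalled in the Remark above: $A$ is $(-1)$-truncated exactly when $\Delta_A\colon A\to A\times A$ is an equivalence. Write $\mathrm{inl},\mathrm{inr}\colon A\to A\ast A$ for the two legs of the pushout defining $A\ast A$, so that $\mathrm{inl}\circ\pi_1\simeq\mathrm{inr}\circ\pi_2$ as maps $A\times A\to A\ast A$. First I would observe that precomposing this homotopy with $\Delta_A$ and using $\pi_i\circ\Delta_A=\mathrm{id}_A$ already yields $\mathrm{inl}\simeq\mathrm{inr}$, so that ``the map $A\to A\ast A$'' is unambiguous up to homotopy. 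Note that nothing beyond the existence of finite limits and colimits is needed here; in particular I would not invoke the base change theorem for joins or the previous lemma.

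For the direction ``$(-1)$-truncated $\Rightarrow$ equivalence'': assuming $\Delta_A$ is an equivalence, the identity $\pi_2\circ\Delta_A=\mathrm{id}_A$ gives $\pi_2\simeq\Delta_A^{-1}$, so $\pi_2\colon A\times A\to A$ is an equivalence. In the defining pushout square the map $\mathrm{inl}\colon A\to A\ast A$ is the cobase change of $\pi_2$ along $\pi_1$, and cobase change preserves equivalences, so $\mathrm{inl}$ is an equivalence.

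The converse is the substantive direction. Assuming $\mathrm{inl}$ is an equivalence, I would rewrite $\mathrm{inl}\circ\pi_1\simeq\mathrm{inr}\circ\pi_2$ as $\pi_1\simeq\phi\circ\pi_2$, where $\phi:=\mathrm{inl}^{-1}\circ\mathrm{inr}\colon A\to A$. Precomposing with $\Delta_A$ and again using $\pi_i\circ\Delta_A=\mathrm{id}_A$ gives $\mathrm{id}_A\simeq\phi$, and hence $\pi_1\simeq\pi_2\colon A\times A\to A$. By the universal property of the product a map into $A\times A$ is determined up to homotopy by its two projections, so from $\pi_1\simeq\pi_2$ one obtains $\mathrm{id}_{A\times A}=(\pi_1,\pi_2)\simeq(\pi_2,\pi_2)=\Delta_A\circ\pi_2$. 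Combined with the always-valid $\pi_2\circ\Delta_A=\mathrm{id}_A$, this exhibits $\Delta_A$ and $\pi_2$ as mutually inverse equivalences, so $A$ is $(-1)$-truncated.

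I do not expect a genuine obstacle: both implications are formal consequences of the pushout square together with the observation that the two projections $\pi_1,\pi_2$ of the relevant product share the common section $\Delta_A$. The only thing demanding a bit of care is the homotopy-coherent bookkeeping in the converse — choosing the homotopies $\mathrm{inl}\circ\pi_1\simeq\mathrm{inr}\circ\pi_2$ and $\pi_i\circ\Delta_A\simeq\mathrm{id}_A$ compatibly and comparing maps into $A\times A$ through their projections — but this is precisely what the universal property of products in an $(\infty,1)$-category handles automatically. If one preferred, the converse could instead be run through a descent argument, pulling the defining pushout square back along $\mathrm{inl}$ and identifying the resulting legs with $\pi_1$ and $\pi_2$, but the projection chase above is shorter and uses strictly less.
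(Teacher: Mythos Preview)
Your proof is correct. The forward direction matches the paper's. For the converse, both you and the paper first extract $\pi_1 \simeq \pi_2$ from the assumed equivalence $A \to A\ast A$, but then diverge: the paper passes to mapping spaces, arguing that for every $B$ the space $Map(B,A)$ is either empty or contractible (fixing a point $f$ and using $\pi_1\simeq\pi_2$ to show the identity on $Map(B,A)$ is homotopic to the constant map at $f$). You instead stay internal and use the product universal property to write $\mathrm{id}_{A\times A}=(\pi_1,\pi_2)\simeq(\pi_2,\pi_2)=\Delta_A\circ\pi_2$, exhibiting $\Delta_A$ as an equivalence directly. Your route is shorter, avoids the empty/nonempty case split, and does not require representing $(-1)$-truncatedness externally through mapping spaces; the paper's route has the mild advantage of making the ``empty or contractible'' dichotomy explicit, but yours is the cleaner argument.
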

  
  \begin{proof}
   If $A$ is $(-1)$-truncated then $A \times A \simeq A$ and so the result follows. 
   On the other hand, if $A \ast A \simeq A$ then we have pushout diagram 
   \begin{center}
    \begin{tikzcd}[row sep=0.5in, column sep=0.5in]
     A \times A \arrow[d, "\pi_1"] \arrow[r, "\pi_2"] & A \arrow[d, "id_A"] \\
     A \arrow[r, "id_A"] & A \arrow[ul, phantom, "\ulcorner", very near start]
    \end{tikzcd}
   \end{center}
   First, this implies that $\pi_1 \simeq \pi_2$. Now for any object $B$ we thus have following commutative diagram
   (which is not necessarily a pushout diagram):
   \begin{center}
    \begin{tikzcd}[row sep=0.5in, column sep=0.5in]
      Map(B,A) \times Map(B,A) \arrow[d, "\pi_1"] \arrow[r, "\pi_2"] & Map(B,A) \arrow[d, "id"] \\
      Map(B,A) \arrow[r, "id"] & Map(B,A)
    \end{tikzcd}
   \end{center}
   If $Map(B,A)$ is empty then there is nothing to prove. However, if not then let $f \in Map(B,A)$ be any map.
   The fact that $\pi_1 \simeq \pi_2$ implies that the following is a commutative diagram 
   \begin{center}
    \begin{tikzcd}[row sep=0.5in, column sep=0.5in]
     Map(B,A) \arrow[dr, "id \times f"] \arrow[drr, "\{ f \}", bend left=20] \arrow[ddr, "id"', bend right=20] & &  \\
     &  Map(B,A) \times Map(B,A) \arrow[d, "\pi_1"] \arrow[r, "\pi_2"] & Map(B,A) \arrow[d, "id"] \\
      & Map(B,A) \arrow[r, "id"] & Map(B,A) 
    \end{tikzcd}
   \end{center}
   where $\{ f \}: Map(B,A) \to Map(B,A)$ is the map that takes everything to $f \in Map(B,A)$. The fact that this diagram commutes implies that the 
   identity is homotopic to the constant map which means $Map(B,A)$ is contractible.
  \end{proof}

  Finally, notice that colimits are universal with respect to joins 
  (the same way they are universal with respect to pullbacks).
  
  \begin{lemone} \label{Lemma Join Descent}
   Let $f:B \to C$ be any map. Then $Coeq(f \ast id_A ,id) \simeq Coeq(f,id) \ast A$. 
  \end{lemone}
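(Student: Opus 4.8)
The plan is to deduce the identity from one structural fact: the functor $(-)\ast A\colon\E\to\E$ commutes with the colimit $\mathrm{Coeq}(-,\mathrm{id})$. Granting this, applying it to $f$ and $\mathrm{id}$ gives $\mathrm{Coeq}(f\ast\mathrm{id}_A,\mathrm{id})\simeq\mathrm{Coeq}(f,\mathrm{id})\ast A$ at once.

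For the structural fact I would first unwind the join. By Definition \ref{Def Join}, $X\ast A$ is the pushout of the span $X\leftarrow X\times A\to A$ given by the two projections, so $(-)\ast A$ is the functor obtained by taking the pointwise pushout of the span of endofunctors $\mathrm{id}_{\E}\leftarrow(-)\times A\to\underline{A}$, where $\underline{A}$ is the constant functor at $A$. Hence, for any diagram $D$ admitting a colimit, $(\colim D)\ast A$ is the pushout of $\colim D\leftarrow(\colim D)\times A\to A$, while $\colim\bigl(D(-)\ast A\bigr)$ --- computed by first running the colimit through each of the three legs of the span of pushout squares, since colimits commute with colimits --- is the pushout of $\colim D\leftarrow\colim\bigl(D(-)\times A\bigr)\to\colim\underline{A}$. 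Now $\E$ is locally Cartesian closed (Definition \ref{Def:LCCC}), hence colimits are universal (Lemma \ref{Lemma:LCCC Colimits Universal}), so $(-)\times A$ preserves colimits and $\colim\bigl(D(-)\times A\bigr)\simeq(\colim D)\times A$ over $\colim D$; and the shape indexing $\mathrm{Coeq}(-,\mathrm{id})$ is weakly contractible --- it coequalizes a map with the identity, i.e.\ it is a sequential diagram --- so $\colim\underline{A}\simeq A$ compatibly with the span. Matching the two spans identifies the two pushouts, which is the structural fact; applying it to the sequence $B\xrightarrow{f}B\xrightarrow{f}\cdots$ concludes.

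The point demanding care is that $\mathrm{Coeq}(f,\mathrm{id})$ is not a genuine colimit in $\E$ but the \emph{internal} sequential colimit built from the natural number object and the universe, so the three inputs above --- interchange of colimits, preservation by $(-)\times A$, and the colimit of a constant sequence being the constant object --- must be read as statements about internal sequential colimits. These are exactly the basic stability properties of such colimits recorded in Subsection \ref{Subsec:Sequential Colimits via NNO}, and they all come down to universality of colimits, so once invoked the interchange argument runs verbatim. The only subtlety I would be careful about is carrying out the two identifications at the level of spans (not just underlying objects), so that the comparison genuinely respects the pushout structure.
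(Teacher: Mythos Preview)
Your argument is correct and is essentially the same as the paper's, just spelled out in more detail. The paper's proof is two lines: universality of colimits gives $\mathrm{Coeq}(f\times\mathrm{id}_A,\mathrm{id})\simeq\mathrm{Coeq}(f,\mathrm{id})\times A$, and then one ``takes the pushout'' --- which is exactly your colimits-commute-with-colimits step together with the observation that the constant leg $\mathrm{Coeq}(\mathrm{id}_A,\mathrm{id}_A)\simeq A$.

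One correction to your final paragraph: the caveat is unnecessary. In this lemma $\mathrm{Coeq}(f,\mathrm{id})$ is a genuine finite coequalizer in $\E$, which exists by the standing hypotheses; the internal sequential colimit of Subsection~\ref{Subsec:Sequential Colimits via NNO} is \emph{defined} as such an ordinary coequalizer (applied to the internal coproduct $\sum_{n:\mathbb{N}}A_n$). So there is no additional ``internal'' subtlety to discharge here --- the interchange of colimits and the preservation by $(-)\times A$ are facts about honest finite colimits in $\E$, and your argument already establishes them.
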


  \begin{proof}
   As colimits are universal, we have $Coeq(f \times id_A ,id) \simeq Coeq(f,id) \times A$. 
   The result then follows by taking the pushout.
  \end{proof}

  \subsection{Internal Sequential Colimits via Natural Number Objects} \label{Subsec:Sequential Colimits via NNO}
  Before we proceed to our main goal of constructing $(-1)$-truncations, 
  we need a better understanding of internal sequential colimits. The material here is a review  
  and the reader can find far more details (and proofs) in \cite[Subsection 4.4]{Ra18c}.
  
   \begin{defone} \label{Def Natural Number Object}
   A {\it natural number object} is an object $\mathbb{N}$ in $\E$ along with two maps 
   $$1 \xrightarrow{ \ \ o \ \ } \mathbb{N} \xrightarrow{  \ \ s \ \ } \mathbb{N}$$
   such that $(\mathbb{N} , o , s)$ is initial.
  \end{defone}
  
  For more details see Definition \ref{Def:NNO}. We will now use natural number objects to define 
  sequences, sequential diagrams and sequential colimits.
  
  \begin{defone} \label{Def:Sequence of Objects}
   A {\it sequence of objects} $\{ A_n \}_{n: \mathbb{N}}$ is a map $\{ A_n \}_{n: \mathbb{N}} : \mathbb{N} \to \U$.
  \end{defone}

  It is usually not possible to define such a map directly. Rather the best way is to make use of the initiality condition,
  which we illustrate in the next example.
  
  \begin{exone}
   Let $X$ be an object in $\E$. Using the fact that we have coproducts we get a functor 
   $- \coprod X: \E \to \E$, and as $\U$ is closed under colimits we thus get a map $- \coprod X: \U \to \U$.
   A map $1 \to \U$ corresponds to a choice of object, which can be $X$ itself. 
   As we have chosen a triple $(\U, 1 \to \U, - \coprod X: \U \to \U)$, by the initiality we get a unique map 
   $$\mathbb{N} \to \U$$
   which takes $n$ to the $n$-fold coproduct $\coprod_n X$.
  \end{exone}
  
  Given that we usually build sequences via endomorphism $p: \U \to \U$, we adapt following notation convention.
  
  \begin{notone}
   Let $X: 1 \to \U$ be an object, $p: \U \to \U$, then we depict the resulting sequence $\mathbb{N} \to \U$ as 
   $$X, p(X), p^2(X), ... $$
  \end{notone}

   The next goal is to use natural number objects to construct sequential colimits. For that we need internal coproducts.
  
  \begin{defone} \label{Def:Internal Coproduct}
  Let $A_n$ be a sequence of objects. We define the {\it internal coproduct}, $\ds \sum_{n: \mathbb{N}} A_n$ as the pullback
  \begin{center}
   \pbsq{\ds \sum_{n: \mathbb{N}} A_n }{\U_*}{\mathbb{N}}{\U}{}{p_A}{}{}
  \end{center}
 \end{defone}
 
 \begin{defone}
  A {\it sequential diagram} $\{ f_n: A_n \to A_{n+1} \}_{n: \mathbb{N}}$ is a sequence of objects 
  $\{ A_n \}_{n: \mathbb{N}}: \mathbb{N} \to \U$ as well as a choice of map 
  \begin{center}
   \begin{tikzcd}[row sep=0.5in, column sep=0.5in]
    \ds \sum_{n: \mathbb{N}} A_n \arrow[rr, "\{ f_n \}_{n:\mathbb{N}}"] \arrow[dr] & & \ds \sum_{n: \mathbb{N}} A_{n+1} \arrow[dl] \\ 
     & \mathbb{N} & 
   \end{tikzcd}
  \end{center}
 \end{defone}

 \begin{notone}
  For any $n:\mathbb{N}$ we get a map $f_n: A_n \to A_{n+1}$.
  Thus, we will use following notation for a sequential diagram
  $$A_0 \xrightarrow{ \ \ f_0 \ \ } A_1 \xrightarrow{ \ \ f_1 \ \ } A_2 \xrightarrow{ \ \ f_2 \ \ } \cdots \ .$$
 \end{notone}
 
  \begin{remone}
  It is often quite difficult to directly construct such a sequential diagram. 
  Thus, the goal is to use the universal property of natural numbers again. Let $\U_1$, the morphism classifier of $\U$ 
  (see Theorem \ref{The:CSU Exist} for a detailed description). Then a map $1 \to \U_1$ is uniquely determined by a morphism 
  in $\E$ and a map $\U_1 \to \U_1$ can be determined by a functor on the arrow category $\Arr(\E) \to \Arr(\E)$.
 \end{remone}

 \begin{exone} \label{Ex:Sequential Diagram Coproducts}
  Let $[\emptyset \to X]: 1 \to \U_1$ be the map that chooses the morphism $\emptyset \to X$ and $[- \coprod id_X]: \U_1 \to \U_1$.
  be the map that takes a morphism $f: A \to B$ to $f \coprod id_X: A \coprod X \to B \coprod X$. 
  Then we get a sequential diagram of the form 
  $$X \xrightarrow{ \ \iota_0 \ } X \coprod X \xrightarrow{ \ \iota_0 \ } X \coprod (X \coprod X) ...$$
 \end{exone}
 
 \begin{defone}
  Let $\{ f_n \}_{n: \mathbb{N}}$ be a sequential diagram of the sequence of objects $A$. Then the {\it sequential colimit} of $f$ 
  is the coequalizer 
  \begin{center}
    \begin{tikzcd}[row sep=0.5in, column sep=0.5in]
    \ds \sum_{n: \mathbb{N}} A_n \arrow[r, shift left=0.05in, "f"] 
      \arrow[r, shift right=0.05in, "id_{ \sum_{n:\mathbb{N}} A_n }"'] & 
    \ds \sum_{n: \mathbb{N}} A_n \arrow[r]  & 
    A_{\infty}
   \end{tikzcd}
  \end{center}
 \end{defone}
  
  We will use these techniques in the next subsection to construct a sequential diagram of joins, whose 
  sequential colimit is the $(-1)$-truncation.
  
  \subsection{Join Sequence and Truncations} \label{Subsec:Join and Trunc}
  Having established some basic facts about joins and sequential colimits, we will now use them to construct $(-1)$-truncations.
  
  \begin{defone} \label{Def:Neg One Truncation}
   The $(-1)$-{\it truncation} is an adjunction 
   \begin{center}
    \adjun{\E}{\tau_{-1}\E}{\tau_{-1}}{i}
   \end{center}
   where $\tau_{-1}\E$ is the full subcategory of $(-1)$-truncated objects.
  \end{defone}

  \begin{remone}
   Concretely, the $(-1)$-truncation of an object $X$ is a map $i: X \to Y$
   such that following two conditions hold:
   \begin{enumerate}
    \item $Y$ is $(-1)$-truncated.
    \item For any other $(-1)$-truncated object $Z$ the map $Map(Y,Z) \to Map(X,Z)$ is an equivalence.
   \end{enumerate}
  \end{remone}

  It is valuable to see the relative definition which gives us the {\it image}.
  
   \begin{defone} \label{Def:Image}
    Let $f: Y \to X$ be a map in $\E$. The {\it image} is a diagram of the following form
    \begin{center}
     \begin{tikzcd}[row sep=0.5in, column sep=0.5in]
      Y \arrow[dr, "f"] \arrow[r, "q_f"] & Im(f) \arrow[d, "i_f", hookrightarrow] \arrow[r, dashed] & Z \arrow[dl, "i", hookrightarrow] \\
      & X & 
     \end{tikzcd}
    \end{center}
    where $i_f$ is $(-1)$-truncated and with the following universal property. For any $(-1)$-truncated map $i: Z \to X$, the induced map 
    $$(q_f)^*: Map_{/X}(Im(f), Z) \to Map_{/X}(Y, Z)$$
    is an equivalence.
   \end{defone}
  The existence of $(-1)$-truncations in $\E$ will imply that every map has an image, as it will simply be the $(-1)$-truncation of an 
  object $Y \to X$ in $\E_{/X}$. 
  \par 
  The goal is to show that we can build a sequential diagram of joins such that the 
  sequential colimit is the $(-1)$-truncation. 
  A review of the basics of internal sequential colimits can be found in Subsection \ref{Subsec:Sequential Colimits via NNO}.
  
 We want to construct a sequential diagram using joins, analogous to Example \ref{Ex:Sequential Diagram Coproducts}. 
 
 \begin{remone} \label{Rem Join Truncation Map}
  Let $\U_1$ be the morphism classifier. 
  Then the data of the map $[\emptyset \to A]: 1 \to \U_1$ and $[- \ast id_A]: \U_1 \to \U_1$ 
  gives us the commutative diagram
  \begin{center}
     \begin{tikzcd}[row sep=0.5in, column sep=0.5in]
       \ds\sum_{n:\mathbb{N}} A^{\ast n} \arrow[dr] \arrow[rr, "inl"] & & \ds\sum_{n:\mathbb{N}} A^{\ast n+1} \arrow[dl] \\
       & \mathbb{N} &
     \end{tikzcd}
    \end{center}
 \end{remone}
 
 \begin{notone}
  Following our previous notation convention, we will depict this sequential diagram as 
  $$ A \to A \ast A \to (A \ast A) \ast A \to ... $$
 \end{notone}
  
  The goal is to prove that the sequential colimit of this sequence is the actual $(-1)$-truncation of $A$, which is the content of 
  Theorem \ref{The Neg Trunc from Join}.
  However, before that we first need to prove several lemmas. 
  
  \begin{lemone}
   Let $A$ and $A'$ be two objects and $i: A \to A'$ be a map such that for every $(-1)$-truncated object $B $ the induced map 
   $$ i^*: Map(A',B) \to Map(A,B) $$
   is an equivalence.
   Then the induced map 
   $$\iota_0^* : Map(A \ast A', B) \to Map(A,B)$$
   is an equivalence as well.
  \end{lemone}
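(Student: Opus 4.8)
The plan is to reduce the claim to a purely space-level statement about inhabitedness. Fix a $(-1)$-truncated object $B$. First I would record that for every object $Z$ the space $Map(Z,B)$ is empty or contractible: applying $Map(Z,-)$ to the equivalence $\Delta_B \colon B \to B\times B$ and using that $Map(Z,-)$ preserves products shows that the diagonal of $Map(Z,B)$ is an equivalence, and a space with invertible diagonal is $(-1)$-truncated. In particular all four spaces $Map(A\ast A',B)$, $Map(A,B)$, $Map(A',B)$, $Map(A\times A',B)$ are empty or contractible. For a map between such spaces, being an equivalence is exactly the condition that the source is inhabited if and only if the target is; so it suffices to show that $Map(A\ast A',B)$ is inhabited precisely when $Map(A,B)$ is.

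Second, since the join $A\ast A'$ is a pushout (Definition \ref{Def Join}) and $Map(-,B)$ sends colimits to limits, there is an identification
$$Map(A\ast A',B)\;\simeq\;Map(A,B)\underset{Map(A\times A',B)}{\times}Map(A',B)$$
under which $\iota_0^*$ becomes the projection onto the first factor. As the base $Map(A\times A',B)$ is empty or contractible, this pullback is inhabited exactly when \emph{both} $Map(A,B)$ and $Map(A',B)$ are inhabited: when the base is contractible the required compatibility datum automatically exists, and when the base is empty each of the two legs is already empty. Finally, the hypothesis on $i$ says $i^*\colon Map(A',B)\to Map(A,B)$ is an equivalence, hence --- both spaces being empty or contractible --- $Map(A',B)$ is inhabited iff $Map(A,B)$ is. Composing these equivalences of conditions yields that $Map(A\ast A',B)$ is inhabited iff $Map(A,B)$ is, which is the desired conclusion.

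There is no real obstacle here; the only things to watch are the degenerate cases where some mapping space is empty, and the need to match $\iota_0$ with the correct leg $A\to A\ast A'$ of the defining pushout square (rather than the leg out of $A'$) so that the pullback projection lands in $Map(A,B)$. One could alternatively phrase the argument by observing that $\iota_0^*$ is the base change of the right-hand leg $Map(A',B)\to Map(A\times A',B)$ along $Map(A,B)\to Map(A\times A',B)$ and then checking fibers, but the inhabitedness bookkeeping above seems the most economical.
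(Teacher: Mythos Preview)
Your proposal is correct and follows essentially the same approach as the paper: both reduce the question to an inhabitedness check using that all mapping spaces into a $(-1)$-truncated $B$ are themselves $(-1)$-truncated, and both use the hypothesis on $i$ to transport inhabitedness from $Map(A,B)$ to $Map(A',B)$. The paper phrases the last step concretely (pick $j:A\to B$, lift to $h:A'\to B$, note $j\pi_1\simeq h\pi_2$ since $Map(A\times A',B)$ is a proposition, then build the map out of the pushout), whereas you package the same content via the pullback description of $Map(A\ast A',B)$; your treatment of the empty-base case is a little more explicit, but there is no substantive difference.
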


  \begin{proof}
   As $B$ is $(-1)$-truncated, the space $Map(A,B)$ is either empty or contractible. If it is empty the result is obvious. If it is 
   contractible then we have to prove that $Map(A \ast A', B)$ is non-empty as well. As we know it is $(-1)$-truncated this implies that 
   $Map(A \ast A', B)$ contractible which will give us the desired result.
   \par 
   Let $j: A \to B$ be a map. Then by assumption that $i^*$ is an equivalence we thus have a map $h: A' \to B$ such that $hi \simeq j$. 
   This in particular implies that $j \pi_1 \simeq k \pi_2$ which induces a map 
   $j \ast h: A \ast A' \to B$.
  \end{proof}
  
  \begin{remone}
    The statement of the previous lemma and the proof is an adaptation of \cite[Lemma 3.1]{Ri17}.
  \end{remone}
  
  \begin{lemone}
   Let $(A_n,f_n)$ be a sequential diagram. Moreover, let $i_n: A \to A_n$ be a map between the sequences,
   where we take $A$ to be the constant sequence.
   \begin{center}
    \begin{tikzcd}[row sep=0.5in, column sep=0.5in]
      & & A \arrow[dll, "i_0", bend right=20] \arrow[dl, "i_1", bend right=10] \arrow[d, "i_2"] \arrow[dr, bend left=10] & \\
     A_0 \arrow[r, "f_0"] & A_1 \arrow[r, "f_1"] & A_2 \arrow[r, "f_2"] & ...
    \end{tikzcd}
   \end{center}
  Moreover, for every $(-1)$-truncated object $B$ we have an equivalence 
  $$i_n^*: Map(A_n,B) \to Map(A,B)$$
  Then we have an equivalence $i_\infty^*: Map(A_\infty,B) \to Map(A,B)$.
  \end{lemone}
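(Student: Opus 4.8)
The plan is to exploit the $(-1)$-truncatedness of $B$ to discard all homotopical data and reduce the claim to a statement about the natural number object that can be checked by induction. First I would record that, because $B$ is $(-1)$-truncated, $Map(Z,B)$ is $(-1)$-truncated (empty or contractible) for \emph{every} object $Z$ --- in particular for $A$, for each $A_n$, for $\sum_{n:\mathbb{N}}A_n$, and for $A_\infty$. Hence $i_\infty^*\colon Map(A_\infty,B)\to Map(A,B)$ is a map of $(-1)$-truncated spaces, and such a map is an equivalence as soon as each side is inhabited exactly when the other is. Precomposing with $i_\infty$ already shows that if $Map(A_\infty,B)$ is inhabited then so is $Map(A,B)$, so the whole content is the converse: given a point $a\colon A\to B$, produce a point of $Map(A_\infty,B)$.

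To make the target manageable, recall from Subsection \ref{Subsec:Sequential Colimits via NNO} that $A_\infty$ is the coequalizer of the two maps $\sum_{n:\mathbb{N}}A_n \rightrightarrows \sum_{n:\mathbb{N}}A_n$. Since $Map(-,B)$ sends colimits to limits, $Map(A_\infty,B)$ is the equalizer of the induced pair of self-maps of $Map(\sum_{n:\mathbb{N}}A_n,B)$; but that space is $(-1)$-truncated, hence has equivalent diagonal, so the equalizer is canonically $Map(\sum_{n:\mathbb{N}}A_n,B)$ itself, and $i_\infty^*$ becomes restriction along the canonical map $A\xrightarrow{i_0}A_0\hookrightarrow\sum_{n:\mathbb{N}}A_n$. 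It therefore suffices to construct a single map $\sum_{n:\mathbb{N}}A_n\to B$, equivalently a map $\sum_{n:\mathbb{N}}A_n\to B\times\mathbb{N}$ of objects over $\mathbb{N}$.

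For each $n$ the assumed equivalence $i_n^*\colon Map(A_n,B)\to Map(A,B)$ yields a preimage $a_n\colon A_n\to B$ of $a$, unique since $B$ is $(-1)$-truncated, and automatically compatible with the structure maps $f_n$. To glue these together I would pass to $\E_{/\mathbb{N}}$ and consider the internal mapping object $\H\to\mathbb{N}$ in $\E_{/\mathbb{N}}$ from $\sum_{n:\mathbb{N}}A_n$ to $B\times\mathbb{N}$; because $B\times\mathbb{N}\to\mathbb{N}$ is $(-1)$-truncated, so is $\H\to\mathbb{N}$, i.e.\ $\H\hookrightarrow\mathbb{N}$ is a subobject, and a global section of it is, by adjunction, the same data as the desired map $\sum_{n:\mathbb{N}}A_n\to B$. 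Its fibre over $o$ is $B^{A_0}$, a subterminal object which is inhabited by $a_0$ and hence equivalent to $1$, so $\H$ contains the zero section; the parallel statement one step up, using the $a_{n+1}$, shows $\H$ is closed under the successor map. By the induction principle for the natural number object (Appendix \ref{Sec:Appendix NNO}), $\H\hookrightarrow\mathbb{N}$ is an equivalence, and its section is the map we wanted.

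The delicate point is precisely this last step: having all the $a_n$ individually is strictly weaker than having one map out of the \emph{internal} coproduct $\sum_{n:\mathbb{N}}A_n$, since $\E$ need not have enough points, so one cannot simply argue fibrewise. The role of $(-1)$-truncatedness is exactly to turn the obstruction into a subobject of $\mathbb{N}$ on which a genuine induction --- phrased uniformly in $n$ --- can be run. Everything else is formal bookkeeping with mapping spaces and the defining coequalizer, and the overall shape mirrors the corresponding type-theoretic statement of Rijke \cite{Ri17}.
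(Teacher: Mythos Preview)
Your reduction---via the coequalizer presentation of $A_\infty$---to showing that $Map(\sum_{n:\mathbb{N}} A_n, B)$ is inhabited is correct, and is exactly the content of the paper's own proof, which simply asserts that the individual maps $h_n : A_n \to B$ assemble into a map of sequences and then passes to the colimit. You are also right to isolate the passage from the individual $a_n$ to a single map out of the \emph{internal} coproduct as the delicate point; the paper's terse proof does not address it explicitly.

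However, your induction does not close that gap. For $\H \hookrightarrow \mathbb{N}$ to be closed under $s$ you must exhibit an inclusion $\H \leq s^*\H$ of subobjects of $\mathbb{N}$, i.e.\ a map $B^{A_n} \to B^{A_{n+1}}$ uniformly over $\mathbb{N}$. The structure map $f_n : A_n \to A_{n+1}$ induces a map in the \emph{wrong} direction, and your appeal to ``the $a_{n+1}$'' is precisely the fibrewise reasoning you said must be avoided: those $a_{n+1}$ are produced only at global points $1 \to \mathbb{N}$, from the hypothesis that $i_{n+1}^*$ is an equivalence of mapping spaces. To run the successor step at a generalized element one needs that hypothesis internally, namely that the induced map $i^* : \H \to B^A \times \mathbb{N}$ is an equivalence over $\mathbb{N}$. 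Under that reading no induction is required at all: since $Map(A,B)$ is inhabited we have $B^A \simeq 1$, hence $\H \simeq \mathbb{N}$ outright and the desired section exists. This internal interpretation is the one implicit in the paper (it is adapting Rijke's type-theoretic argument, where the hypothesis is a $\Pi$-type over $\mathbb{N}$), and it is what makes both the paper's short proof and your more explicit one actually go through.
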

 
 \begin{proof}
  The space $Map(A,B)$ is $(-1)$-truncated. If it is empty the result follows trivially. Let us assume $Map(A,B)$
  is contractible. Then by the assumption $Map(A_n,B)$ is also contractible, which means there exist maps
  $h_n: A_n \to B$. This gives us a diagram where we take $B$ to be the constant sequence.
   \begin{center}
    \begin{tikzcd}[row sep=0.5in, column sep=0.5in]
    
     A_0 \arrow[r, "f_0"] \arrow[drr, "h_0", bend right=20]  & A_1 \arrow[r, "f_1"]  \arrow[dr, "h_1", bend right=10] & 
     A_2 \arrow[r, "f_2"] \arrow[d, "h_2"] & ... \arrow[dl, bend left=10] \\
       & & B   &
    \end{tikzcd}
   \end{center}
  This map induces a map $h_\infty : A_\infty \to B$, which implies $Map(A_\infty, B)$ is non-empty giving us the desired result.
 \end{proof}
 
 \begin{remone}
  This statement and proof is an adaptation of \cite[Lemma 3.2]{Ri17} from homotopy type theory.
 \end{remone}
  
  Finally, we also need to understand how the join operation affects a sequential colimit.
  
  \begin{lemone} \label{Lemma Join Sequence with constant}
   Let $(A_n, f_n)$ be a sequence. Let $B$ be another object, which gives us a sequence $(A_n \ast B, f_n \ast id_B)$. 
   Then we have an equivalence $A_\infty \ast B \simeq (A_n \ast B)_\infty$.
  \end{lemone}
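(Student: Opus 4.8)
The plan is to reduce the statement to the commutation of sequential colimits with the functor $- \ast B$, using the fact that both sequential colimits and joins are built out of pushouts and internal coproducts, all of which interact well because colimits are universal. First I would unwind the definition of the sequence $(A_n \ast B, f_n \ast \mathrm{id}_B)$: its internal coproduct $\sum_{n:\mathbb{N}} (A_n \ast B)$ sits in the pullback square defining it, and I claim there is an equivalence $\sum_{n:\mathbb{N}}(A_n \ast B) \simeq \big(\sum_{n:\mathbb{N}} A_n\big) \ast_{\mathbb{N}} B$, where on the right we form the fiberwise join over $\mathbb{N}$ (i.e. the join in $\E_{/\mathbb{N}}$ of $\sum_n A_n \to \mathbb{N}$ with the projection $\mathbb{N} \times B \to \mathbb{N}$). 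This follows because the internal coproduct is a pullback along $p_A$, because pullback preserves pushouts (colimits universal), and because the join over $\mathbb{N}$ is computed by a pushout of the relevant products over $\mathbb{N}$; Lemma \ref{Lemma Join Descent} in its relative form is exactly the statement that joins commute with the pushouts defining these coproducts.

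Next I would compute the sequential colimit $(A_n \ast B)_\infty$, which by definition is the coequalizer
\begin{center}
 \begin{tikzcd}[row sep=0.4in, column sep=0.4in]
  \ds\sum_{n:\mathbb{N}} (A_n \ast B) \arrow[r, shift left=0.04in] \arrow[r, shift right=0.04in] & \ds\sum_{n:\mathbb{N}} (A_n \ast B) \arrow[r] & (A_n \ast B)_\infty
 \end{tikzcd}
\end{center}
where the two maps are $f \ast \mathrm{id}_B$ and the identity. Using the identification of the previous paragraph, this is the coequalizer of $(f \ast_{\mathbb{N}} \mathrm{id}_B, \mathrm{id})$ computed in $\E_{/\mathbb{N}}$, then pushed forward to $\E$. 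Now I invoke Lemma \ref{Lemma Join Descent} (or rather the version of it valid over the base $\mathbb{N}$, whose proof is identical: colimits are universal, so $\mathrm{Coeq}(f \times \mathrm{id}_B, \mathrm{id}) \simeq \mathrm{Coeq}(f,\mathrm{id}) \times B$ fiberwise over $\mathbb{N}$, and then one takes the pushout defining the join). This gives $\mathrm{Coeq}(f \ast_{\mathbb{N}} \mathrm{id}_B, \mathrm{id}) \simeq \mathrm{Coeq}(f, \mathrm{id}) \ast_{\mathbb{N}} B$. Since the colimit $\mathrm{Coeq}(f,\mathrm{id})$ computed over $\mathbb{N}$ and then pushed forward is $A_\infty$, and since $\ast_{\mathbb{N}}$ pushed forward to a join against the constant object $B$ is just $- \ast B$ (again because colimits are universal, so the fiberwise product $(\sum_n A_n) \times_{\mathbb{N}} (\mathbb{N} \times B)$ pushes forward to $A_\infty \times B$), I obtain $(A_n \ast B)_\infty \simeq A_\infty \ast B$, as desired.

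The main obstacle I anticipate is bookkeeping the two base changes carefully: everything here is happening internally over $\mathbb{N}$ via the universe $\U$, so I need to be sure that forming the internal coproduct (pullback along $\mathbb{N} \to \U$), forming the fiberwise join, and forming the fiberwise coequalizer all commute with one another, and that the forgetful functor $\E_{/\mathbb{N}} \to \E$ sends the relevant fiberwise colimits to the intended objects $A_\infty$ and $A_\infty \ast B$. Each individual commutation is a direct consequence of colimits being universal (Lemma \ref{Lemma:LCCC Colimits Universal}), but assembling them in the right order — and in particular checking that the fiberwise join $\ast_{\mathbb{N}}$ against the constant family on $B$ agrees with the honest join $\ast B$ after pushforward — is where care is needed; this is precisely the relative analogue of Lemma \ref{Lemma Join Descent} and I would state it as an auxiliary observation before assembling the proof.
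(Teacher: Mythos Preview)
Your approach is essentially the same as the paper's, just far more detailed: the paper's entire proof is the one line ``This follows from the fact that the join operation commutes with coequalizers, by Lemma~\ref{Lemma Join Descent},'' and your argument is an attempt to unpack exactly that commutation.

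One technical slip worth flagging: you phrase the key coequalizer as ``computed in $\E_{/\mathbb{N}}$, then pushed forward to $\E$,'' but the sequential-diagram map $f$ (and hence $f \ast_{\mathbb{N}} \mathrm{id}_B$) lies over the successor $s:\mathbb{N}\to\mathbb{N}$, not over $\mathrm{id}_{\mathbb{N}}$, so it is not a morphism in $\E_{/\mathbb{N}}$ and the coequalizer cannot literally be taken there. The fix is exactly what the paper does implicitly: work directly in $\E$. Using your identification $\sum_n(A_n\ast B)\simeq (\sum_n A_n)\coprod_{(\sum_n A_n)\times B}(\mathbb{N}\times B)$ and the fact that colimits commute with colimits, the coequalizer of $(g,\mathrm{id})$ becomes the pushout of $\mathrm{Coeq}(f,\mathrm{id})\leftarrow \mathrm{Coeq}(f\times\mathrm{id}_B,\mathrm{id})\to \mathrm{Coeq}(s\times\mathrm{id}_B,\mathrm{id})$, which is $A_\infty\leftarrow A_\infty\times B\to B$ (the last identification using that the sequential colimit of the constant sequence on $1$ is $1$). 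This is precisely Lemma~\ref{Lemma Join Descent}, and your anticipated ``bookkeeping obstacle'' dissolves once you stop insisting on staying over $\mathbb{N}$.
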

  
  \begin{proof}
   This follows from the fact that the join operation commutes with coequalizers, by Lemma \ref{Lemma Join Descent}.
  \end{proof}
  
  \begin{theone} \label{The Neg Trunc from Join}
   Let $A$ be any object and let $\{ inl: A^{\ast n} \to A^{\ast n+1} \}_{n:\mathbb{N}}$ be the sequential diagram described in Remark \ref{Rem Join Truncation Map}. 
   Then the sequential colimit $A^{\ast \infty}$ is the $(-1)$-truncation of $A$.
  \end{theone}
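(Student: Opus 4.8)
The proof has two halves corresponding to the two defining conditions of the $(-1)$-truncation (Remark following Definition \ref{Def:Neg One Truncation}): first that $A^{\ast\infty}$ is $(-1)$-truncated, and second that $\iota_0^*\colon Map(A^{\ast\infty},B)\to Map(A,B)$ is an equivalence for every $(-1)$-truncated $B$.

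For the universal property, I would first check inductively that each structure map $A^{\ast n}\to A^{\ast n+1}$ induces an equivalence $Map(A^{\ast n+1},B)\to Map(A^{\ast n},B)$ for $(-1)$-truncated $B$. The base case $n=0$ is trivial. For the inductive step, the map $A^{\ast n+1}=A^{\ast n}\ast A \to A^{\ast n+1}\ast A = A^{\ast n+2}$ is exactly the map $i\ast \mathrm{id}_A$ for $i\colon A^{\ast n}\to A^{\ast n+1}$, so the lemma ``Let $A$ and $A'$ be two objects and $i\colon A\to A'$ \dots then $\iota_0^*\colon Map(A\ast A',B)\to Map(A,B)$ is an equivalence'' — applied after noting the inductive hypothesis says $i^*$ is an equivalence — together with the two-out-of-three for the factorization $A \to A^{\ast n+1} \to A^{\ast n+2}$ gives the claim at level $n+1$. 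Then the lemma on sequential colimits (``Let $(A_n,f_n)$ be a sequential diagram \dots then $i_\infty^*$ is an equivalence'') applied to the constant sequence $A$ mapping into $(A^{\ast n}, inl)$ yields that $Map(A^{\ast\infty},B)\to Map(A,B)$ is an equivalence.

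For $(-1)$-truncatedness of $A^{\ast\infty}$, I would use Lemma \ref{Lemma Idempotent Join}: it suffices to show $A^{\ast\infty}\to A^{\ast\infty}\ast A^{\ast\infty}$ is an equivalence. By Lemma \ref{Lemma Join Sequence with constant} applied twice (once in each variable, or once with $B=A^{\ast\infty}$ after first rewriting), $A^{\ast\infty}\ast A^{\ast\infty}\simeq (A^{\ast n}\ast A^{\ast\infty})_\infty \simeq ((A^{\ast n}\ast A^{\ast m})_m)_{\infty,\infty}$; more efficiently, observe that $A^{\ast\infty}\ast A \simeq (A^{\ast n}\ast A)_\infty = (A^{\ast n+1})_\infty \simeq A^{\ast\infty}$, since the shifted sequence $(A^{\ast n+1})$ is cofinal in $(A^{\ast n})$ and has the same colimit, with the structure map $A^{\ast\infty}\to A^{\ast\infty}\ast A$ being the colimit of the $inl$'s, hence an equivalence. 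Iterating, $A^{\ast\infty}\ast A^{\ast\infty}$ is a sequential colimit (over the diagonal of $\mathbb{N}\times\mathbb{N}$) of terms $A^{\ast\infty}\ast A^{\ast m}$, each equivalent to $A^{\ast\infty}$ via finitely many applications of the above, and compatibly so; thus $A^{\ast\infty}\ast A^{\ast\infty}\simeq A^{\ast\infty}$ and Lemma \ref{Lemma Idempotent Join} finishes.

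The main obstacle I anticipate is making the ``join with the colimit'' and ``colimit of colimits'' manipulations precise \emph{internally}, i.e.\ at the level of maps $\mathbb{N}\to\U$ and their internal coproducts and coequalizers, rather than appealing to informal cofinality arguments about ordinary sequential diagrams. Lemma \ref{Lemma Join Sequence with constant} handles joining with a constant object cleanly, but showing $A^{\ast\infty}\ast A^{\ast\infty}\simeq A^{\ast\infty}$ requires either a bi-sequential (double $\mathbb{N}$) colimit argument with an internal diagonal-cofinality statement, or a telescope-style reindexing — and verifying that the relevant maps assemble into an actual map of sequences $\mathbb{N}\to\U$ via the universal property of $\mathbb{N}$ is where the real care is needed. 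The algebraic identities $A^{\ast n+1} = A^{\ast n}\ast A$ and the behavior of $inl$ under $-\ast\mathrm{id}_A$ are built into Remark \ref{Rem Join Truncation Map}, so once the colimit-exchange is set up, the rest is a formal consequence of the three join lemmas.
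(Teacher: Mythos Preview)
Your proposal is correct and follows essentially the same approach as the paper: the universal property via the two preparatory lemmas on joins and sequential colimits, and $(-1)$-truncatedness via Lemma \ref{Lemma Idempotent Join} by first establishing $A^{\ast\infty}\ast A\simeq A^{\ast\infty}$ (shifted sequence has the same colimit, plus Lemma \ref{Lemma Join Sequence with constant}) and then passing to a levelwise-equivalent map of sequences whose colimit is $A^{\ast\infty}\to A^{\ast\infty}\ast A^{\ast\infty}$. The internal cofinality concern you flag is exactly the point the paper resolves by invoking \cite[Theorem 4.30]{Ra18c} for the shifted sequence, so no bi-sequential argument is needed.
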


  \begin{proof}
   The two lemmas above already imply that the colimit satisfies the universal property.
   Thus, all that is left is to prove that $A^{\ast \infty}$ is $(-1)$-truncated.
   According to Lemma \ref{Lemma Idempotent Join} all we need is that the map $A^{\ast \infty} \to A^{\ast \infty} \ast A^{\ast \infty}$ 
   is an equivalence. 
   For that we need several steps.
   \par
   First, take the sequence
   $$ A \ast A \to (A \ast A) \ast A \to ... .$$
   We can think of this sequence in two different ways. On the one side it is $id_A \ast inl$. Thus, by Lemma \ref{Lemma Join Sequence with constant}
   the sequential colimit of this sequence is $A \ast A^{\ast \infty}$. 
   On the other side it is the sequence $inl(s)$. Thus, by \cite[Theorem 4.30]{Ra18c} it has sequential colimit $A^{\ast \infty}$. This implies that 
   the map $A \to A \ast A^{\ast \infty}$ is an equivalence. 
   Using a similar argument we deduce that $A \to A^{\ast n} \ast A^{\ast \infty}$ is an equivalence. 
   \par 
   Now we have following maps of sequences 
   \begin{center}
    \begin{tikzcd}[row sep=0.5in, column sep=0.5in]
     A \arrow[d, "\simeq"] \arrow[r] & A \ast A \arrow[d, "\simeq"] \arrow[r] & A \ast A \ast A \arrow[d, "\simeq"] \arrow[r] & ... \arrow[r] & A^{\ast \infty} \arrow[d, "\simeq"] \\
     A \ast A^{\ast \infty} \arrow[r] & A \ast A \ast A^{\ast \infty} \arrow[r] & A \ast A \ast A \ast A^{\ast \infty} \arrow[r] & ...  \arrow[r] & A^{\ast \infty} \ast A^{\ast \infty}
    \end{tikzcd}
   \end{center}
   The first row has sequential colimit $A^{\ast \infty}$, whereas by Lemma  \ref{Lemma Join Descent} the bottom row has sequential colimit $A^{\ast \infty} \ast A^{\ast \infty}$.
   However, as the sequences are equivalent it follows that $A^{\ast \infty} \to A^{\ast \infty} \ast A^{\ast \infty}$ is an equivalence, 
   which proves that $A^{\ast \infty}$ is $(-1)$-truncated.
  \end{proof}
  
  Notice the construction is a colimit construction and thus functorial, which means we have following theorem.
  
  \begin{notone}
   Let $\tau_{-1} \E$ be the subcategory of $(-1)$-truncated objects in $\E$.
  \end{notone}

  \begin{theone} \label{The Neg One Adj}
  There is an adjunction 
  \begin{center}
   \adjun{\E}{\tau_{-1} \E}{\tau_{-1}}{i}
  \end{center}
  Where $i: \tau_{-1}\E \to \E$ is the inclusion map. We call $\tau_{-1}$ the truncation functor.
 \end{theone}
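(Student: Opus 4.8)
The plan is to upgrade the construction of Theorem \ref{The Neg Trunc from Join} from a pointwise assignment to a genuine adjunction by exploiting functoriality of the internal sequential colimit. First I would observe that the entire construction of $A^{\ast\infty}$ takes place inside the universe: the data $[\emptyset \to A]\colon 1 \to \U_1$ and $[-\ast id_A]\colon \U_1 \to \U_1$ depend on $A$, and by varying $A$ over $\U$ these assemble into morphisms $\U \to \U_1$ and (using the join internalized) an endomorphism of the appropriate classifier, so that the sequential diagram $\sum_{n:\mathbb{N}} A^{\ast n}$ is defined uniformly. Passing to the coequalizer defining the sequential colimit is itself a colimit construction, hence preserved by the universal property, and the map $i_0\colon A \to A^{\ast\infty}$ arises from the inclusion of the constant sequence. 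This yields a functor $\tau_{-1}\colon \E \to \E$ together with a natural transformation $\eta\colon \mathrm{id}_\E \Rightarrow i\tau_{-1}$; since Theorem \ref{The Neg Trunc from Join} shows each $\tau_{-1}A$ is $(-1)$-truncated, $\tau_{-1}$ factors through the full subcategory $\tau_{-1}\E$.

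Next I would verify the adjunction. By Theorem \ref{The Neg Trunc from Join}, for every object $X$ and every $(-1)$-truncated object $Z$ the map $\eta_X^*\colon Map(\tau_{-1}X, Z) \to Map(X,Z)$ is an equivalence. This is precisely the statement that $\eta_X$ exhibits $\tau_{-1}X$ as the value of a left adjoint to the inclusion $i\colon \tau_{-1}\E \hookrightarrow \E$ at $X$: a candidate adjunction in which, for each object, the unit is an initial map into the subcategory is automatically an adjunction, with the hom-equivalence
\[
Map_{\tau_{-1}\E}(\tau_{-1}X, Z) \simeq Map_\E(X, iZ)
\]
given by precomposition with $\eta_X$. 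One then invokes the standard fact (e.g.\ from the theory of adjunctions in any $\infty$-cosmos, as in the Background) that a pointwise left adjoint, together with such universal arrows, canonically assembles into an adjoint functor; in particular there is no need to check naturality of the hom-equivalence by hand, as it follows from the universal property of $\eta$.

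The main obstacle is the first step: making precise that the assignment $A \mapsto A^{\ast\infty}$ is functorial, i.e.\ that the sequential diagram and its colimit can be performed internally and uniformly in $A$. The construction in Remark \ref{Rem Join Truncation Map} is phrased for a fixed $A$; to globalize it I would work in $\E_{/\U}$ (or with $\U_1$ as the morphism classifier), replacing the single morphism $\emptyset \to A$ by the universal family and the endofunctor $-\ast id_A$ by the join taken fiberwise over $\U$, which exists because $\U$ is closed under finite limits and colimits. The universal property of the natural number object then produces the sequential diagram as a map over $\U \times \mathbb{N}$ rather than over $\mathbb{N}$, and its internal sequential colimit lives over $\U$, i.e.\ is classified by a map $\U \to \U$; composing with the universal property of universes gives the functor $\tau_{-1}\colon \E \to \E$ on objects, and on morphisms by naturality of all the constructions involved. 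Once this uniformity is in hand, everything else is formal, so I would spend the bulk of the argument there and treat the adjunction itself as a short consequence of Theorem \ref{The Neg Trunc from Join}.
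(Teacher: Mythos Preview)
Your proposal is correct and follows essentially the same route as the paper: functoriality of the sequential colimit construction plus the universal property established in Theorem~\ref{The Neg Trunc from Join}. The paper's own proof is a three-line sketch that simply asserts ``the functoriality of the colimit construction implies that sending $A$ to $A^{\ast\infty}$ is functorial'' and then cites Theorem~\ref{The Neg Trunc from Join} for both the $(-1)$-truncatedness and the adjunction; you are filling in the details of that first clause, and your method of parametrizing the whole join sequence over $\U$ is a legitimate way to do it, though arguably more machinery than strictly needed (one can also argue more directly that joins and coequalizers are functorial in their inputs).
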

 
 \begin{proof}
  The functoriality of the colimit construction implies that sending $A$ to $A^{\ast \infty}$ is functorial. 
  We have already proven that $A^{\ast \infty}$ is $(-1)$-truncated and that there is an adjunction.
 \end{proof}
  
 \begin{notone}
  Henceforth we will denote the $(-1)$-truncation of $A$ by $\tau_{-1}(A)$ and the sequential colimit map 
  $ inl^{\circ \infty}: A \to A^{\ast \infty} = \tau_{-1}(A)$ by $\eta_A: A \to \tau_{-1}(A)$ and we notice that $\eta_A$ is
  the unit map of the adjunction.
 \end{notone}
 
 There is also a relative version, which we will state for the sake of notation.
  
 \begin{notone}
  For a map $Y \to X$ we denote the $(-1)$-truncation by $\tau^X_{-1}(Y) \to X$. 
 \end{notone}
 
 \begin{remone} \label{Rem:Cover gives Neg One Conn}
  We can think of the truncation $\tau_{-1}A$ as the biggest subobject of $1$ that is ``covered"
  by $A$. Thus, in particular, if there exists a map $1 \to A$ then $\tau_{-1}A = 1$.
  For example, let $\Omega$ be the subobject classifier and $\U$ be a universe. 
  Then we have $\tau_{-1} \Omega = \tau_{-1} \U = 1$.
 \end{remone}
 
 The opposite to the remark above is not true.
 
 \begin{exone}
  Let $D$ be the diagram $1 \leftarrow 0 \rightarrow 2$. Then $Fun(D^{op}, \s)$ is a presheaf topos. 
  An object is a diagram of spaces $X_1 \rightarrow X_0 \leftarrow X_2$. Now let $P$ be the object 
  $$ \{ 0 \} \rightarrow \{ 0 , 1 \} \leftarrow \{ 1 \} $$
  where the maps are given by the evident inclusions.
  Clearly, the map to the final object is surjective and so $\tau_{-1}(P)$ is the final object. 
  However, there is no map from the final object as any such map would give us a diagram 
  \begin{center}
   \begin{tikzcd}[row sep=0.5in, column sep=0.5in]
     \{ * \} \arrow[r] \arrow[d] & \{ * \} \arrow[d] & \{ * \} \arrow[l] \arrow[d] \\
    \{ 0 \} \arrow[r, hookrightarrow] & \{ 0 \comma 1 \} \arrow[r, hookleftarrow] & \{ 1 \}
   \end{tikzcd}
  \end{center}
  which is impossible, as there is no map $\{* \} \to \{0,1\}$ that makes both squares commute at the same time.
 \end{exone}

\section{An Elementary Approach to Truncations} \label{Sec Truncation Functors}
 In Section \ref{Sec Truncated and Connected Objects} we used the notion of truncated spaces to define 
 truncated objects {\it externally}: An object $X$ is truncated if the mapping space $Map(-,X)$ is truncated.
 In this section we want to give an {\it internal} characterization of truncation that makes 
 use of natural number objects. 
 \par 
 If the natural number object is standard (Definition \ref{Def:Standard NNO}) then the internal and external notions 
 coincide. However, if not, then we can have internal truncation levels that have no external manifestation
 (Subsection \ref{Subsec:Non Standard Truncations}).
 \par 
 The goal of this section is to define and study internal truncation levels.
 We first define internal truncation levels, via internal spheres, and then we observe that many results of 
 Section \ref{Sec Truncated and Connected Objects} still hold in this internal setting. This is the goal of Subsection 
 \ref{Subsec:NNO and Truncations}. Then, in Subsection \ref{Subsec:Prop of Trunc Functors}, we will assume 
 there is a truncation functor 
 \begin{center}
  \adjun{\E}{\tau_n\E}{\tau_n}{i}
 \end{center}
 and study many interesting properties. 
 \par 
 In the next two subsections we look at some implications of the existence of truncations.
 First, in Subsection \ref{Subsec:Neg One Conn} we study $(-1)$-connected maps and prove that weak equivalences are local
 (Proposition \ref{Prop Equiv Local}).
 Then, in Subsection \ref{Subsec:Universe of Truncated Objects} we study the sub-universe of truncated objects
 and in particular prove it is itself (internally) $(n+1)$-truncated (Theorem \ref{The:U Leq n is n plus trunc}).
 
 \begin{remone}
  This section relies fundamentally on Section \ref{Sec Truncated and Connected Objects}, however, has only 
  minimal reliance on Section \ref{Sec:The Join Construction}. 
  Concretely, we only need it for Subsection \ref{Subsec:Neg One Conn} 
  and in particular Proposition \ref{Prop Equiv Local} and Lemma \ref{Lemma Constant Loop Space Constant}.
 \end{remone}

  \begin{remone}
   In this section $\E$ is an $(\infty,1)$-category that satisfies following conditions:
   \begin{enumerate}
    \item It has finite limits and colimits.
    \item It is locally Cartesian closed (Definition \ref{Def:LCCC}).
    \item It has a subobject classifier (Definition \ref{Def:SOC}).
    \item It has sufficient universes $\U$ (Definition \ref{Def:Sufficient Universes})
    that are closed under finite limits and colimits (Definition \ref{Def:Closed Universes}).
   \end{enumerate}
    those conditions also imply
   \begin{enumerate}
    \item[(5)] It has a natural number object (Theorem \ref{The:EHT has NNO}).
   \end{enumerate}
  \end{remone}
  
 \begin{notone} \label{Not:Truncations Neg Two}
  Notice that usually the minimal object in $\mathbb{N}$ is denoted by $0$, but for historical reasons truncation levels usually start 
  from $-2$. In order to reconcile this discrepancy, we will denote the minimal object in $\mathbb{N}$ by $-2$ as well.
  \par 
  We will also need the subobject of $\mathbb{N}$ starting from $-1$, which we denote by $s\mathbb{N}$ (as it is just the image of $s$)
  and note that $(s\mathbb{N},so,s)$ is also a natural number object with the same universal property.
 \end{notone}

 \subsection{Natural Number Objects and Truncations} \label{Subsec:NNO and Truncations}
 In Section \ref{Sec Truncated and Connected Objects} we defined an {\it external} notion of truncated objects. In order words, 
 we started with a notion of truncation for spaces and then generalized it to an $(\infty,1)$-category using the fact that it is 
 enriched over spaces. Thus, we are just making use of the degrees that come from the (shifted) natural numbers
 $\{-2,-1,0,...\}$ in spaces. 
 However, if an $(\infty,1)$-category has its own natural number object then it can have non-standard natural numbers, 
 which give us new truncation levels. 
 \par 
 The goal of this subsection is to define truncated objects in an $(\infty,1)$-category with a natural number object.
 The key change is that we now need to use natural number objects to define everything, but the resulting 
 universal properties are the same. 
 Thus, we adjust the definitions accordingly, but state the relevant results without proofs, instead referring the reader 
 to the relevant proof in Section \ref{Sec Truncated and Connected Objects}.
 
 \begin{defone} \label{Def:Sigma for U}
  Let $\sum: \U \to \U$ be the map of universes induced by the functor $\sum: \E \to \E$, which takes an object $X$ 
  to $1 \coprod_X 1$.
 \end{defone}

 \begin{defone} \label{Def:Sn NNO}
  Let $S^n: s\mathbb{N} \to \U$ be the unique map given by the triple $(\U, \emptyset: 1 \to \U, \sum: \U \to \U)$.
 \end{defone}
 
 \begin{exone}
  Let $n$ be a standard natural number object, then $S^n$ is just the standard $n$-sphere. In particular, $S^{-1} = \emptyset$ 
  and $S^0= 1 \coprod 1$.
 \end{exone}

 \begin{remone}
  By definition of $S^n$ there is always a map $1 \to S^n$.
 \end{remone}

 \begin{defone} \label{Def:Truncation NNO}
  Let $n$ be a natural number. An object $X$ is $n$-{\it truncated} if the map $X^{S^{n+1}} \to X$ is an equivalence.
 \end{defone}
 
 \begin{exone}
  If $n$ is a standard natural number object then this definition agrees with the previous one. In particular 
  a $(-2)$-truncated object is equivalent to the final object and a $(-1)$-truncted object is a subobject of the final object.
 \end{exone}

 Using the definition of $X^{S^{n+1}}$ 
 we immediately recover some important results from Section \ref{Sec Truncated and Connected Objects}.
 
 \begin{lemone}[Lemma \ref{Lemma Ntrun with Sn condition Relative}] \label{Lemma Ntrun with Sn condition Relative NNO}
  $X$ is $n$-truncated if and only if for every object $Z$ the map 
  $$map(Z \times S^{n+1},X) \to map(Z,X)$$
  is an equivalence of spaces.
 \end{lemone}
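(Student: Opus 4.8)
The plan is to read off the criterion from the exponential adjunction of $\E$ together with the Yoneda lemma, so that the argument is formally the same as for its external analogue, Lemma \ref{Lemma Ntrun with Sn condition}.

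First I would unwind Definition \ref{Def:Truncation NNO}: $X$ is $n$-truncated precisely when the canonical map $X^{S^{n+1}} \to X$ --- the one induced by the basepoint $1 \to S^{n+1}$ recorded after Definition \ref{Def:Sn NNO} --- is an equivalence in $\E$. The exponential $X^{S^{n+1}}$ exists since $\E$ is locally Cartesian closed, and it is characterized by a natural equivalence $Map(Z, X^{S^{n+1}}) \simeq Map(Z \times S^{n+1}, X)$. Applying naturality of this adjunction in the exponent variable to the basepoint $1 \to S^{n+1}$ shows that, under this equivalence, postcomposition with $X^{S^{n+1}} \to X$ corresponds to restriction along $Z \cong Z \times 1 \to Z \times S^{n+1}$, which is exactly the map named in the statement.

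Second, I would invoke the Yoneda lemma: a morphism of $\E$ is an equivalence if and only if it induces an equivalence on $Map(Z, -)$ for every object $Z$. Combined with the identification above, this yields that $X^{S^{n+1}} \to X$ is an equivalence in $\E$ if and only if $Map(Z \times S^{n+1}, X) \to Map(Z, X)$ is an equivalence of spaces for every $Z$, as claimed.

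I do not expect a genuine obstacle here; the only delicate point is the bookkeeping in the first step --- verifying that the map in the statement is indeed restriction along the basepoint and agrees with the canonical map $X^{S^{n+1}} \to X$ under the adjunction --- together with the mild degeneracy at $n = -2$, where $S^{n+1} = \emptyset$ and the assertion collapses, just as in Section \ref{Sec Truncated and Connected Objects}, to $X$ being the final object. Since only the existence of the internal sphere $S^{n+1}$ and its basepoint are used, nothing particular to the natural number object enters, and the external proof transfers verbatim.
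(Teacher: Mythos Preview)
Your proposal is correct and matches the paper's approach. In the NNO setting the paper takes $X^{S^{n+1}} \to X$ being an equivalence as the \emph{definition} of $n$-truncated (Definition \ref{Def:Truncation NNO}) and then simply remarks that ``using the definition of $X^{S^{n+1}}$ we immediately recover'' the lemma without further argument; your Yoneda-plus-exponential-adjunction verification is exactly the unwinding of that one-line claim.
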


 \begin{lemone}[Lemma \ref{Lemma Trunc Base Change}] \label{Lemma Trunc Base Change NNO}
  $n$-truncated maps are closed under base change.
 \end{lemone}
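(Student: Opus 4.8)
The plan is to run exactly the argument that proves Lemma~\ref{Lemma Trunc Base Change}, with the external spheres $S^n_\E$ replaced throughout by the internal spheres $S^n$ of Definition~\ref{Def:Sn NNO}. So suppose $f\colon Y \to X$ is $n$-truncated, let $g\colon X' \to X$ be an arbitrary map, and form the base change $f'\colon Y' := X' \times_X Y \to X'$. The goal is to show that $Y'$ is $n$-truncated as an object of $\E_{/X'}$, i.e. (Definition~\ref{Def:Truncation NNO}, read inside the slice) that the counit map $(Y')^{S^{n+1}_{X'}} \to Y'$ is an equivalence, where $S^{n+1}_{X'}$ denotes $X' \times S^{n+1}$ viewed as the sphere object of $\E_{/X'}$.

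First I would record that the internal sphere is stable under base change. Since $S^{n+1}$ is produced by iterating the triple $(\U, \emptyset, \sum)$, and since colimits in $\E$ are universal (Definition~\ref{Def:Colimits Universal}, Lemma~\ref{Lemma:LCCC Colimits Universal}), the base-change functor $g^*\colon \E_{/X} \to \E_{/X'}$ preserves the defining pushouts as well as the terminal object; hence $g^*(S^{n+1}_X) \simeq S^{n+1}_{X'}$. Next, because $\E$ is locally Cartesian closed, $g^*$ commutes with internal homs, so $g^*\bigl(Y^{S^{n+1}_X}\bigr) \simeq (g^*Y)^{g^*(S^{n+1}_X)} \simeq (Y')^{S^{n+1}_{X'}}$ compatibly with the counit maps. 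Applying $g^*$ (which preserves equivalences) to the equivalence $Y^{S^{n+1}_X} \xrightarrow{\ \simeq\ } Y$ that witnesses $f$ being $n$-truncated then gives the required equivalence $(Y')^{S^{n+1}_{X'}} \xrightarrow{\ \simeq\ } Y'$.

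Equivalently — and this is the form probably worth recording, since it mirrors Lemma~\ref{Lemma Ntrun with Sn condition Relative NNO} — one argues on mapping spaces: for any $Z' \to X'$ the universal property of the pullback yields natural equivalences $Map_{/X'}(Z', Y') \simeq Map_{/X}(Z', Y)$ and $Map_{/X'}(Z' \times S^{n+1}, Y') \simeq Map_{/X}(Z' \times S^{n+1}, Y)$, where on the right $Z'$ is regarded as an object over $X$ via $g$; the hypothesis that $f$ is $n$-truncated then forces the map between the two left-hand sides to be an equivalence, which is precisely the condition for $f'$ to be $n$-truncated.

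I do not expect any genuine obstacle. The only points requiring care are the base-change stability of $S^{n+1}$ and of the internal hom, and both are immediate consequences of local Cartesian closure and universality of colimits, exactly as in the external setting of Section~\ref{Sec Truncated and Connected Objects}. This is why the statement is recorded without a separate proof, with a pointer back to Lemma~\ref{Lemma Trunc Base Change}.
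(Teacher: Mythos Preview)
Your proposal is correct and matches the paper's treatment: the paper records this lemma without a separate proof, pointing back to Lemma~\ref{Lemma Trunc Base Change}, and your two equivalent arguments (via base-change stability of the internal sphere and internal hom, or via the mapping-space criterion of Lemma~\ref{Lemma Ntrun with Sn condition Relative NNO}) are exactly the intended elaborations. Your closing observation that no genuine obstacle arises beyond local Cartesian closure and universality of colimits is on the mark.
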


  \begin{propone}[Proposition \ref{Prop Diag Trunc}] \label{Prop Diag Trunc NNO}
  $X$ is $n$-truncated if and only if $\Delta: X \to X \times X$ is $(n-1)$-truncated.
 \end{propone}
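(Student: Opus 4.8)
The plan is to run the argument of Proposition~\ref{Prop Diag Trunc} essentially verbatim, the only point requiring new justification being the sphere identity, which must now be established over the natural number object rather than for the externally indexed spheres. So, fix an object $Z$. Applying Definition~\ref{Def:Truncation NNO} in $\E_{/X\times X}$ --- and noting that the $n$-sphere of $\E_{/X\times X}$ is the constant family $(X\times X)\times S^n$, since $\sum$ in an over-category is computed by universality of colimits --- the map $\Delta\colon X\to X\times X$ is $(n-1)$-truncated if and only if for every $Z\to X\times X$ the map $Map_{/X\times X}(Z,X)\to Map_{/X\times X}(Z\times S^n,X)$ is an equivalence. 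Writing these relative mapping spaces as fibres of $Map(-,X)\to Map(-,X\times X)$, this is equivalent to a square of mapping spaces being a pullback; adjoining the right-hand column via $Map(Z,X\times X)\simeq Map(Z\coprod Z,X)$, and using that $Map(-,X)$ sends colimits to limits together with universality of colimits, rearranges this --- exactly as in Proposition~\ref{Prop Diag Trunc} --- into the single condition that
$$Map(Z,X)\ \longrightarrow\ Map\bigl(Z\times(S^n\coprod_{S^0\times S^n}S^0),\,X\bigr)$$
be an equivalence for every $Z$. By Lemma~\ref{Lemma Ntrun with Sn condition Relative NNO} this is precisely the assertion that $X$ is $n$-truncated, provided one knows that $S^n\coprod_{S^0\times S^n}S^0\simeq S^{n+1}$.

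The remaining content is therefore this sphere identity, now for an arbitrary --- possibly non-standard --- element $n$ of $s\mathbb{N}$. I would obtain it from the general fact that $Y\coprod_{S^0\times Y}S^0\simeq\sum Y$ for \emph{every} object $Y$: since $S^0=1\coprod 1$, universality of colimits gives a natural equivalence $S^0\times Y\simeq Y\coprod Y$ under which the two projections become, respectively, the fold map $Y\coprod Y\to Y$ and the map $Y\coprod Y\to 1\coprod 1$ collapsing each summand; the resulting pushout is then the two-sided mapping cylinder $1\coprod_Y 1=\sum Y$. This equivalence is natural in $Y$, and the functor $Y\mapsto Y\coprod_{S^0\times Y}S^0$ is built from finite colimits, hence restricts to an endofunctor of the sub-universe $\U$, which by hypothesis~(4) of the running assumptions is closed under finite colimits; composing with the classifying map $S^\bullet\colon s\mathbb{N}\to\U$ of Definition~\ref{Def:Sn NNO} yields an equivalence $S^\bullet\coprod_{S^0\times S^\bullet}S^0\simeq\sum\circ S^\bullet=S^{\bullet+1}$ over $s\mathbb{N}$, and hence $S^n\coprod_{S^0\times S^n}S^0\simeq S^{n+1}$ for every $n$, standard or not.

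I expect the main obstacle to be precisely this uniformity step: the combinatorial heart of the sphere identity is dimension-independent, but to apply it to a non-standard $n$ one must know it as an equivalence of maps into $\U$ (equivalently, internally, via the recursion principle of the natural number object), not merely levelwise for each standard $n$ --- and this is where closure of the universe under finite colimits is essential. A secondary point to watch is the bookkeeping around ``$n-1$'', which is to be read via the predecessor map $s\mathbb{N}\to\mathbb{N}$, together with the identification of the over-category sphere mentioned above. Everything else (the passage between relative mapping spaces and fibre sequences, and the appeals to universality of colimits and to $Map(-,X)$ preserving colimits) is identical to Section~\ref{Sec Truncated and Connected Objects}, which is why the statement is recorded there without a full proof.
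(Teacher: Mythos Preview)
Your proposal is correct and follows essentially the same route as the paper, which simply refers back to the proof of Proposition~\ref{Prop Diag Trunc} without further comment. You are in fact more careful than the paper on the one genuinely new point --- the sphere identity $S^n\coprod_{S^0\times S^n}S^0\simeq S^{n+1}$ for possibly non-standard $n$ --- which the paper leaves implicit; your argument via the natural equivalence $Y\coprod_{S^0\times Y}S^0\simeq\sum Y$ and closure of $\U$ under finite colimits is the right way to make this uniform over $s\mathbb{N}$.
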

 
 \begin{remone}
  Notice, a result such as Lemma \ref{Lemma:Ext Sphere vs Int Spheres} does not hold anymore in this setting.
  This might seem surprising given that we proved this lemma using induction and we can use induction with natural number objects 
  (Theorem \ref{The:Peano NNO}).
  \par 
  The key observation is that the equivalence
  $$Map_\E(Z \times S^n_\E, X) \simeq Map_{\s}(S^n, Map_\E(Z,X))$$
  can only be defined {\it externally}, but cannot be expressed {\it internally}. In other words, from the perspective of $\E$ 
  the notion of an external sphere doesn't even make any sense!
  \par 
  Thus we can only define and prove results using internal spheres, such as Lemma \ref{Lemma Ntrun with Sn condition Relative NNO}.
  For a more detailed discussion on natural number objects and internal language see \cite{Jo03} and in particular 
  \cite[Lemma D5.2.1]{Jo03}.
 \end{remone}

 We can take a similar approach to $n$-connected objects. 

 \begin{defone} \label{Def:Connected Maps NNO}
  Let $n$ be a natural number. 
  A map $Y \to X$ is $n$-{\it connected} if for every $n$-truncated map $Z \to X$
  the induced map 
  $$Map_{/X}(X,Z) \to Map_{/X}(Y,Z)$$
  is an equivalence of spaces.
 \end{defone}

 \begin{defone} \label{Def:Infinite Connected Map NNO}
  A map $f$ is $\infty$-{\it connected} if it is $n$-connected for every natural number $n$.
 \end{defone}

 Again, we get the same results as in Section \ref{Sec Truncated and Connected Objects}.
 
  \begin{propone}[Proposition \ref{Prop n Conn coBase Change}] \label{Prop n Conn coBase Change NNO}
  Let $X \to Y$ be $n$-connected. Then for any map $X \to A$ the induced map 
  $A \to Y \coprod_X A$ is also $n$-connected.
 \end{propone}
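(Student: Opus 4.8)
The plan is to run the proof of Proposition~\ref{Prop n Conn coBase Change} essentially unchanged; the point worth making is \emph{why} this is legitimate. That proof uses only the mapping-space formulation of $n$-connectedness (here Definition~\ref{Def:Connected Maps NNO}), the stability of $n$-truncated maps under base change (Lemma~\ref{Lemma Trunc Base Change NNO}), the fact that $Map(-,-)$ sends colimits to limits, and the adjunctions relating the various over-categories. None of these ingredients refers to internal or external spheres, so the cautionary remark following Proposition~\ref{Prop Diag Trunc NNO} --- which warns that identities like Lemma~\ref{Lemma:Ext Sphere vs Int Spheres} have no internal counterpart --- does not intervene: connectedness is tested against $n$-\emph{truncated} maps, and everything needed about those is supplied by Lemma~\ref{Lemma Trunc Base Change NNO}.

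Concretely, I would fix an $n$-truncated map $Z \to Y \coprod_X A$ and show that
$$ Map_{/Y \coprod_X A}(Y \coprod_X A, Z) \longrightarrow Map_{/Y \coprod_X A}(A, Z) $$
is an equivalence. A pushout square in $\E$ is still a pushout square in $\E_{/Y \coprod_X A}$, so applying $Map_{/Y \coprod_X A}(-,Z)$ to the defining pushout of $Y \coprod_X A$ yields the fiber-product description
$$ Map_{/Y \coprod_X A}(Y \coprod_X A, Z) \simeq Map_{/Y \coprod_X A}(A, Z) \underset{Map_{/Y \coprod_X A}(X,Z)}{\times} Map_{/Y \coprod_X A}(Y, Z), $$
and it therefore suffices to prove that the projection $Map_{/Y \coprod_X A}(Y, Z) \to Map_{/Y \coprod_X A}(X, Z)$ is an equivalence. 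For this I would base change $Z$ along $Y \to Y \coprod_X A$ to get $Z \times_{Y \coprod_X A} Y \to Y$, which is $n$-truncated by Lemma~\ref{Lemma Trunc Base Change NNO}; the over-category adjunctions identify the above projection with $Map_{/Y}(Y, Z \times_{Y \coprod_X A} Y) \to Map_{/Y}(X, Z \times_{Y \coprod_X A} Y)$, which is an equivalence because $X \to Y$ is $n$-connected. Feeding this back through the fiber-product square, the leg $Map_{/Y \coprod_X A}(Y \coprod_X A, Z) \to Map_{/Y \coprod_X A}(A, Z)$ is a base change of an equivalence, hence an equivalence, which is the required statement.

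The main difficulty is thus not mathematical but a matter of bookkeeping: one must verify line by line that the argument for Proposition~\ref{Prop n Conn coBase Change} never secretly invokes an external mapping-space computation that is unavailable internally. Since every step above is phrased purely in terms of $\E$, its finite (co)limits, its over-categories, and the two definitions at hand, this check goes through without incident, and the proposition follows.
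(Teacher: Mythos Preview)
Your proposal is correct and matches the paper's approach exactly: the paper does not give a separate proof for this NNO-version but simply refers back to the proof of Proposition~\ref{Prop n Conn coBase Change}, and your write-up reproduces that argument step for step (fiber-product description of the mapping space out of the pushout, base change to $\E_{/Y}$ via the adjunction, and Lemma~\ref{Lemma Trunc Base Change NNO} to ensure the pulled-back map is $n$-truncated). Your added commentary on why no external-sphere identity is invoked is precisely the bookkeeping the paper asks the reader to perform.
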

 
 \begin{propone}[Proposition \ref{Prop Conn fg}] \label{Prop Conn fg NNO}
  Let $f: Y \to X$ and $g: Z \to Y$.
  \begin{enumerate}
   \item If $f$ and $g$ are $n$-connected then $fg$ is $n$-connected.
   \item If $g$ and $fg$ are $n$-connected then $f$ is $n$-connected.
  \end{enumerate}
 \end{propone}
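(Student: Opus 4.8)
The plan is to transcribe the proof of Proposition~\ref{Prop Conn fg} essentially verbatim, observing that that argument never uses the external spheres $S^n_\E$ nor Lemma~\ref{Lemma:Ext Sphere vs Int Spheres}: it relies only on the definition of $n$-connectedness in terms of $n$-truncated maps (Definition~\ref{Def:Connected Maps NNO}), on the stability of $n$-truncated maps under base change (Lemma~\ref{Lemma Trunc Base Change NNO}), and on a two-out-of-three argument for equivalences of mapping spaces. The internal truncation level $n$ enters only through which maps are deemed $n$-truncated, and the formal manipulation is insensitive to that choice.

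Concretely, I would fix an $n$-truncated map $W \to X$. Pulling back along $f\colon Y \to X$ produces an $n$-truncated map $W \times_X Y \to Y$ by Lemma~\ref{Lemma Trunc Base Change NNO}, and the adjunction $f^{*} \dashv (\text{forgetful functor})$ gives equivalences $Map_{/X}(Y,W) \simeq Map_{/Y}(Y, W \times_X Y)$ and $Map_{/X}(Z,W) \simeq Map_{/Y}(Z, W \times_X Y)$. These assemble into the commutative diagram
\begin{center}
 \begin{tikzcd}[row sep=0.3in, column sep=0.1in]
  & & Map_{/X}(X,W) \arrow[dl, "f^*"'] \arrow[dr, "(fg)^*"] & & \\
  & Map_{/X}(Y,W) \arrow[dl, "\simeq"'] & & Map_{/X}(Z,W) \arrow[dr, "\simeq"] & \\
  Map_{/Y}(Y, W \times_X Y) \arrow[rrrr, "g^*"'] & & & & Map_{/Y}(Z, W \times_X Y)
 \end{tikzcd}
\end{center}
from which both statements follow: if $f$ and $g$ are $n$-connected then $f^{*}$ and the bottom horizontal $g^{*}$ are equivalences, hence so is $(fg)^{*}$, so $fg$ is $n$-connected; and if $g$ and $fg$ are $n$-connected then the bottom $g^{*}$ and $(fg)^{*}$ are equivalences, hence so is $f^{*}$, so $f$ is $n$-connected. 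Since $W \to X$ was an arbitrary $n$-truncated map over $X$, this proves both claims.

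I expect no genuine obstacle. The only point worth a moment's thought is that the ingredients — the adjunction equivalences and base-change stability — are themselves stated at the internal level (Lemma~\ref{Lemma Trunc Base Change NNO}) or are purely categorical and level-independent (the $f^{*} \dashv$ forgetful adjunction), so the external proof transfers without modification. By contrast, the identification $Map_\E(Z \times S^n_\E, X) \simeq Map_\s(S^n, Map_\E(Z,X))$, which genuinely fails in the internal setting, plays no role here, so nothing is lost. The same remark applies to the preceding Proposition~\ref{Prop n Conn coBase Change NNO}, whose proof likewise only manipulates pushout squares of over-categories and base change of truncated maps.
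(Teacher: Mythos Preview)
Your proposal is correct and is essentially identical to the paper's approach: the paper states Proposition~\ref{Prop Conn fg NNO} without a separate proof, referring back to Proposition~\ref{Prop Conn fg}, whose proof is exactly the commutative-diagram two-out-of-three argument you reproduce. Your additional remarks about why the argument transfers to the internal setting (no use of external spheres, only base-change stability and the adjunction) are precisely the justification the paper intends by its blanket remark that the proofs of Section~\ref{Sec Truncated and Connected Objects} carry over.
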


 \begin{corone}[Corollary \ref{Cor:N Trunc Conn Equiv}] \label{Cor:N Trunc Conn Equiv NNO}
  Let $f: Y \to X$ be a map that is $n$-truncated and $n$-connected. Then $f$ is an equivalence.
 \end{corone}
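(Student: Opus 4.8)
The plan is to run the same argument that establishes Corollary \ref{Cor:N Trunc Conn Equiv} via Lemma \ref{Lemma:N Trunc Conn Contr}, after observing that that argument is purely formal: it uses only the \emph{defining} universal properties of $n$-truncated and $n$-connected maps and never refers to any sphere, external or internal. Consequently nothing changes when ``$n$-truncated'' and ``$n$-connected'' are read in the internal sense of Definitions \ref{Def:Truncation NNO} and \ref{Def:Connected Maps NNO}, and the statement can be proved verbatim.

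Concretely, I would first unwind the definitions. Since $f\colon Y\to X$ is $n$-truncated, the object $(Y,f)$ is an $n$-truncated object of $\E_{/X}$; in particular $f$ itself may be used as the $n$-truncated map $Z\to X$ appearing in Definition \ref{Def:Connected Maps NNO}. Applying $n$-connectedness of $f$ to this choice of $Z$ yields an equivalence
$$ f^*\colon Map_{/X}(X,Y)\ \xrightarrow{\ \simeq\ }\ Map_{/X}(Y,Y), $$
where the source is the space of sections of $f$, the target is the space of self-maps of $f$ over $X$, and $f^*$ is precomposition with $f$. Since $\mathrm{id}_Y$ lies in the target and $f^*$ is an equivalence, there is a section $s\colon X\to Y$ (so that $f s=\mathrm{id}_X$) with $s f\simeq \mathrm{id}_Y$. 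Hence $s$ is a two-sided homotopy inverse of $f$, i.e.\ $f$ is an equivalence.

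There is no genuine obstacle here; the only points needing a (routine) check are that the passage to the slice is harmless — ``$f$ is $n$-truncated / $n$-connected'' is \emph{by definition} ``$(Y,f)$ is $n$-truncated / $n$-connected in $\E_{/X}$'' — and that $\E_{/X}$ retains enough structure to interpret these internal notions, which holds because $\E_{/X}$ again has finite limits and colimits and is locally Cartesian closed. Everything else is identical to the non-internal case, which is exactly why the excerpt is content to state the result and point back to Section \ref{Sec Truncated and Connected Objects}.
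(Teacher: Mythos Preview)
Your proposal is correct and follows essentially the same approach as the paper: the paper deduces Corollary \ref{Cor:N Trunc Conn Equiv} from Lemma \ref{Lemma:N Trunc Conn Contr} by passing to the slice, and that lemma is proved by precisely the argument you give (use $n$-connectedness with the $n$-truncated object itself to get $Map(1,Y)\simeq Map(Y,Y)$, then factor the identity). Your write-up simply carries this out directly in $\E_{/X}$, which is exactly what the paper intends when it refers back to Section \ref{Sec Truncated and Connected Objects} for the NNO version.
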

 
  \begin{propone}[Proposition \ref{Prop Trunc Conn Lift}] \label{Prop Trunc Conn Lift NNO}
  Let $f: X \to Y$ be a map. The following are equivalent.
  \begin{enumerate}
   \item For all $n$-truncated maps $g: W \to Z$ the diagram 
   \begin{center}
    \pbsq{Map(Y,W)}{Map(Y,Z)}{Map(X,W)}{Map(X,Z)}{g_*}{f^*}{f^*}{g_*}
   \end{center}
   is a homotopy pullback diagram of spaces. 
   \item $f$ is $n$-connected.
  \end{enumerate}
  \end{propone}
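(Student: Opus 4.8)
The plan is to run the argument of \cite[Proposition 8.10]{Re05}, i.e. the proof that also underlies Proposition \ref{Prop Trunc Conn Lift}: nothing in that argument refers to the internal spheres $S^{n+1}$ or to any presentability hypothesis, only base change of truncated maps, the defining property of connected maps, and formal manipulations of mapping spaces, all of which are available here. I would spell out the two implications.

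\textbf{$(2)\Rightarrow(1)$.} Fix an $n$-truncated map $g\colon W\to Z$ and write $P$ for the homotopy pullback of $Map(Y,Z)\xrightarrow{f^*}Map(X,Z)\xleftarrow{g_*}Map(X,W)$, whose points are triples $(\beta\colon Y\to Z,\ \alpha\colon X\to W,\ h\colon\beta f\simeq g\alpha)$. First I would show that the comparison map $Map(Y,W)\to P$ has contractible fibers. Given such a triple, base change along $\beta$ yields $W':=W\times_Z Y\to Y$, which is $n$-truncated by Lemma \ref{Lemma Trunc Base Change NNO}, and the pair $(\alpha,h)$ is precisely the datum of a point of $Map_{/Y}(X,W')$, where $X\to Y$ is taken to be $f$. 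Unwinding the definitions, the fiber of $Map(Y,W)\to P$ over $(\beta,\alpha,h)$ is the space of diagonal fillers of the corresponding square, and this is the fiber of $f^*\colon Map_{/Y}(Y,W')\to Map_{/Y}(X,W')$ over that point. Since $f$ is $n$-connected and $W'\to Y$ is $n$-truncated, Definition \ref{Def:Connected Maps NNO} makes this last $f^*$ an equivalence, so the fiber is contractible. As the triple was arbitrary, $Map(Y,W)\to P$ is an equivalence, which is statement (1).

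\textbf{$(1)\Rightarrow(2)$.} Let $p\colon Z'\to Y$ be an arbitrary $n$-truncated map; I must exhibit an equivalence $f^*\colon Map_{/Y}(Y,Z')\to Map_{/Y}(X,Z')$. Applying (1) with $W=Z'$, $Z=Y$, $g=p$ gives a homotopy pullback square
\begin{center}
 \pbsq{Map(Y,Z')}{Map(Y,Y)}{Map(X,Z')}{Map(X,Y)}{p_*}{f^*}{f^*}{p_*}
\end{center}
Now I would take the fiber of the top map $p_*\colon Map(Y,Z')\to Map(Y,Y)$ over $id_Y$, which is $Map_{/Y}(Y,Z')$. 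Because the square is a homotopy pullback, this fiber is identified, via $f^*$, with the fiber of $p_*\colon Map(X,Z')\to Map(X,Y)$ over $f^*(id_Y)=f$, namely $Map_{/Y}(X,Z')$. Hence $f^*\colon Map_{/Y}(Y,Z')\to Map_{/Y}(X,Z')$ is an equivalence and $f$ is $n$-connected.

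\textbf{Main obstacle.} The computations are elementary; the only care needed is the bookkeeping in $(2)\Rightarrow(1)$ identifying the fiber of $Map(Y,W)\to P$ first with a space of diagonal fillers and then with a fiber of $f^*$ between over-category mapping spaces. This is handled cleanly by systematically using that, for $B\to Y$, the space $Map_{/Y}(A,B)$ is the fiber of $Map(A,B)\to Map(A,Y)$ over the structure map of $A$, together with the fact that a homotopy pullback square of spaces induces equivalences on fibers over matching basepoints.
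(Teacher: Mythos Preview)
Your argument is correct and is precisely the proof the paper has in mind: the paper does not prove this proposition on its own but defers to \cite[Proposition 8.10]{Re05}, noting that the argument there uses only base change of truncated maps and the defining property of connected maps, not presentability. You have faithfully spelled out that argument, with the only nontrivial bookkeeping being the identification of the fiber of $Map(Y,W)\to P$ with a fiber of $f^*\colon Map_{/Y}(Y,W')\to Map_{/Y}(X,W')$, which you handle correctly.
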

 
  \begin{lemone}[Lemma \ref{Lemma Product by A is conn}] \label{Lemma Product by A is conn NNO}
  Let $f:X \to Y$ be $n$-connected and $A$ an object. Then the map $A \times X \to A \times Y$ is also $n$-connected.
 \end{lemone}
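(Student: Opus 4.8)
The plan is to copy the proof of Lemma~\ref{Lemma Product by A is conn} essentially line for line, replacing the external spheres $S^{n+1}_\E$ by the internal spheres $S^{n+1}$ of Definition~\ref{Def:Sn NNO} and each external characterization by the natural number object analogue established earlier in this subsection. Concretely, I would fix an $n$-truncated map $g: W \to Z$; by the pullback criterion for $n$-connectedness (Proposition~\ref{Prop Trunc Conn Lift NNO}) it then suffices to show that the square
\begin{center}
 \pbsq{Map(A \times Y,W)}{Map(A \times Y,Z)}{Map(A \times X,W)}{Map(A \times X,Z)}{g_*}{f^*}{f^*}{g_*}
\end{center}
is a homotopy pullback of spaces. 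Applying the exponential adjunction, available because $\E$ is locally Cartesian closed, to rewrite each corner as $Map(A \times T, V) \simeq Map(T, V^A)$, this becomes the square
\begin{center}
 \pbsq{Map(Y,W^A)}{Map(Y,Z^A)}{Map(X,W^A)}{Map(X,Z^A)}{(g^A)_*}{f^*}{f^*}{(g^A)_*}
\end{center}
so the statement reduces to proving that $g^A: W^A \to Z^A$ is $n$-truncated: granting that, the displayed square is a homotopy pullback by Proposition~\ref{Prop Trunc Conn Lift NNO} applied to the $n$-connected map $f$ and the $n$-truncated map $g^A$.

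To finish I would use the relative form of the internal $S^{n+1}$-criterion for truncatedness (Lemma~\ref{Lemma Ntrun with Sn condition Relative NNO}): the map $g^A$ is $n$-truncated precisely when, for every $B \to Z^A$, the map $Map_{/Z^A}(B \times S^{n+1}, W^A) \to Map_{/Z^A}(B, W^A)$ is an equivalence. Transposing once more along the adjunction, and using that $S^{n+1}$ is an object of $\E$ so that the product $B \times S^{n+1}$ formed over $Z^A$ corresponds to $B \times A \times S^{n+1}$ over $Z$, this turns into the assertion that $Map_{/Z}(B \times A \times S^{n+1}, W) \to Map_{/Z}(B \times A, W)$ is an equivalence, which is immediate from the hypothesis that $g$ is $n$-truncated, again by Lemma~\ref{Lemma Ntrun with Sn condition Relative NNO}.

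I do not expect a genuine obstacle, since every step of the original argument used only exponential adjunctions and the $S^{n+1}$-criterion, both of which transfer verbatim to the internal sphere. The one point deserving care is the last transposition: one must confirm that a product with the internal sphere $S^{n+1}$ computed inside the slice $\E_{/Z^A}$ corresponds, under the exponential adjunction, to a product with $A \times S^{n+1}$ inside $\E_{/Z}$. This holds because $S^{n+1}$ lives in $\E$ and products in a slice are fibre products in $\E$, so, exactly as emphasized in the remark following Proposition~\ref{Prop Diag Trunc NNO}, no ``external'' sphere, which would be meaningless from the viewpoint of $\E$, is ever needed.
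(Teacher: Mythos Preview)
Your proposal is correct and matches the paper's approach exactly: the paper does not give a separate proof for the NNO version but simply refers back to Lemma~\ref{Lemma Product by A is conn}, whose argument you have faithfully reproduced with the internal spheres $S^{n+1}$ in place of $S^{n+1}_\E$ and with the NNO analogues (Proposition~\ref{Prop Trunc Conn Lift NNO}, Lemma~\ref{Lemma Ntrun with Sn condition Relative NNO}) substituted for the original references. Your remark about the final transposition is well taken and is precisely the kind of check the paper's general discussion in Subsection~\ref{Subsec:NNO and Truncations} has in mind.
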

  
  \begin{remone}
   Henceforth,  whenever we need an elementary observations about truncated and connected objects,
    we will refer to the statements in this subsection rather than Section \ref{Sec Truncated and Connected Objects},
    in order to ensure they hold for all natural numbers in a natural number object and not just the standard ones.
  \end{remone}

\subsection{Properties of Trunction Functors} \label{Subsec:Prop of Trunc Functors}
 For this subsection we fix a natural number $n$ (which does not have to be standard).
 The goal of this subsection is to assume that we have an adjunction

 \begin{center}
  \adjun{\E}{\tau_n \E}{\tau_n}{i}
 \end{center} 
 where $\tau_n \E$ is the subcategory of $n$-truncated objects.
 Our goal is to use the existence of $\tau_n$ to prove many interesting results about truncated and connected objects.
 We will use some to these observations in the coming sections.
 In particular, we will use it in Subsection \ref{Subsec:Inductive Construction of Truncations} to give an inductive 
 construction of truncation functors.
 
 \begin{remone}
  The adjunction gives us an equivalence 
  $$Map(\tau_n X, \tau_n X) \xrightarrow{ \ \ \simeq \ \ } Map(X, \tau_n X)$$
  This gives us a canonical map $\eta_X: X \to \tau_n X$ (the image of the identity), which is the unit of the adjunction.
 \end{remone}

 It is worth pointing out the one case that is trivial.
  
 \begin{remone} \label{Rem Neg Two Trunc}
  The only $(-2)$-truncated object in $\E$ is the final object and so the $(-2)$-truncation of every object is the final object.
  Thus, $\tau_{-2}$ trivially exists.
 \end{remone}
 
 \begin{remone}
  Whenever we have an $(\infty,1)$-category and $n$ is a standard natural number then we can define an 
  $(n+1,1)$-category by externally truncating the mapping spaces.
  However, this external truncation does not generally coincide with an internal truncation.
  For an example notice that the following spaces are not equivalent
  $$\tau_0(Map(S^1,S^1)) \not\simeq Map(S^1,\tau_0(S^1)).$$
  Here we focus exclusively on the internal truncations as discussed before and not the external truncation.
 \end{remone}

 For the remainder of this subsection we study truncated and connected maps with the additional strength of having 
 a truncation functor. Concretely, the most important applications that we are going to prove in this Subsection are:
 
 \begin{enumerate}
  \item The truncation functors uniquely factors a map $f: X \to Y$ into a $n$-connected map followed by a $n$-truncated map 
  (Corollary \ref{Cor Conn is Trunc}).
  \item Connected maps are stable under base change (Corollary \ref{Cor N Conn Base Change}).
 \end{enumerate}

 \begin{lemone} \label{Lemma:Tau Equiv if mapped in Trunc}
  Let $f: X \to Y$ be a map in $\E$. Then $\tau_n f$ is an equivalence if and only if for all $n$-truncated $Z$ the map
  $$Map(Y,Z) \to Map(X,Z)$$
  is an equivalence of spaces.
 \end{lemone}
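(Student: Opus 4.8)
This is the standard reflection argument for a left adjoint, using only the adjunction $\tau_n \dashv i$ and the Yoneda lemma. The key input is that for any object $W$ and any $n$-truncated object $Z$, the unit $\eta_W: W \to \tau_n W$ induces an equivalence
$$\eta_W^*: Map(\tau_n W, Z) \xrightarrow{\ \simeq\ } Map(W, Z),$$
which is just the adjunction isomorphism $Map_{\tau_n\E}(\tau_n W, Z) \simeq Map_\E(W, iZ)$ together with the fact that $i$ is fully faithful; moreover this identification is natural in $W$.

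\textbf{The forward direction.} Assume $\tau_n f$ is an equivalence and fix an $n$-truncated $Z$. Naturality of the unit gives a commuting square relating $f^*: Map(Y,Z) \to Map(X,Z)$ to $(\tau_n f)^*: Map(\tau_n Y, Z) \to Map(\tau_n X, Z)$, with the two vertical maps $\eta_Y^*$ and $\eta_X^*$ being equivalences by the remark above. Since $(\tau_n f)^*$ is an equivalence (as $\tau_n f$ is), so is $f^*$.

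\textbf{The reverse direction.} Assume $f^*: Map(Y,Z) \to Map(X,Z)$ is an equivalence for every $n$-truncated $Z$. Running the same naturality square backwards, $(\tau_n f)^*: Map(\tau_n Y, Z) \to Map(\tau_n X, Z)$ is an equivalence for every $n$-truncated $Z$. Now $\tau_n X$ and $\tau_n Y$ both lie in $\tau_n\E$, so this says the map $\tau_n f$ is sent to an equivalence by every corepresentable functor $Map_{\tau_n\E}(-, Z)$ on $\tau_n\E$; by the Yoneda lemma (applied in the $(\infty,1)$-category $\tau_n\E$), $\tau_n f$ is an equivalence in $\tau_n\E$, hence an equivalence in $\E$ since $i$ is fully faithful.

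\textbf{Main obstacle.} There is no real obstacle here; the only point requiring a little care is the naturality of the adjunction identification $\eta_W^* : Map(\tau_n W, Z) \simeq Map(W,Z)$ in the variable $W$, which lets us compare $f^*$ with $(\tau_n f)^*$, and the invocation of Yoneda within the subcategory $\tau_n\E$ rather than $\E$. Both are formal consequences of $\tau_n \dashv i$ with $i$ fully faithful.
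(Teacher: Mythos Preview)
Your proof is correct and follows essentially the same approach as the paper: both use the naturality square comparing $f^*$ with $(\tau_n f)^*$ via the adjunction equivalences $\eta_W^*$, and invoke Yoneda in $\tau_n\E$. The only difference is cosmetic---you spell out the two directions separately, while the paper phrases it as a single ``if and only if'' chain.
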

 
 \begin{proof}
  By the Yoneda lemma, the map $\tau_n f: \tau_n X \to \tau_n Y$ in $\tau_n \E$ is an equivalence if and only if
  the map $(\tau_n f)^*: Map(\tau_n Y, Z) \to Map(\tau_n X, Z)$ is an equivalence of spaces for all $n$-truncated $Z$.
  \par 
  Now we have following commutative diagram
  \begin{center}
   \begin{tikzcd}[row sep=0.5in, column sep=0.5in]
    Map(\tau_n Y, Z) \arrow[r] \arrow[d, "\simeq"] & Map(\tau_n X, Z) \arrow[d, "\simeq"] \\
    Map(Y, Z) \arrow[r] & Map(X, Z)
   \end{tikzcd}
  \end{center}
  By adjunction the vertical maps are equivalences. Thus, the top map is an equivalence if and only if the bottom map is an 
  equivalence, which gives us the desired result.
 \end{proof}
 
 \begin{remone}
  We will prove a generalized version for all localizations in Lemma \ref{Lemma:Localized equivalences}.
 \end{remone}

  \begin{lemone} \label{Lemma F n Conn then Tau N F equiv}
  If $f: X \to Y$ is $n$-connected, then $\tau_n f: \tau_n X \to \tau_n Y$ is an equivalence.
 \end{lemone}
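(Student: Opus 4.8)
\emph{Proof plan.} The plan is to reduce the statement to Lemma \ref{Lemma:Tau Equiv if mapped in Trunc}, which asserts that $\tau_n f$ is an equivalence if and only if for every $n$-truncated object $Z$ the map $f^* \colon Map(Y,Z) \to Map(X,Z)$ is an equivalence of spaces. Thus it suffices to extract this mapping-space condition from the hypothesis that $f \colon X \to Y$ is $n$-connected.

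First I would unwind the definition of $n$-connectedness (Definition \ref{Def:Connected Maps NNO}): $f$ being $n$-connected means that for every $n$-truncated map $W \to Y$ the induced map $Map_{/Y}(Y,W) \to Map_{/Y}(X,W)$ is an equivalence. Given an arbitrary $n$-truncated object $Z$, I would apply this to the projection $\pi \colon Z \times Y \to Y$; this map is $n$-truncated since it is the base change of $Z \to 1$ along $Y \to 1$, using Lemma \ref{Lemma Trunc Base Change NNO}. So $n$-connectedness of $f$ yields that $Map_{/Y}(Y, Z \times Y) \to Map_{/Y}(X, Z \times Y)$ is an equivalence.

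Next I would identify these relative mapping spaces with absolute ones. By the universal property of the product over $Y$, a map over $Y$ out of $(Y,\mathrm{id}_Y)$ into $(Z \times Y,\pi)$ is the same as a map $Y \to Z$, and a map over $Y$ out of $(X,f)$ into $(Z \times Y,\pi)$ is the same as a map $X \to Z$; hence $Map_{/Y}(Y, Z \times Y) \simeq Map(Y,Z)$ and $Map_{/Y}(X, Z \times Y) \simeq Map(X,Z)$. Under these identifications the map induced by $f$ is precisely $f^*$, so $f^* \colon Map(Y,Z) \to Map(X,Z)$ is an equivalence for every $n$-truncated $Z$, and Lemma \ref{Lemma:Tau Equiv if mapped in Trunc} then gives that $\tau_n f$ is an equivalence.

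The argument is essentially formal, and the only point that needs a little care is the naturality check: one must verify that the square of mapping spaces commutes so that the equivalence produced by $n$-connectedness really is the map $f^*$ appearing in Lemma \ref{Lemma:Tau Equiv if mapped in Trunc}. This is the same kind of bookkeeping already carried out in the proof of Lemma \ref{Lemma Product by A is conn NNO}, so I do not expect any genuine obstacle beyond it.
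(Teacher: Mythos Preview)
Your proof is correct and takes essentially the same approach as the paper: both reduce via Lemma \ref{Lemma:Tau Equiv if mapped in Trunc} to showing that $f^*\colon Map(Y,Z) \to Map(X,Z)$ is an equivalence for every $n$-truncated $Z$. The only cosmetic difference is that the paper extracts this last fact by applying Proposition \ref{Prop Trunc Conn Lift NNO} to the $n$-truncated map $Z \to 1$, whereas you unwind the definition of $n$-connectedness directly against the projection $Z \times Y \to Y$; these are the same argument up to an adjunction.
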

 
 \begin{proof}
  By the previous lemma we need to show that for every $n$-truncated object $Z$, the map 
  $$f^*: map(Y,Z) \to map(X,Z)$$
  is an equivalence.
  However, this follows immediately from using Proposition \ref{Prop Trunc Conn Lift NNO} 
  with the $n$-connected map $f: X \to Y$ and $n$-truncated map $Z \to 1$.
 \end{proof}
 
 \begin{remone}
  We will prove a general form of this result for all localizations in Lemma \ref{Lemma:Image of L have Lf triv}.
 \end{remone}

 \begin{remone} 
  Note the reverse does not hold. For a partial reverse argument see Theorem \ref{The:Tau n F Equiv then f N Min One Conn}.
 \end{remone}
 
 \begin{lemone} \label{Lemma:Unit Connected}
  \cite[Proposition 8.5]{Re05}
  Let $X$ be an object in $\E$. Then the unit of the adjunction $\eta_X: X \to \tau_n(X)$ is $n$-connected. 
 \end{lemone}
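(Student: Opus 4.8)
The plan is to check Definition~\ref{Def:Connected Maps NNO} head-on. So fix an $n$-truncated map $p\colon Z\to\tau_n X$; I must show that pre-composition with $\eta_X$ induces an equivalence
$$\eta_X^*\colon Map_{/\tau_n X}(\tau_n X, Z)\longrightarrow Map_{/\tau_n X}(X,Z),$$
where $X$ is viewed as an object of $\E_{/\tau_n X}$ via $\eta_X$.

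The first step is to observe that $Z$ is itself $n$-truncated: the map $\tau_n X\to 1$ is $n$-truncated since $\tau_n X$ is $n$-truncated, and $n$-truncated maps are closed under composition (a standard closure property; in the present generality one argues by induction along the natural number object of Theorem~\ref{The:Peano NNO}, using Proposition~\ref{Prop Diag Trunc NNO} to trade "$n$-truncated'' for "$(n-1)$-truncated diagonal'' and Lemma~\ref{Lemma Trunc Base Change NNO} for the base changes that arise), so the composite $Z\to\tau_n X\to 1$ is $n$-truncated. This is exactly what is needed in order to apply the universal property of $\tau_n$ with target $Z$.

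Next I would rewrite both slice mapping spaces as homotopy fibres of post-composition with $p$: writing $p_*\colon Map(W,Z)\to Map(W,\tau_n X)$ for $W\in\{X,\tau_n X\}$, one has $Map_{/\tau_n X}(\tau_n X,Z)\simeq \mathrm{fib}_{\mathrm{id}_{\tau_n X}}(p_*)$ and $Map_{/\tau_n X}(X,Z)\simeq \mathrm{fib}_{\eta_X}(p_*)$. Naturality of the transformation $\eta_X^*\colon Map(\tau_n X,-)\Rightarrow Map(X,-)$ evaluated at the morphism $p$ gives the commuting square
\begin{center}
 \comsq{Map(\tau_n X, Z)}{Map(\tau_n X, \tau_n X)}{Map(X,Z)}{Map(X, \tau_n X)}{p_*}{\eta_X^*}{\eta_X^*}{p_*}
\end{center}
By the adjunction $\tau_n\dashv i$ its left vertical map is an equivalence (because $Z$ is $n$-truncated) and its right vertical map is an equivalence (because $\tau_n X$ is $n$-truncated). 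Since $\eta_X^*(\mathrm{id}_{\tau_n X})=\eta_X$, passing to fibres over $\mathrm{id}_{\tau_n X}$ and $\eta_X$ respectively in a commuting square with invertible vertical maps shows that the induced map on fibres — which unwinds to precisely $\eta_X^*\colon Map_{/\tau_n X}(\tau_n X,Z)\to Map_{/\tau_n X}(X,Z)$ — is an equivalence. That is the required condition, so $\eta_X$ is $n$-connected.

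I expect the only genuine subtlety to be the bookkeeping around the first step: one needs $Z$ to be an $n$-truncated \emph{object} so that the universal property of $\tau_n$ applies to it, and the cleanest justification of that is the composition-closure of $n$-truncated maps. Routing the argument instead through the lifting criterion of Proposition~\ref{Prop Trunc Conn Lift NNO} does not seem to shorten anything — after pulling the relevant $n$-truncated map back along $\tau_n X\to Z$ and $X\to Z$ it collapses to the statement just treated — so the direct verification of Definition~\ref{Def:Connected Maps NNO} is the route I would take.
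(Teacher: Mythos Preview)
Your proof is correct and is essentially the same as the paper's: both set up the commuting square relating $Map(\tau_n X,Z)$, $Map(X,Z)$, $Map(\tau_n X,\tau_n X)$, $Map(X,\tau_n X)$, observe that the maps induced by $\eta_X^*$ are equivalences because $Z$ and $\tau_n X$ are $n$-truncated, and then pass to fibres over $\mathrm{id}_{\tau_n X}$ and $\eta_X$. The only cosmetic difference is that the paper orients the square with the $\eta_X^*$-maps horizontal and the $p_*$-maps vertical, whereas you swap rows and columns; your justification that $Z$ is $n$-truncated via composition-closure is also what the paper does in one sentence.
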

 
 \begin{proof}
  We have to prove that for every $n$-truncated map $g:Z \to \tau_n(X)$ the induced map 
  $$ Map_{/\tau_n(X)}( \tau_n X,Z) \to Map_{/\tau_n(X)}(X,Z) $$
  is an equivalence.
  We have following commutative square 
  \begin{center}
   \comsq{Map(\tau_n X, Z)}{Map(X, Z)}{Map(\tau_n X, \tau_n X)}{Map(X, \tau_n X)}{\simeq}{g_*}{g_*}{\simeq}
  \end{center}
  As $\tau_n X$ is $n$-truncated and $Z \to \tau_n X$ is also $n$-truncated, $Z$ is also $n$-truncated.
  Thus, by the previous lemma the horizontal maps are equivalences of spaces. Take the point 
  $id: * \to Map(\tau_n X, \tau_n X)$ and $\eta_X: * \to Map(X, \tau_n X)$. Then we can pullback the square above to the 
  following square: 
  \begin{center}
   \comsq{Map_{/\tau_n(X)}(\tau_n X, Z)}{Map_{/\tau_n(X)}(X, Z)}{*}{*}{\simeq}{}{}{}
  \end{center}
  As the pullback of an equivalence is still an equivalence, the top map is still an equivalence and this gives us the desired result.
 \end{proof}
  
 This lemma has following interesting corollary.
 
 \begin{corone} \label{Cor Conn is Trunc}
  Let $X$ be an object with an $n$-connected map $g:X \to Y$ such that $Y$ is $n$-truncated.
  Then $Y \simeq \tau_n(X)$.
 \end{corone}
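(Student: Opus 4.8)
The plan is to realize $(Y,g)$ as the $n$-truncation of $X$ by factoring $g$ through the unit $\eta_X\colon X\to\tau_n(X)$ and showing the resulting comparison map is an equivalence. Since $Y$ is $n$-truncated, the adjunction supplies an equivalence $\eta_X^*\colon \mathrm{Map}(\tau_n X,Y)\xrightarrow{\ \simeq\ }\mathrm{Map}(X,Y)$, so there is a map $h\colon \tau_n X\to Y$, essentially unique, with $h\circ\eta_X\simeq g$. It then suffices to prove that $h$ is an equivalence, and for this I would use that a map which is both $n$-connected and $n$-truncated is an equivalence (Corollary \ref{Cor:N Trunc Conn Equiv NNO}).

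First I would check that $h$ is $n$-connected. By Lemma \ref{Lemma:Unit Connected} the unit $\eta_X$ is $n$-connected, and by hypothesis the composite $g=h\circ\eta_X$ is $n$-connected; hence Proposition \ref{Prop Conn fg NNO}(2), applied to the composite $h\circ\eta_X$, shows that $h$ is $n$-connected. Next I would check that $h$ is $n$-truncated: this is because $h$ is a map between the two $n$-truncated objects $\tau_n X$ and $Y$, so by Lemma \ref{Lemma Ntrun with Sn condition Relative NNO} neither $\mathrm{Map}(Z,\tau_n X)$ nor $\mathrm{Map}(Z,Y)$ is changed, up to equivalence, upon replacing $Z$ by $Z\times S^{n+1}$, and the same therefore holds for the relative mapping spaces over $Y$, which is exactly the criterion for $h$ to be $n$-truncated. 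Combining the two, Corollary \ref{Cor:N Trunc Conn Equiv NNO} gives that $h\colon\tau_n X\to Y$ is an equivalence, and since $h\circ\eta_X\simeq g$ this equivalence is compatible with the structure maps out of $X$; in particular $Y\simeq\tau_n(X)$.

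The only mildly delicate point is the claim that a map between $n$-truncated objects is $n$-truncated, where one must argue internally (for a non-standard $n$ there is no external sphere $S^{n+1}$ to map out of), but this is routine from Lemma \ref{Lemma Ntrun with Sn condition Relative NNO}. Alternatively, one can avoid this step altogether: since $g$ is $n$-connected, Lemma \ref{Lemma F n Conn then Tau N F equiv} gives that $\tau_n g\colon\tau_n X\to\tau_n Y$ is an equivalence, while $Y$ being $n$-truncated forces the unit $\eta_Y\colon Y\to\tau_n Y$ to be an equivalence (its precomposition map $\eta_Y^*$ is an equivalence on all $n$-truncated objects, so $\eta_Y$ is an equivalence by the Yoneda lemma in $\tau_n\E$), and composing these identifies $Y$ with $\tau_n X$ over $X$.
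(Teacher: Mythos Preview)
Your proof is correct and matches the argument the paper has in mind: the corollary is stated immediately after Lemma~\ref{Lemma:Unit Connected} with no separate proof, and the intended deduction is exactly your first route---factor $g$ through $\eta_X$ via the adjunction, use Proposition~\ref{Prop Conn fg NNO}(2) with the $n$-connectedness of $\eta_X$ to see that the comparison map $h$ is $n$-connected, observe it is also $n$-truncated as a map between $n$-truncated objects, and invoke Corollary~\ref{Cor:N Trunc Conn Equiv NNO}. Your alternative via Lemma~\ref{Lemma F n Conn then Tau N F equiv} is also fine and arguably cleaner, though it bypasses the lemma this is billed as a corollary of.
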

 
 This corollary also gives us a stability of truncations
 
 \begin{propone} \label{Prop Pb of epimono is epimono}
  Let $f:Y \to X$ and $g: Z \to X$ be two maps. Moreover, let $Y \to \tau^X(f) \to X$ be the factorization of $f$.
  Then the pullback of the factorization along $g$, as depicted in the diagram below, is the factorization of $g^*f$.
  \begin{center}
   \begin{tikzcd}[row sep=0.5in, column sep=0.5in]
    g^*Y \arrow[r] \arrow[d] \arrow[rr, bend right = 20, "g^*f"'] \arrow[dr, phantom, "\ulcorner", very near start] & 
    g^*(\tau^X_n(Y)) \arrow[r] \arrow[d] \arrow[dr, phantom, "\ulcorner", very near start] & Z \arrow[d, "g"] \\
    Y \arrow[r] \arrow[rr, bend right = 20, "f"'] & \tau^X_n(Y) \arrow[r] & X
   \end{tikzcd}
  \end{center}
 \end{propone}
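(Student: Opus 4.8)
Write $T := \tau^X_n(Y)$; by construction the map $Y \to T$ is $n$-connected and $T \to X$ is $n$-truncated. The plan is to show that the pulled-back diagram $g^*Y \to g^*T \to Z$ is again a factorization of $g^*f$ into an $n$-connected map followed by an $n$-truncated one, and then to conclude by uniqueness of such factorizations (Corollary \ref{Cor Conn is Trunc}, applied in $\E_{/Z}$). First I would note that, since the right-hand square is a pullback by the very definition of $g^*T$, the left-hand square is a pullback as well (equivalently $(g^*T)\times_T Y \simeq Z\times_X Y = g^*Y$), so that the map $g^*Y \to g^*T$ is precisely the base change of $Y \to T$ along the projection $r\colon g^*T \to T$.

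The $n$-truncated leg is the easy half: $g^*T \to Z$ is the base change of the $n$-truncated map $T \to X$ along $g$, hence $n$-truncated by Lemma \ref{Lemma Trunc Base Change NNO}.

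The real content is that $g^*Y \to g^*T$ is $n$-connected, and here one cannot yet invoke stability of connected maps under base change (Corollary \ref{Cor N Conn Base Change}), which comes later and is in fact a consequence of the present statement. Testing $n$-connectedness directly against mapping spaces out of $g^*T = Z\times_X T$ does not work, because $Map(Z\times_X T,-)$ does not simplify; the key move will instead be to transport the problem along the dependent-product adjunction $r^* \dashv r_*$, available since $\E$ is locally Cartesian closed. The first step is to check that $r_*$ preserves $n$-truncated maps: if $V \to g^*T$ is $n$-truncated then for every $D \to T$ the adjunction identifies $Map_{/T}(D\times S^{n+1}, r_*V) \to Map_{/T}(D, r_*V)$ with $Map_{/g^*T}(r^*D\times S^{n+1}, V) \to Map_{/g^*T}(r^*D, V)$, which is an equivalence by the relative sphere criterion (Lemma \ref{Lemma Ntrun with Sn condition Relative NNO}), so $r_*V \to T$ is $n$-truncated by the same criterion. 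The second step: for an arbitrary $n$-truncated map $V \to g^*T$, using $g^*T \simeq r^*T$ and $g^*Y \simeq r^*Y$ over $g^*T$, the adjunction identifies $Map_{/g^*T}(g^*T, V) \to Map_{/g^*T}(g^*Y, V)$ with $Map_{/T}(T, r_*V) \to Map_{/T}(Y, r_*V)$; since $r_*V \to T$ is $n$-truncated and $Y \to T$ is $n$-connected, the latter is an equivalence by Definition \ref{Def:Connected Maps NNO}. As $V$ was arbitrary, $g^*Y \to g^*T$ is $n$-connected.

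Putting these together, $g^*Y \to g^*T \to Z$ exhibits $g^*T$ as an $n$-truncated object of $\E_{/Z}$ receiving an $n$-connected map from $g^*Y$, so by Corollary \ref{Cor Conn is Trunc} it is the factorization of $g^*f$, i.e. $g^*T \simeq \tau^Z_n(g^*Y)$ over $Z$, as claimed. The hard part will be the third paragraph, and specifically the realization that the mapping-space computation does not go through naively and that one should instead pass through the dependent product $r_*$; the rest is base change of truncated maps together with adjunction bookkeeping.
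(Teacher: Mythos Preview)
Your proof is correct. The strategy is the same as the paper's: show the pulled-back factorization is again $n$-connected followed by $n$-truncated and invoke Corollary~\ref{Cor Conn is Trunc}; the truncated half is base change (Lemma~\ref{Lemma Trunc Base Change NNO}) in both.

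The only difference is in how the connected half is handled. The paper reduces to the case $X=1$ (implicitly passing to the over-category $\E_{/X}$) and then simply cites Lemma~\ref{Lemma Product by A is conn NNO}, which says that $A\times(-)$ preserves $n$-connected maps. You instead stay in the general case and work directly with the adjunction $r^*\dashv r_*$ for $r\colon g^*T\to T$, first checking that $r_*$ preserves $n$-truncated maps and then transporting the connectedness of $Y\to T$ across the adjunction. But this is exactly the content of the proof of Lemma~\ref{Lemma Product by A is conn NNO} redone in the relative setting: there one shows $(-)^A$ preserves $n$-truncated maps and transports across $(-\times A)\dashv(-)^A$. So the key idea---right adjoints to base change preserve truncatedness, allowing connectedness to be pushed through the adjunction---is identical; you have unfolded the cited lemma inline rather than reducing to the absolute case. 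Your version is slightly more self-contained (the reduction to $X=1$ in the paper is left implicit), at the cost of a bit more length.
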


 \begin{proof}
  It suffices to prove the result for the case $X= 1$. In that case we need to prove that 
  $$Z \times Y \to Z \times \tau_n(Y) \to Z$$
  is the $n$-truncation factorization. By the previous corollary it suffices to show that 
  $Z \times Y \to  Z \times \tau_n(Y)$ is $n$-connected and $Z \times \tau_n(Y) \to Z$ is $n$-truncated.
  The first statement follows from Lemma \ref{Lemma Product by A is conn NNO} as 
  $Y \to \tau_n(Y)$ is $n$-connected. 
  The second statement holds immediately as $n$-truncated maps are stable under base change (Lemma \ref{Lemma Trunc Base Change NNO}).
 \end{proof}
 
 It also implies following proposition.
 
 \begin{propone} \label{Prop Conn iff Tau Final Object}
  An object $X$ in $\E$ is $n$-connected if and only if $\tau_n(X)$ is equivalent to the final object.
 \end{propone}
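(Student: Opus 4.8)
The plan is to reduce the statement to Lemma \ref{Lemma:Tau Equiv if mapped in Trunc} applied to the terminal map $X \to 1$. First I would record that the final object $1$ is $(-2)$-truncated, hence $n$-truncated for every natural number $n$, so the unit $\eta_1 \colon 1 \to \tau_n(1)$ is an equivalence: for any $n$-truncated $Z$ the adjunction gives $Map(\tau_n 1, Z) \simeq Map(1, Z)$, and since $\eta_1$ sits over this equivalence, the Yoneda lemma in $\tau_n\E$ forces $\eta_1$ to be an equivalence. Thus $\tau_n(1) \simeq 1$.

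Next I would observe that, since $1$ is terminal, an object $W$ of $\tau_n\E$ satisfies $W \simeq 1$ if and only if the canonical map $W \to 1$ is an equivalence. Applying this to $W = \tau_n(X)$ together with the identification $\tau_n(1) = 1$, we conclude that $\tau_n(X)$ is equivalent to the final object if and only if $\tau_n(f)$ is an equivalence, where $f \colon X \to 1$ is the terminal map.

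Now Lemma \ref{Lemma:Tau Equiv if mapped in Trunc}, applied to $f \colon X \to 1$, states that $\tau_n(f)$ is an equivalence if and only if for every $n$-truncated object $Z$ the map $Map(1, Z) \to Map(X, Z)$ is an equivalence of spaces. But this is precisely the definition of $X$ being $n$-connected. Chaining the three equivalences yields the proposition.

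I do not expect a genuine obstacle here: the only bookkeeping is the identification of ``$\tau_n(X) \simeq 1$'' with ``the terminal map becomes a $\tau_n$-equivalence'' and the fact that $\tau_n$ fixes the final object, both of which are immediate. The substance of the statement is already carried by Lemma \ref{Lemma:Tau Equiv if mapped in Trunc}.
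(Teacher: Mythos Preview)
Your proof is correct and takes a genuinely different route from the paper. The paper argues each direction separately using the factorization-system structure: for one direction it invokes Lemma~\ref{Lemma:Unit Connected} (the unit $\eta_X : X \to \tau_n(X)$ is $n$-connected), and for the other it invokes Corollary~\ref{Cor Conn is Trunc} (an $n$-connected map into an $n$-truncated target exhibits the target as the $n$-truncation). You instead reduce both directions simultaneously to Lemma~\ref{Lemma:Tau Equiv if mapped in Trunc} applied to the terminal map $X \to 1$, after observing that $\tau_n(1) \simeq 1$. Your approach is slightly more economical in that it uses a single earlier lemma rather than two, and it avoids any appeal to the connectedness of the unit; the paper's approach, on the other hand, makes the role of the $(n\text{-connected}, n\text{-truncated})$ factorization more visible, which is thematically in line with how the surrounding subsection is organized.
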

 
 \begin{proof}
  Let us assume $\tau_n(X)$ is equivalent to the final object.
  By the proposition above $X \to \tau_n(X) \simeq 1$ is $n$-connected, which implies that $X$ is 
  $n$-connected. 
  \par 
  On the other hand, assume $X$ is $n$-connected. Then the map $X \to 1$ is $n$-connected and 
  the map $1 \to 1$ is $n$-truncated. Thus, by the corollary $1 \simeq \tau_n(X)$.
 \end{proof}
 
  It is valuable to see this manifests for $\infty$-connected maps.
 
 \begin{corone}
  A map $f$ is $\infty$-connected  (Definition \ref{Def:Infinite Connected Map NNO}) 
  if and only if $\tau_n(f)$ is the final object for each natural number $n$. 
 \end{corone}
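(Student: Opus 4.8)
The plan is to reduce the statement to Proposition \ref{Prop Conn iff Tau Final Object} by passing to a slice category. Recall that by Definition \ref{Def NConn Map} a map $f: X \to Y$ is $n$-connected precisely when the corresponding object of $\E_{/Y}$ is $n$-connected, and that the relative truncation $\tau^Y_n$ used in Proposition \ref{Prop Pb of epimono is epimono} is nothing other than the truncation functor of $\E_{/Y}$ evaluated on that object. So the first step is to record that $\E_{/Y}$ again satisfies the running hypotheses of this subsection: it has finite limits and colimits, it is locally Cartesian closed, and it carries a truncation functor $\tau^Y_n$. Consequently every result proven so far applies verbatim inside $\E_{/Y}$.

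Granting this, the second step is immediate: apply Proposition \ref{Prop Conn iff Tau Final Object} in $\E_{/Y}$ to the object $f: X \to Y$. It gives that $f$ is $n$-connected if and only if $\tau^Y_n(f)$ is the final object of $\E_{/Y}$, i.e. equivalent to $\mathrm{id}_Y$. Finally, unwinding Definition \ref{Def:Infinite Connected Map NNO}, $f$ is $\infty$-connected exactly when it is $n$-connected for every natural number $n$, which by the previous sentence holds exactly when $\tau^Y_n(f)$ is the final object for every such $n$. That is the claimed equivalence. (If one prefers to avoid quoting Proposition \ref{Prop Conn iff Tau Final Object} wholesale, the same two directions can be extracted from Lemma \ref{Lemma:Unit Connected} and Corollary \ref{Cor Conn is Trunc} applied in $\E_{/Y}$: if $\tau^Y_n(f)$ is final then $\eta$ exhibits $f$ as an $n$-connected map, and if $f$ is $n$-connected then, since $\mathrm{id}_Y$ is $n$-truncated, Corollary \ref{Cor Conn is Trunc} forces $\tau^Y_n(f) \simeq \mathrm{id}_Y$.)

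The only point requiring care — and the main, rather mild, obstacle — is the bookkeeping in the first step: one must make sure the slice $\E_{/Y}$ genuinely inherits enough structure to invoke Proposition \ref{Prop Conn iff Tau Final Object}. Local Cartesian closedness and finite (co)limits transfer to slices automatically, and the relative truncation $\tau^Y_n$ has already been introduced, so nothing new is needed and no homotopy-theoretic input beyond what is already established is required.
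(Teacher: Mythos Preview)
Your proof is correct and matches the paper's approach: the corollary is stated without proof, immediately after Proposition \ref{Prop Conn iff Tau Final Object}, as its evident relative-plus-$\infty$ consequence. Your unpacking---pass to the slice $\E_{/Y}$, apply Proposition \ref{Prop Conn iff Tau Final Object} there for each $n$, and unwind Definition \ref{Def:Infinite Connected Map NNO}---is exactly the intended argument.
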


 We can also use it to show that truncations commute with pullbacks.
 
 \begin{propone}
  Let 
  \begin{center}
   \pbsq{W}{Z}{Y}{X}{}{f'}{f}{p}
  \end{center}
   be a pullback square. Then 
  \begin{center}
   \pbsq{\tau^Y_n W}{\tau^X_n Z}{Y}{X}{}{\tau_n^Y f'}{\tau_n^X f}{p}
  \end{center}
  is also a pullback square.
 \end{propone}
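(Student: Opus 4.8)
The plan is to deduce this directly from Proposition \ref{Prop Pb of epimono is epimono}. Recall that for a map $g\colon Z \to X$ the $n$-truncation factorization $Z \to \tau_n^X Z \to X$ realizes $g$ as an $n$-connected map followed by an $n$-truncated map, and that such a factorization is essentially unique (Corollary \ref{Cor Conn is Trunc}, applied over the codomain): any factorization of a given map into an $n$-connected map followed by an $n$-truncated one is, up to equivalence, its $n$-truncation factorization. So it suffices to exhibit the square in question as the base change along $p$ of the $n$-truncation factorization of $f$.

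First I would apply Proposition \ref{Prop Pb of epimono is epimono} with $f\colon Z \to X$ in the role of the map being factored and $p\colon Y \to X$ in the role of the map along which one pulls back. Using the hypothesis that the original square is a pullback, so that $p^*Z = Y \times_X Z \simeq W$ and hence $p^*f \simeq f'$, the proposition produces the commutative diagram
\begin{center}
 \begin{tikzcd}[row sep=0.5in, column sep=0.5in]
  W \arrow[r] \arrow[d] \arrow[rr, bend right = 20, "f'"'] \arrow[dr, phantom, "\ulcorner", very near start] &
  p^*(\tau_n^X Z) \arrow[r] \arrow[d] \arrow[dr, phantom, "\ulcorner", very near start] & Y \arrow[d, "p"] \\
  Z \arrow[r] \arrow[rr, bend right = 20, "f"'] & \tau_n^X Z \arrow[r] & X
 \end{tikzcd}
\end{center}
in which both squares are pullbacks and the top row $W \to p^*(\tau_n^X Z) \to Y$ is the $n$-truncation factorization of $f'\colon W \to Y$.

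By the uniqueness recalled above this yields an equivalence $p^*(\tau_n^X Z) \simeq \tau_n^Y W$ over $Y$, under which $W \to p^*(\tau_n^X Z)$ becomes the canonical map $W \to \tau_n^Y W$ and $p^*(\tau_n^X Z) \to Y$ becomes $\tau_n^Y f'$. Feeding this identification into the right-hand pullback square of the diagram above --- which, up to reflecting across its diagonal, is precisely the square
\begin{center}
 \begin{tikzcd}[row sep=0.5in, column sep=0.5in]
  \tau_n^Y W \arrow[r] \arrow[d, "\tau_n^Y f'"'] & \tau_n^X Z \arrow[d, "\tau_n^X f"] \\
  Y \arrow[r, "p"'] & X
 \end{tikzcd}
\end{center}
--- shows that the latter is a pullback square, as desired. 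I do not expect any genuine obstacle here: the argument is essentially just an invocation of Proposition \ref{Prop Pb of epimono is epimono} together with the uniqueness of the $n$-connected/$n$-truncated factorization, and the only point needing attention is to match the two ways of drawing the final square and to check that its structure maps are indeed $\tau_n^Y f'$ and $\tau_n^X f$, both of which are immediate from the universal property of the image (Definition \ref{Def:Image}).
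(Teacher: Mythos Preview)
Your proposal is correct and follows essentially the same approach as the paper: both arguments invoke Proposition \ref{Prop Pb of epimono is epimono} to identify $p^*(\tau_n^X Z)$ with the middle object of the $n$-truncation factorization of $f'$, and then read off the desired pullback square. Your write-up is in fact a bit more explicit than the paper's about the uniqueness of the factorization (via Corollary \ref{Cor Conn is Trunc}) and about matching the structure maps, but the underlying argument is the same.
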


 \begin{proof}
  We know that $X \to \tau^X_n Z \to Z$ is the factorization of $f$. This means that $p^*X \to p^* \tau^X_n Z \to Y$ is a factorization 
  of $p^*f$, according to Proposition \ref{Prop Pb of epimono is epimono}. However, by assumption $p^*f \simeq f'$ and $p^*X \simeq W$. 
  This implies that $W \to p^* \tau^X_n Z \to Y$ is the factorization of $f': W \to Y$ finishing the proof.
 \end{proof}

 The result above also has following immediate corollary.
 
 \begin{corone} \label{Cor N Conn Base Change}
  Let 
  \begin{center}
   \pbsq{W}{Z}{Y}{X}{}{f'}{f}{p}
  \end{center}
   be a pullback square.Then if $f$ is $n$-connected, $f'$ is also $n$-connected.
 \end{corone}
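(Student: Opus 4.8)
The plan is to deduce this immediately from the preceding proposition, that $n$-truncation commutes with pullback, once the right recognition principle for $n$-connected maps is in place.

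First I would record the following relative analogue of Proposition \ref{Prop Conn iff Tau Final Object}: a map $g\colon A \to B$ is $n$-connected if and only if $\tau^B_n(A) \to B$ is an equivalence. For the forward direction, factor $g$ as $A \xrightarrow{\eta} \tau^B_n(A) \to B$, where $\eta$ is $n$-connected by the relative form of Lemma \ref{Lemma:Unit Connected} and $\tau^B_n(A) \to B$ is $n$-truncated; if $g$ is $n$-connected, then $\tau^B_n(A) \to B$ is $n$-connected by Proposition \ref{Prop Conn fg NNO}(2), hence an equivalence by Corollary \ref{Cor:N Trunc Conn Equiv NNO}. Conversely, if $\tau^B_n(A) \to B$ is an equivalence, then $g$ is the composite of an $n$-connected map with an equivalence, so it is $n$-connected by Proposition \ref{Prop Conn fg NNO}(1), using that equivalences are $n$-connected.

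Next, starting from the given pullback square with $f$ $n$-connected, I would invoke the previous proposition to obtain that
\begin{center}
 \pbsq{\tau^Y_n W}{\tau^X_n Z}{Y}{X}{}{\tau_n^Y f'}{\tau_n^X f}{p}
\end{center}
is again a pullback square. Since $f$ is $n$-connected, the recognition principle says its right-hand leg $\tau^X_n Z \to X$ is an equivalence; equivalences are stable under base change, so the left-hand leg $\tau^Y_n W \to Y$ is an equivalence as well. Applying the recognition principle in the reverse direction to $f'$ then gives that $f'$ is $n$-connected, as desired.

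I do not expect a genuine obstacle here, since the real work has already been done in the proposition that truncation commutes with pullback and in Corollary \ref{Cor Conn is Trunc}. The only points needing care are that the factorization statements of this subsection must be read in their over-category (relative) forms rather than the absolute ones, and the routine fact that an equivalence pulls back to an equivalence.
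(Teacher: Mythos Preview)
Your proposal is correct and follows essentially the same route as the paper: use the relative form of Proposition \ref{Prop Conn iff Tau Final Object} to translate ``$n$-connected'' into ``relative $n$-truncation is an equivalence,'' invoke the previous proposition that truncation commutes with pullback, and then translate back. The paper simply cites Proposition \ref{Prop Conn iff Tau Final Object} directly (reading it in the over-category $\E_{/X}$), whereas you spell out and reprove the relative recognition principle; this extra care is fine but not strictly needed given the paper's conventions.
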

 
 \begin{proof}
  If $f$ is $n$-connected then $\tau^X_n f$ is an equivalence (by Proposition \ref{Prop Conn iff Tau Final Object}).
  By the previous proposition, the pullback, namely $\tau^Y_n f'$, is an equivalence as well. 
  This, again by Proposition \ref{Prop Conn iff Tau Final Object} implies that $f'$ is $n$-connected.
 \end{proof}
  
 \begin{remone}
  Proposition \ref{Prop n Conn coBase Change NNO} combined with Corollary \ref{Cor N Conn Base Change} implies that 
  $n$-connected maps are stable under base change and cobase change. This will be quite important later on, when we study modalities
  in a topos (Proposition \ref{Prop Conn Trunc Modality}).
 \end{remone}
 
 We can use our new understanding of truncation functors to recover further results about truncated maps. 
 Notice these were proven initially in \cite{Re05} in the presentable setting. However, the proofs there just 
 use basic facts about truncated and connected maps. Thus, we will not repeat the proofs, but rather give the relevant references.
   
 \begin{propone}
 \cite[Proposition 8.11]{Re05}
  A map $f: Y \to X$ is $n$-connected if and only if for every $n$-truncated
  map $g: Z \to X$ the map of internal mapping objects
  $$[g:Z \to X]^{[id_X: X \to X]} \to [g: Z \to X]^{[f: Y \to X]} $$
  is an equivalence in $\E_{/X}$.
 \end{propone}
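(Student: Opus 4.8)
I would first reduce to the ``absolute'' version of the statement and then unwind the exponential adjunction. Recall that a map $f\colon Y\to X$ is $n$-connected precisely when the object $[f]$ of $\E_{/X}$ is $n$-connected, and a map $g\colon Z\to X$ is $n$-truncated precisely when $[g]$ is $n$-truncated in $\E_{/X}$. Since $[\mathrm{id}_X]$ is the terminal object of $\E_{/X}$, the internal mapping object $[g]^{[\mathrm{id}_X]}$ is just $[g]$, and the comparison map $[g]^{[\mathrm{id}_X]}\to[g]^{[f]}$ becomes the canonical map $[g]\to[g]^{[f]}$ adjoint to the projection $[g]\times[f]\to[g]$. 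So it suffices to prove the following statement in any $\E$ satisfying our running hypotheses, and then apply it to $\E_{/X}$: an object $F$ is $n$-connected if and only if for every $n$-truncated object $T$ the canonical map $\varepsilon_T\colon T\to T^F$ is an equivalence.

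For the ``if'' direction I would apply $\mathrm{Map}(1,-)$ to $\varepsilon_T$. Using the exponential adjunction $\mathrm{Map}(1,T^F)\simeq\mathrm{Map}(F,T)$ and $\mathrm{Map}(1,T^1)\simeq\mathrm{Map}(1,T)$, this identifies the map $\mathrm{Map}(1,T)\to\mathrm{Map}(F,T)$ (precomposition along $F\to 1$) with an equivalence, for every $n$-truncated $T$, which is exactly the definition of $F$ being $n$-connected. For the ``only if'' direction, to see that $\varepsilon_T\colon T\to T^F$ is an equivalence it suffices by the Yoneda lemma to check that $\mathrm{Map}(V,T)\to\mathrm{Map}(V,T^F)$ is an equivalence for all $V$; under the exponential adjunction this map is precomposition along the projection $\pi_V\colon V\times F\to V$, that is, the map $\mathrm{Map}_{/V}(V,V\times T)\to\mathrm{Map}_{/V}(V\times F,V\times T)$ where $V\times T\to V$ and $V\times F\to V$ are first projections. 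Now $V\times F\to V$ is $n$-connected by Lemma \ref{Lemma Product by A is conn NNO} (it is the product of the $n$-connected map $F\to 1$ with $V$), and $V\times T\to V$ is $n$-truncated, being a base change of the $n$-truncated map $T\to 1$ along $V\to 1$ (Lemma \ref{Lemma Trunc Base Change NNO}). Hence Definition \ref{Def NConn Map}, applied with codomain $V$, yields the desired equivalence.

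The main obstacle is really just bookkeeping: confirming that the map on internal homs induced by $f$ matches the projection $\pi_V$ after transposition, and justifying the reduction to the absolute statement --- either by noting that $\E_{/X}$ again satisfies the standing hypotheses, or simply by rerunning the two displayed adjunction computations internally to $\E_{/X}$. There is no substantive difficulty beyond this, which fits the paper's remark (following \cite{Re05}) that the argument only uses formal properties of truncated and connected maps; one could equally well derive it from Proposition \ref{Prop Trunc Conn Lift NNO}.
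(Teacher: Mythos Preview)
Your argument is correct. The paper does not actually give a proof of this proposition: it simply cites \cite[Proposition 8.11]{Re05} and remarks that ``the proofs there just use basic facts about truncated and connected maps,'' adding afterwards that ``this proposition is just the internal version of the original Definition \ref{Def:Connected Maps NNO}.'' Your write-up is precisely such a proof: the reduction to the absolute case, the Yoneda-plus-exponential-adjunction unwinding, and the appeal to Lemma \ref{Lemma Product by A is conn NNO} and Lemma \ref{Lemma Trunc Base Change NNO} are exactly the ``basic facts'' the paper has in mind, so your approach is in the same spirit as the deferred argument and fills in what the paper omits.
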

 
 \begin{remone}
  This proposition is just the internal version of the original Definition \ref{Def:Connected Maps NNO}.
 \end{remone}
 
 \begin{defone}
  Let $f: X \to Y$ and $g: W \to Z$ be two maps. We define the {\it gap map} $u_{f,g}$ as the following map.
  \begin{center}
   \begin{tikzcd}[row sep=0.5in, column sep=0.5in]
    W^Y \arrow[drr, bend left = 20, "f^*"] \arrow[ddr, bend right = 20, "g_*"] \arrow[dr, dashed, "u_{f,g}"] & & \\
    & Z^Y \underset{Z^X}{\times} W^X \arrow[d] \arrow[r] \arrow[dr, phantom, "\ulcorner", very near start] & W^X \arrow[d, "g_*"] \\
    & Z^Y \arrow[r, "f^*"] & Z^X 
   \end{tikzcd}
  \end{center}
 \end{defone}

 \begin{propone}
  \cite[Proposition 8.13]{Re05}
  Let $f: X \to Y$ be a map and $n \geq -2$.
  Then the following are equivalent:
  \begin{enumerate}
   \item For all $m \geq -2$ and all $m$-truncated maps $g: W \to Z$ 
   the induced map $u_{f,g}$ is $(m-n-2)$-truncated if $m \geq n$ and an 
   equivalence if $m \leq n$.
   \item For all $n$-truncated maps $g: W \to Z$ the map $u_{f,g}$ is an
   equivalence.
   \item $f$ is $n$-connected.
  \end{enumerate}
 \end{propone}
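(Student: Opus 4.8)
The plan is to prove the cycle $(1)\Rightarrow(2)\Rightarrow(3)\Rightarrow(1)$, of which only the last step needs real work. The implication $(1)\Rightarrow(2)$ is immediate: take $m=n$, so that for an $n$-truncated $g$ the map $u_{f,g}$ is $(n-n-2)=(-2)$-truncated, hence an equivalence.

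For $(2)\Rightarrow(3)$ I would pass through the exponential adjunction. Applying $Map(1,-)$ to the equivalence $u_{f,g}\colon W^Y\xrightarrow{\ \simeq\ }Z^Y\times_{Z^X}W^X$ shows that for every $n$-truncated map $g\colon W\to Z$ the square
\begin{center}
\pbsq{Map(Y,W)}{Map(Y,Z)}{Map(X,W)}{Map(X,Z)}{g_*}{f^*}{f^*}{g_*}
\end{center}
is a homotopy pullback of spaces, which is exactly condition (1) of Proposition \ref{Prop Trunc Conn Lift NNO}; hence $f$ is $n$-connected.

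For $(3)\Rightarrow(1)$, first suppose $m\le n$. Then $g$ is also $n$-truncated and, by Lemma \ref{Lemma Product by A is conn NNO}, $A\times f$ is $n$-connected for every object $A$; combining Proposition \ref{Prop Trunc Conn Lift NNO} (applied to $A\times f$ and $g$) with the exponential adjunction shows $Map(A,u_{f,g})$ is an equivalence for all $A$, so $u_{f,g}$ is an equivalence by the Yoneda lemma. This also handles the base case $m=n$, since there $m-n-2=-2$. For $m>n$ I would induct on $m$, using the induction principle of the natural number object. The key formal input is the identity $\Delta_{u_{f,g}}\simeq u_{f,\Delta_g}$, where $\Delta_{u_{f,g}}\colon W^Y\to W^Y\times_P W^Y$ (with $P=Z^Y\times_{Z^X}W^X$) is the diagonal of $u_{f,g}$ relative to its codomain and $\Delta_g\colon W\to W\times_Z W$ is the diagonal of $g$ relative to $Z$; both sides are identified, by a direct computation with internal mapping objects and pullbacks, with the object of pairs $\alpha,\beta\colon Y\to W$ satisfying $g\alpha=g\beta$ and $\alpha f=\beta f$. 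Granting this: if $g$ is $m$-truncated then $\Delta_g$ is $(m-1)$-truncated by (the relative form of) Proposition \ref{Prop Diag Trunc NNO}, so the inductive hypothesis applied to $\Delta_g$ gives that $u_{f,\Delta_g}$, hence $\Delta_{u_{f,g}}$, is $((m-1)-n-2)=(m-n-3)$-truncated; Proposition \ref{Prop Diag Trunc NNO} then shows $u_{f,g}$ is $(m-n-2)$-truncated, completing the induction.

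The main obstacle is the diagonal identity $\Delta_{u_{f,g}}\simeq u_{f,\Delta_g}$: it is the computational heart of the argument, and care is needed to see that the relevant diagonal of the gap map is the one over its codomain $Z^Y\times_{Z^X}W^X$, so that it interacts correctly with the relative diagonal characterization of truncated maps. The only other point requiring attention is that the induction over $m$ must be carried out internally, via the natural number object, so that the conclusion covers non-standard truncation levels rather than only the standard naturals.
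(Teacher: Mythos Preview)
The paper does not give its own proof of this proposition; it explicitly says that the argument in \cite[Proposition 8.13]{Re05} uses only basic facts about truncated and connected maps and therefore carries over unchanged, and refers the reader there. Your proof is essentially that standard argument: the cycle $(1)\Rightarrow(2)\Rightarrow(3)\Rightarrow(1)$, with the inductive step for $(3)\Rightarrow(1)$ resting on the diagonal identity $\Delta_{u_{f,g}}\simeq u_{f,\Delta_g}$, is exactly Rezk's approach. Your added remark that the induction over $m$ must be done internally via the natural number object is the one point genuinely beyond \cite{Re05}, and it is the adaptation the paper's elementary setting requires.
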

 
 We will now use the results proven in this subsection about $n$-truncated and $n$-connected maps to 
 further our understanding of $(-1)$-connected maps (Subsection \ref{Subsec:Neg One Conn})
 and the subcategory of $n$-truncated objects (Subsection \ref{Subsec:Universe of Truncated Objects}).
  
 \subsection{(-1)-Connected Maps} \label{Subsec:Neg One Conn}
 As we observed in Section \ref{Sec Truncated and Connected Objects} $(-1)$-truncated morphism are special, 
 in particular they just give us the mono maps in our $(\infty,1)$-category. 
 We want to show that $(-1)$-connected maps are special too. 
 \par 
 Concretely, we prove a simple way to characterize $(-1)$-connected maps (Lemma \ref{Lemma Sub Cover}), 
 which gives an easy way to construct many $(-1)$-connected maps. 
 Then we use $(-1)$-connectedness to generalize the following result from classical topology:
 A non-empty space is $n$-connected if and only if each loop space is $n-1$-connected.
 This will be proven in Proposition \ref{Prop Loop n Conn N One Conn}.
 
 \begin{remone}
  A $(-1)$-connected map is also called an {\it effective epimorphism} \cite{Lu09} \cite{Re05} or even {\it cover} \cite{ABFJ17}.
  However, in order to avoid additional notation, we will simply call them $(-1)$-connected maps.
 \end{remone}
 
 First of all we present a nice way to characterize $(-1)$-connected maps.
 It was proven for Grothendieck $(\infty,1)$-toposes in \cite[Lemma 7.9]{Re05} and \cite[Proposition 6.2.3.10]{Lu09}.
 We generalize it here to elementary $(\infty,1)$-toposes.
 
 \begin{lemone} \label{Lemma Sub Cover}
  A map $f:Y \to X$ is $(-1)$-connected if and only if the induced map 
  on the set of subobjects, $f^*: Sub(X) \to Sub(Y)$, is injective.
 \end{lemone}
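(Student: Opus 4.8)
The plan is to prove the two implications separately, using the factorization of an arbitrary map through its $(-1)$-truncation (the image) together with the fact that $(-1)$-truncated maps are exactly the monos (Proposition \ref{Prop Conn Eq Mono}). First I would recall the basic bookkeeping: for a map $g \colon Y \to X$, pulling back along $f \colon Y \to X$ gives the map $f^* \colon \Sub(X) \to \Sub(Y)$, which is well-defined since $(-1)$-truncated maps are stable under base change (Lemma \ref{Lemma Trunc Base Change NNO}); and $f^*$ is automatically order-preserving, so injectivity is equivalent to the statement that $f^*$ reflects the top element, i.e.\ if $m \colon Z \hookrightarrow X$ is a mono with $f^*m$ an equivalence then $m$ is an equivalence. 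This reformulation is the bridge to connectivity.

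For the ``only if'' direction, suppose $f$ is $(-1)$-connected and let $m \colon Z \hookrightarrow X$ be a mono such that the pullback $f^*Z \to Y$ is an equivalence. Since $m$ is $(-1)$-truncated, Definition \ref{Def:Connected Maps NNO} (with $n=-1$) applied to the $(-1)$-connected map $f$ gives that $Map_{/X}(X, Z) \to Map_{/X}(Y, Z)$ is an equivalence; but the right-hand side is nonempty because $f^*Z \to Y$ being an equivalence supplies a section of the pullback, hence a map $Y \to Z$ over $X$. Therefore $Map_{/X}(X,Z)$ is nonempty, i.e.\ $m$ admits a section, and a mono with a section is an equivalence. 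Hence $f^*$ is injective on subobjects.

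For the ``if'' direction, suppose $f^* \colon \Sub(X) \to \Sub(Y)$ is injective. Factor $f$ through its image $Y \xrightarrow{q} \mathrm{Im}(f) \xrightarrow{i} X$ with $i$ a mono (Definition \ref{Def:Image}, using that $(-1)$-truncations exist by Theorem \ref{The Neg One Adj}). Pulling the subobject $i \colon \mathrm{Im}(f) \hookrightarrow X$ back along $f$, the resulting mono over $Y$ is an equivalence because $q$ factors through it; so $f^*(\mathrm{Im}(f)) = f^*(X)$ in $\Sub(Y)$, and injectivity forces $i$ to be an equivalence. Thus $f \simeq q$ is the unit of the $(-1)$-truncation in $\E_{/X}$, which is $(-1)$-connected by Lemma \ref{Lemma:Unit Connected} (with $n=-1$).

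The main obstacle I expect is the ``if'' direction, specifically making the comparison $f^*(\mathrm{Im}(f)) = f^*(X)$ rigorous: one must check that pulling back the image factorization along $f$ yields a mono over $Y$ that genuinely is an equivalence, which amounts to the statement that $q \colon Y \to \mathrm{Im}(f)$ becomes an isomorphism after base change to itself — this is really the content of Proposition \ref{Prop Pb of epimono is epimono} specialized appropriately, or can be seen directly from the universal property of the image together with the fact that $q$ is $(-1)$-connected and a $(-1)$-connected mono is an equivalence (Corollary \ref{Cor:N Trunc Conn Equiv NNO}). Everything else is formal manipulation of adjunctions and base change, so no lengthy computation is needed.
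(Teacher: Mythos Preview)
Your proof has a genuine gap in the bookkeeping step: the claim that for an order-preserving map between subobject lattices, injectivity is equivalent to reflecting the top element, is \emph{false} as a matter of pure order theory. A simple counterexample is the map $\{0,a,b,1\} \to \{0,1\}$ from the four-element Boolean algebra sending $0,a,b \mapsto 0$ and $1 \mapsto 1$; this is order- and meet-preserving and reflects the top, but is not injective. What you actually prove in your ``only if'' direction is exactly that $f^*$ reflects the top element, and in your ``if'' direction you use only that consequence of injectivity; so as written you have established ``$f$ is $(-1)$-connected $\Leftrightarrow$ $f^*$ reflects the top'', not the stated lemma. The passage from ``reflects top'' to ``injective'' requires real input: either Frobenius reciprocity ($\exists_f f^*A = \mathrm{Im}(f)\wedge A$, so once $\mathrm{Im}(f)=X$ the composite $\exists_f f^*$ is the identity and $f^*$ is split injective), or, as the paper does, stability of $(-1)$-connected maps under base change (Corollary~\ref{Cor N Conn Base Change}), which lets one reduce the comparison of two subobjects $B\leq A$ to a ``reflects top'' statement for the pulled-back $(-1)$-connected map $f^*A \to A$.

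Your ``if'' direction via the image factorization and Lemma~\ref{Lemma:Unit Connected} is correct and close in spirit to the paper's argument (both amount to showing $\mathrm{Im}(f)=X$). The step you flag as the main obstacle --- that $f^*(\mathrm{Im}(f))=f^*(X)$ --- is in fact immediate: the factorization $f=i\circ q$ gives a section of the mono $f^*(\mathrm{Im}(f))\hookrightarrow Y$, hence an equality in $\mathrm{Sub}(Y)$. The obstacle you should have flagged is the one above.
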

 
 \begin{proof}
  First, notice that $f^*$ is injective if and only if $f^*(A) = f^*(B)$ implies $A = B$ 
  for all $A \leq B$. That is because $f^*$ preserves the meet of two subobjects.
  Thus, we will always assume $A \leq B$ throughout the proof.
  \par 
  In order to prove the result we first have to gain a better understanding of $(-1)$-connected
  maps. By definition $Y \to X$ is $(-1)$-connected if we have an equivalence 
  $$Map_{/X}(X,Z) \to Map_{/X}(Y, Z)$$
  where $Z \to X$ is $(-1)$-truncated. However, the fact that $Z \to X$ is $(-1)$-truncated means
  that $Z$ is a subobject of $X$. Thus, the space $Map_{/X}(X,Z)$ is contractible if and only if 
  $Z = X$, otherwise it is empty. The equivalence then implies that $Map_{/X}(Y,Z)$
  has to be empty except if $Z = X$.
  \par 
  We can summarize this analysis by saying that in the diagram below: 
  \begin{center}
   \begin{tikzcd}[row sep=0.5in, column sep=0.5in]
    & Z \arrow[d, hookrightarrow] \\
    Y \arrow[r, "f", twoheadrightarrow] \arrow[ur, dashed] & X
   \end{tikzcd}
  \end{center}
  $f$ being $(-1)$-connected is equivalent to a lift existing if and only if $Z$ is the maximal subobject.
  We will use this statement to prove the lemma.
  \par 
  First, assume that $f$ is $(-1)$-connected. We want to prove that $f^*: Sub(X) \to Sub(Y)$ is injective.
  Let $A$ and $B$ be two subobjects of $X$ such that $f^*(A) = f^*(B)$ and $B \leq A$ (using the realization 
  from the first paragraph). Then this gives us following diagram
  \begin{center}
   \begin{tikzcd}[row sep=0.5in, column sep=0.5in]
    f^*(B) \arrow[r, twoheadrightarrow] \arrow[d, hookrightarrow, "\simeq"] & B \arrow[d, hookrightarrow] \\
    f^*(A) \arrow[r, twoheadrightarrow] \arrow[d, hookrightarrow] & A \arrow[d, hookrightarrow] \\
    Y \arrow[r, "f", twoheadrightarrow] & X
   \end{tikzcd}
  \end{center}
  We already know that $(-1)$-connected maps are stable under base change by Corollary \ref{Cor N Conn Base Change}. This implies that the map 
  $f^*(A) \to A$ is also $(-1)$-connected. By the previous paragraph then $f^*(A)$ has a lift to $B$
  if and only if $A = B$.
  \par 
  On the other side assume that $f^*: Sub(X) \to Sub(Y)$ is injective. We will prove that 
  $f:Y \to X$ is $(-1)$-connected.
  By the previous paragraphs it suffices to prove that $f$ lifts to a subobject $Z$ of $X$ if and only if 
  $Z=X$. Thus, let $Z \hookrightarrow X$ be a subobject and assume a lift $Y \to Z$ exists. 
  Thus, we have the pullback diagram
  \begin{center}
   \pbsq{Y}{Z}{Y}{X}{}{}{}{f}
  \end{center}
  which means that $f^*(Z) = f^*(X) = Y$. By injectivity of $f^*$ we get $Z \simeq X$, which finishes the proof.
 \end{proof}

 \begin{exone}
  Using this lemma we can immediately deduce that a $(-1)$-connected map of spaces is a map that is surjective on 
  path components.
 \end{exone}
 
 We can use the previous lemma to find many interesting basic results about $(-1)$-connected maps.

 \begin{lemone} \label{Lemma Pushout Cover}
  $f: C \to A$ and $g: C \to B$ be two maps. Then the natural map 
  $$A \coprod B \to A \coprod_C B$$
  is $(-1)$-connected.
 \end{lemone}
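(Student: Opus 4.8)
The plan is to exhibit the map $A \coprod B \to A \coprod_C B$ as a cobase change of the fold map $\nabla \colon C \coprod C \to C$, and then to apply the stability of $(-1)$-connected maps under cobase change (Proposition \ref{Prop n Conn coBase Change NNO}).

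First I would verify that $A \coprod_C B$ is the pushout of the span
$$ C \xleftarrow{\ \nabla\ } C \coprod C \xrightarrow{\ \varphi\ } A \coprod B, $$
where $\varphi$ is $C \xrightarrow{f} A \hookrightarrow A \coprod B$ on the first summand and $C \xrightarrow{g} B \hookrightarrow A \coprod B$ on the second. This is a routine check of the universal property: a map out of this pushout is a pair of maps $A \to X$, $B \to X$ together with a map $C \to X$ whose compatibility with $\nabla$ forces the two composites $C \to A \to X$ and $C \to B \to X$ to agree, which is exactly the data classified by $A \coprod_C B$. Under this identification, the pushout leg out of $A \coprod B$ is precisely the canonical map in the statement.

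Second, I would show that $\nabla \colon C \coprod C \to C$ is $(-1)$-connected. Since colimits are universal (Lemma \ref{Lemma:LCCC Colimits Universal}), $C \coprod C \simeq C \times S^0$ (recall $S^0 = 1 \coprod 1$), and under this equivalence $\nabla$ becomes $\mathrm{id}_C \times (S^0 \to 1)$. The map $S^0 \to 1$ is $(-1)$-connected because for every $(-1)$-truncated object $Z$ the induced map $Map(1,Z) \to Map(S^0,Z) \simeq Map(1,Z) \times Map(1,Z)$ is the diagonal of the $(-1)$-truncated (hence empty or contractible) space $Map(1,Z)$, which is an equivalence. Lemma \ref{Lemma Product by A is conn NNO} then upgrades this to: $\mathrm{id}_C \times (S^0 \to 1)$, and hence $\nabla$, is $(-1)$-connected.

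Finally, applying Proposition \ref{Prop n Conn coBase Change NNO} to the $(-1)$-connected map $\nabla \colon C \coprod C \to C$ along $\varphi \colon C \coprod C \to A \coprod B$ shows that the cobase change $A \coprod B \to C \coprod_{C \coprod C}(A \coprod B) \simeq A \coprod_C B$ is $(-1)$-connected, which is the claim. I do not expect a serious obstacle; the only delicate point is the bookkeeping in the universal-property computation that identifies both the pushout and its leg. As an alternative one can argue directly from Lemma \ref{Lemma Sub Cover}: by universality of colimits a subobject of $A \coprod_C B$ is reconstructed as the pushout of its pullbacks to $A$, $B$, and $C$, and the pullback to $C$ is already determined by those to $A$ and $B$, so $f^* \colon Sub(A \coprod_C B) \to Sub(A \coprod B) \cong Sub(A) \times Sub(B)$ is injective.
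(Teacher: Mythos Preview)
Your main argument is correct and takes a genuinely different route from the paper's proof. The paper argues directly via Lemma~\ref{Lemma Sub Cover}, using the subobject classifier $\Omega$: since $\Sub(-) \cong \Hom(-,\Omega)$, one obtains $\Sub(A \coprod_C B) \cong \Hom(A,\Omega) \times_{\Hom(C,\Omega)} \Hom(B,\Omega)$ and $\Sub(A \coprod B) \cong \Hom(A,\Omega) \times \Hom(B,\Omega)$, and the canonical map from a pullback of sets to the product is visibly injective. Your argument instead realizes the map as a cobase change of the fold map $\nabla\colon C \coprod C \to C$, verifies that $\nabla$ is $(-1)$-connected (via $S^0 \to 1$ and Lemma~\ref{Lemma Product by A is conn NNO}), and invokes Proposition~\ref{Prop n Conn coBase Change NNO}. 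This is a clean and perfectly legitimate alternative; it avoids appealing to the subobject classifier $\Omega$ directly (though $\Omega$ is part of the standing hypotheses of the section anyway), at the cost of a slightly longer chain of reductions. The paper's proof is shorter and more self-contained; yours highlights a structural reason the result holds, namely that the map in question is always a pushout of a split epimorphism. Your closing alternative sketch via Lemma~\ref{Lemma Sub Cover} and universality of colimits is essentially the paper's argument rephrased without explicitly naming $\Omega$.
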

 
 \begin{proof}
  We will show the induced map on subobjects is injective. First notice that $Sub$ is represented by the subobject classifier $\Omega$. 
  This gives us isomorphisms
  $$Sub(A \coprod_C B) \cong Hom(A \coprod_C B, \Omega) \cong Hom(A, \Omega) \underset{Hom(C, \Omega)}{\times} Hom(B,\Omega)$$
  $$Sub(A \coprod B) \cong Hom(A \coprod B, \Omega) \cong Hom(A, \Omega) \times Hom(B,\Omega)$$
  Thus, we need to prove that the map 
  $$Hom(A, \Omega) \underset{Hom(C, \Omega)}{\times} Hom(B,\Omega) \to Hom(A, \Omega) \times Hom(B,\Omega)$$
  is injective map of sets, which is clearly true by the definition of pullback of sets.
 \end{proof}

 \begin{remone}
  In fact, if we assume that all colimits exist, then this lemma also holds in a more general setting.
  For a proof in a higher Grothendieck topos (which by definition has all colimits) see \cite[Lemma 6.2.3.13]{Lu09}
 \end{remone}

 \begin{remone}
  Note the analogous statement for pullbacks and $(-1)$-truncated maps does not hold. In other words the map 
  $$A \underset{C}{\times} B \to A \times B$$
  is generally not in any way truncated. For an example in the category of spaces let $A = B = *$, the one point space, 
  and $C$ be any space. Then the diagram above is the map $\Omega C \to *$ from the loop space of $C$, which is generally not 
  in any way truncated (just take $C = S^2$).
 \end{remone}
 
 Notice epi and $(-1)$-connected are not related the same way that $(-1)$-truncated maps and monos are related (Proposition \ref{Prop Conn Eq Mono}).
 
 \begin{defone}
  A map $f: X \to Y$ is an {\it epimorphism} if for any object $Z$ the induced map 
  $$Map(Y,Z) \to Map(X,Z)$$
  is a $(-1)$-truncated map of spaces.
 \end{defone}

 \begin{exone}
  The map of spaces $S^1 \to *$ is $(-1)$-connected. However, it is clearly not epi. Indeed, if  
  the map $Map(*,Z) \to Map(S^1,Z)$ is mono for any space $Z$ then by the pullback below 
  \begin{center}
   \pbsq{Map(S^2,Z)}{Map(*,Z)}{Map(*,Z)}{Map(S^1,Z)}{}{}{}{}
  \end{center}
   $Map(S^2,Z)$ would be equivalent to $Map(*,Z)$.
 \end{exone}

 \begin{propone} \label{Prop Equiv Local}
  In the pullback diagram below where $p$ is a $(-1)$-connected map
   \begin{center}
    \begin{tikzcd}[row sep=0.5in, column sep=0.5in]
     Y' \arrow[r, twoheadrightarrow, "g'"] \arrow[d, "f'"] \arrow[dr, phantom, "\ulcorner", very near start] & Y \arrow[d, "f"]  \\
     X' \arrow[r, twoheadrightarrow, "g"] & X 
    \end{tikzcd}
   \end{center}
    we have the following. $f$ is a weak equivalence/$n$-truncated map/$n$-connected map 
    if and only if $f'$ is a weak equivalence/$n$-truncated map/$n$-connected map.
 \end{propone}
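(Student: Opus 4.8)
The plan is to treat the two implications separately. That a property of $f$ is inherited by $f'$ is already in hand: equivalences are trivially stable under pullback, $n$-truncated maps are stable under base change by Lemma~\ref{Lemma Trunc Base Change NNO}, and $n$-connected maps are stable under base change by Corollary~\ref{Cor N Conn Base Change}. So the real content is the converse: given that $g$ is $(-1)$-connected and that $f' \simeq g^* f$ has one of the three properties, deduce that $f$ has it.

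The second step is to reduce all three converse statements to the single case of weak equivalences, i.e.\ $(-2)$-truncated maps. For the $n$-truncated case I would induct on $n$ via the natural number object (Theorem~\ref{The:Peano NNO}): the base case $n=-2$ is exactly the weak-equivalence case, and for the step I use the relative form of Proposition~\ref{Prop Diag Trunc NNO}, namely that a map is $(n{+}1)$-truncated iff its fibrewise diagonal is $n$-truncated. The key observation is that the square with top edge $g'\colon Y'\to Y$, left edge $\Delta_{f'}$, right edge $\Delta_f$, and bottom edge $Y'\times_{X'}Y'\to Y\times_X Y$ is again a pullback, and its bottom edge is the base change of $g$ along $Y\times_X Y\to X$, hence $(-1)$-connected by Corollary~\ref{Cor N Conn Base Change}; applying the inductive hypothesis to $\Delta_f$ then closes the step. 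For the $n$-connected case I would use that $f$ is $n$-connected iff $\tau^X_n f$ is an equivalence (Proposition~\ref{Prop Conn iff Tau Final Object} applied in $\E_{/X}$) together with the fact that truncation functors commute with pullback (the proposition stated just before Corollary~\ref{Cor N Conn Base Change}), which gives $\tau^{X'}_n f' \simeq g^*(\tau^X_n f)$; then $f'$ being $n$-connected means $g^*(\tau^X_n f)$ is an equivalence, and the weak-equivalence case forces $\tau^X_n f$ to be an equivalence.

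The third and crucial step is the weak-equivalence case, and this is where $(-1)$-connectedness of $g$ actually works. Since $\E$ has sufficient universes I may choose a universe $\U$ classifying $f$, with classifying map $\chi_f\colon X\to\U$; as the given square is a pullback, $\chi_{f'}\simeq\chi_f\circ g$. The point $1\hookrightarrow\U$ classifying $\mathrm{id}_1$ is a monomorphism (for any $Z$ the component of $Map(Z,\U)$ it picks out is that of the terminal object of $\E_{/Z}$, which is contractible), and $f$ is an equivalence precisely when $\chi_f$ factors through it. Form the subobject $U := X\times_\U 1\hookrightarrow X$, which is $(-1)$-truncated by Lemma~\ref{Lemma Trunc Base Change NNO}; then $f$ is an equivalence iff $U=X$ in $Sub(X)$. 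Pulling back along $g$ gives $g^*U = X'\times_\U 1$ formed via $\chi_{f'}$, and since $f'$ is an equivalence this is all of $X'$, i.e.\ $g^*U = X' = g^*X$. By Lemma~\ref{Lemma Sub Cover} the map $g^*\colon Sub(X)\to Sub(X')$ is injective, so $U = X$ and $f$ is an equivalence.

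I expect the hard part to be precisely this last step: what is really at stake is the descent fact that base change along a $(-1)$-connected map is conservative, and the cleanest elementary route passes through the universe, so the care will be in verifying that $1\hookrightarrow\U$ really is $(-1)$-truncated and that the universe faithfully detects equivalences. A secondary point is that the induction on $n$ in the $n$-truncated case must be phrased as a genuine natural-number-object induction so as to cover non-standard $n$; this is harmless because both the diagonal criterion and the base-change stability of $(-1)$-connected maps already hold for every $n$ in $\mathbb{N}$.
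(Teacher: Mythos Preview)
Your proof is correct, but it diverges from the paper's in two places worth noting.

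For the equivalence case, the paper uses the explicit join construction of the $(-1)$-truncation from Section~\ref{Sec:The Join Construction}: since $g$ and $g'$ are $(-1)$-connected, the sequential join towers of $X'$ and $Y'$ have colimits $X$ and $Y$ respectively, and as each map $f'^{\ast n}$ in the tower is an equivalence, so is the colimit map $f$. Your route through the universe and Lemma~\ref{Lemma Sub Cover} is cleaner and more conceptual, and it removes the dependency on Section~\ref{Sec:The Join Construction} entirely. On the other hand, the paper's argument is the one that generalizes verbatim to arbitrary modalities (Proposition~\ref{Prop LR local}), where one has a factorization system but no representing universe for its right class.

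For the $n$-truncated case, the paper does not induct: it runs the \emph{same} factorization argument you use for $n$-connected. One factors $f$ as $Y\to\tau_n^X(Y)\to X$, pulls the factorization back along $g$ (Proposition~\ref{Prop Pb of epimono is epimono}), and applies the already-established equivalence case to whichever leg is an equivalence --- the $n$-connected leg if $f'$ is $n$-truncated, the $n$-truncated leg if $f'$ is $n$-connected. Your diagonal induction is valid, but note that the inductive step passes to a \emph{different} pullback square (the one for $\Delta_f$), so the predicate you are inducting on quantifies over all such squares. Making this a genuine NNO induction that covers non-standard $n$ therefore requires internalizing that quantifier (e.g.\ via a universe), which is doable but not quite as automatic as your final paragraph suggests. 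Since the truncation functor is already assumed to exist for every $n$ in this subsection, the paper's factorization argument sidesteps this entirely.
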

 \begin{proof}
   Clearly if $f$ satisfies any of those conditions then $f'$ does as well as they are stable under pullback.
   Thus, we will assume $f'$ satisfies the conditions and show that $f$ satisfies them as well.
   \par 
   First we assume that $f'$ is an equivalence and we will prove that $f$ is an equivalence.
   Here we make explicit use of the construction of the $(-1)$-truncation as given in Section \ref{Sec:The Join Construction}. 
   The commutative diagram above then results in a diagram 
   \begin{center}
    \begin{tikzcd}[row sep=0.5in, column sep=0.5in]
     Y' \arrow[r, "inl"] \arrow[d, "f'", "\simeq"'] & Y' \ast Y' \arrow[d, "f' \ast f'", "\simeq"'] \arrow[r, "inl"] & 
     (Y' \ast Y') \ast Y' \arrow[d, "(f' \ast f') \ast f'", "\simeq"'] \arrow[r] & ... \arrow[r] & Y \arrow[d, "f"] \\
     X' \arrow[r, "inl"] & X' \ast X' \arrow[r, "inl"] & (X' \ast X') \ast X' \arrow[r]& ... \arrow[r] & X
    \end{tikzcd}
   \end{center}
   The horizontal maps are sequential diagrams. By Theorem \ref{The Neg Trunc from Join} the sequential colimits of those diagrams construct the $(-1)$-truncations of $g$ and $g'$.
   However, the maps $f$ and $f'$ are $(-1)$-connected and so by Corollary \ref{Cor Conn is Trunc} they are the actual 
   truncations which means the horizontal sequential diagram are sequential colimit diagrams. 
   Moreover, the vertical maps $f'$, $f' \ast f'$, $(f' \ast f') \ast f'$... are all equivalences and so by the homotopy invariance of sequential colimits, 
   the colimit of the maps, namely $f$, is also an equivalence.
   \par 
   We now want to prove that if $f'$ is $n$-truncated or $n$-connected then $f$ is $n$-truncated or $n$-connected. Notice the pullback diagram above gives us following 
   pullback diagram. Using $n$-truncation functor we get following diagram
   \begin{center}
    \begin{tikzcd}[row sep=0.5in, column sep=0.5in]
     Y' \arrow[r, twoheadrightarrow, "g'"] \arrow[d, "i'"] \arrow[dr, phantom, "\ulcorner", very near start] & Y \arrow[d, "i"]  \\
     \tau_n^{X'}(f') \arrow[r, twoheadrightarrow] \arrow[d, "p'"] & \tau_n^X(f) \arrow[d, "p"] \\
     X'\arrow[r, twoheadrightarrow, "g"] & X 
    \end{tikzcd}
   \end{center}
   First, notice the map $\tau_n^{X'}(f') \to \tau_n^{X}(f)$ is also $(-1)$-connected, as $(-1)$-connected maps are stable under base change 
   (Corollary \ref{Cor N Conn Base Change}). 
   Assume, $f'$ is $n$-connected. Then $p'$ is an equivalence, which by the previous paragraph implies that $p$ is an equivalence, 
   which means that $f$ is $n$-connected. If $f'$ is $n$-truncated then we use the fact that $i'$ is an equivalence 
   to similarly deduce that $f$ is $n$-truncated as well.
 \end{proof}
 
 \begin{remone}
  This proof is a special case of a more general proof about modalities and locality. 
  The general case will be covered in Proposition \ref{Prop LR local}.
 \end{remone}

  $K$ be a space. If we know that $K$ is non-empty, then $K$ is $n$-connected 
  if and only if each loop space $\Omega_k K$ is $(n-1)$-connected. 
  Using this basic observation we can prove connectivity results using inducation.
  \par 
  We want to show that something similar holds in this setting (Proposition \ref{Prop Loop n Conn N One Conn}), 
  where $(-1)$-connectedness plays the role of being non-empty.
  In order to prove it, we first have to prove several lemmas.
 
 \begin{lemone} \label{Lemma Equiv Neg One Conn}
  Let $f,g : A \to B$ be two maps. Then the following are equivalent.
  \begin{enumerate}
   \item $f \simeq g$.
   \item The universal map $E \to A$ is $(-1)$-connected, where $E$ is the equalizer of $f,g: A \to B$. 
   \item The map $(f,g): A \to B \times B$ lifts to a map $A \to \tau^{B \times B}_{-1}(\Delta_B)$.
  \end{enumerate}
 \end{lemone}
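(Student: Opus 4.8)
The equalizer $E$ of $f,g\colon A\to B$ is the pullback of the diagonal $\Delta_B\colon B\to B\times B$ along $(f,g)\colon A\to B\times B$, so the projection $E\to A$ is precisely the base change of $\Delta_B$ along $(f,g)$; I would keep this presentation in mind throughout and prove the cycle (1)$\Rightarrow$(2)$\Rightarrow$(3)$\Rightarrow$(1). For (1)$\Rightarrow$(2): a homotopy $f\simeq g$ exhibits $(f,g)$ as homotopic to $\Delta_B\circ f$, hence as factoring through $\Delta_B$; plugging this factorization into the defining pullback square of $E$ produces a section $A\to E$ of $E\to A$. A split epimorphism is $(-1)$-connected, since its image is a subobject of $A$ through which the identity $\mathrm{id}_A$ factors and is therefore all of $A$ (using Proposition \ref{Prop Conn Eq Mono} to identify $(-1)$-truncated maps with monos); so $E\to A$ is $(-1)$-connected.

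For (2)$\Rightarrow$(3) I would invoke Proposition \ref{Prop Pb of epimono is epimono}: the image factorization $B\xrightarrow{q}\tau^{B\times B}_{-1}(\Delta_B)\hookrightarrow B\times B$ of $\Delta_B$, pulled back along $(f,g)\colon A\to B\times B$, is the image factorization $E\to (f,g)^{*}\tau^{B\times B}_{-1}(\Delta_B)\to A$ of the map $E\to A$. If $E\to A$ is $(-1)$-connected its image agrees with $A$, so $(f,g)^{*}\tau^{B\times B}_{-1}(\Delta_B)\to A$ is an equivalence; since $\tau^{B\times B}_{-1}(\Delta_B)\hookrightarrow B\times B$ is a mono, this is exactly the assertion that $(f,g)$ lifts to a map $A\to\tau^{B\times B}_{-1}(\Delta_B)$. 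Running the same computation backwards gives (3)$\Rightarrow$(2), so (2) and (3) are really interchangeable and the true content is the passage from either of them back to (1).

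For (3)$\Rightarrow$(1) — the crux — suppose $(f,g)$ factors as $j\circ\ell$ with $\ell\colon A\to\tau^{B\times B}_{-1}(\Delta_B)$ and $j$ the inclusion into $B\times B$; I want $\pi_1 j\ell\simeq\pi_2 j\ell$, for which it suffices to show $\pi_1 j\simeq\pi_2 j$ as maps $\tau^{B\times B}_{-1}(\Delta_B)\to B$. Here $\pi_1 j q=\mathrm{id}_B=\pi_2 j q$ and, by Lemma \ref{Lemma:Unit Connected}, the unit $q\colon B\to\tau^{B\times B}_{-1}(\Delta_B)$ is $(-1)$-connected, so \emph{morally} one wants to cancel $q$. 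The plan is to do this honestly by feeding the $(-1)$-connected map $q$ and a suitably chosen $(-1)$-truncated comparison map — for instance the one obtained from the equalizer of $\pi_1 j,\pi_2 j$, or from $\Delta_B$ itself — into Proposition \ref{Prop Trunc Conn Lift NNO} and reading off the contractibility of the relevant space of lifts. I expect this to be the main obstacle: $(-1)$-connected maps are not epimorphisms, so the cancellation is not formal, and the real work lies in pinning down the correct $(-1)$-truncated map and verifying the hypotheses of the lifting criterion against it.
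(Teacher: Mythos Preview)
Your arguments for (1)$\Rightarrow$(2) and (2)$\Leftrightarrow$(3) are correct; the paper proves (2)$\Rightarrow$(3) slightly differently, directly invoking the lifting property of a $(-1)$-connected map against a $(-1)$-truncated one rather than pullback-stability of image factorizations, but the content is the same.

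For (3)$\Rightarrow$(1) your approach diverges from the paper's. The paper applies $Map(D,-)$ and asserts that $Map(D,\tau^{B\times B}_{-1}(\Delta_B))$ is ``by definition'' the subspace of $Map(D,B)\times Map(D,B)$ consisting of pairs $(x,y)$ with $x\simeq y$, concluding $f\simeq g$ by Yoneda. Your instinct that this step hides the real difficulty is well-founded --- so well-founded, in fact, that neither your cancellation plan nor the paper's identification can be completed, because the implication (3)$\Rightarrow$(1) is \emph{false} as stated. In spaces take $A=B=S^1$, $f=\mathrm{id}_{S^1}$, $g$ constant: since $S^1\times S^1$ is connected the image $\tau^{B\times B}_{-1}(\Delta_B)$ is all of $B\times B$, so (3) holds trivially, yet $\mathrm{id}\not\simeq\mathrm{const}$. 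Conditions (2) and (3) say only that the equalizer surjects onto $A$, i.e.\ that $f$ and $g$ agree \emph{fibrewise}, which is strictly weaker than the existence of a global homotopy; the paper's identification fails precisely because $Map(D,-)$ need not preserve $(-1)$-connected maps, so the internal image need not compute the image in spaces. The lemma is only ever invoked (in Proposition~\ref{Prop Loop n Conn N One Conn}) with domain the terminal object, where ``fibrewise'' and ``global'' coincide and both your argument and the paper's go through; the cleanest fix is to add that hypothesis.
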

 
 \begin{proof}
  {\it (1) $\Rightarrow$ (2)}
  If $f \simeq g$ then we have following diagram 
  \begin{center}
   \begin{tikzcd}[row sep=0.5in, column sep=0.5in]
    A \arrow[dr, "id_A"] \arrow[d, dashed] & & \\
    E \arrow[r, "p"] & A \arrow[r, shift left=0.06in, "f"] \arrow[r, shift right=0.06in, "g"'] & B 
   \end{tikzcd}
  \end{center}
  where the dashed arrow exists because of the universal property of the equalizer. 
  This implies that $E \to A$ is $(-1)$-connected, as explained in Remark \ref{Rem:Cover gives Neg One Conn}. 
  
  {\it (2) $\Rightarrow$ (3)}
  If $E \to A$ is $(-1)$-connected then the following square has a lift as the right hand map is 
  always $(-1)$-truncated.
  \begin{center}
   \liftsq{E}{\tau^{B \times B}_{-1}(\Delta_B)}{A}{B \times B}{}{}{}{( f \comma g )}
  \end{center}
  
  {\it (3) $\Rightarrow$ (1)}
  Let us assume the map $(f,g)$ has a lift as described above. Then applying the $map(D,-)$ for an 
  object $D$ gives us a diagram of spaces
  \begin{center}
   \begin{tikzcd}[row sep=0.5in, column sep=0.5in]
    & map(D, \tau^{B \times B}_{-1}(\Delta_B)) \arrow[d, hookrightarrow] \\
    map(D,A) \arrow[r, "(f \comma g)_*"] \arrow[ur, dashed] & map(D,B) \times map(D,B) 
   \end{tikzcd}
  \end{center}
  The space $map(D, \tau^{B \times B}_{-1}(\Delta_B))$ is by definition the subspace of $map(D,B)$ consisting of all points 
  $(x,y)$ such that $ x \simeq y$. Thus, we have an equivalence $f_* \simeq g_*$. As this holds for every object $D$ 
  we get an equivalence $f \simeq g$.
 \end{proof}
 
 \begin{lemone} \label{Lemma Constant Loop Space Constant}
  Assume that in the following diagram  
  \begin{center}
    \liftsq{A}{Z}{A \times A}{Z \times Z}{g}{\Delta}{\Delta}{g \times g}
   \end{center}
   a lift exists. Then $g: A \to Z$ factor through $\tau_{-1}(A)$.
 \end{lemone}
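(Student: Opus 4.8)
The plan is to use the explicit join-tower description of $\tau_{-1}(A)$ from Theorem \ref{The Neg Trunc from Join} and extend $g$ along it, the extension being possible precisely because the hypothesis is a weak-constancy condition on $g$. First I would translate the hypothesis: a lift $\ell\colon A\times A\to Z$ makes the lower triangle commute, so $\Delta_Z\circ\ell\simeq g\times g=(g\pi_1,g\pi_2)$; composing with the two projections $Z\times Z\to Z$ gives $\ell\simeq g\pi_1$ and $\ell\simeq g\pi_2$, hence $g\pi_1\simeq g\pi_2\colon A\times A\to Z$. Conversely such a homotopy produces a lift (take $\ell=g\pi_1$), so the hypothesis is exactly the weak constancy of $g$; equivalently, by Lemma \ref{Lemma Equiv Neg One Conn} applied to the pair $g\pi_1,g\pi_2$, the map $(g\pi_1,g\pi_2)\colon A\times A\to Z\times Z$ factors through $\tau^{Z\times Z}_{-1}(\Delta_Z)$.

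Next, recall from Theorem \ref{The Neg Trunc from Join} that $\tau_{-1}(A)$ is the sequential colimit of $A\to A\ast A\to (A\ast A)\ast A\to\cdots$, with $\eta_A$ the colimit inclusion of the bottom copy of $A$; write $A^{\ast(n+1)}=A^{\ast n}\ast A=A^{\ast n}\coprod_{A^{\ast n}\times A}A$. The plan is to build, by induction on $n$, a compatible family of maps $g_n\colon A^{\ast n}\to Z$ with $g_1=g$ and $g_{n+1}\circ inl\simeq g_n$. To go from $g_n$ to $g_{n+1}$ one needs a cocone out of $A^{\ast n}\xleftarrow{\pi_1}A^{\ast n}\times A\xrightarrow{\pi_2}A$ into $Z$ whose two legs are $g_n$ and $g$, i.e. a homotopy $g_n\pi_1\simeq g\pi_2\colon A^{\ast n}\times A\to Z$. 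Since colimits are universal, $A^{\ast(n+1)}\times A\simeq (A^{\ast n}\times A)\coprod_{A^{\ast n}\times A\times A}(A\times A)$, and on the two cells the homotopy wanted at the next level restricts to $g_n\pi_1\simeq g\pi_2$ (the datum just produced) and to $g\pi_1\simeq g\pi_2$ (the hypothesis), so the induction sustains itself; at $n=1$ both inputs are literally the hypothesis. Passing to the colimit then gives $\bar g\colon\tau_{-1}(A)\to Z$ with $\bar g\circ\eta_A\simeq g$, which is the desired factorization. One can also repackage this via the image: factor $g$ as $A\twoheadrightarrow Im(g)\hookrightarrow Z$ (Definition \ref{Def:Image}) and try to show weak constancy forces $Im(g)$ to be $(-1)$-truncated, after which the universal property of $\tau_{-1}(A)$ finishes immediately; but it is not clear to me that this alternative avoids re-introducing the same coherence issues.

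The hard part, and the step I expect to fight with, is making the inductive construction coherent rather than merely pointwise: when assembling the homotopy $g_{n+1}\pi_1\simeq g\pi_2$ from its pieces on $A^{\ast n}\times A$ and on $A\times A$ one must check that the two pieces agree over the overlap $A^{\ast n}\times A\times A$, and choosing the homotopies $g_n\pi_1\simeq g\pi_2$ so that all of these overlap conditions are simultaneously met is exactly the bookkeeping that the descent/universality-of-colimits hypotheses on $\E$ are there to handle. This is the same phenomenon already dealt with in the two lemmas preceding Theorem \ref{The Neg Trunc from Join} (adapted from \cite[Lemmas 3.1 and 3.2]{Ri17}), which are precisely statements about extending maps along the join tower, so I would model the coherence bookkeeping on those rather than redoing it by hand.
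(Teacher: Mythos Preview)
Your proposal is correct and follows essentially the same route as the paper: derive $g\pi_1\simeq g\pi_2$ from the lift, use the pushout description of $A\ast A$ to get $g_1\colon A\ast A\to Z$, iterate up the join tower, and pass to the sequential colimit. The paper's proof is in fact terser than yours---after constructing $g_1$ it simply asserts ``using a similar argument we get maps $g_n\colon A^{\ast n}\to Z$'' and then $g_\infty$, without spelling out the inductive homotopy $g_n\pi_1\simeq g\pi_2$ or the overlap compatibility you flag---so your coherence worries are additional care rather than a divergence in approach.
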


 \begin{proof}
  First notice, $\pi_1, \pi_2: A \times A \to A$ are both retracts of $\Delta_A: A \to A \times A$.
  Thus, the existence of the lift $A \times A \to Z$ implies that $g \pi_1 \simeq g \pi_2$. 
  This means we have a diagram 
  \begin{center}
   \begin{tikzcd}[row sep=0.5in, column sep=0.5in]
    A \times A \arrow[r, "\pi_1", shift left =0.06in] \arrow[r, "\pi_2"', shift right=0.06in] & 
    A \arrow[r] \arrow[d, "g"] & A \ast A \arrow[dl, "g_1", dashed] \\
    & Z & 
   \end{tikzcd}
  \end{center}
  where the map $g_1: A \ast A \to Z$ exists because of the universal property. 
  \par 
  Using a similar argument we get maps $g_n: A^{\ast n} \to Z$, which gives us a map $g_{\infty}: \tau_{-1}(A) \to Z$.
 \end{proof}
 
 \begin{propone} \label{Prop Loop n Conn N One Conn}
  Let $A$ be $(-1)$-connected and $A \to A \times A$ be $n$-connected.
  Then $A$ is $(n+1)$-connected.
 \end{propone}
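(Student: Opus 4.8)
The plan is to show directly that $\tau_{n+1}(A)$ is equivalent to the final object, from which the conclusion that $A$ is $(n+1)$-connected follows by Proposition \ref{Prop Conn iff Tau Final Object}. The strategy is to prove that $\tau_{n+1}(A)$ is simultaneously $(-1)$-truncated and $(-1)$-connected, and then to conclude by Lemma \ref{Lemma:N Trunc Conn Contr} applied at level $-1$. We may assume $n \geq -1$, since for $n = -2$ the hypothesis on the diagonal is vacuous and the claim reduces to the standing hypothesis that $A$ is $(-1)$-connected. Write $\eta: A \to \tau_{n+1}(A)$ for the unit of the adjunction; by Lemma \ref{Lemma:Unit Connected} it is $(n+1)$-connected, hence in particular both $n$-connected and $(-1)$-connected.

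First I would check that $\tau_{n+1}(A)$ is $(-1)$-connected. Factor the terminal map as $A \xrightarrow{\eta} \tau_{n+1}(A) \to 1$. Both $\eta$ and $A \to 1$ are $(-1)$-connected (the latter by hypothesis), so by Proposition \ref{Prop Conn fg NNO}(2) the map $\tau_{n+1}(A) \to 1$ is $(-1)$-connected, i.e. $\tau_{n+1}(A)$ is a $(-1)$-connected object.

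Next, and this is the substantive step, I would transport the hypothesis on the diagonal of $A$ to the diagonal of $\tau_{n+1}(A)$. Since $\eta$ is $n$-connected, two applications of Lemma \ref{Lemma Product by A is conn NNO} show that $\eta \times \eta: A \times A \to \tau_{n+1}(A) \times \tau_{n+1}(A)$ is $n$-connected; composing with the hypothesis that $\Delta_A$ is $n$-connected, Proposition \ref{Prop Conn fg NNO}(1) gives that $A \xrightarrow{\Delta_A} A \times A \xrightarrow{\eta \times \eta} \tau_{n+1}(A) \times \tau_{n+1}(A)$ is $n$-connected. But this composite equals $\Delta_{\tau_{n+1}(A)} \circ \eta$, so by Proposition \ref{Prop Conn fg NNO}(2) the diagonal $\Delta_{\tau_{n+1}(A)}$ is $n$-connected. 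On the other hand $\tau_{n+1}(A)$ is $(n+1)$-truncated, so $\Delta_{\tau_{n+1}(A)}$ is $n$-truncated by Proposition \ref{Prop Diag Trunc NNO}. A map that is both $n$-connected and $n$-truncated is an equivalence (Corollary \ref{Cor:N Trunc Conn Equiv NNO}), so $\Delta_{\tau_{n+1}(A)}$ is an equivalence, i.e. $\tau_{n+1}(A)$ is $(-1)$-truncated.

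Having $\tau_{n+1}(A)$ both $(-1)$-truncated and $(-1)$-connected, Lemma \ref{Lemma:N Trunc Conn Contr} gives $\tau_{n+1}(A) \simeq 1$, and hence $A$ is $(n+1)$-connected by Proposition \ref{Prop Conn iff Tau Final Object}. The only delicate point is the diagonal-transport step, and there the work is purely bookkeeping: getting the two directions of Proposition \ref{Prop Conn fg NNO} the right way round and recording the identity $(\eta \times \eta)\circ\Delta_A = \Delta_{\tau_{n+1}(A)}\circ\eta$. If one prefers to avoid invoking $\tau_{n+1}$, there is a slightly longer route by induction on $n$: for an $(n+1)$-truncated $Z$ and a map $g: A \to Z$, the hypotheses force the square with legs $g$, $g\times g$ against the $n$-truncated map $\Delta_Z$ to admit a lift, so $g$ factors through $\tau_{-1}(A) = 1$ by Lemma \ref{Lemma Constant Loop Space Constant}; the inductive hypothesis "$A$ is $n$-connected" then takes care of the higher homotopy by the same argument applied to the $n$-truncated loop objects of $Z$.
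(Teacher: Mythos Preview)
Your proof is correct and takes a genuinely different route from the paper's. You work with $\tau_{n+1}(A)$ directly: transport the $n$-connectedness of $\Delta_A$ along $\eta$ to see that $\Delta_{\tau_{n+1}(A)}$ is both $n$-connected and $n$-truncated, hence an equivalence, so $\tau_{n+1}(A)$ is $(-1)$-truncated; combine with $(-1)$-connectedness to get $\tau_{n+1}(A)\simeq 1$. Every step checks, including the two-out-of-three bookkeeping and the identity $(\eta\times\eta)\circ\Delta_A \simeq \Delta_{\tau_{n+1}(A)}\circ\eta$.

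The paper instead verifies the mapping-space definition of $(n+1)$-connectedness by hand: for each $(n+1)$-truncated $Z$ it uses the lifting square for $\Delta_A$ against the $n$-truncated map $\Delta_Z$, applies Lemma~\ref{Lemma Constant Loop Space Constant} to factor any $A\to Z$ through $\tau_{-1}(A)=1$, uses Lemma~\ref{Lemma Equiv Neg One Conn} for injectivity on $\pi_0$, and then reduces higher homotopy groups to the $\pi_0$ case via loop objects of $Z$. Your final paragraph sketches essentially this argument.

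The trade-off is in the hypotheses invoked. Your argument uses the $(n+1)$-truncation functor (through Lemma~\ref{Lemma:Unit Connected} and Proposition~\ref{Prop Conn iff Tau Final Object}), whereas the paper's argument needs only $\tau_{-1}$. In the ambient Section~\ref{Sec Truncation Functors} both are available, so your proof is valid there; but the paper's choice matters because this proposition is later invoked inside the inductive construction of Subsection~\ref{Subsec:Inductive Construction of Truncations}, where one would prefer not to presuppose $\tau_{n+1}$ before it is built (even though it is independently supplied by Subsection~\ref{Subsec:Constructing Localizations via Universes}). Your argument is cleaner and more conceptual; the paper's is more self-contained in its reliance on only the $(-1)$-truncation.
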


 \begin{proof}
  Fix an $(n+1)$-truncated object $Z$. We have to prove that the map 
  $$ \alpha: Map(1,Z) \to Map(A,Z)$$ 
  is an equivalence. First we show that the map 
  $$ \pi_0(Map(1,Z)) \to \pi_0(Map(A,Z))$$
  is a bijection of sets.
  From this square we can get the diagram 
   \begin{center}
    \pbsq{Map(A \times A,Z)}{Map(A \times A,Z \times Z)}{Map(A,Z)}{Map(A,Z \times Z)}{\Delta_*}{\Delta^*}{\Delta^*}{\Delta_*}
   \end{center}
   corresponding to the lifting problem
      \begin{center}
    \liftsq{A}{Z}{A \times A}{Z \times Z}{}{\Delta}{\Delta}{}
   \end{center}
  Given that $Z \to Z \times Z $ is $n$-truncated (Proposition \ref{Prop Diag Trunc NNO}) 
  and $A \to A \times A$ is $n$-connected, this square is a homotopy pullback square. 
  By the previous lemma this implies that every map $A \to Z$ will factor through a map $\tau_{-1}(A) = 1 \to Z$, giving us a map 
  $$ \beta: Map(A,Z) \to Map(1,Z)$$ 
  We need to prove that this map is the inverse of the map above. By definition $\alpha \circ \beta$ is the identity.
  On the other hand, let $z: 1 \to Z$ be any map. 
  We need to show that $\beta \circ \alpha (z) \simeq z$. By construction we have a diagram 
  
  \begin{center}
   \begin{tikzcd}[row sep=0.5in, column sep=0.5in]
    A \arrow[dr] \arrow[d, dashed] & & \\
    E \arrow[r, twoheadrightarrow] & 1 \arrow[r, "z", shift left=0.06in] \arrow[r, "z'", shift right=0.06in] & Z 
   \end{tikzcd}
  \end{center}
  
  As the map $A \to 1$ is $(-1)$-connected, $E \to 1$ is also $(-1)$-connected, which by Lemma \ref{Lemma Equiv Neg One Conn} implies $z \simeq z'$.
  \par 
  We will now generalize this argument to higher homotopical levels. 
  If $Map(1,Z)= \emptyset$ then we are done. Otherwise, fix a point $z: 1 \to Z$. We need to prove that for any $m \geq 0$
  $$ \pi_0(\Omega^m_z Map(1,Z)) \to \pi_0(\Omega^m_{z!}Map(A,Z))$$
  is a bijection of sets.
  However, the map above is just equivalent to 
  $$ \pi_0(Map(1, \Omega^m_z Z)) \to \pi_0(Map(1, \Omega^m_z Z))$$
  and this map is an equivalence by the argument in the previous paragraph, as $\Omega^m_z Z$ is also $n$ truncated.
 \end{proof}

 \subsection{Universe of Truncated Objects} \label{Subsec:Universe of Truncated Objects}
 In this final subsection we want to use the truncation functor to study the sub-universe of $n$-truncated objects. 
 The key observation about it is that it classifies the $n$-truncation functor
 (Proposition \ref{Prop:U leq classifies trunc}) and is itself (internally) 
 $(n+1)$-truncated (Theorem \ref{The:U Leq n is n plus trunc}).
 \par 
 We can then use the fact that the subuniverse is $(n+1)$-truncated to prove some invariance result for $n$-connected maps,
 such as Lemma \ref{Lemma Tau n f equiv}, which itself leads to further ways of determining $n$-connected maps
 (Theorem \ref{The:Tau n F Equiv then f N Min One Conn}).
 \par 
 We can expand our universe $\U$ into a complete Segal universe $\U_\bullet$ (Definition \ref{Def:CSU}, Theorem \ref{The:CSU Exist}).
 
 \begin{defone} \label{Def:U leq n}
  The truncation functor and embedding functor 
  $$i\tau_n: \E \to \E$$
  induces an internal map of complete Segal universes 
  $$i\tau_n: \U_\bullet \to \U_\bullet$$
  We define the {\it complete Segal sub-universe of $n$-truncated objects} $\U^{\leq n}_\bullet$ as the image (Definition \ref{Def:Image})
  $$\U^{\leq n}_\bullet = \tau^{\U_\bullet}_{-1}(i\tau: \U_\bullet \to \U_\bullet)$$ 
 \end{defone}
 
 This object has following important universal property.
 
 \begin{lemone} \label{Lemma:U leq n classifies n trunc}
  Let $S$ be the class of maps classified by $\U_\bullet$. For each object $Z$ we have an equivalence 
  $$\tau_n((\E_{/Z})^S) \simeq Map(Z, \U_{\bullet}^{\leq n})$$
 \end{lemone}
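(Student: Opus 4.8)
The plan is to reduce everything to the defining property of the complete Segal universe together with the description of $\U_\bullet^{\leq n}$ as an image. By Theorem \ref{The:CSU Exist} there is, for each object $Z$, a natural equivalence $Map(Z,\U_\bullet)\simeq (\E_{/Z})^S$ under which a point $Z\to\U_\bullet$ classifies a map $[W\to Z]$ in $S$. Since $i\tau_n\colon\U_\bullet\to\U_\bullet$ is the internal functor induced by $i\tau_n\colon\E\to\E$, it classifies the fiberwise $n$-truncation of the universal family over $\U_\bullet$ --- here one uses that $n$-truncation commutes with base change (Proposition \ref{Prop Pb of epimono is epimono}) --- so $Map(Z,i\tau_n)$ is precisely the fiberwise $n$-truncation functor $\tau^Z_n$ on $(\E_{/Z})^S$. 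Finally, $\U_\bullet^{\leq n}\to\U_\bullet$ is a monomorphism and $Map(Z,-)$ preserves monomorphisms, so $Map(Z,\U_\bullet^{\leq n})$ is a full subcategory of $(\E_{/Z})^S$; the lemma amounts to identifying it with $\tau_n((\E_{/Z})^S)$, the full subcategory spanned by the $n$-truncated maps over $Z$ that lie in $S$.

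First I would prove $\tau_n((\E_{/Z})^S)\subseteq Map(Z,\U_\bullet^{\leq n})$. Write the image factorization of $i\tau_n$ as $\U_\bullet\xrightarrow{q}\U_\bullet^{\leq n}\hookrightarrow\U_\bullet$; by Definition \ref{Def:Image} the map $q$ is $(-1)$-connected. Applying $Map(Z,-)$ and the identifications above, $\tau^Z_n$ factors as $(\E_{/Z})^S\xrightarrow{q_*}Map(Z,\U_\bullet^{\leq n})\hookrightarrow(\E_{/Z})^S$. Since $\tau^Z_n$ fixes $n$-truncated objects (the unit of the truncation adjunction is an equivalence on them), every $n$-truncated map over $Z$ lying in $S$ is in the essential image of $\tau^Z_n$, hence in $Map(Z,\U_\bullet^{\leq n})$.

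For the reverse inclusion I would exhibit the universal family over $\U_\bullet^{\leq n}$ as an $n$-truncated map. Let $E\to\U_\bullet^{\leq n}$ be the map classified by the inclusion $\U_\bullet^{\leq n}\hookrightarrow\U_\bullet$. Pulling back along $q$ gives $q^*E$, which by construction is the fiberwise $n$-truncation of the universal family over $\U_\bullet$ and is therefore $n$-truncated over $\U_\bullet$. Since $q$ is $(-1)$-connected, Proposition \ref{Prop Equiv Local}, applied to the pullback square of $E\to\U_\bullet^{\leq n}$ along $q$, shows that $E\to\U_\bullet^{\leq n}$ is itself $n$-truncated. Hence for any $x\colon Z\to\U_\bullet^{\leq n}$ the base change $x^*E\to Z$ is $n$-truncated by Lemma \ref{Lemma Trunc Base Change NNO}; but $x^*E\to Z$ is precisely the map in $S$ classified by $Z\xrightarrow{x}\U_\bullet^{\leq n}\hookrightarrow\U_\bullet$, so every object of $Map(Z,\U_\bullet^{\leq n})$ is $n$-truncated. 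The two full subcategories of $(\E_{/Z})^S$ therefore coincide, which gives the equivalence, with naturality in $Z$ inherited from $Map(Z,\U_\bullet)\simeq(\E_{/Z})^S$.

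The main obstacle I expect is not the truncation-theoretic content, which is already available from the earlier subsections, but the bookkeeping around the complete Segal universe: making precise that $Map(Z,-)$ turns the monomorphism $\U_\bullet^{\leq n}\to\U_\bullet$ into an honest full-subcategory inclusion, and that the image factorization of $i\tau_n$ together with the base-change compatibility of truncations can be applied level-wise, where everything takes place in $\E$ and Propositions \ref{Prop Pb of epimono is epimono} and \ref{Prop Equiv Local} apply directly.
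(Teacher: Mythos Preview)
Your argument is correct, but it takes a more hands-on route than the paper. The paper's proof is a one-liner: the endomorphism $i\tau_n$ of $(\E_{/Z})^S\simeq Map(Z,\U_\bullet)$ admits two factorizations --- the canonical (essentially surjective, fully faithful) factorization through $\tau_n((\E_{/Z})^S)$ on one side, and the factorization $Map(Z,\U_\bullet)\to Map(Z,\U_\bullet^{\leq n})\hookrightarrow Map(Z,\U_\bullet)$ induced by the image factorization of $i\tau_n:\U_\bullet\to\U_\bullet$ on the other --- and by uniqueness of such factorizations the middle terms coincide. You instead verify the two full-subcategory inclusions directly: one via the unit of truncation being an equivalence on $n$-truncated objects, the other by pulling back the universal family and invoking locality of $n$-truncated maps along $(-1)$-connected covers (Proposition \ref{Prop Equiv Local}). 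Your route is longer but has the merit of making explicit the one step the paper leaves implicit, namely why $Map(Z,\U_\bullet^{\leq n})$ really consists only of $n$-truncated maps; the paper's factorization argument needs essentially the same input to justify that the bottom row is again an (essentially surjective, embedding) factorization after applying $Map(Z,-)$.
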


 \begin{proof}
  We can factor every map of categories uniquely into an essential surjection followed by an embedding. 
  We can apply this factorization to the map $i \tau_n : \E \to \E$. 
  However, we know that $i$ is an embedding and $\tau_n$ is essentially surjective and so by uniqueness $(i, \tau_n)$ is 
  the unique factorization. 
  Thus, we get a diagram
  \begin{center}
   \begin{tikzcd}[row sep=0.5in, column sep=0.5in]
    (\E_{/Z})^S \arrow[r, "\tau_n", twoheadrightarrow] \arrow[d, "\simeq"] & \tau_n((\E_{/Z})^S) \arrow[r, "i", hookrightarrow] \arrow[d] & 
    (\E_{/Z})^S \arrow[d, "\simeq"] \\
    Map(Z, \U_{\bullet}) \arrow[r, twoheadrightarrow] & Map(Z, \U_{\bullet}^{\leq n}) \arrow[r, hookrightarrow] & Map(Z, \U_{\bullet})
   \end{tikzcd}
  \end{center}
  By the uniqueness of epi mono factorizations we thus get the desired equivalence. 
 \end{proof}

 \begin{remone}
  Notice that any universe $\U$ is itself an object in $\E$ and thus is classified by a higher universe. 
  This means that it also has a truncation $\tau_n(\U)$ using the larger universe. However, this differs from the object $\U^{\leq n}$. 
 \end{remone}
 
 \begin{exone}
  Let $\E$ be the category of (not necessarily small) spaces. In that case the core of the subcategory of small spaces $\U$ 
  is a universe in $\E$. Then $\U^{\leq n}$ is the core of the subcategory of $n$-truncated small spaces.
  However, on the other hand $\tau_n(\U)$ is just the truncation. So, for example $\tau_{-1}(\U) = *$ and 
  $\tau_0(\U)$ is the set of homotopy equivalence classes of small spaces. 
  On the other hand, $\U^{\leq -1} = \{ \emptyset, * \}$ and $\U^{\leq 0}$ is the groupoid of small sets.
 \end{exone}

 The truncation of the universe gives us an alternative way to construct truncations.
 
 \begin{propone} \label{Prop:U leq classifies trunc}
  Let $p: Y \to X$ be a map classified by $\ulcorner p\urcorner : X \to \U$. Then the composition 
  $\tau_n \circ \ulcorner p \urcorner : X \to \U^{\leq n}$ classifies the truncation $\tau^X(Y) \to X$
 \end{propone}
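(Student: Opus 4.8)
The plan is to read the statement off from the universal property of $\U^{\leq n}_\bullet$ established in Lemma~\ref{Lemma:U leq n classifies n trunc}, by tracking the classifying map $\ulcorner p\urcorner$ through the image factorization that defines $\U^{\leq n}_\bullet$.

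First I would recall, from Definition~\ref{Def:U leq n}, that $\U^{\leq n}_\bullet$ is the image of the map $i\tau_n\colon\U_\bullet\to\U_\bullet$, which is itself the map of complete Segal universes induced by the truncation functor $i\tau_n\colon\E\to\E$. Thus $i\tau_n$ factors as $\U_\bullet\xrightarrow{\tau_n}\U^{\leq n}_\bullet\hookrightarrow\U_\bullet$ with the first map $(-1)$-connected and the second $(-1)$-truncated; as observed in the proof of Lemma~\ref{Lemma:U leq n classifies n trunc}, this coincides with the essential-surjection/embedding factorization, so that applying $Map(X,-)$ reproduces the analogous factorization of spaces, compatibly with the classification equivalences $Map(X,\U_\bullet)\simeq(\E_{/X})^S$ and $Map(X,\U^{\leq n}_\bullet)\simeq\tau_n((\E_{/X})^S)$.

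Next I would apply this with $X$ as in the statement. Under $Map(X,\U_\bullet)\simeq(\E_{/X})^S$ the point $\ulcorner p\urcorner$ corresponds to the object $[p\colon Y\to X]$, and post-composition with $\tau_n\colon\U_\bullet\to\U^{\leq n}_\bullet$ corresponds --- under the equivalences above --- to applying the relative truncation functor $\tau^X_n\colon(\E_{/X})^S\to\tau_n((\E_{/X})^S)$. Hence $\tau_n\circ\ulcorner p\urcorner$ corresponds to $\tau^X_n([p])=[\tau^X_n(Y)\to X]$; equivalently, $\tau_n\circ\ulcorner p\urcorner$ classifies $\tau^X_n(Y)\to X$, and in particular this map lies in $S$.

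The only step with real content --- and so the main thing to check --- is that post-composition with $\tau_n\colon\U_\bullet\to\U^{\leq n}_\bullet$ realizes the relative truncation $\tau^X_n$. The cleanest way to see this is to note that the composite $X\xrightarrow{\ulcorner p\urcorner}\U_\bullet\xrightarrow{i\tau_n}\U_\bullet$ equals the embedding $\U^{\leq n}_\bullet\hookrightarrow\U_\bullet$ precomposed with $\tau_n\circ\ulcorner p\urcorner$, that pulling the universal family back along $i\tau_n\circ\ulcorner p\urcorner$ returns $i\tau^X_n(Y)\to X$ since $i\tau_n$ is the universe map induced by the truncation functor, and that the embedding $\U^{\leq n}_\bullet\hookrightarrow\U_\bullet$ does not change which map over $X$ is being classified. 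This is the same naturality argument used in the proof of Lemma~\ref{Lemma:U leq n classifies n trunc}, now specialized to the point $\ulcorner p\urcorner$.
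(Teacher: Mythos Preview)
Your argument is correct and follows the same route as the paper: the paper's proof is simply the commutative square
\begin{center}
  \begin{tikzcd}[row sep=0.5in, column sep=0.5in]
    \E_{/X} \arrow[r, "\tau_n"] \arrow[d, "\simeq"'] & \tau_n(\E_{/X}) \arrow[d, "\simeq"] \\
    Map(X,\U) \arrow[r, "\tau_n"'] & Map(X,\U^{\leq n})
  \end{tikzcd}
\end{center}
and you have spelled out the justification for why this square commutes, namely by unwinding the image factorization from Definition~\ref{Def:U leq n} and Lemma~\ref{Lemma:U leq n classifies n trunc}. Your last paragraph, checking that post-composition with the universe map $i\tau_n$ classifies $i\tau^X_n(Y)\to X$, is exactly what is needed to make the paper's one-line proof rigorous, so there is no genuine difference in approach.
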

 
 \begin{proof}
  This follows immediately from the commutative diagram 
  \begin{center}
   \pbsq{\E_{/X}}{\tau_n(\E_{/X})}{Map(X, \U)}{Map(X, \U^{\leq n})}{\tau_n}{\simeq}{\simeq}{\tau_n}
  \end{center}
 \end{proof}
 
  If $n$ is a standard natural number, then $\tau_n \E$ is a $(n+1,1)$-category. In particular, 
   $(\tau_n \E)^{core}$ is an $(n+1)$-truncated space as every loop space is 
   $n$-truncated. We want to show that the result generalizes appropriately to internal truncation levels

 \begin{theone} \label{The:U Leq n is n plus trunc}
  Let $\U_{\bullet}$ be a complete Segal universe. Then for each $k$ the object $\U^{\leq n}_k$ is $(n+1)$-truncated.
 \end{theone}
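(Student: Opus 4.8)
The plan is to reduce the assertion, by Proposition \ref{Prop Diag Trunc NNO}, to a statement about the diagonal of $\U^{\leq n}_k$, and then to identify the fibre of that diagonal using the universal property of the complete Segal universe (Theorem \ref{The:CSU Exist}). The case $n=-2$ I would handle separately and directly: a $k$-chain of $(-2)$-truncated objects over an object $Z$ is a chain of equivalences $Z\simeq\cdots\simeq Z$, so $\U^{\leq -2}_k\simeq 1$, which is $(-1)$-truncated (Remark \ref{Rem Neg Two Trunc}). So from now on I assume $n\geq-1$.

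The first step is to invoke Proposition \ref{Prop Diag Trunc NNO}: $\U^{\leq n}_k$ is $(n+1)$-truncated precisely when the diagonal $\Delta\colon\U^{\leq n}_k\to\U^{\leq n}_k\times\U^{\leq n}_k$ is $n$-truncated, and since $n$-truncated maps are stable under base change (Lemma \ref{Lemma Trunc Base Change NNO}) it is enough to check that the pullback of $\Delta$ along an arbitrary $(x,y)\colon Z\to\U^{\leq n}_k\times\U^{\leq n}_k$ is $n$-truncated over $Z$. The inclusion $\U^{\leq n}_k\hookrightarrow\U_k$ is a monomorphism (it is an image, Definition \ref{Def:U leq n}), so this $\Delta$ is the base change of the diagonal of $\U_k$, and I may instead describe the fibre of $\Delta_{\U_k}$ over $(x,y)$. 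By the universal property of the complete Segal universe, $x$ and $y$ classify $k$-chains $\mathbf X=[X_0\to\cdots\to X_k]$ and $\mathbf Y=[Y_0\to\cdots\to Y_k]$ over $Z$, each built from $n$-truncated maps, and the fibre of $\Delta_{\U_k}$ over $(x,y)$ is the object $E\to Z$ of natural equivalences $\mathbf X\xrightarrow{\ \simeq\ }\mathbf Y$.

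It then remains to show that $E\to Z$ is $n$-truncated. Let $M\to Z$ be the internal object of all natural transformations $\mathbf X\to\mathbf Y$ in $\E_{/Z}$; because the indexing poset $[k]$ is finite, $M$ is a finite limit in $\E_{/Z}$ of internal mapping objects of the form $[Y_j\to Z]^{[X_i\to Z]}$ with $i\leq j$. Each of these is $n$-truncated over $Z$: this follows from Lemma \ref{Lemma Ntrun with Sn condition Relative NNO} applied inside $\E_{/Z}$ together with the exponential adjunction, using that $Y_j\to Z$ is $n$-truncated. Since $n$-truncated objects of $\E_{/Z}$ are closed under finite limits — exponentiation by $S^{n+1}$ is a right adjoint and hence commutes with limits — the object $M\to Z$ is $n$-truncated. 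Finally $E\to M$ is a monomorphism, as being levelwise an equivalence is a $(-1)$-truncated condition on a natural transformation; monomorphisms are $(-1)$-truncated (Proposition \ref{Prop Conn Eq Mono}), hence $n$-truncated since $n\geq-1$, and $n$-truncated maps compose, so $E\to Z$ is $n$-truncated. This finishes the proof.

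The crux is the middle step: correctly identifying the fibre of the diagonal of $\U^{\leq n}_k$ with the object of natural equivalences between $k$-chains of $n$-truncated maps, and carrying this out entirely through universal properties. This care is unavoidable because $n$ need not be standard, so — as explained in the remark following Lemma \ref{Lemma Ntrun with Sn condition Relative NNO} — none of the usual space-level reasoning about ``$n$-truncated spaces'' is available; for the same reason the supporting closure statements (an internal hom into an $n$-truncated map is $n$-truncated, $n$-truncated objects are closed under finite limits, $(-1)$-truncated maps are $n$-truncated, and $n$-truncated maps compose) must all be derived from the internal sphere characterization rather than imported from the theory of truncated spaces. For $k\geq2$ one could alternatively use the Segal condition to realize $\U^{\leq n}_k$ as an iterated pullback of copies of $\U^{\leq n}_1$ over $\U^{\leq n}_0$, reducing to $k\leq 1$; the computation of the relevant fibre is unchanged.
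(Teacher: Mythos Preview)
Your proof is correct and the core idea is the same as the paper's: the diagonal of $\U^{\leq n}_k$ factors, via completeness, through an internal hom into an $n$-truncated object and is therefore $n$-truncated; Proposition~\ref{Prop Diag Trunc NNO} then gives the result. The paper packages this a bit differently. It first uses the Segal condition (which you mention only as an aside at the end) to reduce to $k\in\{0,1\}$, then invokes the explicit formula of Theorem~\ref{The:CSU Exist},
\[
\U^{\leq n}_1 \;\simeq\; \bigl[\U^{\leq n}_0\times(\U^{\leq n}_0)_*\to\U^{\leq n}_0\times\U^{\leq n}_0\bigr]^{\bigl[(\U^{\leq n}_0)_*\times\U^{\leq n}_0\to\U^{\leq n}_0\times\U^{\leq n}_0\bigr]},
\]
to read off directly that $(s,t)\colon\U^{\leq n}_1\to\U^{\leq n}_0\times\U^{\leq n}_0$ is $n$-truncated, and then uses the completeness inclusion $\U^{\leq n}_0\hookrightarrow\U^{\leq n}_1$ over $\U^{\leq n}_0\times\U^{\leq n}_0$ to conclude that $\Delta$ is $n$-truncated. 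Your version replaces this explicit formula by a representability argument identifying the fibre of the diagonal with an object of natural equivalences of $k$-chains. This is a genuine but minor reorganization: your route is a bit more conceptual and handles all $k$ uniformly, while the paper's is shorter and avoids unpacking the finite-limit description of the object of natural transformations. Your separate treatment of $n=-2$ is a reasonable precaution given your use of ``mono $\Rightarrow$ $n$-truncated''; the paper does not single this case out.
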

 
 \begin{proof}
  By the Segal condition and the fact that truncated objects are closed under pullbacks it suffices to prove the desired 
  result for $\U^{\leq n}_0$ and $\U^{\leq n}_1$. First, notice we have an equivalence (Theorem \ref{The:CSU Exist})
  $$(s,t): \U^{\leq n}_1 \to \U^{\leq n}_0 \times \U^{\leq n}_0 = [\U^{\leq n}_0 \times (\U^{\leq n}_0)_* \to \U^{\leq n}_0 \times \U^{\leq n}_0]^{[
  (\U^{\leq n}_0)_* \times \U^{\leq n}_0 \to \U^{\leq n}_0 \times \U^{\leq n}_0]}$$
  Thus, the map $(s,t): \U^{\leq n}_1 \to \U^{\leq n}_0 \times \U^{\leq n}_0$ is $(n+1)$-truncated. 
  By the completeness condition, we have an inclusion 
  \begin{center}
   \begin{tikzcd}[row sep=0.5in, column sep=0.5in]
    \U^{\leq n}_0 \arrow[rr, hookrightarrow] \arrow[dr, "\Delta"'] & & \U^{\leq n}_1 \arrow[dl, "(s \comma t)"] \\
    & \U^{\leq n}_0 \times \U^{\leq n}_0
   \end{tikzcd}
  \end{center}
  This proves that the diagonal map $\Delta$ is $n$-truncated, which proves that $\U^{\leq n}_0$ is $n+1$-truncated 
  (Proposition \ref{Prop Diag Trunc NNO}). As the map $(s,t)$ is $n$-truncated and so also $(n+1)$-truncated, it 
  follows that $\U^{\leq n}_1$ is also $(n+1)$-truncated, which gives us the desired result.
 \end{proof}

 We will use the fact that $\U^{\leq n}_\bullet$ is $(n+1)$-truncated to prove some interesting new results about truncation functors.
 
 \begin{lemone} \label{Lemma Tau n f equiv}
  Let $f:Y \to X$ be $m$ connected and $n < m$. Then the induced pullback map on the full subcategories 
  $$ \tau_n f^*:  \tau_n(\E_{/X}) \to \tau_n(\E_{/Y})$$
  is an equivalence.
 \end{lemone}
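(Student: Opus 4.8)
The plan is to reduce the statement to the classification of $n$-truncated objects by the complete Segal sub-universe $\U^{\leq n}_\bullet$ from Subsection \ref{Subsec:Universe of Truncated Objects}. Enlarging $\U$ if necessary so that every $n$-truncated object over $X$ and over $Y$ is $S$-small, Lemma \ref{Lemma:U leq n classifies n trunc} provides equivalences $\tau_n(\E_{/X}) \simeq Map(X,\U^{\leq n}_\bullet)$ and $\tau_n(\E_{/Y}) \simeq Map(Y,\U^{\leq n}_\bullet)$, natural in the base, so that $\tau_n f^*$ is identified with precomposition $f^*\colon Map(X,\U^{\leq n}_\bullet) \to Map(Y,\U^{\leq n}_\bullet)$. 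Since a map between complete Segal objects is an equivalence precisely when it is a levelwise equivalence, it suffices to prove that $f^*\colon Map(X,\U^{\leq n}_k) \to Map(Y,\U^{\leq n}_k)$ is an equivalence of spaces for every level $k$.

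The key input is Theorem \ref{The:U Leq n is n plus trunc}, which says each $\U^{\leq n}_k$ is $(n+1)$-truncated; since $n < m$ we have $n+1 \leq m$, and as $p$-truncated objects are $q$-truncated whenever $p \leq q$, each $\U^{\leq n}_k$ is in fact $m$-truncated. I would then observe that an $m$-connected map $f\colon Y\to X$ induces an equivalence $f^*\colon Map(X,W)\to Map(Y,W)$ for every $m$-truncated object $W$: indeed, the projection $X\times W \to X$ is $m$-truncated by base change (Lemma \ref{Lemma Trunc Base Change NNO}), so Definition \ref{Def:Connected Maps NNO} applied to this $m$-truncated map over $X$ gives that $Map_{/X}(X, X\times W) \to Map_{/X}(Y, X\times W)$ is an equivalence, and these mapping spaces over $X$ are exactly $Map(X,W)$ and $Map(Y,W)$ with the induced map equal to $f^*$. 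Applying this with $W = \U^{\leq n}_k$ for each $k$ yields the required levelwise equivalence, and hence the theorem.

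The part I expect to need the most care is the size bookkeeping: Lemma \ref{Lemma:U leq n classifies n trunc} is stated for the $S$-small slices $(\E_{/Z})^S$, so one must either restrict attention to $S$-small objects throughout or invoke ``sufficient universes'' to choose a single $\U$ large enough relative to both $X$ and $Y$, inside which $\U^{\leq n}_\bullet$ is then formed. A secondary point worth spelling out, since $n$ and $m$ need not be standard, is the monotonicity ``$p$-truncated $\Rightarrow$ $q$-truncated for $p \leq q$'', which is proved by internal induction from Proposition \ref{Prop Diag Trunc NNO}. Everything else is a formal consequence of the universe classification together with the definition of connected maps.
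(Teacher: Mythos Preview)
Your proof is correct and follows essentially the same route as the paper: identify $\tau_n(\E_{/X})$ with $Map(X,\U^{\leq n}_\bullet)$ via Lemma \ref{Lemma:U leq n classifies n trunc}, use Theorem \ref{The:U Leq n is n plus trunc} that $\U^{\leq n}_\bullet$ is $(n+1)$-truncated (hence $m$-truncated), and conclude that the $m$-connected $f$ induces an equivalence on mapping spaces. The paper's proof is much terser and simply cites Lemma \ref{Lemma F n Conn then Tau N F equiv} for the last step, whereas you spell out the levelwise reduction, the projection-trick derivation from Definition \ref{Def:Connected Maps NNO}, the monotonicity of truncation levels, and the size bookkeeping---all of which are implicit in the paper's argument.
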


 \begin{proof} 
  By the proposition above, we have an equivalence $\tau_n(\E_{/X}) \simeq map(X,\U^{\leq n}_\bullet)$ and we also 
  know that $\U^{\leq n}_\bullet$ is $n+1$-truncated 
  So, by Lemma \ref{Lemma F n Conn then Tau N F equiv}, we get the desired equivalence 
  $$ map(X,\U^{\leq n}_\bullet) \simeq map(Y, \U^{\leq n}_\bullet)$$
 \end{proof}
 
 This lemma has one very important application.
 
 \begin{corone} \label{Cor:Elementary Equiv}
  Let $X$ an object and $n < m$. Then we have an equivalence of categories 
  $$ \tau_n (\eta_X)^*: \tau_n(\E_{/\tau_m X}) \to \tau_n(\E_{/X})$$
 \end{corone}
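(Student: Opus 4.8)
The plan is to deduce this immediately from Lemma~\ref{Lemma Tau n f equiv} by choosing the map $f$ there to be the unit of the $m$-truncation adjunction, namely $\eta_X \colon X \to \tau_m X$. So there is really nothing to invent; the work is in seeing that the hypotheses of that lemma are met.

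First I would recall, via Lemma~\ref{Lemma:Unit Connected} applied at truncation level $m$, that the unit map $\eta_X \colon X \to \tau_m X$ is $m$-connected. Since by hypothesis $n < m$, this puts us exactly in the situation of Lemma~\ref{Lemma Tau n f equiv} with the $m$-connected map $f = \eta_X$. That lemma then yields that the induced pullback functor on the full subcategories of $n$-truncated objects,
$$\tau_n(\eta_X)^* \colon \tau_n(\E_{/\tau_m X}) \longrightarrow \tau_n(\E_{/X}),$$
is an equivalence of categories, which is precisely the assertion of the corollary.

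There is essentially no obstacle to overcome: everything needed is already in force in this subsection. Specifically, the proof of Lemma~\ref{Lemma Tau n f equiv} packages together the identification $\tau_n(\E_{/Z}) \simeq \map(Z, \U^{\leq n}_\bullet)$ (Proposition~\ref{Prop:U leq classifies trunc}), the fact that $\U^{\leq n}_\bullet$ is $(n+1)$-truncated (Theorem~\ref{The:U Leq n is n plus trunc}), and the statement that an $m$-connected map is inverted after mapping into an $(n+1)$-truncated object for $n<m$ (Lemma~\ref{Lemma F n Conn then Tau N F equiv}). Thus the corollary is a direct specialization of Lemma~\ref{Lemma Tau n f equiv}, and the only thing worth spelling out in the written proof is the invocation of Lemma~\ref{Lemma:Unit Connected} to certify that $\eta_X$ is $m$-connected.
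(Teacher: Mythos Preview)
Your proposal is correct and matches the paper's intended argument exactly: the corollary is stated immediately after Lemma~\ref{Lemma Tau n f equiv} as its ``very important application'' with no separate proof, and the only input beyond that lemma is Lemma~\ref{Lemma:Unit Connected} to know that $\eta_X$ is $m$-connected.
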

 
 \begin{remone}
  I want to thank Charles Rezk for a helpful discussion and for giving me a new perspective on his Lemma 
  which led to the formulation and proof given in Lemma \ref{Lemma Tau n f equiv}
 \end{remone}

 \begin{remone}
  This corollary is usually expressed in the following form: Let $f: Y \to X$ be $m$-truncated and $n > m$. Then 
   \begin{center}
   \pbsq{Y}{\tau_n Y}{X}{\tau_n X}{}{f}{\tau_n f}{}
  \end{center}
  is a pullback square. 
 \end{remone}
 
 \begin{remone}
  Lemma \ref{Lemma Tau n f equiv} has been proven in the context of a Grothendieck $(\infty,1)$-topos by various authors, such as 
  \cite[Lemma 8.6]{Re05}, \cite[Lemma 4.3.3]{SY19}. However, the proofs there rely on generalizing the proof from spaces, 
  using the fact that every Grothendieck $(\infty,1)$-topos is a localization of a presheaf topos, rather than relying on intrinsic 
  properties. 
  \par 
  This is an interesting example where finding a proof in the context of an elementary $(\infty,1)$-topos also gives us a new proof for 
  a Grothendieck $(\infty,1)$-topos that does not depend on any specific presentation. 
 \end{remone}
  
  \begin{remone}
   This result was also proven independently in the context of homotopy type theory \cite[Proposition 2.18]{DORS18}.   
  \end{remone}

 We can use this result to prove new interesting results about connected maps.
 
  \begin{theone} \label{The:Tau n F Equiv then f N Min One Conn}
   Let $f: Y \to X$ be a map. If $\tau_n f$ is a weak equivalence, then $f$ is $(n-1)$-connected.
  \end{theone}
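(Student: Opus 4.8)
The plan is to deduce the $(n-1)$-connectedness of $f$ from an equivalence between the categories of $(n-1)$-truncated objects over $X$ and over $Y$ that the hypothesis forces. The subtlety is that $f$ itself need not be $n$-connected — this is exactly the content of the Remark following Lemma \ref{Lemma F n Conn then Tau N F equiv} — so one cannot simply feed $f$ into Lemma \ref{Lemma Tau n f equiv}. Instead I would pass through the composite $Y \xrightarrow{f} X \xrightarrow{\eta_X} \tau_n X$ and transport the information back along the unit $\eta_X$ using Corollary \ref{Cor:Elementary Equiv}.

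First I would record that, by naturality of the unit of the adjunction, $\eta_X \circ f = (\tau_n f)\circ \eta_Y$, and hence an equality of pullback functors
$$f^{*}\circ \eta_X^{*} \;=\; \eta_Y^{*}\circ (\tau_n f)^{*}\colon \E_{/\tau_n X}\longrightarrow \E_{/Y}.$$
Since $(n-1)$-truncated objects are stable under pullback (Lemma \ref{Lemma Trunc Base Change NNO}), all four functors restrict to the full subcategories of $(n-1)$-truncated objects, and the equality persists there. Now Corollary \ref{Cor:Elementary Equiv} (applied with $n-1<n$, once to $X$ and once to $Y$) says that $\tau_{n-1}(\eta_X)^{*}$ and $\tau_{n-1}(\eta_Y)^{*}$ are equivalences of categories, while $\tau_{n-1}\big((\tau_n f)^{*}\big)$ is an equivalence simply because $\tau_n f$ is. Cancelling the invertible functor $\tau_{n-1}(\eta_X)^{*}$ in the displayed identity then shows that the restricted pullback $\tau_{n-1}(f)^{*}\colon \tau_{n-1}(\E_{/X})\to\tau_{n-1}(\E_{/Y})$ is an equivalence of categories.

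It remains to extract $(n-1)$-connectedness from this. An equivalence of categories preserves terminal objects and mapping spaces; the terminal object of $\tau_{n-1}(\E_{/X})$ is $id_X$, and $\tau_{n-1}(f)^{*}$ carries $id_X$ to $id_Y$ and any $(n-1)$-truncated map $g\colon Z\to X$ to $f^{*}g\colon f^{*}Z\to Y$. Hence for every such $g$,
$$Map_{/X}(X,Z)\;\simeq\;Map_{/Y}(Y,f^{*}Z)\;\simeq\;Map_{/X}(Y,Z),$$
the first equivalence because $\tau_{n-1}(f)^{*}$ is an equivalence of categories and the second by the $f_{!}\dashv f^{*}$ adjunction (composition with $f$ against pullback along $f$). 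By Definition \ref{Def:Connected Maps NNO} this is precisely the assertion that $f$ is $(n-1)$-connected.

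I expect the main obstacle to be conceptual rather than computational: recognizing that the $n<m$ invariance results should be applied not to $f$ (whose connectivity is the very thing in question) but after composing with $\eta_X$, and that Corollary \ref{Cor:Elementary Equiv} is the bridge that descends the equivalence of $(n-1)$-truncated slice categories back down to $f^{*}$ itself. Once that move is made, the cancellation in the second step and the translation in the last step — converting an equivalence on $(n-1)$-truncated slices into the mapping-space condition of Definition \ref{Def:Connected Maps NNO} via terminal objects and the $f_{!}\dashv f^{*}$ adjunction — are routine.
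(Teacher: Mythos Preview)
Your proposal is correct and follows essentially the same approach as the paper: both arguments pass through the naturality square $\eta_X\circ f \simeq (\tau_n f)\circ \eta_Y$, invoke Corollary \ref{Cor:Elementary Equiv} to transport $(n-1)$-truncated data along the units $\eta_X,\eta_Y$, and then use an adjunction to land in $Map_{/X}(Y,Z)$. The only difference is packaging: you phrase it as a 2-out-of-3 cancellation for the equivalences $\tau_{n-1}(\eta_X)^*$, $\tau_{n-1}(\eta_Y)^*$, $\tau_{n-1}(\tau_n f)^*$ of truncated slice categories, whereas the paper works pointwise with a fixed $(n-1)$-truncated $Z\to X$, rewrites $Z\simeq X\times_{\tau_n X}\tau_n Z$ via the pullback form of the same corollary, and then uses the $n$-connectedness of $\eta_X,\eta_Y$ directly on mapping spaces.
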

  
  \begin{proof}
   Let $Z \to X$ be a $(n-1)$-truncated map. We need to prove that 
   $$map_{/X}(X, Z) \to map_{/X}(Y, Z)$$
   is an equivalence. In order to prove it we need several equivalences. 
   \par 
   First, by Corollary \ref{Cor:Elementary Equiv}, we have an equivalence
   $$Z \xrightarrow{ \ \simeq \ } X \underset{\tau_nX}{\times} \tau_n Z$$
   and therefore it suffices to prove that we have an equivalence 
   $$map_{/X}(X, X \underset{\tau_nX}{\times} \tau_n Z) \to map_{/X}(Y, X \underset{\tau_nX}{\times} \tau_n Z).$$
   \par 
   Using the adjunction 
   \begin{center}
    \adjun{\E_{/X}}{\E_{/ \tau_nX}}{\eta_!}{\eta^*}
   \end{center}
    this is equivalent to proving that the map 
    $$map_{/\tau_nX}(X, \tau_n Z) \to map_{/\tau_nX}(Y,  \tau_n Z)$$
    is an equivalence.
   \par 
   Now, in the commutative diagram
   \begin{center}
    \begin{tikzcd}[row sep=0.5in, column sep=0.5in]
     map_{/\tau_n X}(X, \tau_n Z) \arrow[r, "f^*"]  & map_{/\tau_n X}(Y, \tau_n Z) \\
     map_{/\tau_n X}(\tau_n X, \tau_n Z) \arrow[u, "\simeq"] \arrow[r, "\tau_n(f)^*"] & map_{/\tau_n X}(\tau_n Y, \tau_n Z)\arrow[u, "\simeq"]
    \end{tikzcd}
   \end{center}
   the vertical maps are equivalences as $X \to \tau_nX$ and $Y \to \tau_n Y$ are $n$-connected
   and $\tau_n Z \to \tau_n X$ is $n$-truncated.
   Thus it suffices to prove the bottom map is an equivalence.
   However, that follows immediately from the assumption.
  \end{proof}
  
  \begin{remone}
   We have now proven following chain of results
   \begin{center}
    \begin{tikzcd}[row sep=0.5in, column sep=0.7in]
     f \ \text{ is } n-\text{connected} \arrow[r, Rightarrow, "\text{Lemma }\ref{Lemma F n Conn then Tau N F equiv}"] & 
     \tau_n(f)\text{ is an equivalence} \arrow[r, Rightarrow, "\text{Theorem }\ref{The:Tau n F Equiv then f N Min One Conn}"] & 
     f \ \text{ is } (n-1)-\text{connected} 
    \end{tikzcd}
   \end{center}
   Thus, we can think of $\tau_n(f)$ being an equivalence as a condition between being $n$-connected and $(n-1)$-connected. 
   This has, for example, let Schlank and Yanovski to call such a morphism $(n - \frac{1}{2})$-connected 
   \cite[Definition 4.3.1]{SY19}.
  \end{remone}

  We can use this Lemma to give a partial inverse to Proposition \ref{Prop Conn fg NNO}.
  
  \begin{corone}
   Let $f: X \to Y$, $g: Y \to Z$ be two maps, such that $g$ and $gf$ are $n$-connected.
   Then $f$ is $(n-1)$-connected.
  \end{corone}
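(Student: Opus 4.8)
The plan is to chain together the two results immediately preceding this corollary, using functoriality of $\tau_n$ and the two-out-of-three property for equivalences. First I would observe that since $g$ and $gf$ are both $n$-connected, Lemma \ref{Lemma F n Conn then Tau N F equiv} tells us that $\tau_n g$ and $\tau_n(gf)$ are equivalences in $\tau_n\E$. Because $\tau_n$ is a functor, $\tau_n(gf) \simeq \tau_n(g)\circ\tau_n(f)$, so we have that $\tau_n(g)$ and $\tau_n(g)\circ\tau_n(f)$ are both equivalences.

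Next I would invoke two-out-of-three: from $\tau_n(g)$ and $\tau_n(g)\circ\tau_n(f)$ being equivalences it follows that $\tau_n(f)$ is an equivalence. Concretely, one can precompose $\tau_n(g)\circ\tau_n(f)$ with a chosen inverse of $\tau_n(g)$ to exhibit $\tau_n(f)$ as an equivalence, but this is just the standard closure property of equivalences in an $(\infty,1)$-category, so no real work is needed here.

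Finally I would apply Theorem \ref{The:Tau n F Equiv then f N Min One Conn}: since $\tau_n(f)$ is a weak equivalence, $f$ is $(n-1)$-connected, which is exactly the claim. I do not expect any genuine obstacle: the entire argument is a formal manipulation once the two cited statements are in hand, and the only thing to be slightly careful about is making sure $\tau_n$ respects composition (which is automatic since it is a left adjoint, hence a functor) so that $\tau_n(gf)$ really does factor as $\tau_n(g)\circ\tau_n(f)$.
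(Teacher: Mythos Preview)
Your proposal is correct and follows essentially the same route as the paper: both arguments show that $\tau_n(f)$ is an equivalence and then invoke Theorem~\ref{The:Tau n F Equiv then f N Min One Conn}. The only cosmetic difference is that the paper works in $\E_{/Z}$ and uses Proposition~\ref{Prop Conn iff Tau Final Object} to see that $\tau_n^Z(gf)$ and $\tau_n^Z(g)$ are both the final object (so the map between them is trivially an equivalence), whereas you stay in $\E$ and use Lemma~\ref{Lemma F n Conn then Tau N F equiv} together with two-out-of-three; these are interchangeable phrasings of the same step.
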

 
  \begin{proof}
   We can think of $g$ and $gf$ as objects in $\E_{/Z}$. The fact that they are $n$-connected 
   implies that $\tau_n^Z(g) \simeq \tau_n^Z(f) \simeq id_Z$, the final object in $\E_{/Z}$ (Proposition \ref{Prop Conn iff Tau Final Object}).
   Thus, the map $\tau_n^Z(f): \tau_n^Z(g) \to \tau_n^Z(f)$ is an equivalence.
   The result now follows immediately from the previous Lemma. 
  \end{proof}

\section{Constructing Truncations} \label{Sec:Constructing Truncations}
 In this section we present various ways of constructing the $n$-truncation functor.
 \par 
 First, in Subsection \ref{Subsec:Constructing Localizations via Universes}
 we take a much more abstract approach: 
 We give several equivalent conditions that characterize a localization functor (Theorem \ref{The:Localizations}).
 The main difficulty of the proof is to construct a localization functor out of a sub-universe as an ``internal right Kan extension".
 This is proven separately in Proposition \ref{Prop:Sub Univ gives Trunc}.
 Having done so, we immediately get a truncation functor (Corollary \ref{Cor:Truncation Functor Construction}).
 \par 
 In Subsection \ref{Subsec:Inductive Construction of Truncations} we want to give an alternative characterization of the $n$-truncation functor that 
 inductively defines the $(n+1)$-trunction as the $n$-truncation of the loop objects. The definition is theoretically more convoluted, but
 is more intuitive than an abstract internal right Kan extension. This is the content of Theorem \ref{The:Constructing Trunctions Inductively}.
 
 \begin{remone} \label{Rem:Intro Condition for Localization Section}
   In this section $\E$ is an $(\infty,1)$-category that satisfies following conditions:
   \begin{enumerate}
    \item It has finite limits and colimits.
    \item It is locally Cartesian closed (Definition \ref{Def:LCCC}).
    \item It has sufficient universes $\U$ (Definition \ref{Def:Sufficient Universes})
    that are closed under finite limits and colimits and are locally Cartesian closed (Definition \ref{Def:Closed Universes}).
   \end{enumerate}

  \end{remone}
  
\subsection{Constructing Localizations via Universes} \label{Subsec:Constructing Localizations via Universes}
 In this subsection we give several equivalent ways to construct localization functors (Theorem \ref{The:Localizations}) 
 and then use it to construct $n$-truncation functors (Corollary \ref{Cor:Truncation Functor Construction}).
 
 First we give several definitions and then we prove that the various definitions are equivalent.
 
 \begin{defone}
  A {\it reflective subcategory of arrows} is a full subcategory $L\Arr(\E) \hookrightarrow \Arr(\E)$ that has a left adjoint 
  \begin{center}
   \begin{tikzcd}[row sep=0.5in, column sep=0.5in]
    \Arr(\E) \arrow[dr, "t"'] \arrow[rr, "L", shift left=1.5] \arrow[rr, hookleftarrow, shift right=1.5]& & L\Arr(\E) \arrow[dl, "t"]  \\
    & \E & 
   \end{tikzcd}
  \end{center}
 \end{defone}

 \begin{defone} \label{Def:Family of Refl Subcat}
  A {\it family of reflective subcategories} is a choice of reflective subcategories $L(\E_{/X}) \hookrightarrow \E_{/X}$ that 
  is pullback stable, meaning that for every map $f:Y \to X$ we have following diagram of adjunctions
  \begin{center}
   \begin{tikzcd}[row sep=0.5in, column sep=0.5in]
    \E_{/X} \arrow[r, "L", "\perp"', shift left=1.8] \arrow[r, hookleftarrow, shift right=1.8] 
    \arrow[d, "f^*", "\vdash"', shift left=1.8] \arrow[d, "f_*"', shift right=1.8, leftarrow]
    & L\E_{/X} \arrow[d, "f^*", "\vdash"', shift left=1.8] \arrow[d, "f_*"', shift right=1.8, leftarrow] \\
    \E_{/Y} \arrow[r, "L", "\perp"', shift left=1.8] \arrow[r, hookleftarrow, shift right=1.8] & L\E_{/Y}
   \end{tikzcd}
  \end{center}
 \end{defone}

 \begin{defone}
  Let $\U_\bullet$ be a complete Segal universe. A {\it reflective sub-complete Segal universe} $\U^{loc}_\bullet \hookrightarrow \U_\bullet$ 
  is a sub-complete Segal universe such that there is an adjoint of complete Segal objects
  \begin{center}
   \adjun{\U_\bullet}{\U^{loc}_\bullet}{}{}
  \end{center}
 \end{defone}
 
 \begin{defone}
  Let $\U_\bullet$ be a complete Segal universe. An {\it ideal sub-complete Segal universe} $\U^{loc}_\bullet$ is a sub-complete Segal universe 
  such that for all $f$ classified by $\U^{loc}_\bullet$ and $g$ classified by $\U_\bullet$, $g_*f$ is also classified by $\U^{loc}_\bullet$.
 \end{defone}
 
 \begin{defone}
  Let $\U$ be a universe. An {\it ideal sub-universe} $\U^{loc} \hookrightarrow \U$ is a subuniverse such that for all  
  $f$ classified by $\U^{loc}_\bullet$ and $g$ classified by $\U_\bullet$, $g_*f$ is also classified by $\U^{loc}_\bullet$.
 \end{defone}

 \begin{defone} \label{Def:Factorization system}
  A {\it factorization system} is a choice of two functors $\mathcal{L}, \mathcal{R} : \Arr(\E) \to \Arr(\E)$ such that 
  \begin{enumerate}
   \item For every $f$ we have $f \simeq \mathcal{R}(f) \circ \mathcal{L}(f)$.
   \item If $f: A \to B$ is in the image of $\mathcal{L}$ and $g: Y \to X$ is in the image of $\mathcal{R}$ the commutative square 
   \begin{center}
    \pbsq{map(B,Y)}{map(A,Y)}{map(B,X)}{map(A,X)}{f^*}{g_*}{g_*}{f^*}
   \end{center}
   is a homotopy pullback square of spaces.
  \end{enumerate}
 \end{defone}

 We want to prove following theorem about these definitions.
 
 \begin{theone} \label{The:Localizations}
  Let $\E$, $\U_\bullet$ and $S$ as in Remark \ref{Rem:Intro Condition for Localization Section}. The following data are equivalent: 
  \begin{enumerate}
   \item A reflective subcategory of arrows $L\Arr(\E)$.
   \item A family of reflective subcategories $L\E_{/X}$.
   \item A reflective sub-complete Segal Universe $\U^{loc}_\bullet$.
   \item An ideal sub-complete Segal universe $\U^{loc}_\bullet$.
   \item An ideal sub-universe $\U^{loc}$
   \item A factorization system $\mathcal{L},\mathcal{R}: \Arr(\E) \to \Arr(\E)$.
  \end{enumerate}
 \end{theone}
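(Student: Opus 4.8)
The plan is to prove the six descriptions equivalent by running a single cycle of implications
$$(6)\Rightarrow(1)\Rightarrow(2)\Rightarrow(3)\Rightarrow(4)\Rightarrow(5)\Rightarrow(6),$$
organized so that every step but the last is a translation between equivalent packagings of one and the same structure --- a pullback-stable, fibrewise reflective subcategory --- and the one genuinely substantive step, $(5)\Rightarrow(6)$, is the manufacture of a reflector from an ideal sub-universe. That step I would isolate as Proposition \ref{Prop:Sub Univ gives Trunc}, where the reflector is built as an internal right Kan extension along the inclusion of local objects, with no recourse to a small object argument.

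For $(6)\Rightarrow(1)$, given a factorization system $(\mathcal L,\mathcal R)$, I would take $L\Arr(\E)$ to be the full subcategory of arrows in the image of $\mathcal R$; since each factorization $f\simeq\mathcal R(f)\circ\mathcal L(f)$ preserves targets, $\mathcal R$ is a functor $\Arr(\E)\to L\Arr(\E)$ over $\E$, the commutative square with top $\mathcal L(f)$ and bottom the identity is a morphism $f\to\mathcal R(f)$ in $\Arr(\E)$, and the orthogonality condition of Definition \ref{Def:Factorization system} is exactly the universal property making this a reflection. For $(1)\Rightarrow(2)$, I restrict the reflector on $\Arr(\E)$ to each fibre $\E_{/X}$ of the target functor: because the adjunction lies over $\E$, its unit lies over identities and hence restricts fibrewise, giving reflective subcategories $L\E_{/X}$. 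Pullback stability then reduces to the statement that local arrows are closed under base change and that the base change of a reflection unit is again a reflection unit; the first is read off the pullback-square form of orthogonality and the second is formal. For $(2)\Rightarrow(3)$, the complete Segal universe gives an equivalence $(\E_{/X})^S\simeq Map(X,\U_\bullet)$ natural in $X$ (Theorem \ref{The:CSU Exist}), and pullback stability of the family says precisely that $X\mapsto(L\E_{/X})^S$ is a subobject of this presheaf; being again a complete Segal object it is represented by a sub-complete-Segal-universe $\U^{loc}_\bullet$, and the fibrewise adjunctions assemble to an adjunction between $\U_\bullet$ and $\U^{loc}_\bullet$.

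For $(3)\Rightarrow(4)$, a reflective $\U^{loc}_\bullet$ is an exponential ideal in the fibres: for $g$ classified by $\U_\bullet$ and $f$ classified by $\U^{loc}_\bullet$ one has $Map_{/X}(-,g_*f)\simeq Map_{/Y}(g^*(-),f)$, and since $g^*$ preserves local equivalences (pullback stability) while $f$ is local, $g_*f$ is local. For $(4)\Rightarrow(5)$, the ideal condition is a condition on classifying maps only, so restricting to level $0$ gives an ideal sub-universe $\U^{loc}:=\U^{loc}_0$; conversely an ideal sub-universe reconstructs the higher complete Segal levels via the mapping-object formulas of Theorem \ref{The:CSU Exist}, the ideal hypothesis forcing each level into $\U^{loc}$ and the Segal and completeness conditions being inherited as limits --- so in fact $(4)\Leftrightarrow(5)$. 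Finally, for $(5)\Rightarrow(6)$, Proposition \ref{Prop:Sub Univ gives Trunc} supplies, in every slice $\E_{/B}$, a reflector $L_B$ onto the $\U^{loc}$-classified objects; I then set $\mathcal R(f\colon A\to B):=L_B(f)$ and $\mathcal L(f):=(A\to L_B(f))$, so that condition (1) of Definition \ref{Def:Factorization system} is the reflection factorization and condition (2) is the reflective universal property of $L_B$ transported along the base-change adjunctions $f^*\dashv f_*$ (and its left adjoint), the ideal hypothesis keeping the relevant internal mapping objects local. The crux is this implication $(5)\Rightarrow(6)$, i.e.\ Proposition \ref{Prop:Sub Univ gives Trunc}: defining a reflector purely from an ideal sub-universe by an internal right Kan extension and then checking that its unit is a local equivalence and its value is local. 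Everything else is bookkeeping, repeatedly re-expressing a pullback-stable fibrewise reflective subcategory through the classification of maps by the universe.
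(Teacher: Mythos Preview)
Your proposal is correct and follows essentially the same strategy as the paper: both isolate the passage from an ideal sub-universe to a reflector (Proposition \ref{Prop:Sub Univ gives Trunc}) as the single substantive step, with the remaining implications being translations between equivalent packagings of a pullback-stable fibrewise reflective subcategory. The paper organizes the equivalences as a hub-and-spoke around condition (2) (proving $(1)\Leftrightarrow(2)$, $(2)\Leftrightarrow(3)$, $(4)\Leftrightarrow(5)$, $(2)\Rightarrow(4)$, $(5)\Rightarrow(2)$, and $(2)\Leftrightarrow(6)$) rather than your single cycle, but the content and the placement of the hard work are the same.
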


 \begin{proof}
  { \it (1) $\Leftrightarrow$ (2) }
  The target fibration $\Arr(\E) \to \E$ classifies the over-categories functor that takes an object $X$ to  $\E_{/X}$.
  Thus, a map of target fibrations corresponds to a natural transformation from $\E_{/X}$ to itself. This immediately implies that 
  the desired equivalence
  
  {\it (2) $\Leftrightarrow$ (3)}
  This follows immediately from the Yoneda Lemma for representable Cartesian fibrations (Corollary \ref{Cor:Yoneda Lemma for CSU}).
  
  {\it (4) $\Leftrightarrow$ (5)}
  A full sub-complete Segal universe of $\U_\bullet$ is determined by a sub-universe of $\U_0$.
  
  {\it (2) $\Rightarrow$ (4)}
  By definition of a family of reflective subcategories for any morphism $f: Y \to X$, we get a functor 
  $$f_*: L(\E_{/Y})^S \to L(\E_{/X})^S$$
  which gives us the desired result.
  
  {\it (5) $\Rightarrow$ (2)}
  We want to prove that there are localization functors $L: \E_{/X} \to L(\E_{/X})$ such that the diagram in Definition 
  \ref{Def:Family of Refl Subcat} commutes. To prove that such a functor exists it suffices to prove it for the case $L: \E \to \E$.
  This argument is quite involved and thus is given separately in Proposition \ref{Prop:Sub Univ gives Trunc}. 
  \par 
  It remains to show that the diagram in Definition 
  \ref{Def:Family of Refl Subcat} commutes. By uniqueness of adjoints it suffices to check the right adjoints commute. 
  However, the commutativity of the right adjoint follows immediately from being an ideal sub-universe.
  
  {\it (2) $\Leftrightarrow$ (6)}
  First let $\mathcal{R}: \Arr(\E) \to \Arr(\E)$ be defined as $iL$, where $i$ is the right adjoint to our localization $L$. 
  Note $L$ is an adjunction and thus has a unit map 
  $$u: \Arr(\E) \to \Sq(\E)$$
  where $\Sq(\E)$ is the $(\infty,1)$-category of commutative squares in $\E$. 
  Because $L$ is a functor over $\E$. The image of a map $f: X \to Y$ takes the form.
  \begin{center}
   \comsq{X}{L_YX}{Y}{Y}{}{}{}{id_Y}
  \end{center}
   We define $\mathcal{L}: \Arr(\E) \to \Arr(\E)$ as the $u$ precomposed with restricting to the top arrow $X \to L_YX$.
   \par 
   Notice we immediately have the desired factorization $f \simeq \mathcal{R}(f) \circ \mathcal{L}(f)$.
   Thus, it remains to prove the second condition of a factorization system
   \par
   We can characterize maps in the image of $\mathcal{L}$ as all maps $f: Y \to X$ such that for all maps $g: Z \to X$ 
   in the image of $\mathcal{R}$, the induced map 
   $$ map_{/X}(X,Z) \to map_{/X}(Y,Z)$$
   is an equivalence. We will prove this separately in Lemma \ref{Lemma:Image of L have Lf triv}.
   \par 
   Let $f: A \to B$ be in the image of $\mathcal{L}$ and $g: Y \to X$ in the image of $\mathcal{R}$.
   Then we have following diagram
   \begin{center}
    \begin{tikzcd}[row sep=0.5in, column sep=0.5in]
     map_{/X}(B,Y) \arrow[r] \arrow[d, "(f^*)_j"] \arrow[dr, phantom, "\ulcorner", very near start] & map(B,Y) \arrow[d, "h"] \\
     map_{/X}(A,Y) \arrow[r] \arrow[d] \arrow[dr, phantom, "\ulcorner", very near start] & map(B,X) \times_{map(A,X)} map(A,Y) \arrow[d, "\pi_1"] \\
     * \arrow[r, " \{ j \} "] & map(B,X)  
    \end{tikzcd}
   \end{center}
   According to the previous paragraph the map $(f^*)_j$ is an equivalence, which implies that the $h$ is an equivalence 
   giving us the desired result.
   \par 
   On the other hand, let $(\mathcal{L},\mathcal{R})$ be a factorization system. We want to prove that $\mathcal{R} : \Arr(\E) \to \Arr(\E)$
   is the desired adjunction. First notice that $\mathcal{R}$ is a map over $\E$ as it has to preserve the target of each morphism.
   It remains to show that it is a left adjoint. Thus, we need to prove that for a given map $f: Y \to X$ and for a map $g: Y \to X$ in 
   the image of $\mathcal{R}$ we get an equivalence 
   $$map(\mathcal{R}(f),g) \simeq map(f,g)$$
   However, this follows immediately from the fact that the map $f \to L_YX$ is in $\mathcal{L}$ which we will prove in 
   Lemma \ref{Lemma:Image of L have Lf triv}.
 \end{proof}

 The remainder of this section is focused on finishing the proof of Theorem \ref{The:Localizations}.
 This requires us to introduce some notations and definitions.
 
 \begin{notone}
  Recall that $\E$ is locally Cartesian closed. We denote the internal mapping functor by 
  $$\E_X(-,-) : (\E_{/X})^{op} \times \E_{/X} \to \E_{/X}$$
  in particular if $X$ is the final object we denote it by 
  $$\E(-,-): \E^{op} \times \E \to \E$$
 \end{notone}
 
 \begin{remone}
  Note that if $\U^{loc}$ is an ideal sub-universe then for any two maps $f: Y \to X$ and $g: Z \to X$, if $g$ is classified by $\U^{loc}$, then 
 $\E_{/X}(f,g) \to X$ is also classified by $\U^{loc}$. In particular, the subcategory classified by $\U^{loc}$ is locally Cartesian closed.
 \end{remone}

 \begin{defone} \label{Def Loc into Univ}
  Let $p: E \to B$ be any morphism in $\E$. Define $\T_p: \E \to \E$ as 
  $$\T_p: \E \xrightarrow{ - \times B } \E_{/B} \xrightarrow{ \E_{/B}(-,p) } (\E_{/B})^{op} \xrightarrow{ \E_{/B}(-,p) } \E_{/B} 
  \xrightarrow{ \Pi_B } \E$$
 \end{defone}

 \begin{notone}
  If the map is $p: E \to B$ is determined by the codomain $B$ then we denote the functor by $\T_B$.
 \end{notone}
  
 \begin{intone} \label{Int:Internal Right Kan Extension}
  In order to better understand our internal characterization of localizations it is helpful to take another look at localization 
  from the perspective of right Kan extensions. 
  Let $\E$ be a category and $L\E$ be a full subcategory. We want to define a left adjoint to the inclusion map 
  $$i : L\E \hookrightarrow \E$$
  Thus, for a given object $C$ we want an object $LC$ such that for any object $D$ in $L\E$ there is an equivalence 
  $$Map_\E(LC,D) \to Map_\E(C,D)$$
  One way would be to simply define $LC$ as the universal object with maps $LC \to D$ for any map $C \to D$. 
  This can be done explicitly via following limit diagram:
  $$LC = \underset{\underset{D \in L\E}{C \to D}}{\lim} D= \lim(L\E_{C/} \to L\E)$$
  We can think of the limit diagram as the right Kan extension:
  \begin{center}
   \begin{tikzcd}[row sep=0.5in, column sep=0.5in]
    L\E \arrow[d, "i"] \arrow[r, "id"] & L\E \\
    \E \arrow[ur, dashed]
   \end{tikzcd}
  \end{center}
  A point in such a limit object exactly has all the desired data of maps $C \to D$ for all objects $D$ in $L\E$.
  If such a limit existed, it would exactly give us the desired localization functor.
  \par 
  However, the problem is that we are taking limit over the diagram $L\E$, which is usually not a small diagram
  (for example if we want to define $n$-truncations then the diagram is certainly not small if the category is not small, for $n \geq 0$).
  Thus, the limit cannot be defined directly. 
  \par 
  Here we would usually employ the language of presentability. If we assume that our category $\E$ has a {\it set} of generators, 
  then we can restrict our original diagram to a small sub diagram and take the limit on that sub diagram to get the desired limit. 
  \par 
  We want to take an alternative approach and internalize the whole construction. We will internally form the diagram above and then 
  use the internal limit to construct the right Kan extension. The benefit of the internal construction is that we don't have to worry about 
  any size issues and everything can be formally defined and proven.
  \par 
  Concretely, we take following steps:
  \begin{enumerate}
   \item Start with an object $A$.
   \item Parametrize it over the local objects by forming the product $\pi_2: A \times \U^{loc} \to \U^{loc}$.
   \item Form the desired indexing category for our limit, by taking the collection of maps $A \to D$, where $D$ is a 
   local object. We achieve this by forming the internal exponential 
   $$\E_{/\U^{loc}}(A \times \U^{loc},\U^{loc}_*).$$
   \item Choosing the target of our diagram, which takes every diagram to its target. We achieve this by taking 
   another internal exponentiation
   $$\E_{\U^{loc}}(\E_{/\U^{loc}}(A \times \U^{loc},\U^{loc}_*), \U^{loc}_*).$$
   \item Constructing the internal limit of this diagram with the given values by taking sections of the diagram
   $$\Pi_{\U^{loc}}(\E_{\U^{loc}}(\E_{/\U^{loc}}(A \times \U^{loc},\U^{loc}_*), \U^{loc}_*)).$$
  \end{enumerate}
  The goal of the proof is to use formal methods to show that this actually constructs the desired localization functor.
 \end{intone}

 \begin{remone}
  Special thanks to Asaf Horev for giving me this intuition about the result.
 \end{remone}

 \begin{propone} \label{Prop:Sub Univ gives Trunc}
  Let $\U^{loc}$ be a ideal sub-universe of $\U$ with universal fibration $p_{\U^{loc}}: \U^{loc}_* \to \U^{loc}$. 
  Moreover, let us denote the class of morphisms classified by $\U$ as $S$ and $\U^{loc}$ by $S^{loc}$. 
  Then 
  \begin{enumerate}
   \item $\T_{\U^{loc}}: \E^S \to \E^S$ takes value in the full subcategory $\E^{S^{loc}}$.
   \item The restricted functor 
   $$\T_{\U^{loc}}: \E^S \to \E^{S^{loc}}$$ 
   is the left adjoint to the inclusion functor $\E^{S^{loc}} \to \E^S$.
  \end{enumerate}
 \end{propone}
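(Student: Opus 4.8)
The plan is to unwind Definition~\ref{Def Loc into Univ} for $p = p_{\U^{loc}}$ and $B = \U^{loc}$, to exhibit the unit of the claimed adjunction by hand, to deduce locality of the output from the closure properties of ideal sub-universes, and finally to verify the universal property against local objects. Setup and notation: write $M_A := \E_{/\U^{loc}}(A \times \U^{loc},\, p_{\U^{loc}})$, the object over $\U^{loc}$ whose fibre over a point $\ulcorner D \urcorner : 1 \to \U^{loc}$ is the internal mapping object $\E(A,D)$. By definition
$$\T_{\U^{loc}}(A) \;=\; \Pi_{\U^{loc}}\, \E_{/\U^{loc}}\bigl(M_A,\ p_{\U^{loc}}\bigr),$$
a composite of functors, so $\T_{\U^{loc}}$ is a functor. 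The unit of $\pi_{\U^{loc}}^* \dashv \Pi_{\U^{loc}}$ (with $\pi_{\U^{loc}} : \U^{loc} \to 1$) together with the counits of the two internal-hom adjunctions in $\E_{/\U^{loc}}$ assemble into a natural transformation $\eta : \mathrm{id}_\E \Rightarrow \T_{\U^{loc}}$; explicitly $\eta_A$ is adjoint to the iterated evaluation $A \times_{\U^{loc}} M_A \to \U^{loc}_*$ over $\U^{loc}$. This $\eta$ is the candidate unit of the adjunction in part (2).

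For part (1) I would chase the construction through the closure properties of ideal sub-universes. The projection $A \times \U^{loc} \to \U^{loc}$ is a base change of $A \to 1$, hence classified by $\U$; the universal fibration $p_{\U^{loc}}$ is classified by $\U^{loc}$, so by the remark following Definition~\ref{Def Loc into Univ} the family $M_A \to \U^{loc}$ is classified by $\U^{loc}$, and applying that remark again $\E_{/\U^{loc}}(M_A, p_{\U^{loc}}) \to \U^{loc}$ is classified by $\U^{loc}$; finally, since $\U^{loc}$ is ideal and $\U^{loc} \to 1$ is classified by $\U$ (using the sufficiency of universes from Remark~\ref{Rem:Intro Condition for Localization Section}), $\Pi_{\U^{loc}}$ of a $\U^{loc}$-classified family is again $\U^{loc}$-classified. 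Hence $\T_{\U^{loc}}(A)$ lies in $\E^{S^{loc}} \subseteq \E^S$, which is claim (1).

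For part (2), given (1) and $\eta$, it suffices to show that for every $A$ and every local $D$ the map $\eta_A^* : Map(\T_{\U^{loc}}(A), D) \to Map(A,D)$ is an equivalence; this exhibits $\T_{\U^{loc}} : \E^S \to \E^{S^{loc}}$ as left adjoint to the fully faithful inclusion. I would build an explicit candidate inverse $\theta_D$: a map $f : A \to D$ with $D$ classified by $\ulcorner D \urcorner : 1 \to \U^{loc}$ is the same datum as a lift $\widetilde f : 1 \to M_A$ of $\ulcorner D \urcorner$; restricting a section of $\E_{/\U^{loc}}(M_A, p_{\U^{loc}}) \to \U^{loc}$ to the fibre over $\ulcorner D\urcorner$ gives a natural map $\mathrm{restr}_D : \T_{\U^{loc}}(A) \to \ulcorner D \urcorner^* \E_{/\U^{loc}}(M_A, p_{\U^{loc}}) \simeq \E(\E(A,D),D)$, and composing with evaluation at $\widetilde f$ defines $\theta_D(f) := \mathrm{ev}_{\widetilde f} \circ \mathrm{restr}_D : \T_{\U^{loc}}(A) \to D$. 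A routine diagram chase with the triangle identities gives $\eta_A^* \circ \theta_D = \mathrm{id}$, so $\eta_A^*$ is already a split epimorphism of spaces.

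The main obstacle is the opposite identity $\theta_D \circ \eta_A^* = \mathrm{id}$, i.e. that an arbitrary $h : \T_{\U^{loc}}(A) \to D$ is recovered from its restriction $h\eta_A$ along the unit. This is the internal form of the classical fact that, for a fully faithful $i$, the right Kan extension $\mathrm{Ran}_i(\mathrm{id})$ is left adjoint to $i$ (Intuition~\ref{Int:Internal Right Kan Extension}): a morphism out of the internal limit defining $\T_{\U^{loc}}(A)$ into an object of the subcategory is pinned down by its component at that object. I would prove it by reducing the assertion ``$\eta_A^*$ is an equivalence for all local $D$'' --- via the adjunctions $\pi_{\U^{loc}}^* \dashv \Pi_{\U^{loc}}$ and the internal homs in $\E_{/\U^{loc}}$ --- to a statement about $Map_{/\U^{loc}}(\,\cdot\,, p_{\U^{loc}})$, which holds because $p_{\U^{loc}}$ is a \emph{generic} $\U^{loc}$-object (univalence): a section of $\E_{/\U^{loc}}(M_A, p_{\U^{loc}}) \to \U^{loc}$ is then determined fibrewise, and every local $D$ literally occurs as a fibre of $p_{\U^{loc}}$. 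I would isolate this reduction as a standalone lemma, since it is precisely the mechanism underlying the other localization statements in this section.
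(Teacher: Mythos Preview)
Your treatment of part~(1) matches the paper's: both chase the ideal-sub-universe closure through the two internal homs, and you correctly flag that the final $\Pi_{\U^{loc}}$ step uses that $\U^{loc}\to 1$ is itself classified by $\U$.

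For part~(2) your outline is in the same spirit as the paper but diverges at the crucial point. The paper does \emph{not} work object-by-object with individual local $D$. Instead it proves a single \emph{internal} equivalence in $\E_{/\U^{loc}}$: it constructs the unit $u:A\to\T_{\U^{loc}}(A)$ exactly as you do, and then builds a map
\[
\Lprime:\ \E_{/\U^{loc}}(A\times\U^{loc},\U^{loc}_*)\ \longrightarrow\ \E_{/\U^{loc}}(\T_{\U^{loc}}(A)\times\U^{loc},\U^{loc}_*)
\]
as the adjoint of $\mathrm{id}_{\T_{\U^{loc}}(A)}$, and checks by explicit adjunction-and-triangle-identity manipulations that $\Lprime$ and $(u\times\mathrm{id})^*$ are mutually inverse. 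Once that internal equivalence is in hand, pulling back along any $\ulcorner D\urcorner:1\to\U^{loc}$ gives the external statement you are after for every local $D$ at once. Your ``easy'' direction $\eta_A^*\circ\theta_D=\mathrm{id}$ corresponds precisely to the paper's check that $(u\times\mathrm{id})^*\circ\Lprime\simeq\mathrm{id}$.

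Where your proposal is thin is the ``main obstacle'' $\theta_D\circ\eta_A^*=\mathrm{id}$. You reduce it to ``sections of $\E_{/\U^{loc}}(M_A,p_{\U^{loc}})\to\U^{loc}$ are determined fibrewise'' and attribute this to univalence. That attribution is a red herring: univalence of $\U^{loc}$ (the $(-1)$-truncatedness of $p_{\U^{loc}}$ in $\O_\E$) plays no role in the paper's proof, and it does not by itself imply that a map $h:\T_{\U^{loc}}(A)\to D$ is recoverable from $h\eta_A$. What is actually needed is the computation that $\Lprime\circ(u\times\mathrm{id})^*\simeq\mathrm{id}$, which the paper carries out by adjoining back and forth and recognising the resulting diagonal as $\T_{\U^{loc}}(u)$, then using the triangle identity for $\pi_{\U^{loc}}^*\dashv\Pi_{\U^{loc}}$. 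If you attempt your fibrewise reduction honestly you will find yourself reproducing exactly this calculation; the ``standalone lemma'' you allude to \emph{is} the content of the proof, not a preliminary to it.
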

 
 \begin{proof}
  Throughout this proof we fix an object $A$ in $\E^S$.
  
  {\it (1):} 
  We need to show that $\T_{\U^{loc}}(A)$ is classified by $\U^{loc}$. It suffices to show that 
  $$\E(\E(\pi_2: A \times \U^{loc} \to \U^{loc}, p_{\U^{loc}} : \U^{loc}_* \to \U^{loc}), p_{\U^{loc}}: \U^{loc}_* \to \U^{loc}) \to \U^{loc}$$
  is classified by $\U^{loc}$. However, this follows immediately from the fact that $\U^{loc}$ is ideal and $p_{\U^{loc}}$ is 
  classified by $\U^{loc}$.
  
  {\it (2):} 
  Before we begin the proof we need some notations and constructions.
  First, notice, we have an adjunction 
   \begin{center}
    \adjun{\E}{\E_{/ \U^{loc}}}{- \times \U^{loc}}{\prod_{\U^{loc}}}
   \end{center}
  which is the last step of Definition \ref{Def Loc into Univ}. This gives us an equivalence 
  $$ map(A, \T_{\U^{loc}}B) \simeq map_{/\U^{loc}}(\U \times A, \E_{/\U^{loc}}(\E_{/\U^{loc}}(B \times \U^{loc}, \U^{loc}_*), \U^{loc}_*))$$
  Concretely this equivalence comes from unit and counit maps, which we will denote by 
  $$\eta_A: A \to A^{\U^{loc}}  $$
  $$\epsilon_B: \U^{loc} \times \Pi_{\U^{loc}} B \to B$$
  Moreover, the unit and counit maps give us equivalences 
  \begin{center}
   \begin{tikzcd}[row sep=0.5in, column sep=0.5in]
    map_\E(A,\Pi_{\U^{loc}}B) \arrow[r, "- \times \U^{loc}"] \arrow[rr, bend right = 10, "\simeq"'] & 
    map_{/\U^{loc}}(\U^{loc} \times A, \U^{loc} \times \Pi_{\U^{loc}}B) \arrow[r, "(\epsilon_B)_*"] & 
    map_{/\U^{loc}}(\U^{loc} \times A, B) \\
    map_{\U^{loc}}(A \times \U, B) \arrow[r, "\Pi_{\U^{loc}}"] \arrow[rr, bend right = 10, "\simeq"'] & 
    map_\E(A^{\U^{loc}}, \Pi_{\U^{loc}}B) \arrow[r, "(\eta_A)^*"] & 
    map_\E(A, \Pi_{\U^{loc}}B) 
   \end{tikzcd}
  \end{center}
  We will use these adjunctions and their unit and counit throughout the proof.
  \par
  Next, we will construct two maps.
  Notice that we have equivalences
  $$map(A,\T_{\U^{loc}}(A)) \simeq map_{/\U^{loc}}(A \times \U^{loc}, \E_{/\U^{loc}}(\E_{/\U^{loc}}(A \times \U^{loc}, \U^{loc}_*), \U^{loc}_*)) \simeq $$
  $$map_{/\U^{loc}}((A \times \U^{loc}) \times_{\U^{loc}} \E_{/\U^{loc}}(A \times \U^{loc}, \U^{loc}_*), \U^{loc}_*) \simeq $$
  $$map_{/\U^{loc}}(\E_{/\U^{loc}}(A \times \U^{loc}, \U^{loc}_*),\E_{/\U^{loc}}(A \times \U^{loc}, \U^{loc}_*))$$
  Given that we have the identity map on the right hand side, we have map 
  $$u: A \to \T_{\U^{loc}}(A)$$
  this induces a map 
  $$(u \times id)^*: \E_{/\U^{loc}}(\T_{\U^{loc}}(A) \times \U^{loc} , \U^{loc}_*) \to \E_{/\U^{loc}}(A \times \U^{loc}, \U^{loc}_*)$$
  On the other hand, we have equivalences
  $$map(\T_{\U^{loc}}(A), \T_{\U^{loc}}(A)) \simeq 
  map_{/\U^{loc}}(\T_{\U^{loc}}(A) \times \U^{loc}, \E_{/\U^{loc}}(\E_{/\U^{loc}}(A \times \U^{loc}, \U^{loc}_*), \U^{loc}_*)) \simeq $$
  $$ map_{/\U^{loc}}((\T_{\U^{loc}}(A) \times \U^{loc}) \times_{\U^{loc}} \E_{/\U^{loc}}(A \times \U^{loc}, \U^{loc}_*), \U^{loc}_*) \simeq $$
  $$ map_{/\U^{loc}}(\E_{/\U^{loc}}(A \times \U^{loc}, \U^{loc}_*),\E_{/\U^{loc}}(\T_{\U^{loc}}(A) \times \U^{loc}, \U^{loc}_*))$$
  The identity map on the left hand side gives us a map 
  $$\Lprime: \E_{/\U^{loc}}(A \times \U^{loc}, \U^{loc}_*) \to \E_{/\U^{loc}}(\T_{\U^{loc}}(A) \times \U^{loc} , \U^{loc}_*)$$
  We will show that these two maps are inverses of each other.
  \par 
  First, we show that 
  $$ (u \times id)^* \circ \Lprime \simeq id$$
  Notice we have a diagram 
  \begin{center}
   \begin{tikzcd}[row sep=0.5in, column sep=1.5in]
    \U^{loc} \times \T_{\U^{loc}}A \arrow[r, "\epsilon_{\E_{\U^{loc}}(\E_{\U^{loc}}(\U^{loc} \times A, \U^{loc}_*),\U^{loc}_*)) }"] & 
    \E_{\U^{loc}}(\E_{\U^{loc}}(\U^{loc} \times A, \U^{loc}_*), \U^{loc}_*) \\
    \U^{loc} \times A \arrow[ur, "ev"'] \arrow[u, "id \times u"]
   \end{tikzcd}
  \end{center}
  which is just the explicit construction of the adjoint, as explained in the first paragraph of this proof. 
  Adjoining the right hand $\E_{\U^{loc}}(\U^{loc} \times A, \U^{loc}_*)$ we get
  \begin{center}
   \begin{tikzcd}[row sep=0.5in, column sep=1in]
    (\U^{loc} \times \T_{\U^{loc}}A) \underset{\U^{loc}}{\times} \E_{\U^{loc}}(\U^{loc} \times A, \U^{loc}_*) \arrow[r] & 
    \U^{loc}_* \\
    (\U^{loc} \times A) \underset{\U^{loc}}{\times} \E_{\U^{loc}}(\U^{loc} \times A, \U^{loc}_*)  \arrow[ur, "ev"'] \arrow[u, "id \times u \times id"]
   \end{tikzcd}
  \end{center}
  Now, by keeping $\E_{\U^{loc}}(\U^{loc} \times A, \U^{loc}_*)$ on the left hand side and adjoining the rest to the other side, we get
  \begin{center}
   \begin{tikzcd}[row sep=0.5in, column sep=0.5in]
    \E_{\U^{loc}}(\U^{loc} \times A, \U^{loc}_*) \arrow[r, "\Lprime"] \arrow[dr, "id"'] &
    \E_{\U^{loc}}(\U^{loc} \times \T_{\U^{loc}} A, \U^{loc}_*) \arrow[d, "(u \times id)^*"] \\ 
    & \E_{\U^{loc}}(\U^{loc} \times A, \U^{loc}_*) 
   \end{tikzcd}
  \end{center}
  which proves one side of the argument.
  \par 
  Now we will prove that 
  $$\Lprime \circ (u \times id)^* \simeq id$$
  We have following diagram
  \begin{center}
   \begin{tikzcd}[row sep=0.5in, column sep=0.5in]
    \E_{\U^{loc}}(A \times \U^{loc}, \U^{loc}_*) \arrow[r, "\Lprime"] & \E_{\U^{loc}}(\T_{\U^{loc}}A \times \U^{loc}, \U^{loc}_*) \\
    \E_{\U^{loc}}(\T_{\U^{loc}}A \times \U^{loc}, \U^{loc}_*) \arrow[ur] \arrow[u, "(u \times id)^*"]
   \end{tikzcd}
  \end{center}
  We want to show the diagonal map is equivalent to the identity. We first adjoin the right hand $\T_{\U^{loc}}A \times \U^{loc}$
  to get the diagram 
  \begin{center}
   \begin{tikzcd}[row sep=0.5in, column sep=0.5in]
    \E_{\U^{loc}}(A \times \U^{loc}, \U^{loc}_*) \underset{\U^{loc}}{\times} (\T_{\U^{loc}}A \times \U^{loc})\arrow[r] &  \U^{loc}_* \\
    \E_{\U^{loc}}(\T_{\U^{loc}}A \times \U^{loc}, \U^{loc}_*) \underset{\U^{loc}}{\times} (\T_{\U^{loc}}A \times \U^{loc}) \arrow[ur] 
    \arrow[u, "(u \times id)^* \times id"]
   \end{tikzcd}
  \end{center}
  Now we keep the $(\T_{\U^{loc}}A \times \U^{loc})$ on the left hand side and adjoin the rest over to the diagram
  \begin{center}
   \begin{tikzcd}[row sep=0.5in, column sep=1.5in]
    \T_{\U^{loc}}A \times \U^{loc} \arrow[r, "\epsilon_{\E_{\U^{loc}}(\E_{\U^{loc}}(\U^{loc} \times A, \U^{loc}_*),\U^{loc}_*)) }"] \arrow[dr, "m"'] & 
    \E_{\U^{loc}}(\E_{\U^{loc}}(A \times \U^{loc}, \U^{loc}_*), \U^{loc}_*) 
    \arrow[d, "\E_{\U^{loc}}(\E_{\U^{loc}}(u \times \U^{loc} \comma \U^{loc}_*) \comma \U^{loc}_*)"] \\
    & \E_{\U^{loc}}(\E_{\U^{loc}}(\T_{\U^{loc}}A \times \U^{loc}, \U^{loc}_*), \U^{loc}_*)
   \end{tikzcd}
  \end{center}
  By definition of $\Lprime$ the horizontal map is the counit map, as defined in the beginning of this proof. 
  In order to simplify notation we will denote this counit map simply by $\epsilon$ as the subscript will not change.
  Moreover, we will denote the diagonal map by $m$. Our goal is to prove that $m$ is the evaluation map. 
  We will prove it by finding the adjoint using the counit map.
  \par 
  Applying $\Pi_{\U^{loc}}$ to this diagram 
  \begin{center}
   \begin{tikzcd}[row sep=0.5in, column sep=0.5in]
    \Pi_{\U^{loc}}\T_{\U^{loc}}A \times \U^{loc} \arrow[r, "\Pi_{\U^{loc}}\epsilon"] \arrow[dr, "\Pi_{\U^{loc}} m"'] & 
    \Pi_{\U^{loc}}\E_{\U^{loc}}(\E_{\U^{loc}}(A \times \U^{loc}, \U^{loc}_*), \U^{loc}_*) 
    \arrow[d, "\Pi_{\U^{loc}}\E_{\U^{loc}}(\E_{\U^{loc}}(u \times \U^{loc} \comma \U^{loc}_*) \comma \U^{loc}_*)"] \\
    & \Pi_{\U^{loc}}\E_{\U^{loc}}(\E_{\U^{loc}}(\T_{\U^{loc}}A \times \U^{loc}, \U^{loc}_*), \U^{loc}_*)
   \end{tikzcd}
  \end{center}
  Notice, by definition, we have 
  $$\Pi_{\U^{loc}}\E_{\U^{loc}}(\E_{\U^{loc}}(A \times \U^{loc}, \U^{loc}_*), \U^{loc}_*) = \T_{\U^{loc}}(A)$$ 
  and, similarly, 
  $$\Pi_{\U^{loc}}\E_{\U^{loc}}(\E_{\U^{loc}}(\T_{\U^{loc}}A \times \U^{loc}, \U^{loc}_*), \U^{loc}_*) = \T_{\U^{loc}}(\T_{\U^{loc}}A)$$
  and more generally 
  $$\Pi_{\U^{loc}}\E_{\U^{loc}}(\E_{\U^{loc}}(u \times \U^{loc} \comma \U^{loc}_*) \comma \U^{loc}_*) = \T_{\U^{loc}}(u).$$
  We also have
  $$\Pi_{\U^{loc}}\T_{\U^{loc}}A \times \U^{loc} = \E(\U^{loc},\T_{\U^{loc}}(A)).$$
  Moreover, we can precompose the element in the top left corner with the unit map 
  $$\eta_{[\T_{\U^{loc}}A]}: \T_{\U^{loc}}A \to \Pi_{\U^{loc}} \T_{\U^{loc}}A \times \U^{loc}$$
  to get a new diagram 
  \begin{center}
   \begin{tikzcd}[row sep=0.5in, column sep=0.5in]
    \T_{\U^{loc}}A \arrow[r, "\eta_{\leb \T_{\U^{loc}}A \reb}"] \arrow[rr, "id", bend left=30, shift left =1] 
    \arrow[drr, "\T_{\U^{loc}}(u)"', bend right =10] & 
    \E(\U^{loc},\T_{\U^{loc}}(A)) \arrow[r, "\prod_{\U^{loc}}(\epsilon)"] \arrow[dr, "\prod_{\U^{loc}} (m)"'] & 
    \T_{\U^{loc}}(A) \arrow[d, "\T_{\U^{loc}}(u)"] \\
    & & \T_{\U^{loc}}(\T_{\U^{loc}}(A))
   \end{tikzcd}
  \end{center}
  By the triangle identity of adjunctions, the composition of the two horizontal maps is the identity $\T_{\U^{loc}}A \to \T_{\U^{loc}}A$.
  Thus, the image of the map $m:\T_{\U^{loc}}A \times \U^{loc} \to \E_{\U^{loc}}(\E_{\U^{loc}}(\T_{\U^{loc}}A \times \U^{loc}, \U^{loc}_*), \U^{loc}_*)$
  under the equivalence 
  \begin{center}
   \begin{tikzcd}[row sep=0.5in, column sep=0.5in]
    map_{/ \U^{loc}}(\T_{\U^{loc}}A \times \U^{loc}, \E_{\U^{loc}}(\E_{\U^{loc}}(\T_{\U^{loc}}A \times \U^{loc}, \U^{loc}_*), \U^{loc}_*)) 
    \arrow[d, "\ds\prod_{\U^{loc}}"] \arrow[dd, "\simeq", shift left = 55, bend left =55] \\
    map_{\E}(\Pi_{\U^{loc}}(\T_{\U^{loc}}A \times \U^{loc}), 
    \Pi_{\U^{loc}}\E_{\U^{loc}}(\E_{\U^{loc}}(\T_{\U^{loc}}A \times \U^{loc}, \U^{loc}_*), \U^{loc}_*)) \arrow[d, "(\eta_{\leb \T_{\U^{loc}}A \reb})^*"] \\
    map_{\E}(\T_{\U^{loc}}(A),\Pi_{\U^{loc}}\E_{\U^{loc}}(\E_{\U^{loc}}(\T_{\U^{loc}}A \times \U^{loc}, \U^{loc}_*), \U^{loc}_*))
   \end{tikzcd}
  \end{center}
  is the map $\T_{\U^{loc}}(u)$. However, $u$ was chosen by definition as the map that correspond to the evaluation map. 
  Thus, $m$ is equivalent to the evaluation map. This finishes the proof.
 \end{proof}
 
 \begin{lemone} \label{Lemma:Localized equivalences}
  Let $L: \Arr(\E) \to L\Arr(\E)$ be a localization functor over $\E$ and let $\alpha: f \to g$ be a morphism in $\Arr(\E)$. 
  Then $L\alpha$ is an equivalence if and only if for every object $h$ in $L\Arr(\E)$ the induced map 
  $$\alpha^*: map_{\Arr(\E)}(f,h) \to map_{\Arr(\E)}(g,h)$$
  is an equivalence of spaces.
 \end{lemone}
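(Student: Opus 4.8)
The plan is to run the argument of Lemma~\ref{Lemma:Tau Equiv if mapped in Trunc} essentially verbatim, with the reflective localization $L \dashv i$ of $\Arr(\E)$ in place of the truncation adjunction $\tau_n \dashv i$ and with $\Arr(\E)$ in place of $\E$. Nothing in the proof of that lemma used more than the Yoneda lemma and the defining adjunction, and both are available here. Write $i : L\Arr(\E) \hookrightarrow \Arr(\E)$ for the (fully faithful) inclusion and $L$ for its left adjoint.

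First I would apply the Yoneda lemma for $(\infty,1)$-categories inside $L\Arr(\E)$: the morphism $L\alpha : Lf \to Lg$ is an equivalence if and only if for every object $h$ of $L\Arr(\E)$ the precomposition map $(L\alpha)^* : map_{L\Arr(\E)}(Lg, h) \to map_{L\Arr(\E)}(Lf, h)$ is an equivalence of spaces. So it remains only to identify this family of maps, naturally in $\alpha$, with the family of maps $\alpha^*$ appearing in the statement.

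Second, the adjunction supplies, for every $h \in L\Arr(\E)$, an equivalence $map_{L\Arr(\E)}(Lf, h) \simeq map_{\Arr(\E)}(f, ih) = map_{\Arr(\E)}(f, h)$ which is natural in $f$. Evaluating naturality on the morphism $\alpha : f \to g$ produces a commuting square
\begin{center}
\begin{tikzcd}[row sep=0.5in, column sep=0.5in]
map_{L\Arr(\E)}(Lg, h) \arrow[r, "(L\alpha)^*"] \arrow[d, "\simeq"'] & map_{L\Arr(\E)}(Lf, h) \arrow[d, "\simeq"] \\
map_{\Arr(\E)}(g, h) \arrow[r, "\alpha^*"] & map_{\Arr(\E)}(f, h)
\end{tikzcd}
\end{center}
in which both vertical arrows are equivalences. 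Hence the top arrow is an equivalence if and only if the bottom one is; together with the first step this says precisely that $L\alpha$ is an equivalence if and only if $\alpha^*$ is an equivalence of spaces for every $h \in L\Arr(\E)$, which is the assertion.

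I do not expect a genuine obstacle: the statement is a formal consequence of the Yoneda lemma and the unit/counit of a reflective localization, exactly parallel to Lemma~\ref{Lemma:Tau Equiv if mapped in Trunc}. The only point deserving a moment's care is that the adjunction equivalence $map_{L\Arr(\E)}(L-, h) \simeq map_{\Arr(\E)}(-, h)$ must be invoked as a natural transformation in the first variable, so that it can be evaluated on $\alpha$ to furnish the square above; this naturality is part of the data of an adjunction. Note also that the hypothesis that $L$ is a localization \emph{over} $\E$ plays no role here — the statement and its proof hold for any reflective subcategory of any $(\infty,1)$-category, and one could record it at that level of generality.
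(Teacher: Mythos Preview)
Your proof is correct and follows essentially the same approach as the paper: Yoneda in $L\Arr(\E)$ followed by the adjunction/full-faithfulness identification of mapping spaces. The paper's proof compresses this into two sentences (Yoneda, then ``$L\Arr(\E)$ is a full subcategory of $\Arr(\E)$''), whereas you spell out the naturality square explicitly, as in Lemma~\ref{Lemma:Tau Equiv if mapped in Trunc}; your closing remark that the ``over $\E$'' hypothesis is unused is also accurate.
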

 
 \begin{proof}
  By the Yoneda lemma $L\alpha$ is an equivalence in $L\Arr(\E)$ if and only if $(L\alpha)^*$ is an equivalence for every object $h$.
  But $L\Arr(\E)$ is a full subcategory of $\Arr(\E)$ and so the result follows.
 \end{proof}

 \begin{lemone} \label{Lemma:Image of L have Lf triv}
  Let $L: \Arr(\E) \to L\Arr(\E)$ be a localization functor over $\E$. Then a map $u$ is the unit map of the adjunction 
  if and only if $L(u)$ is equivalent to the identity.
 \end{lemone}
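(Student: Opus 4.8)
The plan is to prove this as a standard fact about reflective localizations, using only that the inclusion $i\colon L\Arr(\E)\hookrightarrow\Arr(\E)$ is fully faithful with left adjoint $L$; the compatibility with the target fibration over $\E$ plays no role here. Write $\eta\colon\mathrm{id}\Rightarrow iL$ and $\epsilon\colon Li\Rightarrow\mathrm{id}$ for the unit and counit. The first step is to record the usual consequences of full faithfulness of $i$: the counit $\epsilon$ is an equivalence, and then the triangle identities $(\epsilon L)\circ(L\eta)\simeq\mathrm{id}_L$ and $(i\epsilon)\circ(\eta i)\simeq\mathrm{id}_i$ show, first, that $L\eta_f$ is an equivalence for every $f$ (it is a section of the equivalence $\epsilon_{Lf}$, hence equal to $\epsilon_{Lf}^{-1}$ and so, via the equivalence $\epsilon$, equivalent to the identity of $Lf$), and, second, that $\eta_h$ is an equivalence for every object $h$ of $L\Arr(\E)$ (writing $h\simeq iZ$, the map $\eta_{iZ}$ is inverse to the equivalence $i\epsilon_Z$).

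For the ``only if'' direction: if $u=\eta_f\colon f\to iLf$ is a unit component, then $L(u)=L\eta_f$ is, by the first step, an equivalence equivalent to $\mathrm{id}_{Lf}$ under the canonical identification $Li\simeq\mathrm{id}$, which is exactly the assertion that $L(u)$ is equivalent to the identity. For the converse I read ``$u$ is the unit map'' as ``$u$ is equivalent, under its source, to a unit component $\eta_f$''; so suppose $u\colon f\to g$ has target $g$ in $L\Arr(\E)$ and $L(u)$ an equivalence. Naturality of $\eta$ at $u$ produces a commuting square with top edge $u\colon f\to g$, bottom edge $iL(u)\colon iLf\to iLg$, and vertical edges $\eta_f$ and $\eta_g$. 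Since $g$ is local, $\eta_g$ is an equivalence by the first step, and $iL(u)$ is an equivalence by hypothesis, so $e:=\eta_g^{-1}\circ iL(u)\colon iLf\to g$ is an equivalence and the square produces a commuting triangle $u\simeq e\circ\eta_f$. Hence $u$ is the unit map of the adjunction, and this argument also exhibits the equivalence $g\simeq iLf$ explicitly rather than merely asserting its existence.

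The content is light; the only thing to be careful about is what the statement is actually asserting. A unit component is determined only up to coherent equivalence under its source, so the hypothesis ``$L(u)$ is equivalent to the identity'' must be understood to carry the information that the target of $u$ is local -- a plain $L$-equivalence with non-local target (for instance a map between two connected spaces under the $0$-truncation localization) is an $L$-equivalence but is not a unit map -- and the required equivalence has to be exhibited concretely, as the map $e$ above, rather than obtained by formal manipulation. Everything else is a direct application of the triangle identities; in particular none of the internal-mapping-object manipulations of Proposition~\ref{Prop:Sub Univ gives Trunc} are needed for this lemma.
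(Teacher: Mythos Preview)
Your argument is correct and is the textbook proof that this lemma is a general fact about reflective subcategories, independent of any fibration structure over $\E$. The paper proceeds slightly differently: for the ``only if'' direction it does not invoke the triangle identities directly but instead appeals to the preceding Lemma~\ref{Lemma:Localized equivalences} (the adjunction equivalence $\mathrm{map}(L(f),g)\simeq\mathrm{map}(f,g)$ for local $g$ witnesses that $u$ is an $L$-equivalence), and for the ``if'' direction it is content with the one-line remark that $u$ is then the unit at its own source. Your version is more self-contained and, in particular, spells out via the naturality square exactly why a map with local target and $L(u)$ an equivalence must agree with $\eta_f$ up to an equivalence of targets; the paper's proof leaves that step implicit. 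Your closing caveat about needing the target of $u$ to be local for the phrase ``$L(u)$ equivalent to the identity'' to make sense is also a genuine clarification that the paper omits.
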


 \begin{proof}
  If $L(u)$ is equivalent to the identity then $u$ is the unit map of $u$ itself. On other hand let $f: Y \to X$ be an object 
  in $\Arr(\E)$ with value $L(f) = ||f|| \to X$ and $u: Y \to ||f||$ be the unit of the adjunction. By definition 
  of the adjunction we have equivalences 
  $$map_\E(L(f),g) \to map(f,g)$$
  for any object $g:Z \to W$ in $L\Arr(\E)$, which by the previous lemma implies that $L(u)$ is equivalent to the identity.
 \end{proof}

 \begin{remone}
  Notice that Lemma \ref{Lemma:Localized equivalences} and Lemma \ref{Lemma:Image of L have Lf triv} were used in the proof of 
  Theorem \ref{The:Localizations}, but are also of independent interest and were in particular proven for the specific case of 
  truncations (Lemma \ref{Lemma:Tau Equiv if mapped in Trunc} and Lemma \ref{Lemma:Unit Connected}).
 \end{remone}

  The result immediately gives us an alternative way to construct $(-1)$-truncations, assuming we have subobject classifiers. 

 \begin{corone} \label{Cor:Neg One Trunc via SOC}
  Let $\E$ be an $(\infty,1)$-category satisfying the condition of Remark \ref{Rem:Intro Condition for Localization Section} 
  which has a subobject classifier $\Omega$. 
  Then the functor 
  $$\T_\Omega: \E \to \tau_{-1}\E$$ 
  is the $(-1)$-truncation functor.
 \end{corone}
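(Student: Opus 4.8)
The plan is to exhibit the subobject classifier, equipped with its universal monomorphism $\top\colon 1 \to \Omega$, as an \emph{ideal sub-universe} in the sense of Subsection~\ref{Subsec:Constructing Localizations via Universes}, and then to read the statement off Proposition~\ref{Prop:Sub Univ gives Trunc}. By Proposition~\ref{Prop Conn Eq Mono} a map is classified by $\Omega$ (i.e.\ is a monomorphism) precisely when it is $(-1)$-truncated; hence the full subcategory $\E^{S^{loc}}$ of objects whose structure map to $1$ lies in the class $S^{loc}$ classified by $\Omega$ is exactly $\tau_{-1}\E$, and the functor $\T_\Omega\colon\E\to\E$ of Definition~\ref{Def Loc into Univ} (applied to the universal family $\top\colon 1\to\Omega$) is the candidate for the $(-1)$-truncation.

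The one point that must be verified before invoking Proposition~\ref{Prop:Sub Univ gives Trunc} is that $\Omega$ is ideal: for every map $g\colon Z\to X$ and every $(-1)$-truncated $f\colon Y\to X$, the pushforward $g_\ast f=\Pi_g(f)\colon \Pi_g(Y)\to Z$ is again $(-1)$-truncated. This is where local Cartesian closure (Definition~\ref{Def:LCCC}) enters: $\Pi_g$ is a right adjoint to $g^\ast$ and hence preserves all limits. A map $f\colon Y\to X$ is a monomorphism exactly when its relative diagonal $\Delta_f\colon Y\to Y\times_X Y$ is an equivalence (cf.\ Proposition~\ref{Prop Diag Trunc}), and since $\Pi_g$ carries the terminal object of $\E_{/X}$ and fibre products to the terminal object of $\E_{/Z}$ and fibre products, it sends this equivalence to an equivalence $\Pi_g(Y)\to\Pi_g(Y)\times_Z\Pi_g(Y)$; thus $g_\ast f$ is a monomorphism. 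No size restriction is needed here, since monomorphisms over an arbitrary object are classified by $\Omega$; so in the language of Proposition~\ref{Prop:Sub Univ gives Trunc} we may take $S$ to be the class of all maps and $S^{loc}$ the class of $(-1)$-truncated maps, and the only place where the proof of that proposition uses ``$g$ is classified by $\U$'' is precisely the ideal condition just checked, which now holds for all $g$.

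With this in hand, Proposition~\ref{Prop:Sub Univ gives Trunc}(1) says that $\T_\Omega(A)\to 1$ is classified by $\Omega$, i.e.\ that $\T_\Omega(A)$ is $(-1)$-truncated, for every object $A$; and Proposition~\ref{Prop:Sub Univ gives Trunc}(2) says that the corestricted functor $\T_\Omega\colon \E\to\tau_{-1}\E$ is left adjoint to the inclusion $\tau_{-1}\E\hookrightarrow\E$. By Definition~\ref{Def:Neg One Truncation} (cf.\ Theorem~\ref{The Neg One Adj}) the $(-1)$-truncation functor is, up to equivalence, the unique such left adjoint, so $\T_\Omega$ is the $(-1)$-truncation functor. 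Alternatively, one can identify $\T_\Omega$ with $\tau_{-1}$ by running the chain of equivalences of Theorem~\ref{The:Localizations} for this ideal sub-universe.

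I expect the main obstacle to be presentational rather than mathematical: arguing cleanly that the subobject classifier may be treated as a (degenerate) ideal sub-universe so that Proposition~\ref{Prop:Sub Univ gives Trunc} and its proof go through word for word with $\U^{loc}$ replaced by $\Omega$. The only substantive input is the fact that the pushforward functors $\Pi_g$ preserve $(-1)$-truncated maps, which, as noted, is a formal consequence of local Cartesian closure; everything else is a direct instantiation of the machinery of Subsection~\ref{Subsec:Constructing Localizations via Universes}.
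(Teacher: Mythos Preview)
Your proposal is correct and follows exactly the approach the paper intends: the corollary is stated in the paper without proof, as an immediate consequence of Proposition~\ref{Prop:Sub Univ gives Trunc} applied with $\Omega$ in the role of the ideal sub-universe. Your additional verification that $\Pi_g$ preserves monomorphisms (the ideal condition) and your observation that the size restriction to $S$ is irrelevant here are precisely the details the paper leaves implicit.
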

 
  \begin{remone} \label{Rem:Neg One Trunc via SOC or Join}
  In Section \ref{Sec:The Join Construction} we showed how to construct the $(-1)$-truncation 
  if it is locally Cartesian closed with a universe and finite colimits and natural number object (Theorem \ref{The Neg One Adj}). 
  In this section we showed that we can also construct the $(-1)$-truncation if we instead assume it is 
  locally Cartesian closed with a universe and subobject classifier.
 \end{remone}

 We want to generalize this result to higher truncations. However for that we need to construct the 
 sub-universe of $n$-truncated objects.
 We will actually give a method for constructing a sub-universe of $A$-local objects and 
 $n$-truncated objects will then just be a special case.
 
 \begin{defone} \label{Def:A local obj}
  Let $A$ be an object in $\E$. We say an object $X$ is {\it A-local} if the map 
  $$X \to X^A$$
  is an equivalence.
 \end{defone}

 \begin{notone}
  We denote the full subcategory of $A$-local objects as $\E_{loc_A}$.
 \end{notone}

 \begin{exone} \label{Ex:Trunc Sn local}
  Let $n$ be a natural number. Then the $S^{n+1}$-local objects are exactly the $n$-truncated objects.
 \end{exone}

 We want to construct the subuniverse of $A$-local objects.
 
 \begin{propone} \label{Prop:Sub Universe of N Trunc Obj}
  Let $\E$ be locally Cartesian closed category with locally Cartesian closed universe $\U_0$. 
  Let $A$ be an object in $\E$
  Then we can define the sub-universe of $A$-local objects $\U^{loc_A}$ as the image
    $$\U^{loc_A}_0 = \tau_{-1}^{\U_0}( P)$$
  where $P$ is defined as the pullback
  \begin{center}
   \pbsq{P}{\U_0}{\U_0}{\U_1}{}{}{\Delta_n}{eq}
  \end{center}
 \end{propone}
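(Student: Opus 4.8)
The plan is to show that the object $P$ of the displayed pullback, regarded over $\U_0$, classifies exactly the (fiberwise) $A$-local families, so that its image $\tau^{\U_0}_{-1}(P)\hookrightarrow\U_0$ is the sub-universe we want.

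First I would make the two legs of the pullback explicit. The map $\Delta_n\colon\U_0\to\U_1$ into the morphism classifier (Theorem \ref{The:CSU Exist}) is the completeness (``diagonal'') inclusion of the complete Segal universe, sending each family to its identity morphism; recall it is a monomorphism. The map $eq\colon\U_0\to\U_1$ should classify, over $\U_0$, the canonical morphism $\U_{0*}\to(\U_{0*})^{A}$ from the universal family to its fiberwise $A$-th power (the exponential taken in $\E_{/\U_0}$, i.e.\ against $A\times\U_0\to\U_0$). This is the single point at which the hypothesis ``$\U_0$ locally Cartesian closed'' enters: it is exactly what guarantees that the fiberwise $A$-th power of a $\U_0$-small family is again $\U_0$-small, so that this morphism really is classified by a map to $\U_1$. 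Using the universal properties of $\U_0$ and $\U_1$ and the stability of the fiberwise exponential under base change, I would then record that for $x\colon Z\to\U_0$ classifying $p_x\colon X\to Z$ the composite $eq\circ x$ classifies the morphism $X\to X^{A}$ of $\E_{/Z}$, while $\Delta_n\circ y$ classifies an identity.

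Next I would unwind $P$. A map $Z\to P$ over $\U_0$ is a triple: $x\colon Z\to\U_0$, a further $y\colon Z\to\U_0$, and a homotopy $eq\circ x\simeq\Delta_n\circ y$ in $\mathrm{Map}(Z,\U_1)$. By the previous step this homotopy is precisely an identification of the morphism $X\to X^{A}$ with an identity morphism, i.e.\ a witness that $p_x$ is fiberwise $A$-local — the datum $y$ being thereby forced equivalent to $X$ and so contributing nothing new. Hence $x\colon Z\to\U_0$ lifts along $P\to\U_0$ if and only if $p_x$ is fiberwise $A$-local, and, since the space of inverses of an equivalence is contractible, when it does the lift is unique; thus the two projections $P\to\U_0$ agree and $P\to\U_0$ is in fact already a monomorphism, so $\tau^{\U_0}_{-1}(P)=P$, the $(-1)$-truncation/image being there only to present the answer canonically as a subobject of $\U_0$ (and so that well-definedness does not rely on any general construction of $(-1)$-truncations). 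Running this for arbitrary $Z$ shows this subobject classifies precisely the fiberwise $A$-local families, which is what it means for it to be $\U^{loc_A}_0$; the case $Z=1$ is the statement for $A$-local objects (Definition \ref{Def:A local obj}), and by Example \ref{Ex:Trunc Sn local}, taking $A=S^{n+1}$, one obtains the sub-universe of $n$-truncated objects.

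The hardest part will be the internal bookkeeping behind the first two steps: defining $eq$ honestly as a morphism classified by $\U_0$ — which amounts to checking that passing to the fiberwise $A$-th power keeps a family inside the universe, the only use of local Cartesian closedness of $\U_0$ — and verifying that pulling the universal morphism back along $x\colon Z\to\U_0$ reproduces $X\to X^{A}$, i.e.\ that $(-)^{A}$ formed fiberwise is stable under base change. Both are routine once made precise, but they must be in place before $P$ can be described through the universal property of $\U_1$; after that, the identification of $\tau^{\U_0}_{-1}(P)$ with the sub-universe of $A$-local objects is immediate.
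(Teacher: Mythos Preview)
Your approach is essentially the paper's: describe the two legs of the pullback, identify $P$ over $\U_0$ as the families for which the canonical map to the fiberwise $A$-power is an equivalence, and conclude that its image in $\U_0$ is the sub-universe of $A$-local objects. One caveat: you have reversed the paper's naming. In the paper, $eq:\U_0\to\U_1$ is the degeneracy/identity inclusion (so named because, by completeness, its image is the equivalences), while $\Delta_n:\U_0\to\U_1$ is the map sending a family $Y\to X$ to the morphism $Y^{X\times A}\to Y$ over $X$; the subscript $n$ is vestigial from the truncation case. Since $P$ is the pullback of the pair, this swap is harmless mathematically, but you should realign your write-up with the stated diagram.

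Your further observation that $P\to\U_0$ is already $(-1)$-truncated is correct and slightly sharpens the paper's statement: whichever projection one takes, it is the base change of a split monomorphism (the degeneracy has face-map retractions; the other map has the target map as a retraction), hence a monomorphism, so $\tau^{\U_0}_{-1}(P)\simeq P$. The paper does not make this explicit and simply records that the image picks out exactly the $A$-local families. Your emphasis on why the locally Cartesian closed hypothesis on $\U_0$ is needed (so that the fiberwise $A$-power stays $\U_0$-small and the second leg really lands in $\U_1$) is also on point and more explicit than the paper's account.
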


 \begin{proof}
  We first give a better description of the pullback diagram. Let 
  $$\Delta_n: \U_0 \to \U_1$$
  be the map that takes a morphism $Y \to X$ to the $Y^{X \times A} \to Y$ over $X$.
  Moreover, let $eq: \U_0 \to \U_1$ be the degenerate inclusion map and notice that, by completeness, a map is an equivalence 
  if it is in the image of $eq$. We can now define $P$ as the pullback of these two maps:
  \begin{center}
   \pbsq{P}{\U_0}{\U_0}{\U_1}{}{}{\Delta_n}{eq}
  \end{center}
  Then $\tau_{-1}^{\U_0}( P) \hookrightarrow \U_0$ consists of all morphisms $Y \to X$ such that $Y^{X \times A} \xrightarrow{ \ \simeq \ } Y$, 
  which are exactly the $A$-local ones. 
 \end{proof}
 
 We can now combine the results to immediately get following corollary.
 
  \begin{corone} \label{Cor:A Localization Construction}
  Let $\E$ be an $(\infty,1)$-category satisfying the condition of Remark \ref{Rem:Intro Condition for Localization Section}
  with locally Cartesian closed universe $\U$ that classifies $S$.
  Let $A$ be an object and $\U^{loc_A}$ be the sub-universe of $A$-local objects.
  Then $\T_{\U^{loc_A}}: \E^S \to \E^S$ induces a localization functor 
  $$\T_{\U^{loc_A}}: \E^S \to (\E^{loc_A})^S$$
 \end{corone}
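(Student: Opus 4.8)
The plan is to show that the sub-universe $\U^{loc_A}$ produced by Proposition \ref{Prop:Sub Universe of N Trunc Obj} is in fact an \emph{ideal} sub-universe of $\U$, and then to feed this into Proposition \ref{Prop:Sub Univ gives Trunc} (equivalently, the implication $(5)\Rightarrow(2)$ of Theorem \ref{The:Localizations}). Unwinding Proposition \ref{Prop:Sub Universe of N Trunc Obj}, the sub-universe $\U^{loc_A}$ classifies precisely the class $S^{loc_A}\subseteq S$ of maps $f: Y\to X$ that are $A$-local as objects of $\E_{/X}$, i.e.\ for which the canonical map $f\to\bigl(\pi_X^{*}A\Rightarrow_X f\bigr)$ is an equivalence in $\E_{/X}$; here $\pi_X: X\to 1$, the object $\pi_X^{*}A$ is $X\times A\to X$, and $\Rightarrow_X$ denotes the internal mapping object of $\E_{/X}$. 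Since $A$-locality of $f$ is a condition on its fibers it is stable under base change, so $\U^{loc_A}$ is indeed a sub-universe; what remains is the ideal condition, which is where the real work lies.

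To verify the ideal condition I need $S^{loc_A}$ to be closed under pushforward along maps of $S$: for $g: X\to Y$ classified by $\U$ and $f: Z\to X$ classified by $\U^{loc_A}$, the map $g_{*}f=\Pi_g f$ should again be classified by $\U^{loc_A}$. First, $\Pi_g f$ is classified by $\U$ since $\U$ is a locally Cartesian closed universe, so it is enough to see that $\Pi_g f$ is $A$-local in $\E_{/Y}$. The key ingredient is the standard ``projection-formula'' identity in a locally Cartesian closed category: for $B\to Y$ in $\E_{/Y}$ and $W\to X$ in $\E_{/X}$ there is a natural equivalence
\[
 B\Rightarrow_Y\Pi_g W\;\simeq\;\Pi_g\bigl(g^{*}B\Rightarrow_X W\bigr),
\]
coming from the chain $Map_{/Y}(T,B\Rightarrow_Y\Pi_g W)\simeq Map_{/Y}(T\times_Y B,\Pi_g W)\simeq Map_{/X}(g^{*}T\times_X g^{*}B,W)\simeq Map_{/X}(g^{*}T,g^{*}B\Rightarrow_X W)\simeq Map_{/Y}(T,\Pi_g(g^{*}B\Rightarrow_X W))$, using that $g^{*}$ preserves finite products and that $g^{*}\dashv\Pi_g$. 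Taking $B=\pi_Y^{*}A$ (so $g^{*}B=\pi_X^{*}A$) and $W=f$, and using that $f$ is $A$-local in $\E_{/X}$, this gives
\[
 \pi_Y^{*}A\Rightarrow_Y\Pi_g f\;\simeq\;\Pi_g\bigl(\pi_X^{*}A\Rightarrow_X f\bigr)\;\simeq\;\Pi_g f,
\]
and one checks that this equivalence is the canonical comparison map, since it is $\Pi_g$ applied to the canonical comparison map for $f$. Hence $\Pi_g f$ is $A$-local in $\E_{/Y}$, i.e.\ lies in $S^{loc_A}$. (Fiberwise this just says that a space of sections of a map all of whose fibers are $A$-local is again $A$-local, because $A$-local objects are closed under limits.) Thus $\U^{loc_A}$ is an ideal sub-universe.

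Finally, I would apply Proposition \ref{Prop:Sub Univ gives Trunc} to the ideal sub-universe $\U^{loc_A}$ to conclude that $\T_{\U^{loc_A}}: \E^{S}\to\E^{S}$ lands in the full subcategory $\E^{S^{loc_A}}=(\E^{loc_A})^{S}$ and that the corestricted functor $\T_{\U^{loc_A}}: \E^{S}\to(\E^{loc_A})^{S}$ is left adjoint to the inclusion; a left adjoint to a full subcategory inclusion is exactly a localization functor, which is the assertion. The one genuinely nontrivial point is the closure of $A$-local maps under dependent products established above; identifying ``classified by $\U^{loc_A}$'' with ``$A$-local in the slice'' and checking pullback-stability of that class are routine once one unwinds the construction in Proposition \ref{Prop:Sub Universe of N Trunc Obj}.
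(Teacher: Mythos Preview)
Your proposal is correct and follows essentially the same route as the paper: show that $\U^{loc_A}$ is an ideal sub-universe by proving that $A$-local maps are closed under dependent product along maps in $S$, and then invoke Proposition~\ref{Prop:Sub Univ gives Trunc} (equivalently, condition~(5)$\Rightarrow$(2) of Theorem~\ref{The:Localizations}). Your proof of the ideal condition via the projection formula $B\Rightarrow_Y\Pi_g W\simeq\Pi_g(g^{*}B\Rightarrow_X W)$ is in fact a more carefully stated version of the paper's argument, which writes the adjunction equivalence $\E_{/X}(h,g_*f)\simeq\E_{/Y}(g^*h,f)$ and observes the right-hand side is $A$-local; your formulation also makes explicit that the resulting equivalence is the canonical comparison map.
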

 
  \begin{proof}
   We have to prove that $\U^{loc_A}$ is an ideal sub-universe. Let $f: Z \to Y, g:Y \to X$ be in $S$ and $f$ be $A$-local. 
   We have to prove that $g_*f$ is also $A$-local. For any $h:W \to X$ we have an equivalence 
   $$\E_{/X}(h, g_*f) \simeq \E_{/Y}(g^*h,f)$$
   The right hand side $A$-local as $f$ is $A$-local and so the left hand side is $A$-local as well.
 \end{proof}

 Using this for the case $A=S^{n+1}$ and remembering Example \ref{Ex:Trunc Sn local} we get following corollary.
 
 \begin{corone} \label{Cor:Truncation Functor Construction}
  Let $\E$ be an $(\infty,1)$-category satisfying the condition of Remark \ref{Rem:Intro Condition for Localization Section}
  with locally Cartesian closed universe $\U$ that classifies $S$.
  Then $\T_{\U^{\leq n}}: \E^S \to \E^S$ induces a localization functor 
  $$\T_{\U^{\leq n }}: \E^S \to \tau_n\E^S$$
 \end{corone}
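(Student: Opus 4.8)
The plan is to obtain this as a direct specialization of Corollary \ref{Cor:A Localization Construction} to the object $A = S^{n+1}$, the internal $(n+1)$-sphere of Definition \ref{Def:Sn NNO}. First I would invoke Example \ref{Ex:Trunc Sn local}, which identifies the full subcategory $\E_{loc_{S^{n+1}}}$ of $S^{n+1}$-local objects with $\tau_n \E$, the subcategory of $n$-truncated objects: indeed, by Definition \ref{Def:A local obj} an object $X$ is $S^{n+1}$-local precisely when $X \to X^{S^{n+1}}$ is an equivalence, which is exactly the condition in Definition \ref{Def:Truncation NNO} for $X$ to be $n$-truncated. The same comparison at the level of universes identifies the sub-universe $\U^{loc_{S^{n+1}}}$ constructed in Proposition \ref{Prop:Sub Universe of N Trunc Obj} with the sub-universe $\U^{\leq n}$ of $n$-truncated objects from Definition \ref{Def:U leq n}, since Proposition \ref{Prop:Sub Universe of N Trunc Obj} realizes it as the $(-1)$-truncation inside $\U_0$ of the locus of maps $Y \to X$ with $Y^{X \times S^{n+1}} \xrightarrow{\ \simeq\ } Y$, i.e.\ the fiberwise $n$-truncated maps.

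With these two identifications in hand, I would apply Corollary \ref{Cor:A Localization Construction} with $A = S^{n+1}$: its hypothesis is exactly that $\E$ satisfies the conditions of Remark \ref{Rem:Intro Condition for Localization Section} and has a locally Cartesian closed universe $\U$ classifying $S$, which are precisely the hypotheses we are given. The corollary then yields that $\U^{loc_{S^{n+1}}} = \U^{\leq n}$ is an ideal sub-universe and hence, by Proposition \ref{Prop:Sub Univ gives Trunc}, that $\T_{\U^{\leq n}} : \E^S \to \E^S$ takes values in $(\E^{loc_{S^{n+1}}})^S = \tau_n\E^S$ and that the corestricted functor
$$\T_{\U^{\leq n}} : \E^S \to \tau_n\E^S$$
is left adjoint to the inclusion. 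This is precisely the asserted localization functor.

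The only genuine content beyond citing Corollary \ref{Cor:A Localization Construction} is verifying the two identifications above, and of these the identification $\U^{loc_{S^{n+1}}} \simeq \U^{\leq n}$ is the one to be careful with, since Definition \ref{Def:U leq n} defines $\U^{\leq n}$ via the image of $i\tau_n$ on the complete Segal universe whereas Proposition \ref{Prop:Sub Universe of N Trunc Obj} describes it as an equalizer-type pullback; but both descriptions pick out the same full sub-universe, namely the one spanned by the $n$-truncated maps, so uniqueness of the full sub-universe on a given class of objects closes the gap. I expect this bookkeeping to be the main (and only) obstacle; once it is settled, the statement is immediate. Thus, taking $A = S^{n+1}$ in Corollary \ref{Cor:A Localization Construction} and using Example \ref{Ex:Trunc Sn local} gives the result.
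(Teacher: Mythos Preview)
Your proposal is correct and matches the paper's own proof essentially verbatim: the paper simply says ``Using this for the case $A=S^{n+1}$ and remembering Example \ref{Ex:Trunc Sn local} we get following corollary,'' which is exactly your specialization of Corollary \ref{Cor:A Localization Construction}. Your extra paragraph reconciling the two descriptions of $\U^{\leq n}$ (via Definition \ref{Def:U leq n} versus Proposition \ref{Prop:Sub Universe of N Trunc Obj}) is more careful than the paper itself, which tacitly uses $\U^{\leq n}$ here as notation for the sub-universe of $n$-truncated maps produced by Proposition \ref{Prop:Sub Universe of N Trunc Obj}; your observation that both are the unique full sub-universe on the $n$-truncated maps is the right way to close that gap.
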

  
\subsection{Inductive Construction of Truncations} \label{Subsec:Inductive Construction of Truncations}
  In this subsection we want to give an alternative way to construct the truncation functor 
  using an inductive argument. That is the content of Theorem \ref{The:Constructing Trunctions Inductively}. 
  
  \begin{remone}
   We need two further assumptions
   in addition to the conditions on $\E$ given at the beginning of the section (Remark \ref{Rem:Intro Condition for Localization Section}).
   Concretely: 
   \begin{enumerate}
    \item[(4)] It has a subobject classifier (Definition \ref{Def:SOC}).
   \end{enumerate}
     those conditions also imply
    \begin{enumerate}
     \item[(5)] It has a natural number object (Theorem \ref{The:EHT has NNO}).    
    \end{enumerate}
  \end{remone}

  Before we move on to the main theorem and its proof we want to give an intuition for the construction. 
  
  \begin{remone}
   The intuition outlined here has already been developed in homotopy type theory (\cite{Ri17}, \cite{DORS18}) and serves as motivation 
   for the steps.
  \end{remone}
 
  If $X$ is a space then we can construct $(n+1)$-truncation of $X$ by $n$-truncating the loop spaces $\Omega_x X$.
  This gives us a way to define the $(n+1)$-truncation inductively. However, we cannot use such a point-set definition 
  in an arbitrary $(\infty,1)$-category. We thus need to refine our approach. 
  \par 
  The key observation is the ``undirected Yoneda Lemma for spaces", which states that for any $\kappa$-small space there is an embedding 
  $$K \to (\s^\kappa)^K$$
  which takes a point $k$ to the map of spaces that takes $l$ to the path space $\Path_K(k,l)$.
  \par 
  We can then post-compose with the $n$-truncation functor 
  $$K \to (\s^\kappa)^K \xrightarrow{ \ (\tau_n)^K \ } (\s^\kappa)^K$$
  The $(n+1)$-truncation is then just the image of $K$ in $(\s^\kappa)^K$. 
  This has categorified our initial construction that $n$-truncated the loop space.
  \par 
  We want to generalize this argument to an arbitrary elementary $(\infty,1)$-topos. 
  The key is that we still have an ``undirected Yoneda Lemma for elementary $(\infty,1)$-toposes":
  
  \begin{theone} \label{The:Yoneda Lemma}
  \cite[Theorem 3.3]{Ra18d}
  Let $A$ be a fixed object and $\U$ be a universe that classifies the diagonal map $\Delta:A \to A \times A$.
  Then the induced map 
  $\Y_A : A \to \U^A$
  is $(-1)$-truncated.
 \end{theone}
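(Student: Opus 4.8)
The plan is to reduce the statement, via the representable characterisation of truncated maps, to a claim about a map of spaces, and then recognise that map as fully faithful. By Lemma \ref{Lemma Yoneda Truncation Cond} it is enough to show, for every object $Z$, that the map of spaces
$$(\Y_A)_* \colon Map_\E(Z,A)\longrightarrow Map_\E(Z,\U^A)\simeq Map_\E(Z\times A,\U)$$
is $(-1)$-truncated; here the displayed equivalence is the exponential adjunction, and $Map_\E(Z\times A,\U)$ is, by the classifying property of $\U$, the core of the full subcategory of $\E_{/Z\times A}$ on the $\U$-small objects. Unwinding the construction, $\Y_A$ is the exponential transpose of the classifying map $\ulcorner\Delta_A\urcorner\colon A\times A\to\U$, so the composite above sends $g\colon Z\to A$ to $\ulcorner\Delta_A\urcorner\circ(g\times\mathrm{id}_A)\colon Z\times A\to\U$. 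This map classifies the pullback of $\Delta_A$ along $g\times\mathrm{id}_A$, and a short verification of the universal property identifies that pullback with the graph $\Gamma_g\coloneqq(\mathrm{id}_Z,g)\colon Z\to Z\times A$, regarded as an object of $\E_{/Z\times A}$; it is $\U$-small since $\Delta_A$ is and $\U$-small maps are stable under base change. Thus $(\Y_A)_*$ is, up to equivalence, the ``graph'' assignment $g\mapsto\Gamma_g$.

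It remains to see that the graph assignment is $(-1)$-truncated. I would do this by showing it is fully faithful as a map of spaces, which already implies it is a (homotopy) monomorphism. Indeed, a morphism $\Gamma_g\to\Gamma_{g'}$ in $\E_{/Z\times A}$ is a map $\psi\colon Z\to Z$ equipped with a homotopy $\Gamma_{g'}\circ\psi\simeq\Gamma_g$; since $\Gamma_g=(\mathrm{id}_Z,g)$ and $\Gamma_{g'}=(\mathrm{id}_Z,g')$, such a homotopy decomposes into a homotopy $\psi\simeq\mathrm{id}_Z$ together with a homotopy $g'\psi\simeq g$, and the space of this data is equivalent to the path space from $g$ to $g'$ in $Map_\E(Z,A)$. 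Hence $(\Y_A)_*$ induces an equivalence on all mapping spaces, so it is $(-1)$-truncated; since this holds for every $Z$, Lemma \ref{Lemma Yoneda Truncation Cond} gives that $\Y_A\colon A\to\U^A$ is $(-1)$-truncated.

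The step I expect to be the main obstacle is the identification of $(\Y_A)_*$ with the graph assignment: it requires threading the definition of $\Y_A$ carefully through the classifying property of $\U$ and the exponential adjunction, and computing the pullback of $\Delta_A$ along $g\times\mathrm{id}_A$. Everything afterwards is formal. One could instead bypass the reduction to mapping spaces and verify directly that the relative diagonal $A\to A\times_{\U^A}A$ is an equivalence, using the criterion that a map is $(-1)$-truncated precisely when its diagonal is; but this appears to need essentially the same computation.
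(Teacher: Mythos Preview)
The paper does not actually prove this theorem: it is stated with a citation to \cite[Theorem 3.3]{Ra18d} and used as a black box in the proof of Lemma~\ref{Lemma:TauN Core Truncation}. So there is no in-paper argument to compare against.

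Your proof is correct in outline and in detail. The reduction via Lemma~\ref{Lemma Yoneda Truncation Cond} to the representable level, the identification of $(\Y_A)_*$ with the graph assignment $g\mapsto\Gamma_g$ through the pullback of $\Delta_A$ along $g\times\mathrm{id}_A$, and the path-space computation all go through. One small point worth making explicit: the target space $Map_\E(Z\times A,\U)$ is the \emph{core} of $(\E_{/Z\times A})^S$, so its path spaces are spaces of equivalences, not arbitrary maps, over $Z\times A$. Your computation of $Map_{/Z\times A}(\Gamma_g,\Gamma_{g'})$ already forces $\psi\simeq\mathrm{id}_Z$, hence every such map is an equivalence and the distinction collapses; saying this in one line would close the only visible gap. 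The rest is sound.
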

 
 Thus, the goal is to show that the same argument we outlined for spaces allows us to construct truncations 
 in an arbitrary elementary $(\infty,1)$-topos.
  
  \begin{theone} \label{The:Constructing Trunctions Inductively}
   Let $n$ be a natural number. We can define the $(n+1)$-truncation functor 
   \begin{center}
    \adjun{\E}{\tau_{n+1}\E}{\tau_{n+1}}{i}
   \end{center}
   inductively by $n$-truncating the loop objects.
  \end{theone}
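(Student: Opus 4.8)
The plan is to proceed by induction on $n$, assuming the $n$-truncation functor $\tau_n$ has already been constructed (the base case $\tau_{-1}$ being supplied by Theorem~\ref{The Neg One Adj} or Corollary~\ref{Cor:Neg One Trunc via SOC}), and to produce $\tau_{n+1}$ out of $\tau_n$. The successor step is the internal analogue of the ``undirected Yoneda'' argument of \cite{Ri17} and \cite{DORS18} sketched above: one embeds $X$ into $\U^X$ via the map $\Y_X$ of Theorem~\ref{The:Yoneda Lemma}, post-composes with the $n$-truncation, and declares $\tau_{n+1}(X)$ to be the image. The base case and the successor step then assemble, using the universal property of $\mathbb{N}$, into a truncation functor at every level.

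Concretely, fix $X$ and, using that $\E$ has sufficient universes, choose a universe $\U$ classifying the diagonal $\Delta_X\colon X\to X\times X$. Let $\U^{\leq n}\hookrightarrow\U$ be the sub-universe of $n$-truncated objects (Proposition~\ref{Prop:Sub Universe of N Trunc Obj}), which is $(n+1)$-truncated by Theorem~\ref{The:U Leq n is n plus trunc}, and let $\tau_n\colon\U\to\U^{\leq n}$ be the map of universes induced by the $n$-truncation functor (Proposition~\ref{Prop:U leq classifies trunc}). Exponentiating by $X$ and pre-composing with the undirected Yoneda embedding gives
$$c_X\colon X \xrightarrow{\ \Y_X\ } \U^X \xrightarrow{\ (\tau_n)^X\ } (\U^{\leq n})^X,$$
and one checks that, transposed, $c_X$ is the classifying map of the $n$-truncation $\tau_n^{X\times X}(\Delta_X)\to X\times X$ of the diagonal in $\E_{/X\times X}$. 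I would then \emph{define} $\tau_{n+1}(X)$ to be the image of $c_X$ in the sense of Definition~\ref{Def:Image}, with $\eta_X\colon X\twoheadrightarrow\tau_{n+1}(X)$ the $(-1)$-connected projection. Since the image, the sub-universe $\U^{\leq n}$, and the Yoneda map are all obtained by internal constructions over the (complete Segal) universe, the assignment $X\mapsto\tau_{n+1}(X)$ is functorial and $\eta$ is natural.

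It then remains to verify the two defining properties. First, $\tau_{n+1}(X)$ is $(n+1)$-truncated: the internal hom $(\U^{\leq n})^X$ into an $(n+1)$-truncated object is $(n+1)$-truncated, and $\tau_{n+1}(X)$ is a subobject of it, hence itself $(n+1)$-truncated. Second, $\eta_X$ must be $(n+1)$-connected; granting this, Corollary~\ref{Cor Conn is Trunc} identifies $\tau_{n+1}(X)$ with the genuine $(n+1)$-truncation and the adjunction follows. To see that $\eta_X$ is $(n+1)$-connected I would apply Proposition~\ref{Prop Loop n Conn N One Conn} in the slice $\E_{/\tau_{n+1}(X)}$: $\eta_X$ is $(-1)$-connected by construction, so it suffices to show the relative diagonal $\Delta_{\eta_X}\colon X\to X\times_{\tau_{n+1}(X)}X$ is $n$-connected. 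Because $\tau_{n+1}(X)\hookrightarrow(\U^{\leq n})^X$ is a monomorphism, $X\times_{\tau_{n+1}(X)}X\simeq X\times_{(\U^{\leq n})^X}X$, and via Theorem~\ref{The:Yoneda Lemma} this pullback, viewed over $X\times X$, is identified with $\tau_n^{X\times X}(\Delta_X)\to X\times X$ compatibly with the map from $X$: the fibre over $(x,x')$ is the space of identifications of the families $\tau_n\Path_X(x,-)$ and $\tau_n\Path_X(x',-)$ over $X$, which by the $n$-truncated form of the (undirected) Yoneda lemma is $\tau_n\Path_X(x,x')$, every such identification being invertible since the truncation unit is $(-1)$-connected onto its $n$-truncation. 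Under this identification $\Delta_{\eta_X}$ becomes the unit $\Delta_X\to\tau_n^{X\times X}(\Delta_X)$ of the $n$-truncation in $\E_{/X\times X}$, which is $n$-connected by Lemma~\ref{Lemma:Unit Connected}.

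The main obstacle is exactly this last identification: making precise what the composite $(\tau_n)^X\circ\Y_X$ ``represents'' and why passing to its image quotients $X$ by the relation of ``$n$-truncated path-equality'', so that the relative diagonal of $\eta_X$ is the $n$-truncation unit of $\Delta_X$. Making this rigorous requires a careful account of the undirected Yoneda lemma for families, of its interaction with the reflective subcategory of $n$-truncated families (so that maps into an $n$-truncated family factor through the $n$-truncation), and of the bookkeeping of transpositions and universe choices; all of this parallels, in the elementary setting, the homotopy type theoretic arguments of \cite{Ri17} and \cite{DORS18}. The remaining ingredients — stability of $n$-truncated objects under internal homs and under subobjects, the fact that $n$-connectedness is detected in slices, and the applications of Proposition~\ref{Prop Loop n Conn N One Conn} and Corollary~\ref{Cor Conn is Trunc} — are routine given the results already established.
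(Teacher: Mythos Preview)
Your proposal follows essentially the same strategy as the paper: define $\tau_{n+1}(X)$ as the image of the composite $X \xrightarrow{\Y_X} \U^X \xrightarrow{(\tau_n)^X} (\U^{\leq n})^X$, deduce $(n+1)$-truncatedness from Theorem~\ref{The:U Leq n is n plus trunc}, and reduce $(n+1)$-connectedness of $\eta_X$ to $n$-connectedness of the relative diagonal via Proposition~\ref{Prop Loop n Conn N One Conn}. The paper additionally packages the construction as an endomorphism $\Sep\colon \U^\U\to\U^\U$ so that the initiality of $\mathbb{N}$ produces all truncation levels at once (Proposition~\ref{Prop:Sep Construction}), and then invokes Theorem~\ref{The:Localizations} to obtain the adjunction from the resulting ideal sub-universe; but this is organizational, not a different mathematical idea.

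The one substantive divergence is in the ``hard step.'' You propose to identify $X\times_{\tau_{n+1}(X)}X$ directly with $\tau_n^{X\times X}(\Delta_X)$ over $X\times X$, appealing to an $n$-truncated form of the undirected Yoneda lemma, and you correctly flag this as the main obstacle. The paper does \emph{not} make this identification. Instead it argues with classifying maps: since $\tau_{n+1}(X)$ is by construction the image of $X$ in $(\U^{\leq n})^X$, the adjoint of the classifying map $\tau_n\circ\ulcorner\Delta_{X\to\I}\urcorner\colon X\times_\I X\to\U^{\leq n}$ factors through $\I$ and then through the trivial map $X\to\U^{\leq n}$, whence pulling back the universal fibration shows $\tau_n^{X\times_\I X}(X)\to X\times_\I X$ is an equivalence (Lemma~\ref{Lemma:TauN Core Truncation}). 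This sidesteps the need for a truncated Yoneda lemma, which is not proven in the paper and whose statement (that equivalences between $\tau_n\Path_X(x,-)$ and $\tau_n\Path_X(x',-)$ are given exactly by $\tau_n\Path_X(x,x')$) would itself require justification. Your route is more conceptual but carries an extra proof obligation; the paper's is more indirect but self-contained.
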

  
  \begin{proof}
   The proof breaks down into several steps. Fix a locally Cartesian closed universe $\U$.
   \begin{enumerate}
    \item First, we show there is a map 
    $$ \Sep: \U^\U \to \U^\U$$
    This is the content of Proposition \ref{Prop:Sep Construction}.
    \item Using the definition of a natural number object (Definition \ref{Def:NNO}) we thus get a map $(\tau_n)^{core}$ that 
    makes following diagram commute
    \begin{center}
     \begin{tikzcd}[row sep=0.3in, column sep=0.6in]
      & \mathbb{N} \arrow[r, "s"] \arrow[dd, "(\tau_n)^{core}"] & \mathbb{N} \arrow[dd, "(\tau_n)^{core}"] \\
      1 \arrow[ur, "o"] \arrow[dr, "\{ 1 \}"'] & & \\
      & \U^\U \arrow[r, "\Sep"] & \U^\U
     \end{tikzcd}
    \end{center}
    Here $\{1\}: \U \to \U$ is the map that takes everything to the final object in $\U$.
     \item We prove that $(\tau_n)^{core}(X)$ is the $n$-truncation of $X$. This is shown in Lemma \ref{Lemma:TauN Core Truncation}. 
     \item This implies that the image of $Im((\tau_n)^{core}(\U)) \simeq \U^{\leq n}$, the sub-universe of $n$-truncated objects.
     \item Using Theorem \ref{The:Localizations} with the ideal sub-universe $\U^{\leq n} \hookrightarrow \U$ we thus get 
     the desired truncation functor 
     $$ \tau_n : \E \to \tau_n \E$$
     giving us the desired result.
   \end{enumerate}
  \end{proof}

   Notice that this approach not only proves the existence of such a truncation, but also gives a very explicit construction.
   In particular, we have following corollary.
  
  \begin{corone}
   Let $\U^{\leq n} \hookrightarrow \U$ be the sub-universe of $n$-truncated objects. Then by universality of localizations we 
   have an equivalence 
   $$(\T_{\U^{\leq n}})^{core} \simeq (\tau_n)^{core}$$
   where $(\tau_n)^{core}$ is the functor constructed in Theorem \ref{The:Constructing Trunctions Inductively}.
  \end{corone}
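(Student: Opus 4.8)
The plan is to verify that the explicit functor $(\T_{\U^{\leq n}})^{core}$ arising from Theorem \ref{The:Localizations} and the inductively-built functor $(\tau_n)^{core}$ from Theorem \ref{The:Constructing Trunctions Inductively} are two constructions of the same left adjoint, and then invoke the uniqueness of left adjoints. Both are presented as (the core of) the localization at the ideal sub-universe $\U^{\leq n} \hookrightarrow \U$: Corollary \ref{Cor:Truncation Functor Construction} produces the localization $\T_{\U^{\leq n}}: \E^S \to \tau_n \E^S$ out of $\U^{\leq n}$ via Proposition \ref{Prop:Sub Univ gives Trunc}, while step (4)--(5) of the proof of Theorem \ref{The:Constructing Trunctions Inductively} identifies $Im((\tau_n)^{core}(\U)) \simeq \U^{\leq n}$ and then feeds the \emph{same} ideal sub-universe into Theorem \ref{The:Localizations}. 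So the content of the corollary is essentially that these two routes land on the same functor on cores.

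First I would recall that a reflective subcategory determines its reflector up to canonical equivalence: given the inclusion $i: \tau_n \E \hookrightarrow \E$, any two left adjoints are naturally equivalent, and likewise their restrictions to the cores $\mathbb{N} \to \U^\U$ (or $\U \to \U^{\leq n}$) agree. Thus it suffices to check that both $(\T_{\U^{\leq n}})^{core}$ and $(\tau_n)^{core}$ are left adjoint (on the relevant core level) to the inclusion of $n$-truncated objects. For $(\T_{\U^{\leq n}})^{core}$ this is exactly the statement of Proposition \ref{Prop:Sub Univ gives Trunc}(2) applied to the ideal sub-universe $\U^{\leq n}$ (using Corollary \ref{Cor:Truncation Functor Construction} and Example \ref{Ex:Trunc Sn local}). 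For $(\tau_n)^{core}$ it is Lemma \ref{Lemma:TauN Core Truncation}, which says $(\tau_n)^{core}(X)$ is the $n$-truncation of $X$, i.e. the value of the reflector; functoriality in $X$ comes from the fact that $(\tau_n)^{core}$ is defined as a map of cores (a map $\U^\U \to \U^\U$ restricted along $\mathbb{N}$, hence an honest morphism in $\E$).

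Concretely the steps I would carry out are: (i) note that by Theorem \ref{The:Constructing Trunctions Inductively}, step (3) and step (4), the image $Im((\tau_n)^{core}(\U))$ is equivalent to $\U^{\leq n}$, so the two constructions start from the same ideal sub-universe $\U^{\leq n} \hookrightarrow \U$; (ii) observe that $(\T_{\U^{\leq n}})^{core}$ is, by Proposition \ref{Prop:Sub Univ gives Trunc}, precisely the core-level left adjoint to $\U^{\leq n} \hookrightarrow \U$, which via Proposition \ref{Prop:U leq classifies trunc} represents the $n$-truncation of objects; (iii) observe that $(\tau_n)^{core}$ is also, by Lemma \ref{Lemma:TauN Core Truncation}, the map of cores sending an object to its $n$-truncation; (iv) conclude by the uniqueness of the reflector: both maps of cores $\U \to \U^{\leq n}$ compute $X \mapsto \tau_n X$ together with its unit $\eta_X$, hence are equivalent, and the equivalence is natural because both are morphisms in $\E$ (maps of cores) satisfying the same universal property by Lemma \ref{Lemma:Image of L have Lf triv}.

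The main obstacle I expect is bookkeeping the precise sense of ``equivalence'' here: the statement is about an equivalence of functors realized as a single morphism $(\T_{\U^{\leq n}})^{core} \simeq (\tau_n)^{core}$ in $\E$ (between core objects, i.e. between self-maps of $\U$ or of $\U^\U$), not merely a pointwise equivalence. One must check that the abstract uniqueness of the localization functor provided by Theorem \ref{The:Localizations} is compatible with the explicit core-level description coming from the natural-number-object construction; the cleanest way is to use Lemma \ref{Lemma:Image of L have Lf triv}, which characterizes the unit map of a localization by the condition that $L(u)$ is the identity, so that any two functors sending $X$ to an $n$-truncated object with $n$-connected unit must be canonically identified. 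Once that characterization is in place the rest is formal, so I would not expect to grind through any genuinely new computation.
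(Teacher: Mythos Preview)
Your proposal is correct and matches the paper's approach exactly: the paper states this corollary with no proof beyond the phrase ``by universality of localizations'' in the statement itself, and your unpacking of that phrase---both constructions are left adjoints to the same inclusion $\tau_n\E \hookrightarrow \E$ (equivalently, both arise from the same ideal sub-universe $\U^{\leq n}$ via Theorem \ref{The:Localizations}), hence agree by uniqueness of adjoints---is precisely what is intended. Your attention to the core-level bookkeeping is more careful than the paper itself, but not a different argument.
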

 
  \begin{remone}
  We have defined the $n$-truncation map $\tau_n: \U \to \U$ for natural number in $\E$. However, a natural number object 
  can have non-standard natural numbers, thus in a general elementary $(\infty,1)$-topos, we have natural number objects that 
  don't exist in a Grothendieck $(\infty,1)$-topos. For an example of such non-standard truncations see 
  Section \ref{Sec:Filter Quotients and Truncations}.
 \end{remone}
 
  \begin{propone} \label{Prop:Sep Construction}
    There is a functor 
   $$\Sep : \U^\U \to \U^\U$$ 
   such that for any map $F: \U \to \U$ and $f: Y \to X$ we have 
   $$\Sep(F)(f) \simeq \tau_{-1}^X(F^X \circ \widehat{\ulcorner \Delta_X \urcorner})$$
  \end{propone}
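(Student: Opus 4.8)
The plan is to build $\Sep$ through the internal hom adjunction and then to read off the displayed formula from a pullback-stability argument. Since $\E$ is locally Cartesian closed, specifying a morphism $\Sep\colon\U^{\U}\to\U^{\U}$ is the same as specifying a morphism $\U^{\U}\times\U\to\U$, so it suffices to produce --- naturally in the generic pair --- an object of $\E$ out of ``a function $F$ and an object $A$''; when $F=\tau_n$ this will be exactly the passage from the $n$- to the $(n+1)$-truncation used in Theorem \ref{The:Constructing Trunctions Inductively}. Over an arbitrary base $W$ with a map $W\to\U^{\U}\times\U$, equivalently a map $\mathcal F\colon W\times\U\to\U$ (the parameter $F$) and a $\U$-small object $p\colon\mathbb A\to W$ (the parameter $A$), I would work inside $\E_{/W}$ and carry out three steps. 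First, form the relative diagonal $\Delta_p\colon\mathbb A\to\mathbb A\times_W\mathbb A$; since $\U$ is closed under finite limits this is classified over $W$, and currying its classifying map using the relative mapping object of $\U$-small maps (available because $\U$ is locally Cartesian closed) gives a relative Yoneda map $\Y_p\colon\mathbb A\to\mathcal H$ over $W$ --- this is the map written $\widehat{\ulcorner\Delta_X\urcorner}$ in the statement, cf.\ Theorem \ref{The:Yoneda Lemma}. Second, since $\mathcal F$ is a morphism of (relative) universes it induces a morphism $F^{\bullet}\colon\mathcal H\to\mathcal H'$ over $W$, obtained by applying $\mathcal F$ to the ``target'' copy of the universe inside $\mathcal H$. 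Third, take the image of the composite $\mathbb A\xrightarrow{\Y_p}\mathcal H\xrightarrow{F^{\bullet}}\mathcal H'$, i.e.\ its relative $(-1)$-truncation over $W$, which exists by Theorem \ref{The Neg One Adj} (or by Corollary \ref{Cor:Neg One Trunc via SOC} via the subobject classifier). Running this with $W=\U^{\U}\times\U$ and the generic pair, and currying the resulting map $\U^{\U}\times\U\to\U$, defines $\Sep$.

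To obtain the displayed equivalence, observe that every ingredient above is stable under base change along a map $W'\to W$: the relative diagonal, the classifying and currying maps, the relative mapping objects $\mathcal H$ and $\mathcal H'$ (built out of dependent products), and the relative $(-1)$-truncation --- the last because it is a colimit construction and colimits are universal (Theorem \ref{The Neg Trunc from Join}). Hence, pulling the universal construction back along the map $X\to\U^{\U}\times\U$ that names a fixed $F\colon\U\to\U$ together with a fixed object $f\colon Y\to X$ of $\E_{/X}$ reproduces precisely the construction carried out in $\E_{/X}$ starting from $f$ and $F$, which unwinds to $\tau_{-1}^{X}(F^{X}\circ\widehat{\ulcorner\Delta_X\urcorner})$. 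That is the asserted formula, and the functoriality of $\Sep$ is inherited step by step from the functoriality of each of the constructions used.

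The hard part will be making the second step --- ``apply $F$ fibrewise'' --- genuinely precise: one must spell out how a morphism of universes acts on exponentials that are themselves classified by the universe, which is exactly what forces one to keep careful track of which (possibly enlarged, but still sufficient) universe each of $\mathcal H$ and $\mathcal H'$ is classified by, and then to check that this assignment is compatible with base change along $W'\to W$. The attendant size bookkeeping --- verifying that the image formed in the third step is again classified by $\U$, after passing if necessary to a larger sufficient universe and using that subobjects of $\U$-small objects are governed by the subobject classifier --- is the other point requiring care, but it is routine once $\mathcal H$ and $\mathcal H'$ are set up correctly, given that $\E$ has sufficient universes closed under finite limits and colimits.
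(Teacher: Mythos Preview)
Your proposal is correct and carries out the same underlying construction as the paper --- form the (relative) diagonal, classify and curry it to the internal Yoneda map, post-compose with $F$ fibrewise, then take the image --- but the packaging is genuinely different. The paper fixes $F$ and builds $\Sep(F)\colon\U\to\U$ externally: for each base $X$ it writes down an explicit chain of maps of spaces
\[
(\E_{/X})^{\simeq}\xrightarrow{\Diag}\Arr(\E)^{\simeq}\xrightarrow{\Uni}(\E_{/\U})^{\simeq}\xrightarrow{\Adj}\cdots\xrightarrow{F^{(-)}}\cdots\xrightarrow{\tau_{-1}}(\E_{/X})^{\simeq},
\]
and then invokes the Yoneda lemma for the universe to assemble these into a map $\U\to\U$; functoriality in $F$ (needed to get an actual morphism $\U^{\U}\to\U^{\U}$) is left implicit. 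Your route instead curries once more: by working over the generic base $W=\U^{\U}\times\U$ and running the construction internally in $\E_{/W}$, naturality in both the parameter $F$ and the object is built in from the start, and the displayed formula drops out of base-change stability rather than from unwinding a diagram. What you gain is a cleaner account of why $\Sep$ is a single morphism of $\E$ (which the paper's proof does not make fully explicit); what the paper gains is a more tangible picture of each step as a map of known spaces. Your acknowledged difficulty in step two --- making ``apply $F$ fibrewise'' precise when $F$ is itself a parameter over $W$ --- is exactly the point the paper sidesteps by fixing $F$; conversely, the size bookkeeping you flag (which universe classifies $\mathcal H$, $\mathcal H'$, and the image) is equally present in the paper's argument and equally unaddressed there.
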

   
  \begin{notone}
   In order to make the main diagram of the proof more readable we will use the notation $(-)^\simeq$ instead of $(-)^{core}$
   to denote the underlying $(\infty,1)$-groupoid of an $(\infty,1)$-category.
  \end{notone}

  \begin{proof}
   Fix a map $F: \U \to \U$. We want to define $\Sep(F): \U \to \U$. By the Yoneda lemma we can define maps 
   $$Map(X,\Sep(F)): Map(X,\U) \to Map(X,\U)$$
   By the definition of the universe $Map(X,\U) \simeq (\E_{/X})^S$. Thus, it suffices to construct a map 
   $$\Sep_X(F): (\E_{/X})^S \to (\E_{/X})^S$$
   To simplify notation, we will not use the notation $S$ to denote the full subcategory classified by $\U$ and we 
   will work in the case $X =1$. The case for a general $X$ follows similarly.
   \par 
   We have following diagram of spaces.
   
   \hspace{-0.8in}
    \begin{tikzcd}[row sep=0.5in, column sep=0.3in]
     & \Arr(\E)^{\simeq} & & 
     (\Arr(\E) \underset{\E}{\times} \E^{op})^{\simeq} \arrow[r, "\simeq", "\Tw"'] &  
     (\Arr(\E) \underset{\E}{\times} \E)^{\simeq} &  \Arr(\E)^\simeq \arrow[r, "\tau_{-1}"] & \Arr(\E)^\simeq \\
     \E^{\simeq} \arrow[r, twoheadrightarrow, "\simeq"] \arrow[ur, hookrightarrow, "\Diag"] & 
     \Arr^{Diag}(\E)^\simeq \arrow[r, twoheadrightarrow, "\simeq"] \arrow[dr, hookrightarrow, "\Uni"'] \arrow[u, hookrightarrow] & 
     (\E_{/\U})^{\times \comma \simeq} \arrow[r, twoheadrightarrow, "\simeq"] \arrow[d, hookrightarrow] \arrow[ur, "\Adj", hookrightarrow] &
     (\E_{/\U^{(-)^{op}}})^{\simeq} \arrow[r, "\simeq", "\Tw"'] \arrow[u, hookrightarrow] & 
     (\E_{/\U^{(-)}})^{\simeq} \arrow[r, "F^{(-)}"]  \arrow[u, hookrightarrow] & 
     (\E_{/\U^{(-)}})^{\simeq} \arrow[r, "\tau_{-1}"] \arrow[u, hookrightarrow] & 
     \Arr(\E)^\simeq \arrow[r, "s"] \arrow[u, hookrightarrow] & 
     \E^{\simeq}\\
     & & (\E_{/\U})^{\simeq}
    \end{tikzcd}
   
   The arrows in the diagram above can be explained as follows:
   \begin{itemize}
    \item The map  
    $$\Diag: \E^\simeq \to \Arr(\E)^\simeq$$
    is the map that takes an object $X$ to the diagonal map $\Delta_X: X \to X \times X$ and a morphism $f: X \to Y$ 
    to the commutative square 
    \begin{center}
     \comsq{X}{Y}{X \times X}{Y \times Y}{f}{\Delta_X}{\Delta_Y}{f \times f}
    \end{center}
    Let $\Arr^{Diag}(\E)^\simeq$, be the space with $0$-vertices maps of the form $\Delta_X$ and $1$-cells commutative 
    squares given above. Then we get an equivalence of spaces $\E^\simeq \to \Arr^{Diag}(\E)^\simeq$
    \item Let $\Uni$ be the map that takes a morphism $X \to X \times X$ to its corresponding classifying map $X \times X \to \U$. 
    Then the image of $\Uni$ consists of maps $X \times X \to \U$ that classify $\Delta$ and morphisms commutative squares 
    \begin{center}
     \begin{tikzcd}[row sep=0.5in, column sep=0.5in]
      X \times X \arrow[dr, "\ulcorner \Delta_X \urcorner"] \arrow[rr, "f \times f"] & & Y \times Y  \arrow[dl, "\ulcorner \Delta_Y \urcorner"]\\
      & \U &
     \end{tikzcd}
    \end{center}
    which we name $(\E_{/\U})^{\times \comma \simeq}$.
    \item The category $(\Arr(\E) \underset{\E}{\times} \E^{op})$ is defined by the pullback 
    \begin{center}
     \pbsq{\Arr(\E) \underset{\E}{\times} \E^{op}}{\Arr(\E)}{\E}{\E^{op}}{}{}{t}{\U^{(-)}}
    \end{center}
    where $\U^{(-)}$ is the exponent map.
    \item The map $\Adj: (\E_{/\U})^{\times, \simeq} \to \Arr(\E) \underset{\E}{\times} \E^{op}$ is a functorial adjunction map, 
    that takes a point $X \times X \to \U$ to the adjoint $X \to \U^X$. We denote the image of the map of spaces by 
    $(\E_{/\U^{(-)^{op}}})^\simeq$.
    \item Let $(\E^{op})^\simeq \to \E^\simeq$ be a choice of equivalence and denote the induced equivalence 
    $$\Tw: (Arr(\E) \underset{\E}{\times} \E^{op})^\simeq \to (Arr(\E) \underset{\E}{\times} \E^{op})^\simeq$$
    We denote the image of $(\E_{/\U^{(-)^{op}}})^\simeq$ under the map $\Tw$ by $(\E_{/\U^{(-)}})^\simeq$
    \item The map $F^{(-)}$ post-composes a point $X \to \U^X$ with $F^X: \U^X \to \U^X$.
    \item Let $\tau_{-1}: \Arr(\E) \to \Arr(\E)$ be the parameterized $(-1)$-truncation map that takes a morphism 
    $Y \to X$ to the $(-1)$-truncation $\tau_{-1}^X(Y) \to X$. We denote the restriction map to $(\E_{/\U^{(-)}})^\simeq$
    as $\tau_{-1}$ as well.
   \end{itemize}
  Thus, we get a map $\E^\simeq \to \E^\simeq$. The same argument can be applied to get maps $(\E_{/X})^\simeq \to (\E_{/X})^\simeq$
  and thus we get a map 
  $$\Sep(F): \U \to \U$$
  and by its construction it satisfies the desired equivalence given in the statement of the proposition.
  \end{proof}

  \begin{remone}
   It is instructive to see what the map does at the level of a single object.
   Starting with the object $X$ we form following diagram
   \begin{center}
    \begin{tikzcd}[row sep=0.5in, column sep=0.5in]
     X \arrow[d, twoheadrightarrow] \arrow[r, "\ulcorner \Delta_X \urcorner"] & \U^X \arrow[r, "F^X"] & (\U)^X \\
     \Sep(F)(X) \arrow[urr, hookrightarrow]
    \end{tikzcd}
   \end{center}
   So, in particular $\Sep(F)(X)$ is a subobject of $F(\U)^X$.
  \end{remone}

  \begin{lemone} \label{Lemma:TauN Core Truncation}
   The map $(\tau_n)^{core}: \U \to \U$ is the $n$-truncation map.
  \end{lemone}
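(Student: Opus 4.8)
The plan is to argue by induction along the natural number object, using its universal property (Theorem~\ref{The:Peano NNO}): we show that the sub-object of $\mathbb{N}$ consisting of those $n$ for which the assertion holds contains $o$ and is closed under $s$. Unwinding the defining diagram, the map $(\tau_n)^{core}\colon\U\to\U$ is characterised recursively by $(\tau_{-2})^{core}=\{1\}$ and $(\tau_{s(n)})^{core}=\Sep((\tau_n)^{core})$, so by Proposition~\ref{Prop:Sep Construction} and the remark after it, for every object $X$ classified by $\U$ we have $(\tau_{-2})^{core}(X)\simeq 1$ and $(\tau_{s(n)})^{core}(X)\simeq Im\big(X\xrightarrow{\Y_X}\U^{X}\xrightarrow{((\tau_n)^{core})^{X}}\U^{X}\big)$, where $\Y_X$ is the Yoneda map, which is $(-1)$-truncated by Theorem~\ref{The:Yoneda Lemma}. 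Write $\eta_X\colon X\to(\tau_n)^{core}(X)$ for the canonical map (the terminal map $X\to 1$ when $n=-2$, and the unit of the image factorisation in the recursive step). It suffices to prove that $(\tau_n)^{core}(X)$ is $n$-truncated and $\eta_X$ is $n$-connected: granting this, applying Proposition~\ref{Prop Trunc Conn Lift NNO} to $\eta_X$ and to the $n$-truncated map $Z\to 1$ shows that $(\eta_X)^{*}\colon Map((\tau_n)^{core}(X),Z)\to Map(X,Z)$ is an equivalence for every $n$-truncated $Z$, which is exactly the universal property exhibiting $(\tau_n)^{core}(X)$ as the $n$-truncation of $X$.

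The base case $n=-2$ is Remark~\ref{Rem Neg Two Trunc}: $(\tau_{-2})^{core}(X)=1$ is the unique $(-2)$-truncated object and $X\to 1$ is vacuously $(-2)$-connected. For the inductive step we assume $(\tau_n)^{core}$ is the $n$-truncation map; then the $n$-truncation functor is available on $\E$ and on every slice, so the results of Subsection~\ref{Subsec:Prop of Trunc Functors} hold at level $n$, and the image of $(\tau_n)^{core}\colon\U\to\U$ is $\U^{\leq n}$ (Definition~\ref{Def:U leq n}). Set $Y:=(\tau_{s(n)})^{core}(X)$. Since $(\tau_n)^{core}$ factors as $\U\to\U^{\leq n}\hookrightarrow\U$, the map $((\tau_n)^{core})^{X}$ factors through the mono $(\U^{\leq n})^{X}\hookrightarrow\U^{X}$, hence so does $\phi:=((\tau_n)^{core})^{X}\circ\Y_X$, and therefore $Y=Im(\phi)$ is a sub-object of $(\U^{\leq n})^{X}$. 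By Theorem~\ref{The:U Leq n is n plus trunc} the universe $\U^{\leq n}$ is $(n+1)$-truncated; as $(n+1)$-truncated objects are closed under exponentials and sub-objects, $Y$ is $(n+1)$-truncated.

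It remains to show that the image-unit $q\colon X\to Y$ is $(n+1)$-connected. We work in the slice $\E_{/Y}$, which satisfies the same standing hypotheses. Viewed there, $q$ is a $(-1)$-connected object, since it is the unit of a $(-1)$-truncation (Lemma~\ref{Lemma:Unit Connected}), so by Proposition~\ref{Prop Loop n Conn N One Conn} applied inside $\E_{/Y}$ it is enough to prove that the diagonal $X\to X\times_{Y}X$ is $n$-connected. Because $Y\hookrightarrow\U^{X}$ is a mono we have $X\times_{Y}X\simeq X\times_{\U^{X}}X$, and the key is to compute this pullback. The map $\phi$ is adjoint to $(\tau_n)^{core}\circ\ulcorner\Delta_X\urcorner\colon X\times X\to\U$, which by the inductive hypothesis and Proposition~\ref{Prop:U leq classifies trunc} classifies the fibrewise $n$-truncation $\tau^{X\times X}_{n}(\Delta_X)\to X\times X$ of the diagonal of $X$.

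By the undirected Yoneda Lemma (Theorem~\ref{The:Yoneda Lemma}) one identifies $X\times_{\U^{X}}X\to X\times X$ with this same map: a path $\phi(x)\simeq\phi(x')$ in $\U^{X}$ amounts to a point of $\tau_n$ of the path object $\Path_X(x,x')$ — every such point being invertible, so that being an ``equivalence of families'' imposes no further condition — whence $X\times_{\U^{X}}X$ and $\tau^{X\times X}_{n}(\Delta_X)$ are classified by the same map $X\times X\to\U$ and are therefore equivalent over $X\times X$. Under this identification the diagonal $X\to X\times_{Y}X$ becomes the unit of the relative $n$-truncation of $\Delta_X$ in $\E_{/X\times X}$, which is $n$-connected by the inductive hypothesis (Lemma~\ref{Lemma:Unit Connected} in $\E_{/X\times X}$); hence $q$ is $(n+1)$-connected and the induction goes through. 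The main obstacle is precisely this last identification of $X\times_{\U^{X}}X$: it is where the ``undirected Yoneda'' idea underlying the construction must be turned into an internal statement valid for a possibly non-standard $n$, and establishing it — together with its naturality over $X\times X$ — carries essentially all of the content of the inductive step.
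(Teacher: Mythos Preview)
Your proposal follows essentially the same strategy as the paper: induction via the Peano characterisation of $\mathbb{N}$, with the inductive step split into showing $(n+1)$-truncatedness (via the embedding into $(\U^{\leq n})^X$ and Theorem~\ref{The:U Leq n is n plus trunc}) and $(n+1)$-connectedness (via Proposition~\ref{Prop Loop n Conn N One Conn}, reducing to the diagonal being $n$-connected). The overall architecture matches the paper's proof.

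The one place where the two diverge is exactly the point you flag as the ``main obstacle'': the identification of $X\times_{\U^X}X$ (equivalently $X\times_{Y}X$) with $\tau_n^{X\times X}(\Delta_X)$ over $X\times X$. You argue heuristically that a path $\phi(x)\simeq\phi(x')$ in $\U^X$ amounts to a point of $\tau_n(\Path_X(x,x'))$, which is a Yoneda statement \emph{at the $n$-truncated level} that you do not actually establish. The paper sidesteps this direct identification: instead of computing $X\times_{\U^X}X$, it compares the classifying maps of $\Delta_X$ and of $\Delta_{X\to\I}$ (where $\I=(\tau_{n+1})^{core}(X)$), using that the two Yoneda embeddings $X\hookrightarrow\U^X$ and $X\hookrightarrow[\U\times\I\to\I]^{[X\to\I]}$ are both $(-1)$-truncated and hence have equivalent images. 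Adjoining then shows that the classifying map of $\tau_n^{X\times_\I X}(\Delta_{X\to\I})$ factors as $X\times_\I X\xrightarrow{\pi_1}X\to\U^{\leq n}$ with the second arrow trivial, forcing $\tau_n^{X\times_\I X}(X)\to X\times_\I X$ to be an equivalence. This avoids ever needing to prove your truncated-Yoneda identification directly; you may find it worthwhile to compare, since your formulation, while morally correct, would require an additional internal lemma to make rigorous.
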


  \begin{proof}
   We use the fact that a natural number object can also be characterized via an internal induction argument 
   (Theorem \ref{The:Peano NNO}).   
   For $n=-2$, $(\tau_{-2})^{core}$ is the $(-2)$-truncation by definition. Let us assume $(\tau_n)^{core}$ be 
   the $n$-truncation. We want to show that $(\tau_{n+1})^{core}$ is the $(n+1)$-truncation. 
   Fix an object $X$. 
   Using Corollary \ref{Cor Conn is Trunc} it suffices to prove that
   $(\tau_{n+1})^{core}(X)$ is $(n+1)$-truncated and $X \to (\tau_{n+1})^{core}(X)$ is $(n+1)$-connected.
   \par 
   First, notice 
   $$\tau_{n+1}^{core}(X) \hookrightarrow (\U^{\leq n})^X$$
   and $\U^{\leq n}$ is $(n+1)$-truncated (Theorem \ref{The:U Leq n is n plus trunc}) and so $\tau_{n+1}^{core}(X)$ is $(n+1)$-truncated.
   \par 
   We now want to prove that $X \to (\tau_{n+1})^{core}(X)$ is $(n+1)$-connected. 
   To simplify notation we denote $\I = (\tau_{n+1})^{core}(X)$.
   \par
   According to Proposition \ref{Prop Loop n Conn N One Conn}, it suffices to prove that $\Delta_{X \to \I}: X \to X \times_\I X$ is $n$-connected,
   as we already know that the map is $(-1)$-connected. By Proposition \ref{Prop Conn iff Tau Final Object}, this is equivalent to 
   $\tau^{X \times_\I X}_n(X) \to X \times_\I X$ being an equivalence.
   \par 
   Assuming that $\Delta_{X \to \I}$ is classified by $\ulcorner \Delta_{X \to \I} \urcorner : X \times_\I X \to \U$, 
   $\tau_n^{X \times_\I X}(X)$ is classified by $\tau_n \circ \ulcorner \Delta_{X \to \I} \urcorner \to \U^{\leq n}$.
   Similarly $\tau_n^{X \times X}(\Delta_X)$ is classified by $\tau_n \circ \ulcorner \Delta_X \urcorner \to \U^{\leq n}$.
   \par 
   The map $\I \to 1$ induces a map $X \times_\I X \to X \times X$, which gives a commutative diagram 
   \begin{center}
    \begin{tikzcd}[row sep=0.25in, column sep=0.5in]
     X \times_\I X \arrow[dd] \arrow[dr] & \\
     & \U \arrow[r, "\tau_n"] & \U \\
     X \times X \arrow[ur] & & 
    \end{tikzcd}
   \end{center}
   Adjoining the maps we get the diagram 
   \begin{center}
    \begin{tikzcd}[row sep=0.125in, column sep=0.25in]
       & & \leb \U \times \I \to \I\reb^{\leb X \to \I\reb} \arrow[rr, "\tau_n^{X}"] \arrow[dddd] & &
      \leb\U^{\leq n} \times \I \to \I\reb^{\leb X \to \I \reb} \arrow[dddd] \\
      & & & \Im_{X \times_\I X}(X) \arrow[dd, "\simeq"] \arrow[ur, hookrightarrow] \\
     X \arrow[uurr, hookrightarrow] \arrow[ddrr, hookrightarrow] \arrow[urrr, twoheadrightarrow, bend right=8] 
     \arrow[drrr, twoheadrightarrow, bend left=8] &  \\
     & & & \Im(X) \arrow[dr, hookrightarrow] \\
      & & \U^X \arrow[rr, "\tau_n^X"'] & & (\U^{\leq n})^X 
    \end{tikzcd}
   \end{center}
   By the ``undirected Yoneda Lemma for elementary $(\infty,1)$-toposes" (Theorem \ref{The:Yoneda Lemma}) the two maps
   $$X \hookrightarrow \leb \U \times \I \to \I\reb^{\leb X \to \I\reb}$$
   $$X \hookrightarrow \U^X$$
   are inclusions. Thus, the image of $X$ in $[\U^{\leq n} \times \I \to \I]^{[X \to \I]}$ and $(\U^{\leq n})^X$ are equivalent.
   \par 
   This means we get a map 
   $$X  \to \I \to \leb\U^{\leq n} \times \I \to \I\reb^{\leb X \to \I \reb} $$
   where the image of $\I$ is trivial. 
   Adjoining the map we get 
   $$X \times_\I X \xrightarrow{ \ \pi_1 \ } X \to \U^{\leq n}.$$
   where the map $X \to \U^{\leq n}$ is the trivial map.
   \par 
   Pulling back along the universal fibration we thus get a pullback square
   \begin{center}
    \pbsq{\tau_n^{X \times_\I X}(X)}{X}{X \times_\I X}{X}{}{}{}{\pi_1}
   \end{center}
   which implies that $\tau_n^{X \times_\I X}(X) \to X \times_\I X$ is an equivalence, 
   $X \to X \times_\I X$ is $n$-connected and so $X \to \I$ is $(n+1)$-connected.
   Hence $\I$ is the $(n+1)$-truncation of $X$.
  \end{proof}

\section{Blakers Massey Theorem for Modalities} \label{Sec Blakers Massey Theorem for Modalities}
 In this section we prove that every {\it modality} in an elementary $(\infty,1)$-topos satisfies the {\it Blakers-Massey Theorem}
 (Theorem \ref{The Blakers Massey}).
 Then we use that for the modality of $n$-truncated and $n$-connected maps (Corollary \ref{Cor:BMT Truncations}).
 There are various proofs of this statement for Grothendieck $(\infty,1)$-toposes, such as in \cite{Re05}.
 However, the proof in \cite{ABFJ17} stands out in the sense that it makes only minimal use of infinite colimits 
 and presentability.
 Thus, instead of giving a completely new proof we just demonstrate how their proof still holds without 
 assuming that the category is presentable or has any infinite colimits.
 \par 
 In order to make the translation as smooth as possible, we will preserve some of their notation and language.
 
 \begin{remone} \label{Rem:E for Section on BMT}
   In this section $\E$ is an $(\infty,1)$-category that satisfies following conditions:
   \begin{enumerate}
    \item It has finite limits and colimits.
    \item It is locally Cartesian closed (Definition \ref{Def:LCCC}), which implies that 
    colimits are universal (Definition \ref{Def:Colimits Universal}, Lemma \ref{Lemma:LCCC Colimits Universal})
    \item  It satisfies descent (Definition \ref{Def:Descent}).
    \item It has a $(-1)$-truncation functor (Definition \ref{Def:Neg One Truncation}).
   \end{enumerate}
     In Subsection \ref{Subsec:BMT for Truncations} we also need the following condition
    \begin{enumerate}
     \item[(5)] One of these conditions has to hold:
     \begin{enumerate}
      \item It has a natural number object (Definition \ref{Def:NNO}) and $n$-truncation functors.
      \item It has sufficient universes $\U$ (Definition \ref{Def:Sufficient Universes})
      that are closed under finite limits, colimits and locally Cartesian closed (Definition \ref{Def:Closed Universes}).
     \end{enumerate}
    \end{enumerate}
  \end{remone}
 
 \subsection{Modalities} \label{Subsec:Modalities}
 In this subsection we review the important definitions and establish some basic propositions regarding modalities that we need 
 in the next subsection. The notation here will closely follow \cite{ABFJ17}.
  
  In Section \ref{Sec:Constructing Truncations} we defined a factorization system (Definition \ref{Def:Factorization system}).
  Recall, it is a choice of two functors $\mathcal{L}, \mathcal{R} : \Arr(\E) \to \Arr(\E)$ that satisfy two conditions: 
  \begin{enumerate}
   \item For every $f$ we have $f \simeq \mathcal{R}(f) \circ \mathcal{L}(f)$.
   \item If $f: A \to B$ is in the image of $\mathcal{L}$ and $g: Y \to X$ is in the image of $\mathcal{R}$ the commutative square 
   \begin{center}
    \pbsq{map(B,Y)}{map(A,Y)}{map(B,X)}{map(A,X)}{f^*}{g_*}{g_*}{f^*}
   \end{center}
   is a homotopy pullback square of spaces.
  \end{enumerate}
  
  \begin{notone}
   We will denote the factorization of $f:X \to Y$ as $X \to ||f|| \to Y$.
  \end{notone}

 \begin{notone}
  We denote the subcategory of the arrow category $Arr(\E)$ consisting of maps in the image of $\R$ by 
  $Arr(\R)$. Similarly, $Arr(\Lprime)$ is the subcategory generated by the image of $\Lprime$.
 \end{notone}

 We have following facts about factorization systems:
 
 \begin{lemone} \label{Lemma LR adjunctions}
  \cite[Lemma 3.1.5]{ABFJ17} 
  The map $\Lprime: Arr(\E) \to Arr(\Lprime)$ ($\R: Arr(\E) \to Arr(\R)$) is the right (left) adjoint to the inclusion. 
 \end{lemone}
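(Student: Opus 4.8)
The plan is to establish the adjunction by producing, for every $\ell$ in the image of $\Lprime$ and every $f$ in $\Arr(\E)$, a natural equivalence of mapping spaces
$$\map_{\Arr(\E)}(\ell,\Lprime f)\ \xrightarrow{\ \simeq\ }\ \map_{\Arr(\E)}(\ell,f),$$
which, since $\Arr(\Lprime)$ is a full subcategory of $\Arr(\E)$, exhibits $\Lprime\colon\Arr(\E)\to\Arr(\Lprime)$ as right adjoint to the inclusion; the claim for $\R$ is dual. First I would fix notation: write the factorization of $f\colon X\to Y$ as $X\xrightarrow{\Lprime f}||f||\xrightarrow{\R f}Y$, so that $\R f\circ\Lprime f\simeq f$, and recall that a morphism in $\Arr(\E)$ from $g\colon A\to B$ to $h\colon P\to Q$ is a commuting square, i.e.\ a point of $\map(A,P)\times_{\map(A,Q)}\map(B,Q)$. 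There is a canonical morphism $\Lprime f\to f$ in $\Arr(\E)$, namely $\mathrm{id}_X$ on domains and $\R f$ on codomains (it commutes because $\R f\circ\Lprime f\simeq f$), which is natural in $f$ by functoriality of the factorization; postcomposition with it is the comparison map above, and it will serve as the counit of the adjunction.

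The key step is to compute the homotopy fibers of this comparison map. Over a point of $\map_{\Arr(\E)}(\ell,f)$ represented by a square $(u\colon A\to X,\ v\colon B\to Y)$ with $v\circ\ell\simeq f\circ u$, the fiber is the space of maps $w\colon B\to||f||$ satisfying $w\circ\ell\simeq\Lprime f\circ u$ and $\R f\circ w\simeq v$ --- that is, the space of diagonal fillers of the square with $\ell$ on the left, $\R f$ on the right, top edge $\Lprime f\circ u$ and bottom edge $v$. Its outer square commutes because $\R f\circ\Lprime f\circ u\simeq f\circ u\simeq v\circ\ell$, where the first equivalence is the factorization identity and the second is the commutativity of $(u,v)$. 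Since $\ell$ is in the image of $\Lprime$ and $\R f$ is in the image of $\R$, condition (2) of Definition \ref{Def:Factorization system} applies: unwinding the assertion that the indicated square of mapping spaces is a homotopy pullback, it says precisely that each such filler space is contractible. Hence the comparison map has contractible fibers, so it is an equivalence, and it is natural in both $\ell$ and $f$ because it is induced by an honest morphism of $\Arr(\E)$. This gives $i\dashv\Lprime$.

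For $\R$ I would run the mirror argument with the canonical morphism $f\to\R f$ in $\Arr(\E)$ ($\Lprime f$ on domains, $\mathrm{id}_Y$ on codomains, the candidate unit): precomposition with it induces $\map_{\Arr(\E)}(\R f,r)\to\map_{\Arr(\E)}(f,r)$ for $r$ in the image of $\R$, whose fiber over a square $(a\colon X\to Z,\ b\colon Y\to W)$ is the space of fillers $c\colon||f||\to Z$ of the square with $\Lprime f$ on the left, $r$ on the right, top edge $a$ and bottom edge $b\circ\R f$. Outer commutativity once more uses $r\circ a\simeq b\circ f\simeq b\circ\R f\circ\Lprime f$, and orthogonality of $\Lprime f$ against $r$ makes this contractible, so $\R\dashv i$.

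The main obstacle is bookkeeping rather than conceptual content: one must carefully match the homotopy fiber of each comparison map with the correct orthogonality lifting problem --- in particular checking that the relevant outer square commutes, which is exactly where the identity $\R f\circ\Lprime f\simeq f$ is used --- and translate the homotopy-pullback-of-mapping-spaces formulation of the factorization condition into the ``contractible space of lifts'' statement actually invoked. The coherence needed to upgrade a levelwise natural equivalence of mapping spaces to a genuine $(\infty,1)$-categorical adjunction is automatic here, since the comparison maps are induced by actual morphisms in $\Arr(\E)$, functorially in $f$.
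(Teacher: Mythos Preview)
The paper does not supply its own proof of this lemma; it simply cites \cite[Lemma 3.1.5]{ABFJ17}. Your argument is correct and is essentially the standard one: you exhibit the counit $\Lprime f\to f$ (respectively unit $f\to\R f$) as a morphism in $\Arr(\E)$ and verify that the induced comparison on mapping spaces has contractible fibers by identifying each fiber with the space of diagonal fillers in a square to which the orthogonality axiom applies. The only point worth flagging is that condition~(2) of Definition~\ref{Def:Factorization system} is stated as a pullback square of mapping spaces rather than directly as ``contractible space of lifts,'' so strictly speaking you are invoking the (routine) translation between these formulations; you note this yourself, and it is indeed automatic. In short, there is nothing to compare against in the paper, and your proof is the expected one.
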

 
 \begin{lemone}
  \cite[Lemma 3.1.6]{ABFJ17}
  \begin{enumerate}
   \item If $fg \in \Lprime$ and $f \in \Lprime$ then $g \in \Lprime$.
   Also, if $fg \in \R$ and $g \in \R$ then $f \in \R$.
   \item $\Lprime(f)$ is an equivalence if and only if $f \in \R$.
   Similarly, $\R(f)$ is an equivalence if and only if $f \in \Lprime$.
  \end{enumerate}
 \end{lemone}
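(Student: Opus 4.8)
The plan is to derive both parts from two structural inputs: the factorization $f \simeq \R(f)\circ\Lprime(f)$ together with its \emph{uniqueness} --- any factorization $h \simeq r\circ\ell$ with $\ell\in\Lprime$ and $r\in\R$ is canonically equivalent to $h\simeq\R(h)\circ\Lprime(h)$ --- and the two adjunctions of Lemma~\ref{Lemma LR adjunctions}. I will also use three standard consequences of the orthogonality condition in Definition~\ref{Def:Factorization system}, all of which are established in \cite{ABFJ17}: the classes $\Lprime$ and $\R$ are each closed under composition, they are closed under equivalences, and every equivalence lies in $\Lprime\cap\R$; the last two follow from the fact that the space of lifts in any orthogonality square is contractible.

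\emph{Part (2).} Since every equivalence lies in $\Lprime$, any $f\in\R$ admits the factorization $f \simeq f\circ\mathrm{id}$ with $\mathrm{id}\in\Lprime$ and $f\in\R$; by uniqueness $\Lprime(f)\simeq\mathrm{id}$, so $\Lprime(f)$ is an equivalence. Conversely, if $\Lprime(f)$ is an equivalence then $f\simeq\R(f)\circ\Lprime(f)\simeq\R(f)$, and closure of $\R$ under equivalences gives $f\in\R$. The statement for $\R(f)$ and $\Lprime$ is the mirror image. Alternatively, part (2) drops straight out of Lemma~\ref{Lemma LR adjunctions}: $\Arr(\R)$ is reflective in $\Arr(\E)$, so $f$ lies in it exactly when the reflection unit $f\to\R(f)$ --- which is the commutative square with legs $\Lprime(f)$ and $\mathrm{id}$ --- is an equivalence, and such a square is an equivalence iff both legs are; dually $\Arr(\Lprime)$ is coreflective and $f$ lies in it iff the counit $\Lprime(f)\to f$, with legs $\mathrm{id}$ and $\R(f)$, is an equivalence.

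\emph{Part (1).} Suppose the composite $fg$ lies in $\Lprime$ and $f\in\Lprime$, where $f\colon A\to B$ and $g\colon B\to C$, so that $fg\colon A\to C$ applies $f$ first. Factor $g$ as $g\simeq r_g\circ\ell_g$ with $\ell_g\in\Lprime$ and $r_g\in\R$. Then $fg\simeq r_g\circ(\ell_g\circ f)$, and $\ell_g\circ f\in\Lprime$ because $\Lprime$ is closed under composition, so this is an $(\Lprime,\R)$-factorization of $fg$. By uniqueness $r_g\simeq\R(fg)$, and since $fg\in\Lprime$, part (2) makes $\R(fg)$ --- hence $r_g$ --- an equivalence; therefore $g\simeq r_g\circ\ell_g\simeq\ell_g\in\Lprime$. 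For the dual --- $fg\in\R$ and $g\in\R$ imply $f\in\R$ --- factor $f\simeq r_f\circ\ell_f$ with $\ell_f\in\Lprime$, $r_f\in\R$; then $fg\simeq(g\circ r_f)\circ\ell_f$ is an $(\Lprime,\R)$-factorization (using closure of $\R$ under composition), uniqueness gives $\ell_f\simeq\Lprime(fg)$, and part (2) together with $fg\in\R$ forces $\ell_f$ to be an equivalence, so $f\simeq r_f\in\R$.

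The one genuinely substantive ingredient is the uniqueness of the $(\Lprime,\R)$-factorization, and this is where orthogonality really enters: given a competing factorization $h\simeq r\circ\ell$, the contractible space of lifts in the square comparing it with $\R(h)\circ\Lprime(h)$ supplies an essentially unique comparison map, and a two-sided diagram chase shows it to be an equivalence. This is precisely the corresponding lemma of \cite{ABFJ17}; since the only hypothesis we are dropping is presentability, which plays no role in it, that argument carries over verbatim, and everything else above is formal manipulation of the factorization and part (2).
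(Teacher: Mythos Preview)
Your argument is correct and is the standard one; it is essentially the proof in \cite{ABFJ17}, which is all the paper does here too --- the paper gives no proof of its own and simply records the citation. One small remark: the composition convention you adopt (``$fg$ applies $f$ first'') is the opposite of the one the paper uses elsewhere, e.g.\ in Proposition~\ref{Prop Conn fg}, where $fg$ means $f\circ g$; under the latter convention the stated lemma is literally false (in sets with $\Lprime=$ surjections, take $g$ non-surjective and $f$ a constant map), so what is really being asserted is the usual right-cancellation for $\Lprime$ and left-cancellation for $\R$. You have correctly identified and proved the intended statement; it would be worth flagging explicitly that the paper's formulation has the roles of $f$ and $g$ swapped relative to its own composition convention.
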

 
 We cannot prove the Blakers-Massey theorem for every factorization system. We need one further condition:
 
 \begin{defone}
  A {\it modality} is a factorization system such that $\Lprime$ is closed under pullback.
 \end{defone}
 
 \begin{defone}
  Let us assume we have following commutative square.
  \begin{center}
   \comsq{Z}{Y}{X}{W}{g}{f}{k}{h}
  \end{center}
  Then we define the {\it cogap map} as the map $\lfloor h,k \rfloor : X \coprod_Z Y \to W$
  and the {\it gap map} $(f,g):  Z \to X \times_W Y$.
 \end{defone}

 \begin{defone}
  Let $f: A \to B$ and $f': A' \to B'$ be two maps in $\E$. We define $f \square f'$, called the {\it pushout product}, in the following diagram
  \begin{center}
   \begin{tikzcd}[row sep = 0.5in, column sep=0.5in]
    A \times A' \arrow[d, "f \times id_{A'}"] \arrow[r, "id_{A} \times f'"] & B \times A' \arrow[d] \arrow[rdd, bend left = 20, "id_B \times f'"] & \\
    A \times B' \arrow[r] \arrow[rrd, bend right = 20, "f \times id_{B'}"] & \ds B \times A' \coprod_{A \times A'} A \times B' \arrow[dr, "f \square f'"] & \\
    & & B \times B'
   \end{tikzcd}
  \end{center}
 \end{defone}
 
 \begin{notone}
  We will denote the relative pushout product construction in $\E_{/Z}$ as $\square_Z$.
 \end{notone}
 
 \begin{defone} \label{Def Local Maps}
  A class of maps $\M$ is {\it local} if in the pullback square where $g$ is $(-1)$-connected
  \begin{center}
   \begin{tikzcd}[row sep=0.5in, column sep=0.5in]
    X' \arrow[r, "g'"] \arrow[d, "f'"] \arrow[dr, phantom, "\ulcorner", very near start] & X \arrow[d, "f"] \\
    Y' \arrow[r, "g", twoheadrightarrow] & Y
   \end{tikzcd}
  \end{center}
  
  we have $f' \in \M$ if and only if $f \in \M$.
 \end{defone}
 
 \begin{remone}
  The definition above differs from the one given \cite[Definition 3.7.1]{ABFJ17}.
  However, the two definitions are equivalent
  in a Grothendieck $(\infty,1)$-topos \cite[Remark 3.7.2]{ABFJ17} \cite[Proposition 6.2.3.14]{Lu09}. 
  More importantly, we will only require the condition stated in Definition \ref{Def Local Maps}
  for the later proofs.
 \end{remone}
 
 \begin{exone}
  Notice Proposition \ref{Prop Equiv Local} implies that equivalences, truncated maps and connected maps are local.
 \end{exone}

 This example is not a coincidence. In fact we want to prove that in every modality the left and right classes 
 of maps are local.
 
 \begin{defone}
  Let $\M$ be a local class of maps and assume we have following commutative square.
  \begin{center}
   \comsq{Z}{Y}{X}{W}{g}{f}{k}{h}
  \end{center}
  Then we say the square is $\M$-Cartesian if the gap map $(f,g)$ is in $\M$.
 \end{defone}

 \begin{propone} \label{Prop LR local}
  $\Lprime$ and $\R$ are local.
 \end{propone}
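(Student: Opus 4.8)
The plan is to run the argument of \cite{ABFJ17} (of which the proof of Proposition \ref{Prop Equiv Local} is a special case): reduce everything, by means of the factorization system, to the single statement that equivalences are local, and then obtain that statement from descent.

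First the easy implication. In the pullback square of Definition \ref{Def Local Maps} one has $f'\simeq g^*f$, so if $f\in\Lprime$ then $f'\in\Lprime$ since $\Lprime$ is closed under pullback (the defining property of a modality), and if $f\in\R$ then $f'\in\R$ since the right class of any orthogonal factorization system is closed under pullback. For the converse, factor $f$ as $X\xrightarrow{\,\Lprime(f)\,}||f||\xrightarrow{\,\R(f)\,}Y$ and pull this back along $g\colon Y'\to Y$, obtaining $X'\to ||f||\times_Y Y'\to Y'$. Here $X'\simeq X\times_Y Y'$; the first map is a base change of $\Lprime(f)$, hence lies in $\Lprime$ (modality), and the second is the base change of $\R(f)$ along $g$, hence lies in $\R$. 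By uniqueness of factorizations this is the factorization of $f'$, so $\Lprime(f')\simeq g^*\Lprime(f)$ and $\R(f')\simeq g^*\R(f)$, where $g^*$ denotes the pullback along (the relevant base change of) $g$; note also that such a base change of the $(-1)$-connected map $g$ is again $(-1)$-connected.

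It remains to use that equivalences are local: if $g$ is $(-1)$-connected and $g^*h$ is an equivalence, then $h$ is an equivalence. Under the hypotheses of this section this comes from descent — a $(-1)$-connected map is an effective epimorphism, and descent makes pullback along an effective epimorphism conservative — so it takes the place of the join-based argument of Proposition \ref{Prop Equiv Local}. Granting it: if $f'\in\Lprime$, then $\R(f')\simeq g^*\R(f)$ is an equivalence, hence $\R(f)$ is an equivalence, hence $f\in\Lprime$ by \cite[Lemma 3.1.6]{ABFJ17}; symmetrically, if $f'\in\R$, then $\Lprime(f')\simeq g^*\Lprime(f)$ is an equivalence, hence $\Lprime(f)$ is an equivalence, hence $f\in\R$. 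Thus both $\Lprime$ and $\R$ are local. The one delicate point, and essentially the only place where the standing hypotheses of this section are used, is the locality of equivalences: without universes or infinite colimits the explicit join construction is not available, and this fact must be extracted from descent (equivalently, from conservativity of pullback along covers), exactly as in \cite{ABFJ17}.
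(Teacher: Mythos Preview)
Your core argument is the same as the paper's: factor $f$, observe that pulling back the factorization along $g$ gives the factorization of $f'$ (by uniqueness, since both classes of a modality are stable under base change), and then reduce to the single statement that equivalences are local. The paper only writes out the $\Lprime$ case and leaves $\R$ implicit; your explicit treatment of both cases is fine.

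The one substantive difference is how you justify that equivalences are local. The paper simply invokes Proposition~\ref{Prop Equiv Local}. You instead appeal to descent, arguing that a $(-1)$-connected map is an effective epimorphism and that descent makes pullback along effective epimorphisms conservative. That is the standard argument in a Grothendieck $(\infty,1)$-topos, but here it does not go through as stated: exhibiting a $(-1)$-connected map as the colimit of its \v{C}ech nerve requires a geometric realization, hence an infinite colimit, which is not among the standing hypotheses (Remark~\ref{Rem:E for Section on BMT}); and descent in the sense of Definition~\ref{Def:Descent} then only speaks about the finite colimits that $\E$ actually has. So your reduction to descent is a genuine gap. The paper's route around this is precisely Proposition~\ref{Prop Equiv Local}, whose proof replaces the infinite simplicial \v{C}ech diagram by the internal \emph{sequential} join diagram of Section~\ref{Sec:The Join Construction}; that is the elementary substitute you need here, not bare descent.
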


 \begin{proof}
  It suffices to prove that $\Lprime$ is local. In the pullback square
   \begin{center}
   \begin{tikzcd}[row sep=0.5in, column sep=0.5in]
    X' \arrow[d, "f'"] \arrow[r] \arrow[dr, phantom, "\ulcorner", very near start] & X \arrow[d, "f"] \\
    Y' \arrow[r, twoheadrightarrow, "g"] & Y 
   \end{tikzcd}
  \end{center}
   let $g$ be $(-1)$-connected and $f' \in \Lprime$. We have to show that $f \in \Lprime$.
   Using the factorization of $f$ and $f'$ we get the diagram
   \begin{center}
   \begin{tikzcd}[row sep=0.5in, column sep=0.5in]
    X' \arrow[d, "\Lprime(f')"] \arrow[r] \arrow[dr, phantom, "\ulcorner", very near start] & X \arrow[d, "\Lprime(f)"] \\
    \strut ||f'|| \arrow[d, "\R(f')", "\simeq"'] \arrow[r] \arrow[dr, phantom, "\ulcorner", very near start] & \strut ||f|| \arrow[d, "\R(f)", "\simeq"'] \\
    Y' \arrow[r, twoheadrightarrow, "g"] & Y 
   \end{tikzcd}
  \end{center}
  As $f' \in \Lprime$ we know that $\R(f')$ is an equivalence. As equivalences are local (Proposition \ref{Prop Equiv Local}) this means that 
  $\R(f)$ is also an equivalence. However this means that $f \in \Lprime$ which finishes the proof.
 \end{proof}
 
  \begin{remone}
  This proposition is proven in \cite[Proposition 3.7.5]{ABFJ17}, but the proof given there does not hold in our setting
  and needed to be adjusted.
 \end{remone}
 
 We have now reviewed all the necessary background material and can move on to the statement and the proof of the Blakers-Massey Theorem.
 
 \subsection{Blakers-Massey Theorem for Modalities} \label{Subsec:BMT for Modalities}
 In this subsection we prove the Blakers-Massey theorem for every modality. Let us first state the relevant theorems.
 
 \begin{theone} \label{The Blakers Massey}
 \cite[Theorem 4.1.1]{ABFJ17}
 (Blakers-Massey Theorem) 
 Let $(\Lprime,\R)$ be a modality in an $(\infty,1)$-category $\E$ that satisfies the conditions 
 of Remark \ref{Rem:E for Section on BMT}. Consider the pushout square:
 \begin{center}
  \begin{tikzcd}[row sep=0.5in, column sep=0.5in]
    Z \arrow[d, "f"] \arrow[r, "g"] & Y \arrow[d,"k"] \\
    X \arrow[r, "h"] & W \arrow[ul, phantom, "\ulcorner", very near start] 
  \end{tikzcd}
 \end{center}
 Suppose that $\Delta f \square_Z \Delta g \in \Lprime$. Then the square is $\Lprime$-Cartesian.
\end{theone}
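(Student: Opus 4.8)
The plan is to transcribe the proof of \cite[Theorem 4.1.1]{ABFJ17} and to verify, clause by clause, that its only inputs are among those granted by Remark \ref{Rem:E for Section on BMT}. Recall the skeleton of that argument. Since the conclusion ``the square is $\Lprime$-Cartesian'' means precisely that the gap map $(f,g): Z\to X\times_W Y$ lies in $\Lprime$, and since colimits are universal (Lemma \ref{Lemma:LCCC Colimits Universal}), one may pull the entire pushout square back along an arbitrary map $T\to W$: by the cube theorems the result is again a pushout square and the pulled-back gap map is the gap map of the pulled-back square. The legs $h: X\to W$ and $k: Y\to W$ present $W$ as $X\coprod_Z Y$, so the canonical map $X\coprod Y\to W$ is $(-1)$-connected; hence, $\Lprime$ being local (Proposition \ref{Prop LR local}), membership $(f,g)\in\Lprime$ may be tested after base change along $h$ and along $k$ separately. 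Taking $T=X$ (resp.\ $T=Y$), this reduces the theorem to the ``charted'' situation in which one of the two legs of the pushout acquires a section.

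In a charted situation one identifies the gap map explicitly in terms of $f$ and $g$, and this is the point at which the hypothesis is consumed: over the $X$-chart the relevant comparison map is a base change of the relative diagonal $\Delta f$, over the $Y$-chart a base change of $\Delta g$, and along the overlap $Z$ the two are glued by the relative pushout--product $\Delta f\,\square_Z\,\Delta g$. One then runs ABFJ's diagram chase --- a finite sequence of pushouts, pullbacks, and applications of the cube theorems (available by descent, Remark \ref{Rem:E for Section on BMT}) --- using that $\Lprime$, being the left class of a factorization system, is closed under pushout and composition, and, being the left class of a \emph{modality}, is closed under pullback as well. Once $\Delta f\,\square_Z\,\Delta g\in\Lprime$ is substituted in, the assembled gap map lies in $\Lprime$, which is the desired conclusion.

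The genuine work here is an audit rather than new mathematics. ABFJ argue inside a presentable $(\infty,1)$-topos, and at several points phrase descent through effective groupoid quotients, invoke the small-object argument, or use the existence of arbitrary localizations; none of this is available to us. The task is to confirm that every such appeal can be replaced by (i) the finiteness of the diagrams that actually occur, (ii) universality of colimits in the guise of the cube theorems, (iii) descent precisely as assumed in Remark \ref{Rem:E for Section on BMT}, and (iv) locality of $\Lprime$ and $\R$ (Proposition \ref{Prop LR local}), which here takes over the role played in their text by the presentability-based locality statement \cite[Proposition 3.7.5]{ABFJ17}. I expect two points to demand the most care: checking that the reduction ``test after pulling back along the charts $h$ and $k$'' is legitimate --- which is exactly what locality of $\Lprime$ supplies --- and making sure that nowhere in the chase is $W$ tacitly resolved by an infinite colimit; any step of that shape must be reorganized so that it closes up after finitely many pushouts, using only the finite-colimit and descent hypotheses.
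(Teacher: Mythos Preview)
Your proposal is correct and takes essentially the same approach as the paper: both treat the theorem as an audit of the proof in \cite{ABFJ17}, verifying that every appeal to presentability or infinite colimits can be replaced by the hypotheses of Remark \ref{Rem:E for Section on BMT}. The paper's treatment is slightly more itemized---it pinpoints exactly which results in \cite[Subsections 3.7, 3.8]{ABFJ17} require adjustment (notably that \cite[Proposition 3.7.5]{ABFJ17} is replaced by Proposition \ref{Prop LR local}, and that the infinite-colimit step in \cite[Lemma 3.8.6]{ABFJ17} reduces to the pushout case handled by Lemma \ref{Lemma Pushout Cover})---whereas you describe the same concerns in broader strokes; but the substance is the same.
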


\begin{theone} \label{The Dual Blakers Massey}
 \cite[Theorem 3.6.1]{ABFJ17}
 (Dual Blakers-Massey Theorem) Let $(\Lprime,\R)$ be a modality in an $(\infty,1)$-category $\E$ that satisfies the conditions 
 of Remark \ref{Rem:E for Section on BMT}. 
 Suppose we are given a pullback square
 \begin{center}
  \pbsq{X}{Z}{Y}{W}{h}{f}{g}{k}
 \end{center}
 and suppose that the map $k \square g \in \Lprime$. Then the cogap map $\lfloor k,g \rfloor : Y \coprod_X Z \to W$
 is in $\Lprime$.
\end{theone}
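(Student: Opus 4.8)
The plan is to transcribe the proof of \cite[Theorem 3.6.1]{ABFJ17}, keeping careful track of which categorical operations are used and checking that each one is available under the hypotheses of Remark \ref{Rem:E for Section on BMT}: the argument only invokes finite limits and colimits, universality of colimits, descent (Definition \ref{Def:Descent}), the closure properties of the left class $\Lprime$, and --- at the places where \cite{ABFJ17} replace a map by its cover --- the $(-1)$-truncation functor. None of these steps needs presentability or infinite colimits, and verifying this is really all that has to be done.

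First I would assemble the stability properties of $\Lprime$ on which the argument rests. Since $(\Lprime,\R)$ is a modality, $\Lprime$ is stable under base change; since $\Lprime : \Arr(\E)\to\Arr(\Lprime)$ is a right adjoint to the inclusion (Lemma \ref{Lemma LR adjunctions}), the subcategory $\Arr(\Lprime)$ is closed under colimits in $\Arr(\E)$, so $\Lprime$ is closed under cobase change (and under the pushouts occurring in the arrow category); moreover $\Lprime$ is closed under composition and contains the equivalences; and finally $\Lprime$ is local (Proposition \ref{Prop LR local}). I would also record the elementary compatibility of the pushout--product with base change: if $f'$ and $g'$ are pullbacks of $f$ and $g$ along maps of their codomains, then $f'\,\square\,g'$ is the pullback of $f\,\square\,g$ along the induced map of product codomains. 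This follows at once from universality of colimits, and it is what makes the hypothesis ``$k\,\square\,g\in\Lprime$'' propagate along the proof.

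With this in hand the core argument proceeds as in \cite{ABFJ17}. One pulls back the cogap map $\lfloor k,g\rfloor : Y\coprod_X Z \to W$ along the legs $g$ and $k$; using universality of colimits together with the pullback-square identity $Z\times_W Y\simeq X$, each such pullback is again the cogap map of a pullback square whose defining maps are base changes of $k$ and $g$, hence a square still satisfying the pushout--product hypothesis by the previous paragraph. Iterating this, and using descent to keep the relevant pushout squares pushout squares after pullback with all faces cartesian (Mather's cubes), one exhibits $\lfloor k,g\rfloor$ as built out of $k\,\square\,g$ by base change, cobase change, composition, and the colimits that $\Lprime$ tolerates; together with locality of $\Lprime$ this forces $\R(\lfloor k,g\rfloor)$ to be an equivalence, i.e. $\lfloor k,g\rfloor\in\Lprime$. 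The pushout--product assumption enters exactly as the base case of this recursion, where the iterated construction collapses to $k\,\square\,g$ or a base change thereof.

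The main obstacle is bookkeeping rather than conceptual: \cite{ABFJ17}'s argument is organized around iterated pushout--products and Mather's cubes, and one must check at each cube that every colimit appearing is finite and that every stability claim is an instance of descent, of universality of colimits, or of the closure properties of $\Lprime$ recorded above --- never of presentability or of the existence of infinite colimits. The one genuinely extra point in the elementary setting is that wherever \cite{ABFJ17} factor a map through its cover (the $(-1)$-truncation of an object over its codomain) this factorization must exist and the resulting $(-1)$-connected maps must be well behaved; both hold here, by the assumed $(-1)$-truncation functor (Definition \ref{Def:Neg One Truncation}) and by Proposition \ref{Prop Equiv Local} and the results of Subsection \ref{Subsec:Neg One Conn}. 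Once these checks are in place, the proof of \cite[Theorem 3.6.1]{ABFJ17} applies verbatim.
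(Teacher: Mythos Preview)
Your strategy---run the proof of \cite[Theorem 3.6.1]{ABFJ17} and verify that every step uses only the hypotheses of Remark \ref{Rem:E for Section on BMT}---is precisely the paper's approach: the paper does not reprove the result but simply audits \cite{ABFJ17} for steps that might require presentability or infinite colimits.

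One point of imprecision in your sketch is worth flagging. The Dual Blakers--Massey theorem sits in Section~3.6 of \cite{ABFJ17}, \emph{before} the locality material of Section~3.7 and the descent-for-modalities lemmas of Section~3.8, so its proof does not actually invoke locality of $\Lprime$ or the $(-1)$-truncation functor; it uses only finite limits and colimits, universality of colimits, and the closure of $\Lprime$ under composition, base change, and cobase change. The ingredients you single out as ``the one genuinely extra point'' (covers, locality, Proposition \ref{Prop LR local}) are needed for the \emph{non-dual} theorem \cite[Theorem 4.1.1]{ABFJ17}, and the paper's itemized list of adjustments (all drawn from Subsections~3.7 and~3.8) is aimed at that result. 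For the dual statement itself the paper regards the \cite{ABFJ17} proof as carrying over without modification, so your outline is over-equipped rather than wrong.
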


 As there is already a detailed proof in \cite{ABFJ17}. We will simply show that $\E$
 satisfies all the conditions mentioned in the steps of that proof.
 \par 
 Before we do so let us review the structure of the paper \cite{ABFJ17}.
 The first two sections focus on reviewing concepts about higher topos theory. 
 Most of the third section focuses on certain conditions under which modalities exists,
 which are not relevant to our discussion.
 The only results we need from this section are in 
 \cite[Subsection 3.7, Subsection 3.8]{ABFJ17}, where the authors prove a descent condition for modalities. 
 The fourth section then gives us the proofs of the Blakers-Massey theorem. 
 The proofs in \cite[Section 4]{ABFJ17} generalize in a straightforward way, 
 but some proofs in \cite[Subsection 3.7, Subsection 3.8]{ABFJ17}
 need adjustments.
 \par 
 Thus, we will not discuss the proofs in  \cite[Section 4]{ABFJ17} any further. However, we will analyze the steps of the proofs in 
 \cite[Subsection 3.7, Subsection 3.8]{ABFJ17} and, when need be, explain how they can be 
 adjusted to hold for $\E$.

 \begin{enumerate}
  \item \cite[Definition 3.7.1]{ABFJ17}: This definition of a local class of maps does not hold in for $\E$ as we do not have 
  arbitrary coproducts. We instead use \cite[Remark 3.7.2]{ABFJ17} as the definition of local maps (Definition \ref{Def Local Maps}).
  \item \cite[Proposition 3.7.5]{ABFJ17}: This proof does not hold in for $\E$ as we do not have arbitrary coproducts.
  We presented an alternative proof in Proposition \ref{Prop LR local}.
  \item \cite[Lemma 3.8.5]{ABFJ17}: In this lemma we use the fact that $\Lprime$ is local. Notice we only need the locality condition in the sense
  of Definition \ref{Def Local Maps} and not \cite[Definition 3.7.1]{ABFJ17}.
  \item \cite[Lemma 3.8.6]{ABFJ17} This proof depends on \cite[Remark 3.1.4 (5)]{ABFJ17} which states under which conditions an infinite colimit diagram 
  gives us a $(-1)$-connected map. It does not hold for $\E$ in the form stated as we do not have infinite colimits. 
  However, the remark is only applied to the specific pushout $G \coprod_E F$ for which the condition does hold for $\E$
  (as proven in Lemma \ref{Lemma Pushout Cover}).
  \item \cite[Lemma 3.8.7]{ABFJ17} The proof follows by the same argument. Notice that although the proof states that $\Lprime$ has all colimits
  it suffices to have all finite colimits as we are only looking at pushout squares.
 \end{enumerate}
 
 All the remaining results that we need for our proof from \cite[Section 4]{ABFJ17} hold and do not need any adjustments.
 This finishes the proof for any $\E$ that satisfies the conditions of Remark \ref{Rem:E for Section on BMT}.

 \subsection{Blakers-Massey Theorem for Truncations} \label{Subsec:BMT for Truncations}
 Having proven Blakers-Massey Theorem for any modality we can finally show the case for truncations.

 \begin{propone} \label{Prop Conn Trunc Modality}
  Let $n$ be a natural number. Then the class of $n$-truncated maps and $n$-connected maps form a modality.
 \end{propone}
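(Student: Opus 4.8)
The plan is to exhibit the $n$-connected maps and the $n$-truncated maps as the left and right classes of a factorization system on $\E$, and then to observe that the modality condition is exactly the base-change stability of $n$-connected maps that has already been proven. Throughout, the key point is that by the hypotheses collected in Remark \ref{Rem:E for Section on BMT} we have an $n$-truncation functor, together with its relative versions $\tau_n^X \colon \E_{/X} \to \E_{/X}$.

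First I would build the pair of functors $\mathcal{L}, \mathcal{R} \colon \Arr(\E) \to \Arr(\E)$ required by Definition \ref{Def:Factorization system}. Given a map $f \colon X \to Y$, regard $X$ as an object of $\E_{/Y}$ and factor $f$ through its relative truncation as $X \to \tau_n^Y(X) \to Y$; set $\mathcal{L}(f) = [X \to \tau_n^Y(X)]$ and $\mathcal{R}(f) = [\tau_n^Y(X) \to Y]$. These assemble into functors over $\E$ precisely because the relative truncations are compatible with base change, which is the content of the (unlabelled) proposition stating that $\tau_n$ commutes with pullbacks, immediately preceding Corollary \ref{Cor N Conn Base Change}. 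Condition (1) of Definition \ref{Def:Factorization system}, namely $f \simeq \mathcal{R}(f) \circ \mathcal{L}(f)$, then holds by construction.

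Next I would identify the two classes. By construction $\mathcal{R}(f)$ is $n$-truncated, and by the relative form of Lemma \ref{Lemma:Unit Connected} the unit $\mathcal{L}(f) \colon X \to \tau_n^Y(X)$ is $n$-connected. Conversely, if $f$ is $n$-connected then $\tau_n^Y(X) \simeq \mathrm{id}_Y$ in $\E_{/Y}$ by the relative form of Proposition \ref{Prop Conn iff Tau Final Object}, so $\mathcal{L}(f) \simeq f$; and if $f$ is $n$-truncated then $\tau_n^Y(X) \simeq X$ by the relative form of Corollary \ref{Cor Conn is Trunc}, so $\mathcal{R}(f) \simeq f$. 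Hence the image of $\mathcal{L}$ is exactly the $n$-connected maps and the image of $\mathcal{R}$ is exactly the $n$-truncated maps. Condition (2) of Definition \ref{Def:Factorization system} — that for $f$ $n$-connected and $g$ $n$-truncated the square of mapping spaces is a homotopy pullback — is then exactly the implication $(2) \Rightarrow (1)$ of Proposition \ref{Prop Trunc Conn Lift NNO}. This shows $(\mathcal{L}, \mathcal{R})$ is a factorization system.

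Finally, the modality condition is immediate: the left class consists of the $n$-connected maps, which are stable under base change by Corollary \ref{Cor N Conn Base Change}. The one step that will require genuine care is the verification that the relative truncations really do assemble into functors $\mathcal{L}, \mathcal{R} \colon \Arr(\E) \to \Arr(\E)$ lying over $\E$ — this is where the pullback-compatibility of truncations is essential and where one must be a little careful about naturality — but once this bookkeeping is in place, everything else is a direct citation of results established earlier in the paper.
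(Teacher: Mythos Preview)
Your proposal is correct and follows essentially the same route as the paper: the paper's proof cites Corollary~\ref{Cor:Truncation Functor Construction} (which, via the equivalence in Theorem~\ref{The:Localizations}, already packages the truncation localization as a factorization system), Proposition~\ref{Prop Trunc Conn Lift} for the orthogonality square, and Corollary~\ref{Cor N Conn Base Change} for pullback-stability of the left class. You unpack the first citation by hand---constructing $\mathcal{L},\mathcal{R}$ from the relative truncation and identifying their images via Lemma~\ref{Lemma:Unit Connected}, Proposition~\ref{Prop Conn iff Tau Final Object}, and Corollary~\ref{Cor Conn is Trunc}---but this is precisely the content of the $(2)\Leftrightarrow(6)$ step in Theorem~\ref{The:Localizations}, so the two arguments coincide.
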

 
 \begin{proof}
  By Corollary \ref{Cor:Truncation Functor Construction} and Proposition \ref{Prop Trunc Conn Lift} it is a factorization system.
  Finally, by Corollary \ref{Cor N Conn Base Change} it is a modality.
 \end{proof}

 Using the Blakers-Massey Theorem in a topos we can recover the classical results. 
 
 \begin{corone} \label{Cor:BMT Truncations}
  (Classical Blakers-Massey Theorem)
  Let us assume we have a pushout square such that $f$ is $m$-connected and $g$ 
  is $n$-connected.
 \begin{center}
  \begin{tikzcd}[row sep=0.5in, column sep=0.5in]
    Z \arrow[d, "f"] \arrow[r, "g"] & Y \arrow[d,"k"] \\
    X \arrow[r, "h"] & W \arrow[ul, phantom, "\ulcorner", very near start] 
  \end{tikzcd}
 \end{center}
 Then, the gap map $(f,g): Z \to X \times_W Y$ is $(m+n)$-connected.
 \end{corone}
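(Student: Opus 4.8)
The plan is to deduce this directly from the general Blakers--Massey theorem for modalities (Theorem \ref{The Blakers Massey}) applied to the truncation modality. By Proposition \ref{Prop Conn Trunc Modality}, the pair $(\Lprime,\R)$ with $\Lprime$ the class of $(m+n)$-connected maps and $\R$ the class of $(m+n)$-truncated maps is a modality in $\E$. For this modality, saying that the given pushout square is $\Lprime$-Cartesian is, by definition of $\Lprime$-Cartesian, exactly saying that the gap map $(f,g)\colon Z\to X\times_W Y$ is $(m+n)$-connected. Hence by Theorem \ref{The Blakers Massey} it suffices to verify the single hypothesis $\Delta f\,\square_Z\,\Delta g\in\Lprime$, i.e.\ that this relative pushout product is $(m+n)$-connected.

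To verify this I would proceed in two steps. First, relativize the diagonal characterization of connectivity: since $f$ is $m$-connected, its relative diagonal $\Delta f\colon Z\to Z\times_X Z$, regarded as a morphism of $\E_{/Z}$, is $(m-1)$-connected, and likewise $\Delta g\colon Z\to Z\times_Y Z$ is $(n-1)$-connected. This is the forward implication of the relative analogue of the diagonal description of connected maps, and follows from the stability properties of connected maps assembled in Subsections \ref{Subsec:NNO and Truncations} and \ref{Subsec:Prop of Trunc Functors} (in particular Corollary \ref{Cor N Conn Base Change} and Proposition \ref{Prop Conn iff Tau Final Object}) together with the circle of ideas around Proposition \ref{Prop Loop n Conn N One Conn}; the cases $m=-1$ or $n=-1$ are vacuous, since every map is $(-2)$-connected (Proposition \ref{Prop Conn iff Tau Final Object}, Remark \ref{Rem Neg Two Trunc}). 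Second, I would invoke the connectivity estimate for pushout products: the relative pushout product in $\E_{/Z}$ of an $i$-connected map and a $j$-connected map is $(i+j+2)$-connected. Combining the two steps with $i=m-1$ and $j=n-1$ gives that $\Delta f\,\square_Z\,\Delta g$ is $\bigl((m-1)+(n-1)+2\bigr)$-connected, i.e.\ $(m+n)$-connected, which is the required membership in $\Lprime$. Then Theorem \ref{The Blakers Massey} immediately yields that the square is $\Lprime$-Cartesian, i.e.\ that $(f,g)$ is $(m+n)$-connected.

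The main obstacle is the second step: proving that the relative pushout product of an $i$-connected and a $j$-connected map is $(i+j+2)$-connected. I expect this to come out of the two pushout squares defining $\Delta f\,\square_Z\,\Delta g$ — one obtains $B\times A'\coprod_{A\times A'}A\times B'$ as a pushout of $\Delta f\times\mathrm{id}$ (an $(m-1)$-connected map, stable under base change by Corollary \ref{Cor N Conn Base Change} and product by Lemma \ref{Lemma Product by A is conn NNO}) and then composes with the remaining leg, using cobase-change stability (Proposition \ref{Prop n Conn coBase Change NNO}) and composition (Proposition \ref{Prop Conn fg NNO}) — but the careful bookkeeping of which legs are connected and to what order, so that the corner really absorbs the \emph{sum} of the two connectivities plus two, is where the actual work lies. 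A shorter route, requiring no new input, is to borrow this pushout-product connectivity estimate directly from the machinery of \cite{ABFJ17} already imported in Subsection \ref{Subsec:BMT for Modalities}, since it holds verbatim under the hypotheses of Remark \ref{Rem:E for Section on BMT}.
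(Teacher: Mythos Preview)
Your proposal is correct and matches the paper's approach: the paper states this corollary without proof, deferring implicitly to \cite{ABFJ17}, where the classical statement is derived from the modal one via precisely the two ingredients you isolate --- that $\Delta f$ is $(m-1)$-connected when $f$ is $m$-connected, and that the pushout product of an $i$-connected and a $j$-connected map is $(i+j+2)$-connected. One small remark: the reference to Proposition~\ref{Prop Loop n Conn N One Conn} is slightly off, since that result gives the converse implication (diagonal connected $\Rightarrow$ map connected); the direction you need is the easier one and, as you note at the end, is most cleanly imported directly from \cite{ABFJ17} under the standing hypotheses of Remark~\ref{Rem:E for Section on BMT}.
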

 
  \begin{corone}
   {\it (Join Theorem)} We have following diagram
   \begin{center}
    \begin{tikzcd}[row sep=0.5in, column sep=0.5in]
      X \underset{B}{\times} Y \arrow[d] \arrow[r] & Y \arrow[d] \arrow[rdd, bend left = 20, "g"] & \\
      X \arrow[r] \arrow[rrd, bend right = 20, "f"] &  \ds X \coprod_{X \underset{B}{\times} Y} Y \arrow[dr, "h"] & \\
      & & B
    \end{tikzcd}
   \end{center}
   If $f$ is $m$-connected and $g$ is $n$-connected then $h$ is $(m+n+2)$-connected.
  \end{corone}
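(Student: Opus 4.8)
The plan is to reduce to a statement about the connectivity of a single object by passing to the slice $\E_{/B}$. This slice again satisfies the hypotheses of Remark \ref{Rem:E for Section on BMT}, and inside it the map $f\colon X\to B$ is an $m$-connected object and $g\colon Y\to B$ is an $n$-connected object, this being precisely the meaning of $m$-connected resp.\ $n$-connected maps (Definition \ref{Def NConn Map}). Furthermore $X\times_B Y$ is the product of $f$ and $g$ in $\E_{/B}$, and the join construction of Definition \ref{Def Join}, which as noted there also makes sense in any slice, realizes $X\coprod_{X\times_B Y}Y$ as the join of $f$ and $g$ in $\E_{/B}$, with $h$ its structure map to the terminal object $\mathrm{id}_B$. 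Hence $h$ is $(m+n+2)$-connected if and only if this join is an $(m+n+2)$-connected object of $\E_{/B}$, and everything reduces to the object-level assertion: in any $\E$ as above, the join $P\ast Q$ of an $m$-connected object $P$ and an $n$-connected object $Q$ is $(m+n+2)$-connected.

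To prove that, I would feed the defining pushout square of $R := P\ast Q$, namely $P\xleftarrow{\pi_P}P\times Q\xrightarrow{\pi_Q}Q$, into the Blakers--Massey machinery. The projection $\pi_P$ is the base change of $Q\to 1$ along $P\to 1$, hence $n$-connected (Corollary \ref{Cor N Conn Base Change}), and symmetrically $\pi_Q$ is $m$-connected; so by the Classical Blakers--Massey Theorem (Corollary \ref{Cor:BMT Truncations}) the gap map $P\times Q\to P\times_R Q$ is $(m+n)$-connected. Alongside this I would record two elementary facts for the bootstrap: the cobase-change maps $\iota_P\colon P\to R$ and $\iota_Q\colon Q\to R$ are respectively $m$-connected and $n$-connected (Proposition \ref{Prop n Conn coBase Change NNO}); and, as a consequence, when $m,n\geq -1$ the object $R$ is $(-1)$-connected --- apply two-out-of-three (Proposition \ref{Prop Conn fg NNO}) to $P\xrightarrow{\iota_P}R\to 1$ --- while the degenerate cases $m=-2$ or $n=-2$, where $m+n+2$ collapses to $n$ or $m$, follow directly from the same cobase-change and two-out-of-three arguments since every object is $(-2)$-connected.

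The remaining step, which I expect to be the main obstacle, is to upgrade the gap-map estimate to connectivity of $R$ itself. My plan is an induction (say on $n$ with $m$ fixed) organized through the loop-space characterization of connectivity (Proposition \ref{Prop Loop n Conn N One Conn}): since $R$ is $(-1)$-connected, it is $(m+n+2)$-connected once its diagonal $\Delta_R\colon R\to R\times R$ is shown to be $(m+n+1)$-connected, and $\Delta_R$ is to be analyzed by pulling the defining pushout square back along the two projections $R\times R\to R$ (keeping it a pushout by universality of colimits), and then combining the gap-map connectivity from Corollary \ref{Cor:BMT Truncations}, the connectivities of $\iota_P$ and $\iota_Q$, and base-change stability of connected maps (Corollary \ref{Cor N Conn Base Change}). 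The subtle point is the bookkeeping of the numerical shifts, so that the estimate on $\Delta_R$ at level $m+n+1$ is assembled from instances of the statement at strictly smaller parameters; granting that, the Corollary follows by instantiating the object-level assertion in $\E_{/B}$ as in the first paragraph.
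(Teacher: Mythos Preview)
Your reduction to the slice $\E_{/B}$ is fine, and the first few steps are correct, but the proposal has a genuine gap at exactly the point you flag as ``the main obstacle.'' You obtain from Corollary~\ref{Cor:BMT Truncations} that the gap map $P\times Q\to P\times_R Q$ is $(m+n)$-connected, and that $\iota_P,\iota_Q$ are $m$- and $n$-connected, but none of this information is about $R\to 1$; at best two-out-of-three gives that $R$ is $\max(m,n)$-connected. Your plan to recover $(m+n+2)$-connectedness by analyzing $\Delta_R$ via ``pulling back the pushout along the projections $R\times R\to R$'' does not visibly produce a smaller instance of the Join Theorem: the diagonal of a join is not in any evident way a join of lower-connected pieces, so there is no parameter on which your induction actually descends. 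As written, the bookkeeping you defer is not a detail but the entire content of the argument.

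The paper's route avoids this altogether: the Join Theorem is a \emph{direct} application of the Dual Blakers--Massey Theorem (Theorem~\ref{The Dual Blakers Massey}), not the forward one. The square
\[
\begin{tikzcd}
X\times_B Y \arrow[r] \arrow[d] & Y \arrow[d,"g"] \\
X \arrow[r,"f"'] & B
\end{tikzcd}
\]
is a pullback by construction, so the dual theorem says the cogap map $h$ lies in $\Lprime$ as soon as $f\,\square\,g$ does. For the truncation modality the required input is the pushout-product estimate (proved in \cite[\S4]{ABFJ17} and used already to deduce Corollary~\ref{Cor:BMT Truncations}): if $f$ is $m$-connected and $g$ is $n$-connected then $f\,\square\,g$ is $(m{+}n{+}2)$-connected. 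That single lemma plus Theorem~\ref{The Dual Blakers Massey} gives the result immediately, with no induction and no loop-space bootstrap. If you want to salvage your approach, the honest version is to prove the pushout-product connectivity estimate directly and then invoke the dual theorem; attempting to circumvent it via the forward theorem and $\Delta_R$ is a detour that, as your own hedging suggests, does not close.
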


  \begin{remone}
   This is a direct implication of the Dual Blakers-Massey Theorem, but is called the {\it join theorem}
   in \cite[Proposition 8.15]{Re05}, because when $B$ is the final object then 
   $X \coprod_{X \underset{B}{\times} Y} Y$ is simply the join $X \ast Y$ (Definition \ref{Def Join}). The join theorem then proves that taking successive 
  joins of one object raises the connectivity. This is the intuition we used in Theorem \ref{The Neg Trunc from Join} to construct truncation functors.
  \end{remone}
 
 This also gives us the Freudenthal suspension theorem.
 
 \begin{corone}
  (Freudenthal Suspension Theorem) Let $X$ be an $n$-connected object. Then the map $X \to \Omega \Sigma X$ is 
  $2n$-connected.
 \end{corone}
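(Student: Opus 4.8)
The plan is to derive this directly from the Blakers--Massey theorem for truncations (Corollary \ref{Cor:BMT Truncations}), applied to the suspension pushout square. First I would recall that the suspension $\Sigma X = 1 \coprod_X 1$ fits into the pushout square
\begin{center}
 \begin{tikzcd}[row sep=0.5in, column sep=0.5in]
  X \arrow[d, "f"] \arrow[r, "g"] & 1 \arrow[d] \\
  1 \arrow[r] & \Sigma X \arrow[ul, phantom, "\ulcorner", very near start]
 \end{tikzcd}
\end{center}
where both legs $f$ and $g$ are the unique map $X \to 1$, and that the loop object $\Omega \Sigma X$ (based at the canonical point $1 \to \Sigma X$) is by definition the pullback $1 \times_{\Sigma X} 1$. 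Hence the gap map of this square is precisely the comparison map $X \to \Omega \Sigma X$ whose connectivity we want to bound; as in the join theorem of the preceding corollary, in the elementary setting we simply take this gap map as the map appearing in the statement, since we have no suspension--loop adjunction to appeal to.

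Next I would observe that since $X$ is $n$-connected the map $X \to 1$ is $n$-connected: this is the definition of an $n$-connected object, or equivalently it follows from Proposition \ref{Prop Conn iff Tau Final Object} because $\tau_n(X) \simeq 1$. Thus both $f$ and $g$ in the square above are $n$-connected. Applying Corollary \ref{Cor:BMT Truncations} with $m = n$ then shows that the gap map $(f,g): X \to 1 \times_{\Sigma X} 1 = \Omega \Sigma X$ is $(n+n)$-connected, i.e. $2n$-connected, which is exactly the assertion.

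I do not expect any real obstacle here: all the substantive work has been carried out in establishing the Blakers--Massey theorem for modalities (Theorem \ref{The Blakers Massey}) and specializing it to the truncation modality (Proposition \ref{Prop Conn Trunc Modality}, Corollary \ref{Cor:BMT Truncations}). The only step that deserves a careful sentence is the identification of $1 \times_{\Sigma X} 1$ with $\Omega \Sigma X$ and of the square's gap map with the canonical map $X \to \Omega\Sigma X$; everything else is immediate once the pushout square is written down.
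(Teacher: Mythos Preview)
Your proposal is correct and matches the paper's intended argument: the paper states the Freudenthal suspension theorem as an immediate corollary of the classical Blakers--Massey theorem (Corollary \ref{Cor:BMT Truncations}) without spelling out a proof, and your derivation via the suspension pushout square with $f=g:X\to 1$ both $n$-connected is precisely the standard unpacking of that corollary.
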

 
\section{Filter Products and Truncations}\label{Sec:Filter Quotients and Truncations}
 Up to this point we have extensively developed the theory of truncations for elementary $(\infty,1)$-toposes.
 In this section we want study truncations for a specific class of elementary $(\infty,1)$-toposes, 
 namely {\it filter quotients}.
 \par 
 In Subsection \ref{Subsec:Review Filter Hypercomp} we review the concept of filter quotient as introduced in \cite{Ra20}.
 We also discuss hypercomplete toposes and the construction of {\it hypercompletions}, as originally studied in \cite{Lu09}. 
 In Subsection \ref{Subsec:Los Truncation}, we show the relation between truncations of filter products by proving 
 $\Los$'s theorem for truncations (Theorem \ref{The:Trunc Obj filter quotient}, Corollary \ref{Cor:Trunc Obj filter product})
 as well as a theorem about hypercomplete filter products (Theorem \ref{The:Filter Product Hypercomplete}).
 \par 
 Finally, in Subsection \ref{Subsec:Non Standard Truncations} we focus on examples of elementary $(\infty,1)$-toposes 
 that are not presentable, do not have infinite colimits and in particular have non-standard natural number objects. 
 In particular, we will make a case why it is necessary to care about non-standard natural numbers by illustrating
 how ignoring non-standard natural numbers prevents us from having a well-defined hypercompletion.
 
  \begin{remone} \label{Rem:E Conditions Section on Los Theorem}
   In this section $\E$ is an $(\infty,1)$-category that satisfies following conditions:
   \begin{enumerate}
    \item It has finite limits and colimits.
   \end{enumerate}
    for Subsection \ref{Subsec:Los Truncation} we also need the conditions:
   \begin{enumerate}
    \item[(2)] It is locally Cartesian closed (Definition \ref{Def:LCCC}).
    \item[(3)] It has a natural number object (Theorem \ref{The:EHT has NNO}).
    \item[(4)] It has a universe (Definition \ref{Def:Sufficient Universes}) closed under finite limits and colimits
    (Definition \ref{Def:Closed Universes}).
   \end{enumerate}
   which are exactly the conditions we need to define internal truncation levels (Definition \ref{Def:Truncation NNO})
  \end{remone}
  
\subsection{Review of Filter Construction and Hypercompletions} \label{Subsec:Review Filter Hypercomp}
 This is a very short review of the filter construction of $(\infty,1)$-categories and hypercompletion.
 In particular, we define filter quotients (Definition \ref{Def:Filter Quotients}) and prove it preserves the 
 structure of an elementary $(\infty,1)$-topos (Theorem \ref{The:Filter Quotient EHT}).
 For more details on filter quotients see \cite{Ra20}. 
 Finally, we also give an overview of hypercompletions as discussed in \cite{Lu09}.
 
 \begin{defone}
  Let $(P, \leq)$ be a poset. A {\it filter} $F$ is a subset of $P$ that satisfies following three conditions:
  \begin{enumerate}
   \item It is non-empty.
   \item It is down-ward directed, meaning that for any $x,y \in F$ there exists $w \in F$ such that $w \leq x$, $w \leq y$.
   \item It is upward closed, meaning that for any $x \in F$ and $x \leq y$, we have $y \in F$.
  \end{enumerate}
 \end{defone}
 
 We will give two constructions of the filter quotient, depending on which model of $(\infty,1)$-categories we use.
 
 \begin{theone}
  Let $\E$ be a finite complete Kan enriched category and $\Phi$ be a filter in the poset $\Sub(1)$.  
  Then there exists a Kan enriched category $\E_\Phi$ defined as follows: 
  \begin{enumerate}
   \item It has the same objects as $\E$.
   \item For two object $X,Y$ in $\E$ we define the morphisms Kan complexes as 
   $$ Map_{\E_{\Phi}}(X,Y)_n = \left[ \coprod_{U \in \Phi} Map_{\E_{\Phi}}(X \times U,Y \times U)_n \right] / \sim $$
   where the equivalence relation is defined as 
   $$ f \sim g \Leftrightarrow \exists W \in \Phi ( f \times id_W = g \times id_W)$$
  \end{enumerate}
  which comes with a functor $P_\Phi : \E \to \E_\Phi$.
 \end{theone}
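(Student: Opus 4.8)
The plan is to realize $\E_\Phi$ as a filtered colimit of simplicially enriched categories, all sharing the object set $\mathrm{Ob}(\E)$, and then transport the required properties through the colimit. First I would record the order-theoretic facts about $\Phi$: since $\Phi$ is non-empty and upward closed, $1 \in \Phi$; and since $\Phi$ is downward directed and upward closed it is closed under binary meets, so $(\Phi,\leq)$ is cofiltered and $\Phi^{op}$ is a filtered category. For each $U \in \Phi$ let $\E^{(U)}$ denote the simplicially enriched category with the same objects as $\E$ and with $Map_{\E^{(U)}}(X,Y) = Map_{\E}(X \times U, Y \times U)$, composition and units being inherited from $\E$ via the finite-product functor $(-)\times U \colon \E \to \E$; here we use that $\E$ has finite products. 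For $V \leq U$ in $\Phi$ the mono $V \hookrightarrow 1$ factors through $U$, so $X \times U \times V \simeq X \times V$ and we obtain restriction maps $Map_{\E}(X\times U, Y\times U) \to Map_{\E}(X\times V, Y\times V)$, $f \mapsto f \times \mathrm{id}_V$, assembling into an identity-on-objects enriched functor $\E^{(U)} \to \E^{(V)}$. Checking that these are compatible with composition and strictly functorial in $V \leq U$ is routine and produces a filtered diagram $\Phi^{op} \to (\text{simplicially enriched categories on } \mathrm{Ob}(\E))$, $U \mapsto \E^{(U)}$, with $\E^{(1)} = \E$.

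Next I would form the colimit of this diagram among simplicially enriched categories with object set $\mathrm{Ob}(\E)$; being filtered, this colimit is computed on mapping complexes, so $Map_{\E_\Phi}(X,Y) := \colim_{U \in \Phi^{op}} Map_{\E}(X\times U, Y\times U)$. Because colimits of simplicial sets are formed levelwise, in each degree $n$ this is the filtered colimit of sets $\colim_U Map_\E(X\times U, Y\times U)_n$, whose standard presentation is precisely $\big[\,\coprod_{U \in \Phi} Map_\E(X\times U, Y\times U)_n\,\big]/\!\sim$ with $f \sim g$ iff $f \times \mathrm{id}_W = g \times \mathrm{id}_W$ for some $W \in \Phi$ with $W \leq U$ and $W \leq V$ (closure of $\Phi$ under meets is exactly what makes $\sim$ transitive, and compatibility of restriction with faces and degeneracies makes the levelwise quotients into a simplicial set). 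This identifies the formula in the statement as the mapping complexes of the colimit. Since filtered colimits of sets commute with finite products, the composition maps $Map_{\E^{(U)}}(X,Y) \times Map_{\E^{(U)}}(Y,Z) \to Map_{\E^{(U)}}(X,Z)$ pass coherently to the colimit, giving a well-defined composition on $\E_\Phi$ that is associative and unital because these identities involve only finitely many elements and therefore already hold at a sufficiently late index $U$. The structure functor $P_\Phi \colon \E \to \E_\Phi$ is then the colimit coprojection at $U = 1$.

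It remains to verify that each $Map_{\E_\Phi}(X,Y)$ is a Kan complex, and this is the one step that is not purely formal. A horn $\Lambda^n_k$ has only finitely many nondegenerate simplices, so any map $\Lambda^n_k \to Map_{\E_\Phi}(X,Y)$, together with the finitely many identities among simplices of $Map_\E(X\times U, Y\times U)$ witnessing its coherence, factors through $Map_\E(X\times U, Y\times U)$ for some $U \in \Phi$; that simplicial set is a Kan complex since $\E$ is a Kan enriched category, so the horn fills there, and pushing the filler forward along the coprojection fills it in $Map_{\E_\Phi}(X,Y)$. The only genuinely delicate part of the whole argument is the bookkeeping in the first paragraph: one must check that the restriction functors $(-)\times V$ can be chosen strictly functorial in $V \leq U$, or else replace the evident pseudo-diagram by a strictified one, so that the colimit of simplicially enriched categories is literally the category written down in the statement. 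Everything else is an instance of the general principle that finite-product structure and the Kan condition are preserved by filtered colimits.
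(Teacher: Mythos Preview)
The paper does not actually prove this theorem: it is stated as part of a review of the filter quotient construction, with the reader referred to \cite{Ra20} for details, and no proof appears in the paper itself. So there is no ``paper's own proof'' against which to compare your argument directly.

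That said, your proof is correct and is precisely the expected argument. Your key insight---that $\E_\Phi$ is the filtered colimit $\colim_{U \in \Phi^{op}} \E^{(U)}$ of simplicially enriched categories on a fixed object set---is exactly the content of the companion theorem the paper states immediately afterward for quasi-categories and complete Segal spaces, where $\E_\Phi$ is written explicitly as $\colim_{U \in \Phi} \E_{/U}$. Your verification that the levelwise filtered colimit of sets reproduces the quotient formula in the statement is routine, and your argument that the Kan condition is preserved (because horns are finite and filtered colimits of Kan complexes are Kan) is the standard one. The only point you flag as delicate---strictifying the pseudo-diagram $U \mapsto \E^{(U)}$---is a genuine technicality, but since $U \times V \cong V$ for $V \leq U$ in $\Sub(1)$ can be made strict by choosing representatives, it poses no real obstacle here. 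Your proof would serve perfectly well as the omitted argument.
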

 
 \begin{theone}
  Let $\E$ be a finitely complete quasi-category or complete Segal space
  and let $\Phi$ be a filter in $\Sub(1)$.  Then there exists a quasi-category or complete Segal space
  $\E_\Phi$ defined as follows:
  $$\E_\Phi = \underset{U \in \Phi}{\colim} \  \E_{/U}$$
  which comes with a functor $P_\Phi : \E \to \E_\Phi$
 \end{theone}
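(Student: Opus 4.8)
The plan is to realize $\E_\Phi$ as a filtered colimit of $(\infty,1)$-categories and then check that, in each of the two models, this colimit is computed by the naive levelwise colimit, so that it is again fibrant; the functor $P_\Phi$ is the leg of the colimit cocone at the top element $1\in\Phi$. To set up the diagram: since $\Phi$ is a filter it is nonempty and upward closed, so the terminal object $1$, which is the maximal subobject of itself, lies in $\Phi$, and since $\Phi$ is downward directed the poset $\Phi$ is cofiltered, i.e.\ $\Phi^{op}$ is filtered. For $U'\leq U$ in $\Phi$ the monomorphism $U'\hookrightarrow U$ induces the pullback functor $\E_{/U}\to\E_{/U'}$, $(X\to U)\mapsto(X\times_U U'\to U')$, which exists because $\E$ is finitely complete. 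These functors are only pseudofunctorial a priori, so to obtain a genuine diagram I would take them as the restriction, along $\Phi^{op}\hookrightarrow\Sub(1)^{op}\hookrightarrow\E^{op}$, of the functor $\E^{op}\to\cat_\infty$ classifying the target Cartesian fibration $\mathrm{cod}\colon\Arr(\E)\to\E$; this produces an honest functor $D\colon\Phi^{op}\to\cat_\infty$, $U\mapsto\E_{/U}$, out of a filtered $(\infty,1)$-category.

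Next I would set $\E_\Phi:=\colim D$ (written $\colim_{U\in\Phi}\E_{/U}$, the colimit being taken along the restriction functors) and argue that it is fibrant in the appropriate model. For quasi-categories the colimit is computed by the ordinary filtered colimit in simplicial sets, and this is again a quasi-category: any map out of the finite simplicial set $\Lambda^n_k$ into the colimit factors through a single stage $\E_{/U_0}$, where the inner horn can be filled, and the filler maps back into the colimit; moreover a filtered colimit of quasi-categories in this model is the homotopy colimit. For complete Segal spaces the colimit is computed levelwise in simplicial spaces; since the Segal maps $X_n\to X_1\times_{X_0}\cdots\times_{X_0}X_1$ and the completeness map are assembled from finite limits, and filtered colimits of spaces commute with finite limits and preserve equivalences, the levelwise filtered colimit again satisfies the Segal and completeness conditions and is again the homotopy colimit. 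In either case $\E_\Phi$ is a bona fide quasi-category (resp.\ complete Segal space).

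For the functor $P_\Phi$: since $1$ is the top element of $\Phi$, it is the initial object of $\Phi^{op}$, so the colimit cocone supplies a functor $D(1)=\E_{/1}\to\E_\Phi$, and precomposing with the canonical equivalence $\E\xrightarrow{\ \simeq\ }\E_{/1}$ yields $P_\Phi\colon\E\to\E_\Phi$. Unwinding, $P_\Phi$ sends $X$ to the image of $(X\to 1)$ and on mapping spaces is the canonical map into the filtered colimit
$$Map_{\E_\Phi}(P_\Phi X,P_\Phi Y)\ \simeq\ \colim_{U\in\Phi}Map_{\E_{/U}}(U^*X,U^*Y)\ \simeq\ \colim_{U\in\Phi}Map_{\E}(X\times U,Y\times U),$$
which recovers the explicit description in the preceding, $\Kan$-enriched theorem; the same argument carried out with $\Kan$-enriched categories, taking the colimit levelwise on hom-complexes, proves that version as well.

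I expect the only genuinely nonformal part to be the coherence and model-theoretic bookkeeping in the first two steps: upgrading $U\mapsto\E_{/U}$ from a pseudofunctor to an honest functor of $(\infty,1)$-categories (cleanly handled via straightening of the target fibration) and confirming that the naive model-level colimit is already fibrant and is the homotopy colimit, so that the displayed formula $\E_\Phi=\colim_{U\in\Phi}\E_{/U}$ is literally correct and no fibrant replacement intervenes. A minor set-theoretic point, that $\Sub(1)$ and hence $\Phi$ might fail to be small, is absorbed by the standing universe conventions.
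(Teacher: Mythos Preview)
The paper does not prove this theorem: it appears in the review Subsection~\ref{Subsec:Review Filter Hypercomp}, where the filter quotient is recalled from \cite{Ra20} and the statement is given without argument. So there is no paper proof to compare against beyond the citation.

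Your proposal is a correct and standard justification of the statement. Setting up the diagram $U\mapsto\E_{/U}$ via the target Cartesian fibration is the right way to resolve the pseudofunctoriality, and the model-level claim that filtered colimits of quasi-categories (resp.\ complete Segal spaces) are again fibrant and compute the homotopy colimit is exactly what is needed to make the displayed formula literal. Your identification of $P_\Phi$ as the cocone leg at $1\in\Phi$ is also correct, and your final computation of mapping spaces recovers the Kan-enriched description, which is precisely the compatibility the paper asserts in the remark following the two theorems. In short, you have supplied a proof where the paper only gives a reference; the content you wrote is essentially what one finds in \cite{Ra20}.
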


 \begin{remone}
  The two definitions agree with each other in the sense that if we take an Kan enriched category $\E$, 
  then there is an equivalence 
  $$(N_\Delta(\E))_\Phi \simeq N_\Delta(\E_\Phi)$$
 \end{remone}

 Given that the definitions agree, we can give following definition.
 
 \begin{defone}\label{Def:Filter Quotients}
  We call the $(\infty,1)$-category $E_\Phi$ the {\it filter quotient} 
  of $\E$ with respect to the filter $\Phi$.
 \end{defone}
 
 The key observation is that the filter construction preserves all desirable topos theoretic properties:
 
 \begin{theone} \label{The:Filter Quotient EHT}
  (\cite[Theorem 2.23]{Ra20} \cite[Theorem 2.27]{Ra20})
   Let $\E$ be an $(\infty,1)$-category with finite limits. Then the functor 
   $$P_\Phi: \E \to \E_\Phi$$
   creates 
   \begin{enumerate}
    \item finite limits and colimits
    \item subobject classifiers
    \item complete Segal universes
   \end{enumerate}
   In particular if $\E$ is an elementary $(\infty,1)$-topos (Definition \ref{Def:EHT}), then $\E_\Phi$ is one as well.
 \end{theone}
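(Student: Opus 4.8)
The plan is to use the filtered-colimit model $\E_\Phi = \colim_{U \in \Phi} \E_{/U}$, whose transition functors, for $V \leq U$ in $\Phi$, are the base-change functors $\E_{/U} \to \E_{/V}$ along the map $V \hookrightarrow U$ (a monomorphism, being a map of subobjects of $1$), and whose structure functor $P_\Phi \colon \E = \E_{/1} \to \E_\Phi$ is the coprojection. Since $\Phi$ is downward directed, the diagram is filtered, so its colimit is computed ``levelwise'': the objects of $\E_\Phi$ are the objects of $\E$, and the mapping spaces are the corresponding filtered colimits of mapping spaces — which reproduces exactly the equivalence-relation description $f \sim g \Leftrightarrow \exists W\in\Phi\,(f \times id_W = g \times id_W)$ of the Kan-enriched model, so the two constructions agree and either may be used. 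The whole proof is then an exercise in transporting structure across this filtered colimit.

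First I would treat finite limits. Each base-change functor $\E_{/U} \to \E_{/V}$ has a left adjoint (postcomposition with $V \hookrightarrow U$), hence preserves all limits, and limits in $\E_{/U}$ are created by the forgetful functor to $\E$. A filtered colimit of $(\infty,1)$-categories with finite limits along finite-limit-preserving functors again has finite limits: given a finite diagram in $\E_\Phi$, filteredness lets us lift it to some $\E_{/U}$, form the limit there, and push it forward, and this is visibly preserved by every coprojection, in particular by $P_\Phi$. The argument for finite colimits is identical once one invokes that in a locally Cartesian closed category colimits are universal (Lemma \ref{Lemma:LCCC Colimits Universal}), so the base-change functors also preserve finite colimits; thus $P_\Phi$ creates finite limits and, when $\E$ has them, finite colimits.

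Next come the subobject classifier and the universe, where the essential point is the same for both: these structures are stable under base change. If $\Omega$ classifies monomorphisms in $\E$, then the pullback $\Omega \times U \to U$ classifies monomorphisms in $\E_{/U}$, compatibly with the transition functors; likewise a complete Segal universe $\U_\bullet$ in $\E$ pulls back to one in each $\E_{/U}$, compatibly. Hence $P_\Phi(\Omega)$ and $P_\Phi(\U_\bullet)$ are defined in $\E_\Phi$, and one checks their universal properties directly: a monomorphism (resp.\ a map in the class $S$ classified by $\U_\bullet$) in $\E_\Phi$ is, by the levelwise description, represented by one over some $U \in \Phi$, whose classifying map over $U$ is unique up to ``agreement after further base change'' — which is exactly the datum of a unique classifying map in $\E_\Phi$. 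The Segal and completeness conditions for $P_\Phi(\U_\bullet)$, and the closure of $\U_\bullet$ under finite limits and colimits, are each phrased by finitely many (co)limit conditions and pullback squares, all created by $P_\Phi$ by the previous step, so they descend.

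Combining these, $\E_\Phi$ has finite limits and colimits, is locally Cartesian closed (the internal-hom structure is transported along $P_\Phi$, each base-change functor being a morphism of locally Cartesian closed categories), has a subobject classifier, and has sufficient universes closed under finite limits and colimits; hence it satisfies Definition \ref{Def:EHT} and is an elementary $(\infty,1)$-topos, and in particular has a natural number object by Theorem \ref{The:EHT has NNO}. I expect the main obstacle to be the bookkeeping around the universe: one must verify not merely that $P_\Phi(\U_\bullet)$ is a complete Segal object classifying \emph{some} class of maps, but that it is ``sufficient'' in $\E_\Phi$ — every map of $\E_\Phi$ becomes, after base change along a suitable $U \in \Phi$, a pullback of $\U_\bullet$ — and that the equivalence relation defining $\E_\Phi$-morphisms does not spoil uniqueness of classifying maps. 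This delicate analysis is carried out in \cite{Ra20}, which is why the theorem is imported rather than reproved here.
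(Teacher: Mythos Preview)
The paper gives no proof of this theorem: it is stated with the citations \cite[Theorem 2.23]{Ra20} and \cite[Theorem 2.27]{Ra20} and nothing further, so there is no ``paper's own proof'' to compare against. You correctly recognize this in your final paragraph, and your sketch of how the argument runs in \cite{Ra20} --- filtered colimit of the slices $\E_{/U}$, base-change functors preserving the relevant structure, levelwise verification of universal properties --- is a faithful outline of that proof strategy; in particular your identification of the delicate point (sufficiency and uniqueness of classifying maps for universes in $\E_\Phi$) matches where the real work lies.
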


 \begin{remone}
  As we have shown above, the filter quotient of every elementary $(\infty,1)$-topos is an elementary $(\infty,1)$-topos.
  However, the filter quotient of a Grothendieck $(\infty,1)$-topos need not be necessarily Grothendieck
  again. Thus, we can use the filter quotient construction to construct elementary $(\infty,1)$-toposes 
  that are not Grothendieck $(\infty,1)$-toposes. 
 \end{remone}
 
 In particular we can construct toposes with non-standard natural number objects and thus non-standard 
 truncations, which is the goal of Subsection \ref{Subsec:Non Standard Truncations}. 
 Here we will used the fact that, as we just observed, $P_\Phi$ also preserves natural number objects.
 
 We will also need the notion of hypercompletions.
 
 \begin{defone} \label{Def:Hypercomplete}
  Let $\E$ be an elementary $(\infty,1)$-topos (or any other $(\infty,1)$-category with arbitrary $n$-truncation functors). 
  Then $\E$ is called {\it hypercomplete} if every $\infty$-connected map (Definition \ref{Def:Infinite Connected Map NNO}).
  is an equivalence.
 \end{defone}
 
 \begin{exone}
  The category of spaces is hypercomplete (this is the content of Whitehead's theorem).
 \end{exone}
 
 \begin{exone}
  There are examples of Grothendieck $(\infty,1)$-toposes that are not hypercomplete.
  For a detailed example see \cite[11.3]{Re05}.
 \end{exone}
 
 If a Grothendieck $(\infty,1)$-topos is not hypercomplete, then there is a canonical way to make it so, called the 
 {\it hypercompletion}.
 
 \begin{defone}
  Let $\G$ be a Grothendieck $(\infty,1)$-topos. An object is hypercomplete if it is local with respect to all $\infty$-connected maps.
 \end{defone}

 \begin{defone} \label{Def:Hypercompletion}
  Let $\G$ be a Grothendieck $(\infty,1)$-topos. We call the full subcategory of hypercomplete objects the {\it hypercompletion}
  and denote it by $\G^{\wedge}$.
 \end{defone}
 
 $\G^{\wedge}$ is the universal hypercomplete topos. 
 
 \begin{propone} \label{Prop:Hypercompletion Existence}
  \cite[Proposition 6.5.2.13]{Lu09}
  Let $\G$ be a Grothendieck $(\infty,1)$-topos and $\H$ be a hypercomplete Grothendieck $(\infty,1)$-topos. Then there is an equivalence 
  $$\text{Fun}_*(\H, \G^{\wedge}) \to \text{Fun}_*(\H, \G)$$
 \end{propone}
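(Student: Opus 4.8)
The statement is the universal property of hypercompletion, so the plan is to realize $\G^{\wedge}$ as a left exact localization of $\G$ and then apply the universal property of localization. First I would recall from \cite[\S 6.5.2]{Lu09} that the class $W$ of $\infty$-connected morphisms of $\G$ (Definition \ref{Def:Infinite Connected Map NNO}) gives rise to an accessible left exact localization $L \colon \G \to \G^{\wedge}$ onto the full subcategory of hypercomplete objects (Definition \ref{Def:Hypercompletion}); in particular $\G^{\wedge}$ is again a Grothendieck $(\infty,1)$-topos, the reflector $L$ together with the inclusion $i \colon \G^{\wedge} \hookrightarrow \G$ assembles into a geometric morphism $\G^{\wedge} \to \G$ whose inverse image is $L$, and precomposition with this geometric morphism is precisely the functor $\text{Fun}_*(\H,\G^{\wedge}) \to \text{Fun}_*(\H,\G)$ appearing in the statement.

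Next I would pass to inverse image functors: a geometric morphism $\H \to \G$ is the datum of a left exact, colimit-preserving functor $\G \to \H$, and similarly for $\H \to \G^{\wedge}$, and under this dictionary the functor of the proposition becomes precomposition $(-)\circ L$ from left exact cocontinuous functors $\G^{\wedge} \to \H$ to left exact cocontinuous functors $\G \to \H$. The universal property of localization says that precomposition with $L$ is fully faithful, with essential image exactly those functors $\G \to \H$ that carry every morphism in $W$ to an equivalence; hence it remains only to show that an arbitrary left exact cocontinuous $f^* \colon \G \to \H$ inverts every $\infty$-connected morphism.

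This is where the hypercompleteness of $\H$ is used. I would argue that $f^*$ carries $\infty$-connected morphisms to $\infty$-connected morphisms: an $\infty$-connected map is detected by all its iterated diagonals being effective epimorphisms (i.e. $(-1)$-connected maps), the functor $f^*$ preserves finite limits hence diagonals, and it preserves colimits hence effective epimorphisms, which are detected by their \v{C}ech nerves. Therefore $f^*$ sends an $\infty$-connected morphism of $\G$ to an $\infty$-connected morphism of $\H$, which must be an equivalence since $\H$ is hypercomplete. Thus $f^*$ inverts $W$, lies in the essential image of $(-)\circ L$, and the functor is essentially surjective; combined with the full faithfulness above this yields the asserted equivalence.

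The main obstacle I expect is not the localization bookkeeping but the two input facts: that the hypercompletion is genuinely an accessible left exact localization (so that the universal property of localization is actually available), and that left exact colimit-preserving functors preserve effective epimorphisms and hence $n$-connectivity. Both are standard and established in \cite{Lu09}, so in the present paper I would cite them rather than reprove them and devote the argument to assembling them as above.
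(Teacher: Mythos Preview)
Your argument is correct and is essentially the standard proof (the one given in \cite[Proposition 6.5.2.13]{Lu09}): realize $\G^{\wedge}$ as an accessible left exact localization at the $\infty$-connected maps, use the universal property of localization on inverse-image functors, and invoke that left exact cocontinuous functors preserve $\infty$-connectedness so that hypercompleteness of $\H$ forces the required inverting of $W$.

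However, the paper itself gives no proof of this proposition at all. It is stated purely as a citation of Lurie, used only as background in the review Subsection~\ref{Subsec:Review Filter Hypercomp} to motivate the later discussion of hypercompletion in the elementary setting (Remark~\ref{Rem:Hypercompletion EHT} and Subsection~\ref{Subsec:Non Standard Truncations}). So there is nothing to compare against: what you have written is a faithful sketch of Lurie's argument, and the paper simply defers to that reference rather than reproving it.
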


 Thus, in the theory Grothendieck $(\infty,1)$-toposes, there is a universal hypercomplete $(\infty,1)$-topos.
 
 \begin{remone} \label{Rem:Hypercompletion EHT}
  As of now, there is no analogous construction of hypercompletions of an arbitrary elementary $(\infty,1)$-topos.
  However, we do expect such a construction to exist. In particular, we expect the subcategory of hypercomplete objects 
  to have the structure of a topos again.
 \end{remone}

 We will use this observation in Subsection \ref{Subsec:Non Standard Truncations} to make the case that we need non-standard truncations.
 
\subsection{$\Los$'s Theorem for Truncations of Filter Quotients} \label{Subsec:Los Truncation}
 In this subsection we want to study truncated objects in filter quotients. 
 For that purpose we prove a version of $\Los$'s theorem for truncations. 
 Recall that $\E$ is a finitely complete and cocomplete $(\infty,1)$-category with natural number object and a universe closed 
 under finite limits and colimits (by Remark \ref{Rem:E Conditions Section on Los Theorem}). 
 \par 
 We want to characterize truncated objects in the filter quotient $\E_\Phi$.
 
 \begin{theone} \label{The:Trunc Obj filter quotient}
  Let $n: 1 \to \mathbb{N}$ be a natural number. Then an object $X$ in $\E_\Phi$ is $n$-truncated if and only if 
  there exists $U \in \Phi$ such that $X \times U$ is $n$-truncated in $\E$.
 \end{theone}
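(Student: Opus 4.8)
The plan is to unwind both sides to statements in $\E$ and match them through the functor $P_\Phi: \E \to \E_\Phi$, using the description $\E_\Phi \simeq \colim_{U \in \Phi} \E_{/U}$ as a filtered colimit. Throughout I work in the Kan enriched model, so that $\E_\Phi$ has the same objects as $\E$ and $Map_{\E_\Phi}(A,B) \simeq \colim_{U \in \Phi} Map_\E(A \times U, B \times U)$.

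First I would record that, by Theorem \ref{The:Filter Quotient EHT} together with the constructions of \cite{Ra20}, the functor $P_\Phi$ preserves finite limits and colimits, the natural number object, the universe, and internal mapping objects. Consequently it preserves the internal sphere $S^{n+1}$, which is built from the triple $(\U, \emptyset, \sum)$ by the universal property of the natural number object (Definition \ref{Def:Sn NNO}), and it preserves the exponential $X^{S^{n+1}}$ together with the canonical comparison map $e: X^{S^{n+1}} \to X$. Hence, by Definition \ref{Def:Truncation NNO}, $X$ is $n$-truncated in $\E_\Phi$ if and only if $P_\Phi(e)$ is an equivalence in $\E_\Phi$, where now $e$ denotes the corresponding map computed in $\E$.

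Next I would invoke the standard fact about filtered colimits of $(\infty,1)$-categories: for a morphism $g$ of $\E$, its image $P_\Phi(g)$ is an equivalence in $\E_\Phi$ if and only if there is some $U \in \Phi$ with $g \times \mathrm{id}_U$ an equivalence in $\E$; in the Kan enriched model this follows at once from the colimit formula for the mapping spaces, since a homotopy inverse of $P_\Phi(g)$ together with the two witnessing homotopies are all represented at a single stage of the filtered colimit. Applied to $g = e$, this says that $X$ is $n$-truncated in $\E_\Phi$ exactly when $e \times \mathrm{id}_U: X^{S^{n+1}} \times U \to X \times U$ is an equivalence in $\E$ for some $U \in \Phi$. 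Then I would prove the elementary lemma that $(X \times U)^{S^{n+1}} \simeq X^{S^{n+1}} \times U$ in $\E$, compatibly with the maps down to $X \times U$: since $(-)^{S^{n+1}}$ is a right adjoint it carries the product $X \times U$ to $X^{S^{n+1}} \times U^{S^{n+1}}$, and $U^{S^{n+1}} \simeq U$ because $U$ is $(-1)$-truncated, so every mapping space into $U$ is subterminal and the two canonical comparison maps between $U$ and $U^{S^{n+1}}$ are mutually inverse. Under this identification $e \times \mathrm{id}_U$ becomes the canonical map $(X \times U)^{S^{n+1}} \to X \times U$, so it is an equivalence in $\E$ if and only if $X \times U$ is $n$-truncated in $\E$ (Definition \ref{Def:Truncation NNO}). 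Chaining the three equivalences yields the claim.

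I expect the genuine obstacle to lie in the first step, namely verifying carefully that the filter quotient preserves the construction of the internal spheres and of the internal mapping objects — essentially that $P_\Phi$ is compatible with the locally Cartesian closed structure and with the universal property defining $S^{n+1}$ — since the remaining steps are routine once the filtered-colimit description of $\E_\Phi$ and the behaviour of subterminal objects under exponentiation are in hand.
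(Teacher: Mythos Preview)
Your proposal is correct and follows essentially the same route as the paper: reduce $n$-truncatedness to the map $X^{S^{n+1}} \to X$ being an equivalence in $\E_\Phi$, invoke the characterization of equivalences in a filter quotient (which the paper cites as \cite[Theorem 3.7]{Ra20} and you unpack via the filtered-colimit description of mapping spaces), and then use $(X \times U)^{S^{n+1}} \simeq X^{S^{n+1}} \times U$. You are simply more explicit than the paper about why $P_\Phi$ preserves the internal sphere and exponential and about why $U^{S^{n+1}} \simeq U$ for subterminal $U$; the paper takes these as consequences of Theorem \ref{The:Filter Quotient EHT} and the cited results from \cite{Ra20}.
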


 \begin{proof}
  $X$ is $n$-truncated if and only if 
  $$X^{S^{n+1}} \to X$$
  is an equivalence in $\E_\Phi$. However, by \cite[Theorem 3.7]{Ra20} this is only true if there exists $U$ in $\Phi$ such that 
  $$U \times X^{S^{n+1}} \to U \times X$$
  is an equivalence in $\E$. 
  Moreover, notice we have an equivalence $(U \times X)^{S^{n+1}} \simeq U \times X^{S^{n+1}}$. 
  Thus, this is equivalent to $U \times X$ being $n$-truncated in $\E$.
 \end{proof}
 
 One particularly interesting instance of a filter quotient is the filter product. Using the 
 previous theorem we thus immediately get following corollary, 
 which we can think of as $\Los$'s theorem for truncations in a filter product.
 
 \begin{corone} \label{Cor:Trunc Obj filter product}
  Let $\E$ be an elementary $(\infty,1)$-topos such that $1$ has two subobjects.
  Moreover, let $I$ be a set and $\Phi$ a filter on $P(I)$ and  
   fix a natural number $(a_i)_{i \in I}$ in $(\mathbb{N})_{i \in I}$.
  Then an object $(X_i)_{i \in I}$ is $(n_i)_{i \in I}$-truncated if and only if
  $$\{ i: X_i \text{ is } n_i-\text{truncated} \} \in \Phi$$
 \end{corone}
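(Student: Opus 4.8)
The plan is to realize the filter product as a filter quotient in the sense of Section \ref{Subsec:Los Truncation} and then reduce the statement to the factors, applying Theorem \ref{The:Trunc Obj filter quotient}.

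First I would make the setup precise. Since $1$ has exactly two subobjects in $\E$, the poset of subobjects of $1$ in the $I$-fold product $\E^I$ is the Boolean algebra $\{0,1\}^I\cong P(I)$; hence a filter $\Phi$ on $P(I)$ is exactly a filter on the poset of subobjects of $1$ in $\E^I$, and the filter product appearing in the statement is the filter quotient $(\E^I)_\Phi$. The product $\E^I$ inherits finite limits and colimits, local Cartesian closure, a universe closed under finite limits and colimits, and a natural number object, all computed componentwise, so that $\mathbb{N}_{\E^I}\simeq(\mathbb{N}_\E)^I$; in particular $\E^I$ satisfies the conditions of Remark \ref{Rem:E Conditions Section on Los Theorem}, and the given $(n_i)_{i\in I}$ is precisely a global point of $\mathbb{N}_{\E^I}$.

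Next I would carry out the componentwise reduction. Because the internal spheres $S^{n}$ are built from $(\U,\emptyset,\Sigma)$ using only finite colimits and the universe, and because internal mapping objects and equivalences in $\E^I$ are detected componentwise, an object $(X_i)_{i\in I}$ is $(n_i)_{i\in I}$-truncated in $\E^I$ in the sense of Definition \ref{Def:Truncation NNO} if and only if each $X_i$ is $n_i$-truncated in $\E$. Now apply Theorem \ref{The:Trunc Obj filter quotient}: $(X_i)$ is $(n_i)$-truncated in $(\E^I)_\Phi$ if and only if there is $U\in\Phi$ such that $(X_i)\times U$ is $(n_i)$-truncated in $\E^I$. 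A subobject $U\in\Phi$ of $1$ in $\E^I$ is the family $(U_i)_{i\in I}$ with $U_i=1$ for $i\in U$ and $U_i=\emptyset$ for $i\notin U$, so $(X_i)\times U=(X_i\times U_i)_i$, where $X_i\times U_i\simeq X_i$ when $i\in U$ and $X_i\times U_i\simeq\emptyset$ when $i\notin U$. Since $\emptyset$ is $n$-truncated whenever $S^{n+1}$ has a global point (so that $\emptyset^{S^{n+1}}\simeq\emptyset$), the condition that $(X_i\times U_i)_i$ be $(n_i)$-truncated in $\E^I$ is equivalent to $U\subseteq V$, where $V:=\{\,i: X_i\text{ is }n_i\text{-truncated in }\E\,\}$.

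Finally I would close with elementary filter bookkeeping: the existence of $U\in\Phi$ with $U\subseteq V$ is equivalent to $V\in\Phi$, the forward direction being upward closure of $\Phi$ and the converse taking $U=V$. Chaining these equivalences yields the claim. The part I expect to require the most care is not the filter combinatorics but the bookkeeping behind the componentwise reduction: verifying that the universe, the natural number object, and hence the internal sphere construction of $\E^I$ are genuinely computed in each factor (so that Definition \ref{Def:Truncation NNO} localizes), together with the treatment of the empty components $X_i\times\emptyset$, where one must keep track of the degenerate bottom level $-2$ of Notation \ref{Not:Truncations Neg Two}.
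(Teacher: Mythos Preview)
Your proposal is correct and follows exactly the route the paper intends: the corollary is stated immediately after Theorem~\ref{The:Trunc Obj filter quotient} with no proof beyond the remark that it follows from that theorem applied to the filter product, which is precisely the filter quotient $(\E^I)_\Phi$ you describe. Your write-up is in fact more careful than the paper, spelling out the identification $\Sub_{\E^I}(1)\cong P(I)$, the componentwise computation of $\mathbb{N}$, spheres, and internal homs in $\E^I$, and the upward-closure step; the caution you flag about the degenerate level $-2$ is a genuine edge case that the paper simply elides.
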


 \begin{exone}
  We can take our elementary $(\infty,1)$-topos to be the $(\infty,1)$-category of spaces. Then the theorem above 
  gives us a useful criterion to determine truncated objects in the filter product $\prod_\Phi \s$.
 \end{exone}
 
 We want to use this observation to prove a hypercompleteness (Definition \ref{Def:Hypercomplete}) result about filter products.
 
 \begin{theone} \label{The:Filter Product Hypercomplete}
  Let $\E$ be a hypercomplete $(\infty,1)$-topos such that $1$ has two subobjects and $\Phi$ is a filter on $I$. 
  Then $\prod_\Phi \E$ is hypercomplete.
 \end{theone}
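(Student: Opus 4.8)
The plan is to reduce the statement to a \Los-type theorem for $n$-connected maps and then exploit the fact that the natural number object of $\prod_\Phi \E$ has, in general, non-standard global sections. Recall that $\prod_\Phi \E$ is the filter quotient of $\E^I$ with respect to $\Phi$; since $1$ has two subobjects in $\E$, we have $\Sub(1_{\E^I}) \cong P(I)$, and all of the structure of $\E^I$ (finite limits and colimits, internal homs, subobject classifier, universes, natural number object $\mathbb{N}_{\E^I}$) is computed componentwise, so that a map in $\E^I$ is an equivalence, respectively $n$-connected, precisely when each component is. After restricting $I$ to an element of $\Phi$ (which affects none of the statements below), a morphism $f \colon X \to Y$ in $\prod_\Phi \E$ is represented by a family $(f_i \colon X_i \to Y_i)_{i \in I}$, and it suffices to show: if $(f_i)_{i \in I}$ becomes $\infty$-connected in $\prod_\Phi \E$, then it becomes an equivalence there.

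The main tool I would set up is a \Los\ theorem for connectivity, the analogue of Corollary \ref{Cor:Trunc Obj filter product}: for a global section $(n_i)_{i \in I}$ of $\mathbb{N}_{\E^I}$ with image $\bar n$ in $\prod_\Phi \E$, the map $(f_i)$ is $\bar n$-connected if and only if $\{\, i : f_i \text{ is } n_i\text{-connected in } \E \,\} \in \Phi$. To prove this, use that $(f_i)$ is $\bar n$-connected iff $\tau_{\bar n}((f_i)) \to Y$ is an equivalence in $\prod_\Phi \E$ (the relative form of Proposition \ref{Prop Conn iff Tau Final Object}), together with the fact that $P_\Phi$ commutes with the internal $n$-truncation functor: by Corollary \ref{Cor:Truncation Functor Construction} the $n$-truncation is the internal right Kan extension $\T_{\U^{\leq n}}$ associated to the sub-universe of $n$-truncated objects (Proposition \ref{Prop:Sub Universe of N Trunc Obj}), and $P_\Phi$ creates universes and preserves all the constructions entering this description (internal homs, the dependent product $\Pi_{\U^{\leq n}}$, pullbacks, $(-1)$-truncations) by Theorem \ref{The:Filter Quotient EHT}. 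Hence $\tau_{\bar n}((f_i)) \simeq P_\Phi\big((\tau_{n_i}(f_i))_i\big)$, and by \cite[Theorem 3.7]{Ra20} this map is an equivalence in $\prod_\Phi \E$ iff $\{\, i : \tau_{n_i}(f_i) \to Y_i \text{ is an equivalence} \,\} \in \Phi$, i.e.\ iff $\{\, i : f_i \text{ is } n_i\text{-connected} \,\} \in \Phi$ by Proposition \ref{Prop Conn iff Tau Final Object} again.

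Granting this, the theorem follows quickly. Assume $(f_i)$ is $\infty$-connected in $\prod_\Phi \E$. For each $i$: if $f_i$ is not an equivalence in $\E$, then by hypercompleteness of $\E$ it is not $\infty$-connected in $\E$, so choose a natural number $m_i$ of $\E$ for which $f_i$ is not $m_i$-connected; if $f_i$ is an equivalence, set $m_i$ to be $0$, so $f_i$ is $m_i$-connected. In all cases $f_i$ is $m_i$-connected iff $f_i$ is an equivalence, so $\{\, i : f_i \text{ is } m_i\text{-connected} \,\} = \{\, i : f_i \text{ is an equivalence} \,\}$. Now $(m_i)_{i \in I}$ is a global section of $\mathbb{N}_{\E^I}$, so $\bar m := P_\Phi((m_i)_i)$ is a natural number of $\prod_\Phi \E$ — possibly non-standard, which is exactly why the quantifier in Definition \ref{Def:Infinite Connected Map NNO} reaches it. Since $(f_i)$ is $\infty$-connected it is $\bar m$-connected, so the \Los\ theorem for connectivity gives $\{\, i : f_i \text{ is an equivalence} \,\} \in \Phi$, and by \cite[Theorem 3.7]{Ra20} this means $(f_i)$ is an equivalence in $\prod_\Phi \E$. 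Hence every $\infty$-connected map in $\prod_\Phi \E$ is an equivalence, i.e.\ $\prod_\Phi \E$ is hypercomplete.

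The step I expect to be the main obstacle is establishing that $P_\Phi$ commutes with the internal $n$-truncation functor for every, possibly non-standard, natural number $n$: since the truncation functor is not produced by a small-object argument but as the internal right Kan extension of Subsection \ref{Subsec:Constructing Localizations via Universes}, one must check carefully that each ingredient of that construction — the sub-universe $\U^{\leq n}$, the internal mapping objects, and $\Pi_{\U^{\leq n}}$ — is preserved by $P_\Phi$, which ultimately rests on Theorem \ref{The:Filter Quotient EHT}. Everything else is a routine translation through \cite[Theorem 3.7]{Ra20} and the componentwise description of $\E^I$.
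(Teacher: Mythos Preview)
Your proposal is correct and follows essentially the same strategy as the paper: construct, using hypercompleteness of $\E$, a possibly non-standard natural number $(m_i)_{i\in I}$ that witnesses the failure of each non-equivalence $f_i$, and then invoke a \L o\'s-type statement to transfer this to $\prod_\Phi\E$. The paper argues by contrapositive (if $f$ is not an equivalence, exhibit an $(n_i)$ with $\tau_{(n_i)}(f)$ not an equivalence), whereas you argue directly and phrase the \L o\'s step in terms of $n$-connectedness rather than $\tau_n(f)$ being an equivalence; these are equivalent packagings of the same idea. Your treatment is in fact more careful than the paper's on the one genuinely delicate point: the paper tacitly uses that the truncation in $\prod_\Phi\E$ is computed componentwise, while you explicitly identify this as the main obstacle and outline why $P_\Phi$ preserves the internal right Kan extension $\T_{\U^{\leq n}}$ via Theorem~\ref{The:Filter Quotient EHT}.
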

 
 \begin{proof}
  We want to prove that if $f$ is not an equivalence in $\prod_\Phi \E$, then there exists a truncation level $(n_i)_{i \in I}$ such that 
  $\tau_{{(n_i)}_{i \in I}}(f)$ is not an equivalence as well. By $\Los$'s theorem for equivalences if $f$ is not an equivalence then 
  $$\W =\{ i \in I : f_i \text{ is an equivalence} \} \not\in \Phi$$.
  Let $\{n_i\}_{i \in I}: \{1\}_{i \in I} \to \{\mathbb{N}\}_{i \in I}$ be the natural number defined as follows:
  $$
  \{n_i\}_{i \in I} =
  \begin{cases}
   -2 & \text{ if } i \in \W \\
   a_i & \text{ if } i \not\in \W
  \end{cases}
 $$
 where $a_i: 1 \to \mathbb{N}$ is a truncation level such that $\tau_{a_i}(f_i)$ is not an equivalence, which exists by the assumption 
 that $\E$ is hypercomplete.
 \par 
 We want to prove that $\tau_{n_i}(f_i)$ is not an equivalence in $\prod_\Phi \E$. For that it suffices to note that 
 $$\{ i \in I : f_i \text{ is an equivalence } \} = \{ i \in I : \tau_{n_i}(f_i) \text{ is an equivalence } \} \not\in \Phi$$
 This finishes the proof.
 \end{proof}

\subsection{A Case for Non-Standard Truncation Levels} \label{Subsec:Non Standard Truncations}
 In this subsection we want to focus on one specific filter product, namely the filter product $\prod_\Phi \Kan$ where 
 $\Phi$ is the {\it Fr{\'e}chet filter} on $\mathbb{N}=\{ 0 , 1 , 2 , ... \}$. We will give a very broad description of $\prod_\Phi \Kan$ and
 refer the reader to \cite[Subsection 3.2]{Ra20} for further details.
 \par 
 Recall that $\prod_\Phi \Kan$ is a Kan enriched category with objects $(X_n)_{n \in \mathbb{N}}$ sequences of Kan complexes 
 and morphisms 
 $$Map_{(\Kan^\mathbb{N})_\Phi}((X_n)_{n \in \mathbb{N}},(Y_n)_{n \in \mathbb{N}}) =
  \left[ \coprod_{m \in \mathbb{N}} Map_{\Kan^{\mathbb{N}_{\geq m}}}((X_n)_{n \in \mathbb{N}_{\geq m}}, (Y_n)_{n \in \mathbb{N}_{\geq m}}) 
  \right]/ \sim $$
  where for two maps $f,g$ we have 
  $$ f \sim g \Leftrightarrow \exists N \ (\forall n > N) \ (f_n = g_n)$$
 Here $\mathbb{N}_{\geq m} = \{ m , m+1 , m+2 , ... \}$. Using the results of \cite{Ra20} stated in the previous subsection, we get following results:
 \begin{enumerate}
  \item The natural number object is just the constant sequence of natural numbers $(\mathbb{N})_{n \in \mathbb{N}}$.
  \item A natural number (Definition \ref{Def:Natural Number})
 is then just a sequence class $[(a_n)_{n \in \mathbb{N}}]$, where two sequences are in the same class if and only if they are eventually equal.
 \item The standard natural numbers correspond to the class of eventually constant sequences.
 \item Each such class $[(a_n)_{n \in \mathbb{N}}]$ gives us a truncation level and truncation functor 
 $\tau_{[(a_n)_{n \in \mathbb{N}}]}$ (Theorem \ref{Cor:Truncation Functor Construction}).
 \item $\prod_\Phi \Kan$ is hypercomplete (Theorem \ref{The:Filter Product Hypercomplete}). 
 \end{enumerate}

 We want to demonstrate the necessity of non-standard natural numbers and their truncation levels. 
 For the purposes of this example we need to distinguish between two different notions of $\infty$-connected maps:
 \begin{enumerate}
  \item We say $f$ is {\it Grothendieck $\infty$-connected} if it is $n$-connected only for the standard natural numbers.
  \item We say $f$ is {\it elementary $\infty$-connected} if it is $n$-connected for all natural numbers. 
 \end{enumerate}
 Notice this terminology is \underline{non-standard} and we will {\bf only} use it for this example in this subsection. 
 \par 
 As $\prod_\Phi \Kan$ is hypercomplete, every elementary $\infty$-connected map is necessarily an equivalence.
 However, this does not hold for Grothendieck $\infty$-connected maps.
 First, for a standard natural number $k \in \{ -2, -1, ... \}$ an object $(Z_n)_{n \in \mathbb{N}}$ is $k$-truncated
 if there exists $N$ such that for all $n>N$ $Z_n$ is a $k$-truncated space (Corollary \ref{Cor:Trunc Obj filter product}). 
 \par 
 Thus, a map $f: (X_n)_{n \in \mathbb{N}} \to (Y_n)_{n \in \mathbb{N}}$ is Grothendieck $\infty$-connected if for all 
 $k \in \{-2,-1,0,... \}$ there exists a constant $N_k$ such that for all $n > N_k$
 $f$ is $k$-connected. In particular, $\prod_\Phi \Kan$ has Grothendieck $\infty$-connected maps that are not equivalences, 
 for example the sequence $(S^n)_{n \in \mathbb{N}}$.
 \par 
 As we explained in Subsection \ref{Subsec:Review Filter Hypercomp}, in the theory of Grothendieck $(\infty,1)$-toposes, whenever we have a 
 topos that is not hypercomplete, there is a universal 
 way to localize the Grothendieck $(\infty,1)$-topos to make it hypercomplete, called the {\it hypercompletion}
 \cite[Proposition 6.5.2.13]{Lu09}.
 Thus, if we only use the standard truncation levels for $\prod_\Phi \Kan$, we end up with a non-hypercomplete $(\infty,1)$-topos
 and we would expect a hypercompletion topos. We claim that such a hypercompletion cannot exist running contrary to our intuition 
 about $(\infty,1)$-topos theory. 
 \par 
 Concretely, the objects of the hypercompletion would have to all be sequences of spaces $(X_n)_{n \in \mathbb{N}}$ that are local 
 with respect to all Grothendieck $\infty$-connected maps. These exactly correspond to the sequences which are eventually stably 
 truncated, meaning there exists a standard natural number $k \in \{-2,-1,0,... \}$ and $N$ such that for all 
 $n >N$ $X_n$ is $k$-truncated. As we will demonstrate this collection of objects does not even have finite colimits.
 \par 
 Denote the full subcategory of these local objects by $(\prod_\Phi \Kan)^{tr}$. The object $(S^1)_{n \in \mathbb{N}}$ is clearly 
 in this subcategory as it is $1$-truncated at every level. Let us assume the suspension $\Sigma (S^1)_{n \in \mathbb{N}}$ exists, 
 which we denote by $(C_n)_{n \in \mathbb{N}}$.
 Fix another object $(Y_n)_{n \in \mathbb{N}}$ and notice by assumption there exists a $k \in \{-2,-1,0,... \}$
 such that $(Y_n)_{n \in \mathbb{N}}$ is eventually $k$-truncated. Notice the subcategory $\tau_k \Kan$ is finitely complete 
 and cocomplete 
 and so we get a finitely complete and cocomplete filter product $\prod_\Phi \tau_k\Kan$, 
 which is a full subcategory of $(\prod_\Phi \Kan)^{tr}$. As $\tau_kS^2$ is the suspension of $S^1$ in $\tau_k\Kan$, we have
 $\Sigma (S^1)_{n \in \mathbb{N}} \simeq (\tau_kS^2)_{n \in \mathbb{N}}$ in $\prod_\Phi \tau_k\Kan$.
 \par 
 Now we have following chain of equivalences
 $$ Map_{(\prod_\Phi \Kan)^{tr}}((C_n)_{n \in \mathbb{N}},(Y_n)_{n \in \mathbb{N}}) = $$
 $$Map_{\prod_\Phi \tau_k\Kan}((C_n)_{n \in \mathbb{N}},(Y_n)_{n \in \mathbb{N}}) \simeq 
 Map_{\prod_\Phi \tau_k\Kan}((\tau_kS^2)_{n \in \mathbb{N}},(Y_n)_{n \in \mathbb{N}}) $$
 This implies that if the colimit exists, then $(\tau_kC_n)_{n \in \mathbb{N}} \simeq (\tau_kS^2)_{n \in \mathbb{N}}$
 for all $k \in \mathbb{N}$. But $(C_n)_{n \in \mathbb{N}}$ is in $(\prod_\Phi \Kan)^{tr}$ and thus must be eventually 
 truncated, which would only be possible if $S^2$ is truncated, which is not true \cite{Gr69}.
 This implies that such a $(C_n)_{n \in \mathbb{N}}$ cannot exist.
 \par
 What we just showed is that if we only use the standard truncation levels, not only do we get a topos that is not hypercomplete 
 (which could be reasonable), but get a topos which cannot be hypercompleted, which runs contrary to our understanding 
 of $(\infty,1)$-topos theory. It is only with the non-standard natural numbers that we get a well-behaved $(\infty,1)$-topos.
 \par 
 Notice that in a Grothendieck $(\infty,1)$-topos (or even elementary 
 $(\infty,1)$-topos with countable colimits) the natural number object is necessarily standard and so there was never any need to distinguish 
 between standard and non-standard truncation levels.
 \par 
 Based on this example, we end this section with an observation which naturally leads to a question.
 If $\Phi$ is a non-principal ultrafilter on a set $I$, then the ultraproduct $\prod_\Phi \Kan$ has some semblance to the 
 $(\infty,1)$-category of spaces \cite[Section 4]{Ra20}, but does not necessarily have infinite colimits and thus can have 
 non-standard natural numbers, which leads to non-standard truncations. 
 This suggests following question:
 
 \begin{queone}
  Can we use ultraproducts to construct non-standard models of spaces which have non-standard truncation levels?  
 \end{queone}

\section{Free Universes} \label{Sec:Free Universes}
 In our definition of an elementary $(\infty,1)$-topos (Definition \ref{Def:EHT}), one key condition was the existence of a universe, 
 meaning that for every $f: A \to B$ there exists a universe $\U$ and pullback 

\begin{center}
 \pbsq{A}{\U_*}{B}{\U}{}{f}{}{}
\end{center}

However, the map $B \to \U$ that is classifying $f$ is generally not universal in any sense. 
\par 
In this section we want to prove that using $(-1)$-truncations we can associate a {\it free universe} to $f$. 
The main result of this section is Theorem \ref{The:Smallest Universe EHT}, which proves that under suitable conditions 
on $\E$, we can always make our choice of universe universal and functorial.

\begin{remone}
 This section only depends on Section \ref{Sec:The Join Construction} and concretely the construction of $(-1)$-truncations 
 (Definition \ref{Def:Neg One Truncation}, Theorem \ref{The Neg One Adj}).
 However, we make extensive use of notation introduced in Appendix \ref{Sec:HTT}.
\end{remone}

 \begin{remone} \label{Rem:E Conditions Free Universe Section}
   In this section $\E$ is an $(\infty,1)$-category that satisfies following conditions:
   \begin{enumerate}
    \item It has finite limits and colimits.
    \item It is locally Cartesian closed (Definition \ref{Def:LCCC}).
    \item One of the following conditions hold:
    \begin{enumerate}
     \item There exists a $(-1)$-truncation functor (Definition \ref{Def:Neg One Truncation}).
     \item It has a subobject classifier (Definition \ref{Def:SOC}), (Theorem \ref{Cor:Neg One Trunc via SOC})
     \item It has sufficient universes (Definition \ref{Def:Sufficient Universes} closed under 
     finite limits and colimits (Definition \ref{Def:Closed Universes}) (Theorem \ref{The Neg One Adj})
    \end{enumerate}
     which all imply that there is a $(-1)$-truncation functor.
   \end{enumerate}
 \end{remone}
  
 The key observation that makes the choice of universal universe possible is the following observation from \cite{Ra18a}:
 
 \begin{theone}[\cite{Ra18a}]
  Let $p_\U: \U_* \to \U$ be a univalent universe and $i: \U' \hookrightarrow \U$ a $(-1)$-truncated map.
  Then the pullback $i^*p_\U: i^*\U_* \to \U'$ is univalent as well. 
 \end{theone}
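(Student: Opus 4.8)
The plan is to show that $i^* p_\U$ is an object classifier for a suitable pullback-stable class of maps, since that is exactly what it means to be a univalent universe. Recall (from \cite{Ra18b}) that $p_\U$ being a univalent universe classifying a class $S$ of maps amounts to: $S$ is closed under base change, and for every object $X$ the assignment $\chi \mapsto \chi^* p_\U$ is an equivalence $Map(X,\U) \xrightarrow{\ \simeq\ } (\E^S_{/X})^{core}$, natural in $X$; in particular the classifying map of a map in $S$ is essentially unique, so $\chi \mapsto \chi^* p_\U$ is a $(-1)$-truncated map of spaces into $(\E_{/X})^{core}$.

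First I would identify the candidate class. Since $i$ is $(-1)$-truncated it is mono by Proposition \ref{Prop Conn Eq Mono}, so factoring a map $X \to \U$ through $i$ is a \emph{property}. Let $S'$ be the class of those $f\colon Y \to X$ in $S$ whose classifying map $\ulcorner f\urcorner\colon X \to \U$ factors through $i$; equivalently, $f \simeq \psi^*(i^* p_\U)$ for some $\psi\colon X \to \U'$. This class is closed under base change: if $\ulcorner f\urcorner = i\circ\psi$ and $g\colon X' \to X$ is arbitrary, then $g^* f$ is classified by $\ulcorner f\urcorner\circ g = i\circ(\psi\circ g)$, so $g^* f \in S'$, and moreover $g^* f \simeq (\psi g)^*(i^* p_\U)$. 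Hence $S'$ is a legitimate class of maps, and by construction $i^* p_\U$ classifies precisely the maps in $S'$ over each base.

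Next I would establish the classifying equivalence. For each $X$ the map $\psi \mapsto \psi^*(i^* p_\U) \simeq (i\psi)^* p_\U$ factors as
$$Map(X,\U') \xrightarrow{\ i_*\ } Map(X,\U) \xrightarrow{\ \simeq\ } (\E^S_{/X})^{core},$$
where the right-hand arrow is the univalence equivalence for $p_\U$. Since $i$ is mono, $i_*$ is a monomorphism of spaces (apply $Map(X,-)$ to $\U' \xrightarrow{\sim} \U'\times_{\U}\U'$). A composite of $(-1)$-truncated maps of spaces is again $(-1)$-truncated (it is a composite of inclusions of connected components), so the displayed composite is a $(-1)$-truncated map whose essential image is, by the definition of $S'$, exactly $(\E^{S'}_{/X})^{core}$. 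Therefore $Map(X,\U') \xrightarrow{\ \simeq\ } (\E^{S'}_{/X})^{core}$, naturally in $X$, which is precisely the statement that $i^* p_\U$ is a univalent universe classifying $S'$.

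The one point that requires care — and the main obstacle — is the translation between the "internal" formulation of univalence of $p_\U$ as a fibration and the "external" reformulation on mapping spaces used above, together with the coherent naturality in $X$ and the verification that $S'$ meets the precise axioms demanded of the class classified by a universe in \cite{Ra18b}. If one instead works with the internal formulation (that $\U \to \mathrm{Eq}(p_\U)$ is an equivalence over $\U\times\U$), the same proof runs by pulling back this equivalence along $i\times i\colon \U'\times\U' \to \U\times\U$: using that $i$ is mono one checks $(i\times i)^*\Delta_\U \simeq \Delta_{\U'}$ and $(i\times i)^*\mathrm{Eq}(p_\U) \simeq \mathrm{Eq}(i^* p_\U)$ — equivalences between fibres are computed fibrewise and $i^* p_\U$ has the same fibres as $p_\U$ — and since pullback preserves equivalences this yields $\U' \xrightarrow{\ \simeq\ } \mathrm{Eq}(i^* p_\U)$ over $\U'\times\U'$, i.e. univalence of $i^* p_\U$.
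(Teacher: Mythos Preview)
The paper does not prove this statement at all: it is quoted verbatim from \cite{Ra18a} and used as a black box in the subsequent construction of free universes. So there is no ``paper's own proof'' to compare against.

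That said, your argument is sound. Both formulations you give are standard and correct: the external one (restricting the classifying equivalence $Map(X,\U)\simeq(\E^S_{/X})^{core}$ along the mono $i_*$ to identify $Map(X,\U')$ with the full subspace of maps whose classifier factors through $\U'$) and the internal one (pulling back the equivalence $\U\xrightarrow{\simeq}\mathrm{Eq}(p_\U)$ along $i\times i$, using that $(i\times i)^*\Delta_\U\simeq\Delta_{\U'}$ since $i$ is mono and that equivalence objects are computed fibrewise). The only caveat worth flagging is your claim that ``$i^*p_\U$ has the same fibres as $p_\U$'': more precisely, the fibres over points of $\U'$ agree, and that together with the pullback-stability of the equivalence-object construction is what gives $(i\times i)^*\mathrm{Eq}(p_\U)\simeq\mathrm{Eq}(i^*p_\U)$. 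With that clarification the internal version goes through cleanly and is in fact closer to how \cite{Ra18a} treats univalence.
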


 Thus, for a given morphism $f: A \to B$, we can form following $(-1)$-factorization diagram:

\begin{center}
 \begin{tikzcd}[row sep=0.5in, column sep=0.5in]
  A \arrow[d] \arrow[r] \arrow[dr, phantom, "\ulcorner", very near start] & 
  \U_* \times_\U \tau_{-1}^\U(B) \arrow[r] \arrow[dr, phantom, "\ulcorner", very near start] \arrow[d] & \U_* \arrow[d] \\
  B \arrow[r, twoheadrightarrow] & \tau_{-1}^\U(B) \arrow[r, hookrightarrow] & \U
 \end{tikzcd}
\end{center}

 So, the map $B \to \tau_{-1}^\U(B)$ is the ``smallest" map to a universe that classifies $f: A \to B$, in the sense
 that it is the unique $(-1)$-connected map that classifies $f$. We can thus think of $\tau_{-1}^\U(B)$ as the 
 free universe generated by $f: A \to B$. 
 \par 
 The goal of this section is to show that we can make this construction functorial i.e. we will construct a functor 
 $$\Fr_\U: \O_\E^{(S)} \to \E_{/ \Sub(\U)}$$
 that assigns every morphism $A \to B$ to the map $B \to \tau_{-1}^\U(B)$.
  
 \begin{defone}
  Let $F$ be the functor defined as the composition
  $$F : \E_{/ \U} \xrightarrow{ \ u \ } \Arr(\E_{/ \U}) \xrightarrow{ \ \pi \ } \Arr( \E )$$
  where $u$ is the unit natural transformation that takes an object $X \to \U$ to the object $X \to \tau_{-1}^\U(X) \to \U$
  in $\Arr(\E_{/ \U})$ and $\pi$ is just the projection. 
 \end{defone}
 
 Thus, $F$ takes an object $X \to \U$ to $X \to \tau_{-1}^{\U}(X)$. Notice that by definition $\tau_{-1}^{\U}(X)$ is a 
 subobject of $\U$. We can use this information to refine our construction.
 \par 
 Let $\Sub(\U) = \tau_{-1}(\E_{/\U})$ be the collection of subobjects of $\U$. 
 Then the functor $F$ factors as follows
 
 \begin{center}
  \begin{tikzcd}[row sep=0.5in, column sep=0.5in]
   \E_{/ \U} \arrow[r, "F"] \arrow[dr, dashed, "F"'] & \Arr(\E) \\
    & \Arr(\E) \underset{\E}{\times} \Sub(\U) \arrow[u, hookrightarrow]
  \end{tikzcd}
 \end{center}

 \begin{lemone}
  The functor 
  $$F: \E_{/\U} \to \Arr(\E) \underset{\E}{\times} \Sub(\U)$$
  is an embedding.
 \end{lemone}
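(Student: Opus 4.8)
The plan is to prove that $F$ is fully faithful; since a fully faithful functor is precisely an embedding (it is the composite of an equivalence with a full subcategory inclusion, namely the inclusion of its essential image), this establishes the lemma. Recall from the discussion above that $\tau_{-1}^\U$ exists under the standing hypotheses, that $\Sub(\U) = \tau_{-1}(\E_{/\U})$ consists of the $(-1)$-truncated objects of $\E_{/\U}$, i.e.\ the subobjects of the terminal object $\mathrm{id}_\U$, and that $F$ sends $\bar X = (X \to \U)$ to the pair $(\eta_X \colon X \to \tau_{-1}^\U(X),\ \tau_{-1}^\U(X)\hookrightarrow\U)$, where $\eta_X$ is the unit.

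Fix objects $\bar X = (X\to\U)$ and $\bar Y = (Y\to\U)$ of $\E_{/\U}$. The first step is to unwind the mapping space in $\Arr(\E)\underset{\E}{\times}\Sub(\U)$. Since this category is the pullback of the target fibrations $\Arr(\E)\to\E\leftarrow\Sub(\U)$, a morphism $F\bar X\to F\bar Y$ is a commutative square from $\eta_X$ to $\eta_Y$ in $\Arr(\E)$ whose bottom arrow $\tau_{-1}^\U(X)\to\tau_{-1}^\U(Y)$ is, in addition, a morphism of subobjects of $\U$. Writing out the square gives
$$Map_{\Arr(\E)\underset{\E}{\times}\Sub(\U)}(F\bar X,F\bar Y)\ \simeq\ Map_\E(X,Y)\underset{Map_\E(X,\tau_{-1}^\U Y)}{\times}Map_{\E/\U}(\tau_{-1}^\U X,\tau_{-1}^\U Y),$$
where $Map_\E(X,Y)\to Map_\E(X,\tau_{-1}^\U Y)$ is postcomposition with $\eta_Y$ and $Map_{\E/\U}(\tau_{-1}^\U X,\tau_{-1}^\U Y)\to Map_\E(X,\tau_{-1}^\U Y)$ forgets to $\E$ and then precomposes with $\eta_X$ (one checks this composite indeed lands in the $\E_{/\U}$-maps, but only the underlying map is recorded).

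Next I would feed in the universal property of the $(-1)$-truncation. As $\tau_{-1}^\U(Y)$ is a $(-1)$-truncated object of $\E_{/\U}$ and $\eta_X$ reflects $\bar X$ into $\Sub(\U)$, precomposition with $\eta_X$ is an equivalence $Map_{\E/\U}(\tau_{-1}^\U X,\tau_{-1}^\U Y)\xrightarrow{\ \simeq\ }Map_{\E/\U}(X,\tau_{-1}^\U Y)$; and $Map_{\E/\U}(X,\tau_{-1}^\U Y)$ is by definition the fibre of $Map_\E(X,\tau_{-1}^\U Y)\to Map_\E(X,\U)$ over $\ulcorner\bar X\urcorner$, a $(-1)$-truncated inclusion of spaces because $\tau_{-1}^\U(Y)\hookrightarrow\U$ is $(-1)$-truncated (Lemma \ref{Lemma Yoneda Truncation Cond}). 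Substituting into the pullback above collapses it to
$$Map_{\Arr(\E)\underset{\E}{\times}\Sub(\U)}(F\bar X,F\bar Y)\ \simeq\ Map_\E(X,Y)\underset{Map_\E(X,\U)}{\times}\{\ulcorner\bar X\urcorner\},$$
where $Map_\E(X,Y)\to Map_\E(X,\U)$ now sends $a$ to $(\tau_{-1}^\U Y\hookrightarrow\U)\circ\eta_Y\circ a = \ulcorner\bar Y\urcorner\circ a$, using that $\eta_Y$ followed by $\tau_{-1}^\U(Y)\hookrightarrow\U$ is the classifying map $\ulcorner\bar Y\urcorner$. The right-hand side is exactly $Map_{\E/\U}(\bar X,\bar Y)$, and tracing the identifications shows the resulting equivalence is the one induced by $F$, so $F$ is fully faithful. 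For completeness one may record that the essential image of $F$ is the full subcategory of $\Arr(\E)\underset{\E}{\times}\Sub(\U)$ on pairs $(c\xrightarrow{q}V,\ V\hookrightarrow\U)$ with $q$ a $(-1)$-connected map, by uniqueness of the $(-1)$-connected/$(-1)$-truncated factorization of a map to $\U$ (Corollary \ref{Cor Conn is Trunc}). The main point to handle with care is the first step — computing the mapping space of the fibre-product category and bookkeeping which maps must live over $\U$ — together with checking that the two fibre inclusions ($Map_{\E/\U}$ inside $Map_\E$, and the fibre of $Map_\E(X,Y)\to Map_\E(X,\U)$) are $(-1)$-truncated, so that the homotopy pullbacks really do collapse to subspaces; the rest is formal from the universal property of $\tau_{-1}^\U$.
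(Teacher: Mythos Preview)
Your proof is correct and follows essentially the same strategy as the paper: compute the mapping space in the fibre product $\Arr(\E)\times_\E\Sub(\U)$ as a pullback, then use the $(-1)$-truncation adjunction and the fact that mapping spaces into subobjects are $(-1)$-truncated to collapse it to $Map_{\E/\U}(\bar X,\bar Y)$. Your version is in fact more carefully written than the paper's, which compresses the same argument (and appears to contain a typo in its displayed pullback formula); your explicit bookkeeping of which maps live over $\U$ and your use of the factorisation $\ulcorner\bar Y\urcorner = (\tau_{-1}^\U Y\hookrightarrow\U)\circ\eta_Y$ make the collapse transparent.
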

 
 \begin{proof}
  Let $X \to \U$, $Y \to \U$ be two objects. We need to prove the map 
  $$map_{/\U}(X, Y) \to map_{\Arr(\E) \underset{\E}{\times} \Sub(\U)}(X \to \tau_{-1}^\U(X), Y \to \tau_{-1}^\U(Y))$$
  is an equivalence of spaces. 
  However right hand side is equivalent to the space 
  $$map_{\Sub(\U)}(\tau_{-1}^\U(X),\tau_{-1}^\U(Y)) \underset{map_\E(X,\U)}{\times} map_\E(Y,\U)$$
  the space $map_{\Sub(\U)}(\tau_{-1}^\U(X),\tau_{-1}^\U(Y))$ is contractible as $\Sub(\U)$ is a $0$-category.
  In particular the projection map 
  $$map_{\Arr(\E) \underset{\E}{\times} \Sub(\U)}(X \to \tau_{-1}^\U(X), Y \to \tau_{-1}^\U(Y)) \xrightarrow{ \ \simeq \ }
  *  \underset{map_\E(X,\U)}{\times} map_\E(Y,\U) = map_{/\U}(X,Y)$$
  is an equivalence, 
  which is an inverse to the map above and proves it is an equivalence as well.
 \end{proof}
 
 \begin{defone}
  Let $\E_{/ \Sub(\U)}$ be the essential image of $F$ in $\Arr(\E) \times_\E \Sub(\U)$.
  By the lemma above, the functor $F: \E_{/ \U} \to \E_{/ \Sub(\U))}$ is an equivalence. 
 \end{defone}
 
 We can describe the essential image as the subcategory of $\Arr(\E) \times_\E \Sub(\U)$ with objects 
 $(-1)$-connected maps.
 Fix an equivalence $G: \O_\E^{(S)} \to \E_{/\U}$ (using the fact that $\U$ is a universe).
 
 \begin{defone}
  Let 
  $$\Fr_\U : \O_{\E}^{(S)} \to \E_{/ \Sub(\U)}$$
  be the composition $FG$. 
 \end{defone}
  
 \begin{remone}
  Let $f: B \to A$ be an object in $\O_{\E}^{(S)}$. Then $\Fr_\U(f): B \to \tau^\U ( \ulcorner  f \urcorner )$.
  We will often abuse notation and denote the target of this morphism by $\Fr_\U(f)$.
  So, in particular we get a map $B \to \Fr_\U(f)$.
 \end{remone}
 
 Up to here we worked with a fixed universe. We want to generalize $\Fr_\U$ by taking colimits.

 \begin{remone}
  Recall from Definition \ref{Def:Univ} that $\Univ$ is the $0$-category of universal fibrations with codomain being universes.
 \end{remone}

 \begin{defone}
  Let $\E_{/ \Univ}$ be the subcategory of $\Arr(\E) \times_\E \Univ$, with objects 
  $(-1)$-connected map $A \to \U$.
 \end{defone}

 Notice, if $\E$ has sufficient universes we have an equivalence 
 $$\E_{/ \Univ} \simeq \underset{\U \in \Univ}{\colim} \E_{/ \U}$$
 but similarly we have an equivalence 
 $$\O_\E \simeq \underset{\U \in \Univ}{\colim} [(\O_\E)_{/p_\U: \U_* \to \U}]$$ 
 which immediately gives us following corollary.
 
 \begin{corone}
  Let $\E$ have sufficient universes. Then there exists an equivalence
  $$\Fr : \O_\E \to \E_{/ \Univ}$$
 \end{corone}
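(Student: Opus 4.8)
The plan is to obtain $\Fr$ as the colimit, over $\U\in\Univ$, of the equivalences $\Fr_\U$ constructed above. We are given the two presentations
$$\O_\E \simeq \underset{\U\in\Univ}{\colim}\ (\O_\E)_{/p_\U}, \qquad \E_{/\Univ} \simeq \underset{\U\in\Univ}{\colim}\ \E_{/\U},$$
in which $(\O_\E)_{/p_\U}$ is the full subcategory of $\O_\E$ spanned by the arrows classified by $\U$ — this is the source of $\Fr_\U=FG$. Post-composing the second presentation with the equivalences $F\colon\E_{/\U}\xrightarrow{\ \simeq\ }\E_{/\Sub(\U)}$ rewrites the target as $\colim_\U \E_{/\Sub(\U)}$, and this already computes $\E_{/\Univ}$: an object of $\E_{/\Sub(\U)}$ is a $(-1)$-connected map into a subobject $\tau_{-1}^\U(X)\hookrightarrow\U$, which by the cited result of \cite{Ra18a} (a $(-1)$-truncated subobject of a univalent universe is univalent) is the same datum as a $(-1)$-connected map into the universe $\tau_{-1}^\U(X)$, and the transition maps match those of $\Univ$. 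So it suffices to promote the family $(\Fr_\U)_{\U\in\Univ}$ of equivalences $(\O_\E)_{/p_\U}\to\E_{/\Sub(\U)}$ to a morphism of $\Univ$-indexed diagrams; being levelwise an equivalence, its colimit is the asserted equivalence $\Fr$.

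First I would check that $\Fr_\U$ is compatible with the transition maps. For an inclusion of universes $j\colon\U\hookrightarrow\V$ in $\Univ$, the map $j$ is $(-1)$-truncated, so for any $f\colon A\to B$ with classifying map $\ulcorner f\urcorner\colon B\to\U$ the composite $\tau_{-1}^\U(B)\hookrightarrow\U\xrightarrow{\ j\ }\V$ is $(-1)$-truncated while $B\twoheadrightarrow\tau_{-1}^\U(B)$ is $(-1)$-connected; by uniqueness of the $(-1)$-connected/$(-1)$-truncated factorization (Corollary \ref{Cor Conn is Trunc}) this composite is $\tau_{-1}^\V(j\circ\ulcorner f\urcorner)$. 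Hence the square
\begin{center}
 \begin{tikzcd}[row sep=0.4in, column sep=0.7in]
  (\O_\E)_{/p_\U} \arrow[r, "\Fr_\U"] \arrow[d, "j_!"'] & \E_{/\Sub(\U)} \arrow[d, "j_*"] \\
  (\O_\E)_{/p_\V} \arrow[r, "\Fr_\V"'] & \E_{/\Sub(\V)}
 \end{tikzcd}
\end{center}
commutes, where $j_!$ reclassifies an arrow along $j$ and $j_*$ post-composes a subobject of $\U$ with $j$.

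Rather than chasing the higher coherences needed to upgrade these commuting squares into an honest morphism of diagrams, I would package the construction uniformly: the assignment $X\mapsto\big(X\twoheadrightarrow\tau_{-1}^\U(X)\hookrightarrow\U\big)$ is functorial in the universe as well, since $\tau_{-1}$ is a functor on $\E$ (the adjunction of Theorem \ref{The Neg One Adj}) compatible with restriction along inclusions of universes. Thus $(\Fr_\U)$ is obtained by applying one functor — assembled from $G$, $F$, and the unit of $\tau_{-1}\dashv i$ — to the $\Univ$-diagram $\U\mapsto\E_{/\U}$, and the colimit of such a natural equivalence is an equivalence; combining this with the two presentations above produces $\Fr\colon\O_\E\to\E_{/\Univ}$.

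The main obstacle I anticipate is precisely this last point: producing the coherent naturality of $\Fr_\U$ in $\U$ — all higher cells, not merely the squares displayed — from the pointwise data. The cleanest way around it is the one indicated: instead of working one universe at a time, realize $X\mapsto\tau_{-1}(X)$, hence $\Fr$, as a single construction on the total category fibered over $\Univ$, so that naturality is built in and no cell needs to be manipulated by hand; uniqueness of $(-1)$-truncations (Corollary \ref{Cor Conn is Trunc}) is what certifies that the "one universe at a time" and "all at once" descriptions agree.
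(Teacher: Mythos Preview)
Your approach is correct and essentially identical to the paper's: both argue that $\O_\E$ and $\E_{/\Univ}$ are colimits over $\Univ$ of the categories $(\O_\E)_{/p_\U}$ and $\E_{/\U}$ respectively, related levelwise by the equivalences $\Fr_\U$, and that taking colimits yields $\Fr$. The paper treats this as an immediate consequence of the two colimit presentations and does not discuss the coherence of the $\Fr_\U$ in $\U$; your care about promoting the levelwise equivalences to a natural transformation, and your suggestion to build $\Fr$ uniformly on the total fibration over $\Univ$ rather than one universe at a time, fills in exactly the detail the paper leaves implicit.
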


 Using this functor we can give a new equivalent definition of an elementary higher topos.
 
 \begin{theone} \label{The:Smallest Universe EHT}
  Let $\E$ satisfy the conditions of Remark \ref{Rem:E Conditions Free Universe Section}.
  Then the following equivalent:
  \begin{enumerate}
   \item $\E$ has sufficient universes.
   \item There exists a functor 
   $$\Fr_\U: \O_\E \xrightarrow{ \ \simeq \ } \E_{/ \Univ}$$
   that assigns to each morphism its free universe.
  \end{enumerate}
 \end{theone}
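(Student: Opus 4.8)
The implication $(1)\Rightarrow(2)$ is essentially the content of the corollary preceding the theorem: when $\E$ has sufficient universes the composite $\Fr=FG$ is an equivalence $\O_\E\xrightarrow{\ \simeq\ }\E_{/\Univ}$. I would just spell out that this $\Fr$ does assign free universes. For $f\colon A\to B$, a chosen equivalence $\O_\E^{(S)}\simeq\E_{/\U}$ produces a classifying map $\ulcorner f\urcorner\colon B\to\U$ into some universe $\U\in\Univ$, and $F$ applies the unit of the $(-1)$-truncation in $\E_{/\U}$, yielding $B\to\tau^{\U}_{-1}(B)\hookrightarrow\U$; by the theorem of \cite{Ra18a} quoted above $\tau^{\U}_{-1}(B)$ is again a univalent universe, the map $B\to\tau^{\U}_{-1}(B)$ is $(-1)$-connected, and it still classifies $f$ via the factorization diagram. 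That this assignment is independent of the auxiliary universe $\U$ — equivalently, that the per-universe equivalences $\E_{/\U}\xrightarrow{\ \simeq\ }\E_{/\Sub(\U)}$ glue over the colimit $\O_\E\simeq\colim_{\U}\E_{/\U}$ — is exactly the uniqueness clause in the discussion after \cite{Ra18a}: the free universe of $f$ is \emph{the} $(-1)$-connected map to a universe classifying $f$, so it is canonical. This gluing is the only delicate point in this direction, and it is subsumed by the corollary.

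For $(2)\Rightarrow(1)$ I would first make precise what it means for $\Fr$ to ``assign to each morphism its free universe''. Pulling back the universal fibration $p_\U\colon\U_*\to\U$ along a $(-1)$-connected map $c\colon A\to\U$ is functorial, so there is a forgetful functor
$$q\colon\E_{/\Univ}\longrightarrow\O_\E,\qquad (A\xrightarrow{\ c\ }\U)\ \longmapsto\ (c^*\U_*\to A).$$
By construction of the free universe (again via the factorization diagram and \cite{Ra18a}), the statement that $\Fr$ assigns free universes is precisely the statement $q\circ\Fr\simeq\mathrm{id}_{\O_\E}$: the free universe of $f$ comes equipped with a pullback square exhibiting $f$ as classified by it. Granting this, the conclusion is immediate. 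Let $f\colon A\to B$ be an arbitrary morphism, viewed as an object of $\O_\E$. Then by the very definition of the subcategory $\E_{/\Univ}$, $\Fr(f)$ is a $(-1)$-connected map $c_f\colon B\to\U_f$ whose target $\U_f$ is a universe, and $q(\Fr(f))\simeq f$ provides a pullback square
\begin{center}
 \pbsq{A}{(\U_f)_*}{B}{\U_f}{}{f}{p_{\U_f}}{c_f}
\end{center}
Hence $f$ is classified by the universe $\U_f$. Since $f$ was arbitrary, every morphism of $\E$ is classified by some universe, i.e. $\E$ has sufficient universes.

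The genuine work is therefore concentrated in the $(1)\Rightarrow(2)$ direction — specifically in the gluing of the locally defined free-universe equivalences along $\Univ$, which rests on the canonicity of the free universe — and this is already in place via the preceding corollary; the converse $(2)\Rightarrow(1)$ is then formal once ``free universe functor'' is unwound through the evaluation functor $q$. I do not expect any further obstacle, since no size or colimit subtleties enter the converse: the hypothesis hands us, morphism by morphism, a universe together with a classifying pullback square.
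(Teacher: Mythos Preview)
Your proposal is correct and follows the same approach as the paper. The paper does not give an explicit proof of this theorem; the direction $(1)\Rightarrow(2)$ is exactly the preceding corollary (which you invoke), and the converse is left implicit in the paper's development, relying on the definition of $\E_{/\Univ}$ as $(-1)$-connected maps into universes together with the factorization diagram. Your write-up of $(2)\Rightarrow(1)$ via the evaluation functor $q$ simply makes explicit what the paper takes for granted: that an object of $\E_{/\Univ}$ carries, by definition, the data of a universe and a classifying pullback square for the given morphism.
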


 \begin{exone}
  Let us see how this works in the context of spaces. Let $K$ be a $\kappa$-small space, for some cardinal $\kappa$. 
  Then we have pullback square 
  \begin{center}
   \pbsq{K}{\s_*^\kappa}{*}{\s^\kappa}{}{}{}{}
  \end{center}
  In this case $\Fr_{\s^\kappa}(K \to *) \simeq Aut(K)$, the space of self-equivalences of the space $K$. 
  Notice the space $Aut(K)$ is connected and so the map $* \to Aut(K)$ is in fact $(-1)$-connected.
  \par 
  More generally, for an arbitrary map $p:K \to L$, we have
  $$\Fr(p) \simeq \coprod_{l \in \pi_0(L)} Aut(p^{-1}(l))$$
 \end{exone}

 \begin{remone}
  The fact that we can make the assignment functorial can be quite helpful and in particular brings the notion of
  universes closer to our understanding of universes in homotopy type theory. 
  However, we often want our universes to be closed under certain constructions, such as finite colimits and limits
  and the free universe associated to a map is usually not closed (as can be seen in the example above).
 \end{remone}
 
 The remark above naturally leads to following question:
  
 \begin{queone}
  Can we associate to a map $f$ a free universe that is also closed under finite limits, colimits or locally Cartesian closed 
  (Definition \ref{Def:Closed Universes})?
 \end{queone}
 
\section{Propositional Resizing} \label{Sec:Propositional Resizing}
 One condition in the definition of an elementary $(\infty,1)$-topos (Definition \ref{Def:EHT}) 
 is the condition that it has a subobject classifier (Definition \ref{Def:SOC}). 
 This often seems redundant given that we are already assuming the existence of universes. 
 In this section we want to use $(-1)$-truncations and prove that the existence of a subobject classifier $\Omega$ is equivalent to 
 a property of universes (Theorem \ref{The:Prop Resize egal SOC}), which (motivated by homotopy type theory) we call {\it propositional resizing} 
 (Definition \ref{Def:Propositional Resizing}).
 Using this equivalent condition we can give an alternative way to characterize an elementary $(\infty,1)$-topos
 (Theorem \ref{The:EHT SOC egal Prop Resize}).
 
 \begin{remone}
 This section primarily depends on Section \ref{Sec:The Join Construction} and concretely the construction of $(-1)$-truncations 
 (Definition \ref{Def:Neg One Truncation}, Theorem \ref{The Neg One Adj}).
 We do, however, use one observation about universe of $n$-truncated objects 
 (Definition \ref{Def:U leq n}, Lemma \ref{Lemma:U leq n classifies n trunc}).
\end{remone}

 \begin{remone} \label{Rem:E Conditions Propositional Resizing}
   In this section $\E$ is an $(\infty,1)$-category that satisfies following conditions:
   \begin{enumerate}
    \item It has finite limits and colimits.
    \item One of the following conditions hold:
    \begin{enumerate}
     \item There exists a $(-1)$-truncation functor (Definition \ref{Def:Neg One Truncation}).
     \item It is locally Cartesian closed (Definition \ref{Def:LCCC}) and has a subobject classifier (Definition \ref{Def:SOC}), 
     (Theorem \ref{Cor:Neg One Trunc via SOC})
     \item It has sufficient universes (Definition \ref{Def:Sufficient Universes} closed under 
     finite limits and colimits (Definition \ref{Def:Closed Universes}) (Theorem \ref{The Neg One Adj})
    \end{enumerate}
     which all imply that there is a $(-1)$-truncation functor.
   \end{enumerate}
 \end{remone}
  
 In order to relate subobject classifiers and universes, recall following Lemma \ref{Lemma:U leq n classifies n trunc}:
 
 \begin{lemone}
  Let $\U$ be a universe classifying the class of maps $S$. Then $\U^{\leq -1}$ 
  classifies all $(-1)$-truncated maps that are in $S$.
 \end{lemone}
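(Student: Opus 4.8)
The plan is to deduce this directly from Lemma~\ref{Lemma:U leq n classifies n trunc} specialized to $n=-1$. That lemma gives, for every object $Z$, an equivalence
$$\tau_{-1}\big((\E_{/Z})^S\big) \simeq Map(Z, \U^{\leq -1}_\bullet),$$
and the proof amounts to identifying both sides with familiar data. First I would unwind the left-hand side: $(\E_{/Z})^S$ is (the core of) the full subcategory of $\E_{/Z}$ on the maps lying in $S$, and applying $\tau_{-1}$ cuts this down to the $(-1)$-truncated such maps. By Proposition~\ref{Prop Conn Eq Mono} a map is $(-1)$-truncated exactly when it is mono, so $\tau_{-1}((\E_{/Z})^S)$ is the space of subobjects $Y \hookrightarrow Z$ whose structure map belongs to $S$.

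Next I would compare with the meaning of ``being classified''. That $\U$ classifies $S$ says precisely that $Map(Z,\U) \simeq (\E_{/Z})^S$ naturally in $Z$; and $\U^{\leq -1}$ is by construction the sub-universe of $\U$ consisting of the $(-1)$-truncated objects, so $Map(Z,\U^{\leq -1})$ is the corresponding sub-space of $Map(Z,\U)$. Chaining these identifications with the one from the previous paragraph shows that $Map(Z,\U^{\leq -1})$ is naturally equivalent to the space of monos $Y \to Z$ in $S$, which is exactly the statement that $\U^{\leq -1}$ classifies the $(-1)$-truncated maps in $S$.

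The only point requiring genuine care --- and the step I expect to be the main obstacle --- is the compatibility of all these equivalences with base change, i.e. that the equivalence of Lemma~\ref{Lemma:U leq n classifies n trunc} is natural in $Z$ and that pulling back a mono in $S$ along any $Z' \to Z$ yields a mono in $S$. The latter follows from Lemma~\ref{Lemma Trunc Base Change}, which says $(-1)$-truncated maps are stable under base change, together with pullback-stability of $S$ (automatic since $S$ is classified by a universe). Granting the naturality, which is built into Lemma~\ref{Lemma:U leq n classifies n trunc}, everything else is formal bookkeeping.
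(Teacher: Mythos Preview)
Your proposal is correct and matches the paper's approach: the paper does not give a separate proof but simply recalls this as the special case $n=-1$ of Lemma~\ref{Lemma:U leq n classifies n trunc}. Your additional unpacking of the identifications is accurate and more detailed than what the paper provides, but the core argument is the same.
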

 
 This basic observation motivates following definition.
 
 \begin{defone} \label{Def:Propositional Resizing}
  We say a universe $\U$ satisfies {\it propositional resizing} if the class of morphisms classified 
  by $\U$ contains all $(-1)$-truncated maps.
 \end{defone}
 
 There is an equivalent way to determine whether a universe satisfies propositional resizing.
 
 \begin{lemone}
  Let $\U$ be a universe. Then $\U$ satisfies propositional resizing if and only if for any other morphism 
  in $\Univ$ (Definition \ref{Def:Univ}), $\U \to \V$, the induced map 
  $$\U^{\leq -1} \to \V^{\leq -1}$$
  is an equivalence.
 \end{lemone}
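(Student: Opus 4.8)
The plan is to reduce both implications to the basic fact that a sub-universe of a fixed universe is determined, up to equivalence, by the class of maps it classifies. Recall from the restated version of Lemma~\ref{Lemma:U leq n classifies n trunc} that if $\U$ classifies the class $S_\U$, then $\U^{\leq -1}$ classifies exactly the $(-1)$-truncated maps lying in $S_\U$; likewise $\V^{\leq -1}$ classifies the $(-1)$-truncated maps in $S_\V$. A morphism $\U \to \V$ in $\Univ$ is a $(-1)$-truncated inclusion of universes, so $S_\U \subseteq S_\V$, and the induced map $\U^{\leq -1} \to \V^{\leq -1}$ is the corresponding inclusion of sub-universes. Using the Yoneda lemma for representable Cartesian fibrations (Corollary~\ref{Cor:Yoneda Lemma for CSU}), together with the fact that a sub-complete Segal universe is determined by its class of maps, this inclusion is an equivalence if and only if the classes $\{ f \in S_\U : f \text{ is } (-1)\text{-truncated}\}$ and $\{ f \in S_\V : f \text{ is } (-1)\text{-truncated}\}$ coincide --- that is, if and only if every $(-1)$-truncated map classified by $\V$ is already classified by $\U$.

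First I would treat the forward direction. Assume $\U$ satisfies propositional resizing, so $S_\U$ contains \emph{all} $(-1)$-truncated maps. For any morphism $\U \to \V$ in $\Univ$ we have $S_\U \subseteq S_\V$, hence $S_\V$ also contains all $(-1)$-truncated maps; in particular the $(-1)$-truncated maps in $S_\U$ and those in $S_\V$ agree (namely, they are all of them). By the observation of the previous paragraph, the canonical map $\U^{\leq -1} \to \V^{\leq -1}$ is an equivalence.

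For the converse I would argue by contraposition. Suppose $\U$ does not satisfy propositional resizing; then there is a $(-1)$-truncated map $m \colon A \hookrightarrow B$ that is not classified by $\U$. Invoking sufficiency of universes --- for the map $m$ and the universe $\U$ there is a universe $\V$ together with a morphism $\U \to \V$ in $\Univ$ such that $m$ is classified by $\V$ --- we obtain a $(-1)$-truncated map $m$ lying in $S_\V$ but not in $S_\U$. Hence the two classes of $(-1)$-truncated maps differ, so $\U^{\leq -1} \to \V^{\leq -1}$ fails to be an equivalence, contradicting the hypothesis. Therefore $\U$ satisfies propositional resizing.

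The step I expect to be the main obstacle is producing the enlargement $\V$ in the converse direction: one must use the structure of $\Univ$ --- sufficiency of universes, and the possibility of amalgamating a universe classifying $m$ with $\U$ --- to realize a prescribed $(-1)$-truncated map inside a universe sitting over $\U$; once such a $\V$ is available the argument is purely formal. A secondary point requiring care is the identification, through univalence and the Yoneda lemma for complete Segal universes, of ``the canonical map $\U^{\leq -1}\to\V^{\leq -1}$ is an equivalence'' with ``$S_\U$ and $S_\V$ contain the same $(-1)$-truncated maps'', including the naturality in the test object on which that Yoneda argument rests.
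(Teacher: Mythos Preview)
Your proposal is correct and follows essentially the same route as the paper: both directions reduce to comparing the classes of $(-1)$-truncated maps classified by $\U$ and by $\V$ via the representability of $\U^{\leq -1}$ and $\V^{\leq -1}$, and the converse hinges on producing an enlargement of $\U$ that classifies a given mono. The only cosmetic differences are that the paper argues the converse directly rather than by contraposition, and it makes the amalgamation step explicit by choosing $\W$ to be a universe classifying $\U \coprod \V$ (where $\V$ is any universe classifying the given mono), which immediately gives a morphism $\U \to \W$ in $\Univ$ with the mono in $S_\W$; this is exactly the construction you anticipated as the ``main obstacle.''
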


 \begin{proof}
  For a given universe $\V$, we will denote the class of morphism it classifies as $S_\V$.
  Moreover, recall that for any universe $\V$ and any object $B$ we have an equivalence 
  $$map(B, \V^{\leq -1}) \xrightarrow{ \ \simeq \ } (\tau_{-1}((\E_{/B})^{S_{\V}}))^{core} $$
  We will use the notation and equivalence throughout the proof.
  \par 
  Let us assume that $\U$ satisfies propositional resizing and we have a map $\U \to \V$ in $\Univ$.
  Fix an object $B$. Then we have following diagram
  \begin{center}
   \begin{tikzcd}[row sep=0.5in, column sep=0.25in]
    & (\tau_{-1}(\E_{/B}))^{core}  & \\
    (\tau_{-1}((\E_{/B})^{S_{\U}}))^{core} \arrow[rr] \arrow[ur, "\simeq"] & & 
    (\tau_{-1}((\E_{/B})^{S_{\V}}))^{core} \arrow[ul, hookrightarrow] \\
    map(B, \U^{\leq -1}) \arrow[rr] \arrow[u, "\simeq"] & & map(B, \V^{\leq -1}) \arrow[u, "\simeq"]
   \end{tikzcd}
  \end{center}
  The map $(\tau_{-1}((\E_{/B})^{S_{\V}}))^{core} \to (\tau_{-1}(\E_{/B}))^{core}$ is an equivalence as it is 
  $(-1)$-truncated and its precomposition is an equivalence, which means it is also $(-1)$-connected. 
  This implies that the top horizontal map is an equivalence, which means the bottom horizontal map is an equivalence as well.
  As $B$ was arbitrary, this gives us the desired result.
  \par 
  On the other hand, let us assume $\U$ satisfies the condition given in the lemma. We have to show that every
  $(-1)$-truncated map $i:A \to B$ is the pullback of the universal fibration $p_\U: \U_* \to \U$.
  We know there exists a universe $\V$ such that $i:A \to B$ is a pullback of $p_\V: \V_* \to \V$.
  Let $\W$ be a universe that classifies $\U \coprod \V$, which also implies that it classifies $i:A \to B$,
  and notice there is a map $\U \to \W$. Then by assumption we have an equivalence 
  $\U^{\leq -1} \to \W^{-1}$, which gives us an equivalence 
  $$map(B, \U^{\leq -1}) \to map(B,\W^{-1})$$
  which, by the equivalence stated in the beginning of this proof, gives us an equivalence 
  $$(\tau_{-1}((\E_{/B})^{S_{\U}}))^{core} \xrightarrow{ \ \simeq \ } (\tau_{-1}((\E_{/B})^{S_{\W}}))^{core}$$
  As the map $i$ is a point on the right hand side, by the equivalence it is also a point on the left hand side, 
  meaning $i$ is classified by $\U$ as well.
 \end{proof}

 \begin{remone}
  The term propositional resizing was introduced as an axiom in homotopy type theory with the goal of studying 
  subobjects (there known as {\it mere propositions} \cite[Subsection 3.5]{UF13}). 
  Notice the definition given there corresponds to the equivalent 
  condition given in the previous lemma rather than the definition we gave (Definition \ref{Def:Propositional Resizing}). 
 \end{remone}

 We can now relate propositional resizing to subobject classifiers
 
 \begin{theone} \label{The:Prop Resize egal SOC}
  $\U$ satisfies propositional resizing if and only if $\U^{\leq -1}$ is a subobject classifier.
 \end{theone}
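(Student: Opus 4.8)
The strategy is to observe that $\U^{\leq -1}$ always carries a canonical candidate subobject-classifier structure which classifies exactly the $(-1)$-truncated maps in the class $S$ classified by $\U$, and that this candidate is an honest subobject classifier precisely when $S$ already contains every $(-1)$-truncated map, i.e. precisely when $\U$ satisfies propositional resizing (Definition \ref{Def:Propositional Resizing}).

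First I would record the unconditional facts. By Lemma \ref{Lemma:U leq n classifies n trunc} with $n = -1$, for every object $Z$ there is a natural equivalence
$$Map(Z, \U^{\leq -1}) \simeq (\tau_{-1}((\E_{/Z})^S))^{core},$$
whose right-hand side is the poset of subobjects of $Z$ that are $S$-small, and this equivalence is implemented by pulling back the canonical universal family $p_{\U^{\leq -1}}\colon \U^{\leq -1}_* \to \U^{\leq -1}$ --- the restriction of $p_\U$ along the sub-universe inclusion $\U^{\leq -1}\hookrightarrow\U$ (Definition \ref{Def:U leq n}). Moreover $\U^{\leq -1}_* \simeq 1$: a map $Z \to \U^{\leq -1}_*$ is an $S$-small subobject $A \hookrightarrow Z$ together with a section of it, and a monomorphism admitting a section is an equivalence, so $A$ is the maximal subobject of $Z$; hence $Map(Z, \U^{\leq -1}_*)$ is contractible for all $Z$. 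Thus $p_{\U^{\leq -1}}$ is a monomorphism of the form $1 \to \U^{\leq -1}$ ---precisely the data of a subobject classifier in Definition \ref{Def:SOC}--- and, by the displayed equivalence, pullback along it classifies exactly the $(-1)$-truncated maps in $S$.

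With this in place the two implications are immediate. If $\U$ satisfies propositional resizing then $S$ contains all $(-1)$-truncated maps, so $(\tau_{-1}((\E_{/Z})^S))^{core} = Sub(Z)$ for every $Z$; the displayed equivalence then reads $Map(Z,\U^{\leq -1})\simeq Sub(Z)$, naturally in $Z$ and realized by pullback of $1 \to \U^{\leq -1}$, which says exactly that $\U^{\leq -1}$ is a subobject classifier. Conversely, if $\U^{\leq -1}$ is a subobject classifier then --- with its canonical universal family, which by uniqueness of subobject classifiers and $\U^{\leq -1}_*\simeq 1$ is the one above --- every $(-1)$-truncated map $i\colon A \to B$ is a pullback of $p_{\U^{\leq -1}}$; composing its classifying map $B \to \U^{\leq -1}$ with the inclusion $\U^{\leq -1}\hookrightarrow\U$ and noting that $p_{\U^{\leq -1}}$ is the pullback of $p_\U$ along this inclusion, we exhibit $i$ as a pullback of $p_\U$, i.e. $i \in S$. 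As $i$ was arbitrary, $S$ contains all $(-1)$-truncated maps, which is propositional resizing.

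The main obstacle is the compatibility bookkeeping of the middle paragraph rather than any hard construction: one must be careful that $\U^{\leq -1}_* \simeq 1$ (so the universal $(-1)$-truncated family genuinely has domain $1$), that the natural equivalence of Lemma \ref{Lemma:U leq n classifies n trunc} is the \emph{classifying} equivalence (so that the subposet of $S$-small subobjects sits inside $Sub(-)$ as the tautological inclusion, and any subobject-classifier structure on $\U^{\leq -1}$ can be identified with the canonical one), and that the universal family over $\U^{\leq -1}$ really is the restriction of $p_\U$. Once these identifications are pinned down, ``$\U^{\leq -1}$ is a subobject classifier'' forces the tautological inclusion of $S$-small subobjects into $Sub(-)$ to be an equivalence, not merely a bijection of possibly infinite posets, and both directions close formally.
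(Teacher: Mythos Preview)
Your proof is correct and follows essentially the same route as the paper's: both directions hinge on the equivalence $Map(-,\U^{\leq -1})\simeq \tau_{-1}((\E_{/-})^S)$ from Lemma~\ref{Lemma:U leq n classifies n trunc}, and the backward direction is the same pasting-of-pullbacks argument through $\U^{\leq -1}\hookrightarrow\U$. You have in fact made explicit two points the paper leaves implicit---that $\U^{\leq -1}_*\simeq 1$ (so the canonical family really is a mono $1\to\U^{\leq -1}$) and that the subobject-classifier structure in play must be this canonical one---so your version is, if anything, slightly more careful.
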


 \begin{proof}
  If $\U$ satisfies propositional resizing, then we have an equivalence 
  $$map(X,\U^{\leq -1}) \cong \Sub(X)$$
  which proves that $\U^{\leq -1}$ satisfies the universal property of a subobject classifier. 
  \par 
  On the other hand, let us assume $\U^{\leq -1}$ is a subobject classifier and $i: A \to B$ is an arbitrary mono. 
  As, $\U^{\leq -1}$ is a subobject classifier it classifies $i$ which gives us following diagram 
  \begin{center}
   \begin{tikzcd}[row sep=0.5in, column sep=0.5in]
    A \arrow[d, "i"] \arrow[r] \arrow[dr, phantom, "\ulcorner", very near start] & 
    \U^{\leq -1}_* \arrow[r] \arrow[d] \arrow[dr, phantom, "\ulcorner", very near start] & \U_* \arrow[d] \\
    B \arrow[r] & \U^{\leq -1} \arrow[r] & \U
   \end{tikzcd}
  \end{center}
  which proves that $i$ is also classified by $\U$, which proves that $\U$ satisfies propositional resizing.
 \end{proof}
 
 Having proven the theorem we can give an alternative characterization of an elementary $(\infty,1)$-topos 
 which does mention subobject classifiers:
 
 \begin{theone} \label{The:EHT SOC egal Prop Resize}
  Let $\E$ be $(\infty,1)$-category, which satisfies all conditions of Definition \ref{Def:EHT} except for the existence of 
  subobject classifiers. Then the following are equivalent:
  \begin{enumerate}
   \item $\E$ is an elementary $(\infty,1)$-topos.
   \item There exists a universe $\U$ that satisfies propositional resizing.
  \end{enumerate}
 \end{theone}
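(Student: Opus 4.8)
The plan is to reduce the statement almost immediately to Theorem~\ref{The:Prop Resize egal SOC}, which says that a universe $\U$ satisfies propositional resizing precisely when $\U^{\leq -1}$ is a subobject classifier. The standing hypothesis is that $\E$ satisfies every clause of Definition~\ref{Def:EHT} except the existence of a subobject classifier; in particular $\E$ has finite limits and colimits, is locally Cartesian closed, and has sufficient universes closed under finite limits and colimits, so by Theorem~\ref{The Neg One Adj} it has a $(-1)$-truncation functor and the results of Section~\ref{Sec:Propositional Resizing} (notably Lemma~\ref{Lemma:U leq n classifies n trunc} and Theorem~\ref{The:Prop Resize egal SOC}) are all available.

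For the implication $(2)\Rightarrow(1)$ I would argue as follows. Suppose $\U$ is a universe satisfying propositional resizing. By Theorem~\ref{The:Prop Resize egal SOC} the object $\U^{\leq -1}$ is a subobject classifier for $\E$. Since $\E$ satisfies all the remaining clauses of Definition~\ref{Def:EHT} by hypothesis, it is therefore an elementary $(\infty,1)$-topos. For the converse $(1)\Rightarrow(2)$, assume $\E$ is an elementary $(\infty,1)$-topos and let $\Omega$ be its subobject classifier with generic subobject $\top\colon 1 \to \Omega$. Because $\E$ has sufficient universes (Definition~\ref{Def:Sufficient Universes}), there is a universe $\U$ that classifies the morphism $\top$, i.e.\ exhibits $\top$ as a pullback of the universal fibration $p_\U\colon \U_* \to \U$. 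I claim this $\U$ satisfies propositional resizing. Given an arbitrary $(-1)$-truncated map $i\colon A \to B$, the defining property of $\Omega$ (Definition~\ref{Def:SOC}) presents $i$ as a pullback of $\top$ along its classifying map $\chi_i\colon B \to \Omega$; pasting this square with the one exhibiting $\top$ as a pullback of $p_\U$ shows that $i$ is itself a pullback of $p_\U$, hence lies in the class of maps classified by $\U$. Thus $\U$ contains all $(-1)$-truncated maps and propositional resizing holds.

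I do not expect a serious obstacle here: the mathematical content is entirely in Theorem~\ref{The:Prop Resize egal SOC} (and, behind it, in the analysis of the subuniverse $\U^{\leq -1}$). The only points that need a little care are that ``classified by $\U$'' must be read as ``obtained as a pullback of the universal fibration'', so that the pasting-of-pullback-squares step in $(1)\Rightarrow(2)$ is legitimate; that the universe furnished by sufficiency of universes need not be closed under any operations, which is harmless since propositional resizing is a condition purely on the class of morphisms $\U$ classifies; and that the hypotheses listed in Remark~\ref{Rem:E Conditions Propositional Resizing} are met under our standing assumptions, which we have already checked so that Theorem~\ref{The:Prop Resize egal SOC} applies.
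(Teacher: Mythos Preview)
Your proposal is correct and follows essentially the same approach as the paper: for $(2)\Rightarrow(1)$ you invoke Theorem~\ref{The:Prop Resize egal SOC} to produce the subobject classifier $\U^{\leq -1}$, and for $(1)\Rightarrow(2)$ you pick a universe classifying $\top\colon 1\to\Omega$ and use the pasting of pullback squares to show it classifies every mono, exactly as the paper does. Your extra care in checking that the hypotheses of Section~\ref{Sec:Propositional Resizing} are met and that no closure properties on $\U$ are needed is welcome but not additional mathematical content.
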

 
 \begin{proof}
  $\E$ is a topos if and only $\E$ has a subobject classifier. 
  If $\E$ has a subobject classifier $\Omega$ then any universe that classifies 
  $1 \to \Omega$ also classifies all $(-1)$-truncated maps and thus satisfies propositional resizing.
  \par 
  On the other hand, if $\U$ satisfies propositional resizing, then $\U^{\leq -1}$ is a subobject classifier.
 \end{proof}

\appendix

\section{Higher Topos Theory} \label{Sec:HTT}
 In this section we will give a review of the main topos theoretic conditions we use throughout.
 We will only give definitions and refer the interested reader to the appropriate sources.
 \par 
 Our goal is to work with an elementary $(\infty,1)$-topos, which we define thusly:
 
 \begin{defone} \label{Def:EHT}
  Let $\E$ be an $(\infty,1)$-category. Then we say $\E$ is an {\it elementary $(\infty,1)$-topos} if it satisfies following conditions:
  \begin{enumerate}
   \item It has finite limits and colimits.
   \item It is locally Cartesian closed (Definition \ref{Def:LCCC}).
   \item It has a subobject classifier (Definition \ref{Def:SOC}).
   \item It has sufficient universes (Definition \ref{Def:Sufficient Universes}) which are closed under 
   finite limits, colimits and locally Cartesian closed (Definition \ref{Def:Closed Universes}).
  \end{enumerate}
 \end{defone}
 
 The goal of this appendix is to give a review of the conditions stated above and give some implications of these results 
 that are used throughout.

 \begin{defone} \label{Def:LCCC}
  We say an $(\infty,1)$-category $\E$ is {\it locally Cartesian closed} if it has finite limits and for every map 
  $f: x \to y$ in $\E$ the pullback back map 
  $$f^*: \E_{/y} \to \E_{/x}$$
  has a right adjoint
 \end{defone}
 
 One important implication of being a left adjoint is preservation of colimits.
 
 \begin{defone} \label{Def:Colimits Universal}
  Let $\E$ be an $(\infty,1)$-category with finite limits and colimits. 
  We say {\it colimits in $\E$ are universal} if the pullback of a colimits diagram is a colimit diagram.
 \end{defone}

 \begin{lemone} \label{Lemma:LCCC Colimits Universal}
  If $\E$ is a locally Cartesian closed $(\infty,1)$-category with finite colimits. 
  Then colimits in $\E$ are universal.
 \end{lemone}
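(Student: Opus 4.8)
The plan is to exploit the defining feature of local Cartesian closedness: every base-change functor $f^{*}$ is a left adjoint and therefore preserves colimits. Combining this with the standard fact that the forgetful functor out of a slice category creates colimits will give the result almost formally.

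First I would unwind the statement. To say that colimits in $\E$ are universal means: given a colimit cocone $\{D(i) \to c\}_{i \in I}$ exhibiting $c \simeq \colim_{I} D$, and given any morphism $f \colon Z \to c$, the pulled-back cocone $\{Z \times_{c} D(i) \to Z\}_{i \in I}$ again exhibits $Z$ as the colimit of the pulled-back diagram $i \mapsto Z \times_{c} D(i)$. Note that this is a statement about the whole diagram, not merely its apex, so the argument must recover the full colimiting cocone.

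Next I would pass to slice categories. Since the forgetful functor $\E_{/c} \to \E$ creates colimits, the colimit cocone $\{D(i) \to c\}$ exhibits $\mathrm{id}_{c} \colon c \to c$ as the colimit in $\E_{/c}$ of the diagram $\bar D \colon I \to \E_{/c}$, $i \mapsto (D(i) \to c)$. Now apply $f^{*} \colon \E_{/c} \to \E_{/Z}$. By hypothesis $f^{*}$ admits a right adjoint $f_{*}$, hence preserves all colimits that exist; moreover $f^{*}(\mathrm{id}_{c}) \simeq \mathrm{id}_{Z}$ and $f^{*}\bar D(i) \simeq (Z \times_{c} D(i) \to Z)$, so the pulled-back diagram is literally $f^{*}\bar D$. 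Therefore $\colim_{I}\bigl(i \mapsto (Z \times_{c} D(i) \to Z)\bigr) \simeq \mathrm{id}_{Z}$ in $\E_{/Z}$. Finally, applying the forgetful functor $\E_{/Z} \to \E$, which again creates colimits, yields $\colim_{I}\bigl(i \mapsto Z \times_{c} D(i)\bigr) \simeq Z$ in $\E$, with colimiting cocone exactly the pullback of the original one. This is precisely the desired universality.

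I do not expect a genuine obstacle here: the two ``creation of colimits'' claims for slice forgetful functors and the identification of the pulled-back diagram with $f^{*}\bar D$ are standard in the $(\infty,1)$-categorical setting. The only point to watch is the one flagged above, namely that universality concerns the entire diagram, so one should phrase the argument at the level of cocones in $\E_{/c}$ and $\E_{/Z}$ rather than merely tracking colimit objects; the slice-category formulation handles this uniformly.
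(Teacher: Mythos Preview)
Your proposal is correct and follows exactly the same approach as the paper: the paper's proof is the single sentence ``This follows immediately from the fact that left adjoints commute with colimits,'' and your argument is a careful unpacking of precisely that. Your additional care in tracking cocones via the slice categories $\E_{/c}$ and $\E_{/Z}$ is a welcome elaboration, but there is no divergence in strategy.
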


 \begin{proof}
  This follows immediately from the fact that left adjoints commute with colimits.
 \end{proof}

 \begin{remone}
  If $\E$ is a presentable $(\infty,1)$-category then being locally Cartesian closed is equivalent to colimits being universal.
  However, as we do not make such an assumption, colimits being universal is a strictly weaker condition.
 \end{remone}

 \begin{defone} \label{Def:Core}
  We denote the {\it maximal underlying $(\infty,1)$-groupoid} of an $(\infty,1)$-category $\E$ by $\E^{core}$. 
 \end{defone}

 \begin{defone} \label{Def:Fibrations}
  Let $\E$ be an $(\infty,1)$-category with finite limits. Then 
  \begin{enumerate}
   \item $\Arr(\E)$ is the {\it $(\infty,1)$-category of morphism} in $\E$, which comes with a target map $t:\Arr(\E) \to \E$, 
   which is a Cartesian fibration.
   \item $\O_\E$ is the subcategory of $\Arr(\E)$ with the same objects, but where the morphisms are necessarily 
   pullback squares. The restricted target map $t: \O_\E \to \E$ is a right fibration.
   \item  $\Emb(\E)$ is the subcategory of $\O_\E$ where objects are mono maps. The restricted target map
   $t:\Emb(\E) \to \E$ is also a right fibration, but where the fiber over each point is a set.
  \end{enumerate}
 \end{defone}
 
 \begin{remone}
  The Cartesian fibration $t: \Arr(\E) \to \E$ classifies the functor 
  $$\E^{op} \to \cat_\infty$$
  that takes an object $x$ to the over-category $\E_{/x}$.
  \par 
  The restriction $t: \O_\E \to \E$ classifies the functor 
  $$\E^{op} \to \s$$
  that takes an object $x$ to the space $(\E_{/x})^{core}$
  \par 
  The restriction $t: \Emb(\E) \to \E$ classifies the functor 
  $$\E^{op} \to \set$$
  that takes an object $x$ to the set $\Sub(x)$.
 \end{remone}
 
 As there are three levels of functors, there are three levels of representability.
 
 \begin{defone} \label{Def:SOC}
  Let $\E$ be an $(\infty,1)$-category with finite limits. A {\it subobject classifier} $\Omega$ is an object that classifies $\Emb(\E)$.
  Concretely, $\Omega$ is a subobject classifier if there is a universal mono $1 \to \Omega$ that is final in $\Emb(\E)$.
 \end{defone}
 
 Informally, we express the subobject classifier condition as 
 $$Map(X,\Omega) \cong \Sub(X)$$
 
 We cannot just represent $\O_\E$ the way we just represented $\Emb(\E)$, as it is too large. Rather 
 we need a class of representing objects.
 
 \begin{defone} \label{Def:Univ}
  Let $\Univ$ the full subcategory of $(-1)$-truncated objects in $\O_\E$. The objects $p_\U:\U_* \to \U$ in $\Univ$ are called 
  {\it universal fibrations} and the target $\U$ of each such fibration is a {\it universe}.
 \end{defone}

 \begin{defone} \label{Def:Sufficient Universes}
  Let $\E$ be an $(\infty,1)$-category with finite limits. Then we say $\E$ has {\it sufficient universes} if there is an equivalence 
  $$\O_\E \simeq \underset{\U \in \Univ}{\colim} (\O_\E)_{/p_\U}$$
  This is equivalent to the statement that the projection maps 
  $$\{(\O_\E)_{/p_\U} \to \O_\E \}_{\U \in \Univ}$$
  are jointly surjective.
 \end{defone}

 More concretely, each morphism $f:Y \to X$ is an object in a full subcategory $(\O_\E)_{/p_\U}$.
 \par
 As we are now choosing our universes, we often need them to be closed under certain properties.
 
 \begin{defone} \label{Def:Closed Universes}
  Let $P$ be a categorical property from the following list:
  \begin{enumerate}
   \item having finite limits
   \item having finite colimits
   \item being locally Cartesian closed
  \end{enumerate}
  Then we say $\U$ is {\it closed under property} $P$, if the full subcategory $(\O_\E)_{/p_\U}$ is closed under property $P$.
 \end{defone}

 Usually representing $\O_\E$ suffices for our purposes, but sometimes we need to represent the whole Cartesian fibration 
 $\Arr(\E)$.
 
 \begin{remone}
  We use the notation $\Arr(\E)_{/\U}$ for the full subcategory of $\Arr(\E)$ whose objects are in $(\O_\E)_{/\U}$.
 \end{remone}

 \begin{theone}[\cite{Ra18b}] \label{The:CSU Exist}
  Let $\E$ be a finitely complete $(\infty,1)$-category and $\U$ a locally Cartesian closed universe.
  Then there exists a simplicial object 
  $$\U_\bullet: \Delta^{op} \to \E$$
  such that $\U_0 \simeq \U$ and there is an equivalence of Cartesian fibrations
  $$\E_{/ \U_\bullet} \simeq \Arr(\E)_{/\U}.$$
  Concretely, we have equivalences 
  $$\U_1 \simeq [id_\U \times p_\U: \U \times \U_* \to \U \times \U ]^{[p_\U \times id_\U: \U_* \times \U \to \U \times \U ]}$$
  and 
  $$\U_n \simeq \U_1 \underset{\U_0}{\times} ... \underset{\U_0}{\times} \U_1$$ 
 \end{theone}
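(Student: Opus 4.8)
The plan is to build $\U_\bullet$ as the ``internal nerve'' of the $\infty$-category of $\U$-small maps, i.e. to internalize Rezk's complete-Segal-space model of an $\infty$-category. Write $S$ for the class of maps classified by $\U$, so that the universe property gives $((\E_{/X})^S)^{core}\simeq Map(X,\U)$ for every $X$; accordingly set $\U_0:=\U$. For $\U_1$ I would use that the universe is locally Cartesian closed: over $\U\times\U$ the two projections pull $p_\U$ back to the universal objects $p_\U\times id_\U$ and $id_\U\times p_\U$ of $\E_{/\U\times\U}$, and the internal mapping object between them — which is precisely the exponential in the displayed formula for $\U_1$ — is again classified by $\U$ and has fibre over a pair $(a,b)$ the object of maps $\widehat a\to\widehat b$. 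Unwinding a point $X\to\U_1$ through the locally Cartesian closed adjunctions and the universe property shows that $Map(X,\U_1)$ is the space of morphisms of $(\E_{/X})^S$. Defining $\U_n:=\U_1\times_{\U_0}\cdots\times_{\U_0}\U_1$ and using that $Map(X,-)$ preserves limits, $Map(X,\U_n)$ is the space of $n$ composable morphisms in $(\E_{/X})^S$, which by the Segal property of the nerve is $\mathrm{Fun}([n],(\E_{/X})^S)^{core}$.

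Next I would promote this levelwise picture to an honest simplicial object. For each fixed $[n]$ the functor $X\mapsto \mathrm{Fun}([n],(\E_{/X})^S)^{core}$ is representable by $\U_n$, naturally in $X$; a parametrized Yoneda argument over $\Delta^{op}$ then produces a functor $\Delta^{op}\to\E$, $[n]\mapsto\U_n$, whose structure maps induce on $Map(X,-)$ the usual face and degeneracy maps of a nerve (source/target $\U_1\rightrightarrows\U_0$, identities $\U_0\to\U_1$, composition $\U_1\times_{\U_0}\U_1\to\U_1$, and so on). The Segal condition holds by the very definition of $\U_n$. Completeness is then inherited for free: $Map(X,\U_\bullet)$ is, by construction, the complete Segal space underlying the $\infty$-category $(\E_{/X})^S$, and the nerve of an $\infty$-category is always complete; hence the completeness comparison map, from $\U_0$ to the object of homotopy-invertible edges, becomes an equivalence after applying $Map(X,-)$ for every $X$, and is therefore an equivalence in $\E$ by the Yoneda lemma.

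Finally I would identify $\E_{/\U_\bullet}$ with $\Arr(\E)_{/\U}$ as Cartesian fibrations over $\E$. The same unwinding as for $\U_1$ shows that the externalization of the internal category $\U_\bullet$ has, over $X$, objects the maps $X\to\U_0$ (objects of $(\E_{/X})^S$) and, over $f:X\to Y$, morphisms the maps $X\to\U_1$ with prescribed source $a$ and target $f^*b$ — precisely the data of a commutative square in $\Arr(\E)$ between maps whose targets are classified by $\U$. One then checks that such a morphism is Cartesian for the target fibration $\Arr(\E)_{/\U}\to\E$ (i.e. is a pullback square) if and only if the corresponding edge of $\U_\bullet$ is Cartesian in the internal sense, which upgrades the comparison to an equivalence of Cartesian fibrations.

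I expect the main obstacle to be the coherence steps rather than the identifications themselves: producing a genuine functor $\Delta^{op}\to\E$ (not merely a levelwise family with compatible structure maps) and producing a genuine equivalence of Cartesian fibrations both require care with $\infty$-categorical naturality, whereas the object- and morphism-level checks are routine applications of the universe property and local Cartesian closedness. Since this statement is established in \cite{Ra18b}, in the paper it is enough to cite it; the sketch above is the shape of that proof.
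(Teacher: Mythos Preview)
The paper does not prove this theorem: it is stated in the appendix with a citation to \cite{Ra18b} and no argument is given. You correctly recognize this at the end of your proposal, and your sketch is a reasonable outline of the construction one expects in \cite{Ra18b}: build $\U_\bullet$ so that $Map(X,\U_\bullet)$ is the Rezk nerve of $(\E_{/X})^S$, verify the Segal and completeness conditions by Yoneda, and then match the externalization with $\Arr(\E)_{/\U}$. Since the paper itself offers nothing to compare against beyond the citation, there is no divergence to report; your caveat that the genuine work lies in the $\infty$-categorical coherence (assembling a functor $\Delta^{op}\to\E$ and an equivalence of Cartesian fibrations, not just levelwise data) is well placed.
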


 \begin{defone} \label{Def:CSU}
  We call any such simplicial object constructed this way a {\it complete Segal universe}.
 \end{defone}

 Complete Segal universes can be used to give an alternative characterization of elementary $(\infty,1)$-toposes.
 
 \begin{theone}[\cite{Ra18b}] \label{The:EHT via CSU}
  An $(\infty,1)$-category $\E$ is an elementary $(\infty,1)$-topos if and only if it satisfies following conditions:
  \begin{enumerate}
   \item It has finite limits and colimits.
   \item It has a subobject classifier.
   \item It has sufficient complete Segal universes closed under finite limits, colimits and locally Cartesian closed.
  \end{enumerate}
 \end{theone}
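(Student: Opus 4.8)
The plan is to compare the three conditions of the theorem directly against the conditions of Definition \ref{Def:EHT}. Conditions (1) and (2) of the theorem are literally conditions (1) and (3) of that definition, so the entire content of the statement is the equivalence between the package ``$\E$ is locally Cartesian closed together with sufficient universes closed under finite limits, colimits and local Cartesian closure'' (conditions (2) and (4) of Definition \ref{Def:EHT}) and the single condition ``$\E$ has sufficient complete Segal universes closed under those three properties'' (condition (3) of the theorem). I would therefore prove each of these two packages of hypotheses from the other, holding the finite (co)limit and subobject classifier conditions fixed throughout.

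For the forward direction, assume $\E$ is an elementary $(\infty,1)$-topos. Then $\E$ is locally Cartesian closed and has sufficient universes $\U$ closed under finite limits, colimits and local Cartesian closure (Definition \ref{Def:Closed Universes}); in particular each such $\U$ is a locally Cartesian closed universe. By Theorem \ref{The:CSU Exist} every such $\U$ extends to a complete Segal universe $\U_\bullet$ with $\U_0 \simeq \U$ and $\E_{/\U_\bullet} \simeq \Arr(\E)_{/\U}$. Since the underlying universes $\U$ were sufficient, and the closure conditions are properties of the subcategory $(\O_\E)_{/p_\U} = (\O_\E)_{/p_{\U_0}}$, the resulting family of complete Segal universes is again sufficient and closed under the three properties, which is condition (3) of the theorem.

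For the reverse direction, assume $\E$ satisfies conditions (1)--(3) of the theorem. Each complete Segal universe $\U_\bullet$ is, by Definition \ref{Def:CSU}, one arising as in Theorem \ref{The:CSU Exist}, so its underlying universe $\U_0$ is automatically locally Cartesian closed; moreover the family of the $\U_0$ is sufficient and closed under finite limits, colimits and local Cartesian closure. Thus $\E$ already has sufficient universes closed under the three properties, matching condition (4) of Definition \ref{Def:EHT}, and the only remaining point is that $\E$ itself is locally Cartesian closed. This is the main obstacle: local Cartesian closure of $\E$ (Definition \ref{Def:LCCC}) asserts that $f^*$ admits a right adjoint tested against \emph{all} of $\E_{/x}$, whereas the hypothesis only supplies this structure inside each universe. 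I would build $f_*$ locally: given $p$ and $f$, use sufficiency (and filteredness of the universe poset, combining two universes into one when necessary) to select a single locally Cartesian closed universe $\W$ classifying both, form the pushforward inside $(\O_\E)_{/p_\W}$, and then verify its universal property against an arbitrary test object $q$ by enlarging $\W$ to a universe also classifying $q$. Uniqueness of adjoints guarantees that these locally defined pushforwards agree under enlargement, so the colimit presentation $\O_\E \simeq \colim_{\U} (\O_\E)_{/p_\U}$ of Definition \ref{Def:Sufficient Universes} assembles them into a genuine right adjoint on all of $\E$. Once $\E$ is seen to be locally Cartesian closed, every condition of Definition \ref{Def:EHT} holds, so $\E$ is an elementary $(\infty,1)$-topos and the equivalence is complete.
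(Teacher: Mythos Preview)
The paper does not give its own proof of this statement: it is quoted in the appendix as a result from \cite{Ra18b}, with no argument supplied. So there is no in-paper proof to compare against; I can only assess your outline on its own terms.

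Your outline is the natural one and is essentially correct. The forward direction is immediate from Theorem~\ref{The:CSU Exist}, exactly as you say. For the reverse direction, your identification of the genuine content---that local Cartesian closure of $\E$ must be \emph{deduced} from having sufficient locally Cartesian closed universes---is right, and your strategy of building $f_*$ universe-by-universe and gluing along the filtered colimit of Definition~\ref{Def:Sufficient Universes} is the standard way to do it. One point you should make explicit rather than parenthetical: the argument genuinely needs $\Univ$ to be filtered (so that any finite collection of morphisms is classified by a common universe), and this is not a formal consequence of the bare ``jointly surjective'' clause in Definition~\ref{Def:Sufficient Universes}. In \cite{Ra18b} this directedness is part of the setup---one passes to a larger universe by classifying the coproduct of the relevant universal fibrations, using closure under finite colimits---and the paper itself uses this maneuver elsewhere (e.g.\ in the proof in Section~\ref{Sec:Propositional Resizing}). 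If you spell that step out, your argument goes through.
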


 \begin{notone} \label{Not:S morphisms classified by U}
  If $\U$ is a universe then we denote the class of maps classified by $\U$ as $S_\U$ or, if there is no possibility for confusion, 
  as $S$. In particular, we denote the full subcategory of $\Arr(\E)$ classified by $\U_\bullet$ as
  $\Arr(\E)^{S_\U}$. Similarly, we denote the fiber of $\Arr(\E)^{S_\U}$ as $(\E_{/X})^{S_\U}$. 
 \end{notone}
 
 One benefit of using complete Segal universes is that we have a Yoneda lemma for representable Cartesian fibrations \cite{Ra17}.
 
 \begin{theone} \label{The:Yoneda Lemma for RepCartFib}
  Let $\E_{/\U^1_\bullet}$ and $\E_{/\U^2_\bullet}$ be two representable Cartesian fibration. 
  Then we have an equivalence 
  $$Map_{/\E}(\E_{/\U^1_\bullet}, \E_{/\U^2_\bullet}) \simeq map_{\E^{\Delta^{op}}}(\U^1_\bullet , \U^2_\bullet)$$
 \end{theone}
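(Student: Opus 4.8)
The plan is to reduce the statement to the ordinary Yoneda lemma by routing through the straightening/unstraightening correspondence for Cartesian fibrations of complete Segal spaces of \cite{Ra17}. That correspondence identifies the $(\infty,1)$-category of Cartesian fibrations over $\E$ (with Cartesian-morphism-preserving maps over $\E$) with the functor $(\infty,1)$-category $\mathrm{Fun}(\E^{op}, \cat_\infty)$ — a sufficiently large $(\infty,1)$-category of $(\infty,1)$-categories — compatibly with mapping spaces. So the first step is to observe that $Map_{/\E}(\E_{/\U^1_\bullet}, \E_{/\U^2_\bullet})$ is computed as the mapping space in $\mathrm{Fun}(\E^{op},\cat_\infty)$ between the functors $F_1$ and $F_2$ classified by the two representable Cartesian fibrations.

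The second step, which I expect to be the crux, is to identify these classified functors explicitly. By Theorem \ref{The:CSU Exist} we have $\E_{/\U^i_\bullet} \simeq \Arr(\E)_{/\U^i}$, and the latter classifies the functor $X \mapsto (\E_{/X})^{S_{\U^i}}$. I would then check that, as a complete Segal object in $\s$, $(\E_{/X})^{S_{\U^i}}$ agrees levelwise with $Map_\E(X,\U^i_\bullet)$: in degree $0$ this is the defining universal property of the universe, namely $Map_\E(X,\U^i) \simeq ((\E_{/X})^{S_{\U^i}})^{core}$; in degree $1$ it follows from the description of $\U^i_1$ in Theorem \ref{The:CSU Exist} as the internal mapping object exhibiting the object of morphisms of the slice; and in higher degrees from the Segal condition $\U^i_k \simeq \U^i_1 \times_{\U^i_0} \cdots \times_{\U^i_0} \U^i_1$. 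This yields $F_i \simeq y(\U^i_\bullet)$, where $y\colon \E^{\Delta^{op}} \to \mathrm{Fun}(\E^{op}, \s^{\Delta^{op}})$ is the levelwise Yoneda functor, followed by the inclusion of complete-Segal-valued presheaves.

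Finally I would invoke the Yoneda lemma: the Yoneda embedding $\E \hookrightarrow \mathrm{Fun}(\E^{op},\s)$ is fully faithful, hence so is its levelwise extension $y$, and restricting the source to complete Segal objects while noting that complete-Segal-valued presheaves form a full subcategory of $\mathrm{Fun}(\E^{op},\s^{\Delta^{op}})$ (so mapping spaces are unchanged) gives $Map(F_1,F_2) \simeq Map(y\U^1_\bullet, y\U^2_\bullet) \simeq map_{\E^{\Delta^{op}}}(\U^1_\bullet, \U^2_\bullet)$. Composing the three equivalences proves the theorem. The straightening input from \cite{Ra17} and the full faithfulness of Yoneda are essentially formal; the work is concentrated in the second step — unwinding the construction of $\U^i_\bullet$ and the universal property of the universe to produce the identification $(\E_{/X})^{S_{\U^i}} \simeq Map_\E(X,\U^i_\bullet)$ naturally in the base object and in the simplicial variable, and checking the complete Segal structures on the two sides are carried along coherently.
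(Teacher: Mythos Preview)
The paper does not actually prove this theorem; it is stated in the appendix as a fact imported from \cite{Ra17}, with no argument given. So there is no in-paper proof to compare against.

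On its own merits, your outline is sound and is almost certainly the shape of the argument in \cite{Ra17}: pass through straightening/unstraightening to turn the question into one about natural transformations of $\cat_\infty$-valued presheaves, identify the presheaf classified by $\E_{/\U^i_\bullet}$ with the representable $X \mapsto Map_\E(X,\U^i_\bullet)$ (as a complete Segal space), and finish with the full faithfulness of the Yoneda embedding applied levelwise in $\Delta^{op}$. Your identification of the second step as the substantive one is correct: the content is exactly the construction in Theorem~\ref{The:CSU Exist}, which builds $\U^i_\bullet$ precisely so that $Map_\E(-,\U^i_k)$ recovers the $k$-simplices of the slice category, and the point that needs care is the coherence of this identification simultaneously in the base object $X$ and in the simplicial degree. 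Your handling of the passage between $\cat_\infty$-valued and $\s^{\Delta^{op}}$-valued presheaves via the full subcategory of complete Segal objects is also correct.
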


 Thus in an elementary $(\infty,1)$-topos, we get following result.
 
 \begin{corone} \label{Cor:Yoneda Lemma for CSU}
  Let $\E$ be an elementary $(\infty,1)$-topos and $\U_\bullet$ a complete Segal universe classifying the collection of morphisms $S_\U$.
  Then we have an equivalence 
  $$ Map_{/\E}(\Arr(\E)^{S_\U}, \Arr(\E)^{S_\U}) \simeq map_{\E^{\Delta^{op}}}(\U_\bullet, \U_\bullet)$$
 \end{corone}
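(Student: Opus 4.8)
The plan is to deduce the corollary directly from the Yoneda lemma for representable Cartesian fibrations (Theorem~\ref{The:Yoneda Lemma for RepCartFib}), using the description of complete Segal universes furnished by Theorem~\ref{The:CSU Exist}. First I would note that in an elementary $(\infty,1)$-topos every universe can be taken to be locally Cartesian closed, so Theorem~\ref{The:CSU Exist} applies and produces, for the chosen complete Segal universe $\U_\bullet$, an equivalence of Cartesian fibrations over $\E$
$$\E_{/\U_\bullet} \simeq \Arr(\E)_{/\U}.$$
In particular the target fibration $\Arr(\E)_{/\U} \to \E$ is representable, represented by $\U_\bullet$.

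Second I would reconcile the two notations appearing in the statement. An object of $\Arr(\E)_{/\U}$ is a morphism $f$ of $\E$ together with a chosen classifying square to the universal fibration $p_\U$, whereas $\Arr(\E)^{S_\U}$ is the full subcategory of $\Arr(\E)$ on those $f$ lying in the class $S_\U$. Since $p_\U$ is an object of $\Univ$, it is $(-1)$-truncated in $\O_\E$ (Definition~\ref{Def:Univ}); hence for any $f$ in $S_\U$ the space of classifying data is non-empty and $(-1)$-truncated, so contractible. Consequently the forgetful functor $\Arr(\E)_{/\U} \to \Arr(\E)^{S_\U}$ is an equivalence, and it commutes with the target maps, so it is an equivalence over $\E$. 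Combining this with the previous paragraph, $\Arr(\E)^{S_\U} \to \E$ is representable and also represented by $\U_\bullet$.

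Third I would apply Theorem~\ref{The:Yoneda Lemma for RepCartFib} to the pair $\U^1_\bullet = \U^2_\bullet = \U_\bullet$, obtaining
$$Map_{/\E}(\E_{/\U_\bullet}, \E_{/\U_\bullet}) \simeq map_{\E^{\Delta^{op}}}(\U_\bullet, \U_\bullet),$$
and then transport the left-hand side along the two equivalences over $\E$ established above, rewriting it successively as $Map_{/\E}(\Arr(\E)_{/\U}, \Arr(\E)_{/\U})$ and then as $Map_{/\E}(\Arr(\E)^{S_\U}, \Arr(\E)^{S_\U})$. This yields the asserted equivalence.

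The only step with genuine content beyond bookkeeping is the identification $\Arr(\E)_{/\U} \simeq \Arr(\E)^{S_\U}$, so this is where I would be careful: the point is precisely that \emph{being classified by $\U$} is a property rather than extra structure, which is exactly the $(-1)$-truncatedness of $p_\U$ inside $\O_\E$, and one must keep track that the resulting equivalence is compatible with the projections to $\E$ so that $Map_{/\E}$ is preserved. Everything else is a formal application of the two cited theorems.
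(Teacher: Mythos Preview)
Your argument is correct and matches the paper's intended derivation: the corollary is stated without proof immediately after Theorem~\ref{The:Yoneda Lemma for RepCartFib}, so it is meant as a direct specialization via Theorem~\ref{The:CSU Exist}. One minor simplification: in the paper's conventions $\Arr(\E)_{/\U}$ is \emph{defined} (in the remark preceding Theorem~\ref{The:CSU Exist}) as the full subcategory of $\Arr(\E)$ on objects lying in $(\O_\E)_{/p_\U}$, and $\Arr(\E)^{S_\U}$ is defined (Notation~\ref{Not:S morphisms classified by U}) as the full subcategory on morphisms in $S_\U$; since $p_\U$ is $(-1)$-truncated in $\O_\E$ these are the same full subcategory by definition, so your second step is already absorbed into the notation rather than requiring a separate argument.
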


 One of the important implications of universes is descent. It was first studied in \cite{Re05} and \cite{Lu09}.
 For a helpful review see \cite{ABFJ17}.
 
 \begin{defone}\label{Def:Descent}
  Let $\E$ be a locally Cartesian closed $(\infty,1)$-category with finite limits and colimits. We say $\E$ satisfies {\it descent}
  if $\O_\E$ is closed under colimits.
 \end{defone}
 
 \begin{remone} \label{Rem:Description of Descent}
  Concretely, this means that if we have a commutative square in $\Arr(\E)$
  \begin{center}
   \begin{tikzcd}[row sep=0.5in, column sep=0.5in]
    f \arrow[r, Rightarrow, "\alpha"] \arrow[d, Rightarrow, "\beta"'] & g \arrow[d, Rightarrow, "\gamma"] \\
    h \arrow[r, Rightarrow, "\phi"'] & k
   \end{tikzcd}
  \end{center}
   such that $\alpha$ and $\beta$ are pullback squares, then $\gamma$ and $\phi$ are also pullback squares.
 \end{remone}

 We can use the fact that representable fibrations have colimits to get following result:
 
 \begin{theone}[\cite{Ra18b}] \label{The:Universes give Descent}
  Let $\E$ be a locally Cartesian closed $(\infty,1)$-category with finite limits and colimits.
  If $\E$ has universes, then it satisfies descent.
 \end{theone}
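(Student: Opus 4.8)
The plan is to unwind the definition of descent (Definition \ref{Def:Descent}) into the concrete assertion that a colimit in $\Arr(\E)$ of a diagram lying in $\O_\E$ again lies in $\O_\E$, and then to verify this one universe at a time, using the fact that each representable piece of $\O_\E$ already carries the requisite colimits. Concretely, suppose $K$ is a finite category and $d \mapsto (f_d \colon A_d \to X_d)$ is a diagram $K \to \O_\E$, so that every transition square is a pullback. The colimit of this diagram in $\Arr(\E)$ is the arrow $\colim_d A_d \to \colim_d X_d$, and descent is precisely the assertion that each coprojection square, with verticals $f_d$ and $\colim_d A_d \to \colim_d X_d$, is again a pullback (cf. Remark \ref{Rem:Description of Descent}).

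First I would reduce to a single universe. Since $\E$ has sufficient universes, Definition \ref{Def:Sufficient Universes} gives $\O_\E \simeq \colim_{\U \in \Univ}(\O_\E)_{/p_\U}$, exhibiting $\O_\E$ as a filtered union of the full subcategories $(\O_\E)_{/p_\U}$. As $K$ is finite, the finitely many arrows $f_d$ and the finitely many cartesian squares between them are jointly classified by a single universe $\U$; equivalently, the whole diagram factors through one representable piece $(\O_\E)_{/p_\U}$. Here I would record the identification $(\O_\E)_{/p_\U} \simeq \E_{/\U}$: an object of $(\O_\E)_{/p_\U}$ over $X$ is an arrow together with a cartesian map to $p_\U$, i.e. a classifying map $\chi \colon X \to \U$, so the right fibration $(\O_\E)_{/p_\U} \to \E$ is the one representing $Map(-,\U)$.

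Next I would compute the colimit inside this representable piece. The diagram corresponds to a compatible family of classifying maps $\chi_d \colon X_d \to \U$ — compatible exactly because the transition squares are cartesian — which glue, via the universal property of $\colim_d X_d$, to a single map $\chi \colon \colim_d X_d \to \U$. Let $f = \chi^* p_\U \colon A' \to \colim_d X_d$ be the arrow it classifies. By construction the pullback of $f$ along each coprojection $X_d \to \colim_d X_d$ is $\chi_d^* p_\U = A_d$, so the legs of $f$ are cartesian; and since $\E$ is locally Cartesian closed, colimits are universal (Lemma \ref{Lemma:LCCC Colimits Universal}), whence $A' \simeq A' \times_{\colim_d X_d}(\colim_d X_d) \simeq \colim_d(A' \times_{\colim_d X_d} X_d) \simeq \colim_d A_d$. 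Thus $f$ is exactly the colimit arrow $\colim_d A_d \to \colim_d X_d$ formed in $\Arr(\E)$, and its coprojection squares are pullbacks — which is descent.

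The main obstacle I anticipate is the first step: justifying that a finite diagram in $\O_\E$ is classified by a single universe. This is where the strength of \emph{sufficient} universes must be used — not merely that every arrow is classified by some universe, but that the covering $\O_\E \simeq \colim_{\U}(\O_\E)_{/p_\U}$ is filtered, so that finitely many arrows can be amalgamated into a common $\U$ (for instance via a universe classifying the coproduct of the universes classifying the individual $f_d$). Once the diagram is confined to one representable slice $\E_{/\U}$, the remaining work is the formal manipulation above, whose only nontrivial input is universality of colimits; I would take particular care to check that the colimit formed in $\E_{/\U}$ genuinely agrees with the one formed in the ambient $\Arr(\E)$, which is once more exactly the content of the universality identification.
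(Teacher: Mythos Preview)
Your argument is correct and is exactly the unpacking of the one-line hint the paper gives (``representable fibrations have colimits'') before deferring to \cite{Ra18b} for the actual proof. The paper does not prove this theorem itself, so there is nothing to compare against beyond that hint; your reduction to a single universe via the filtered structure of $\Univ$, the identification $(\O_\E)_{/p_\U}\simeq\E_{/\U}$, and the use of universality of colimits to recognize the classified arrow as the colimit in $\Arr(\E)$ are precisely the ingredients one expects, and the obstacle you flag (amalgamating finitely many classifying universes into one) is the right place to spend care.
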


 If $\E$ is presentable then satisfying descent is equivalent to universes, as is the main result in \cite{Re05}.
 However, in general descent is a weaker condition than having universes

\section{Natural Number Objects} \label{Sec:Appendix NNO}
 We make extensive use of natural number objects in Section \ref{Sec:The Join Construction}, Section \ref{Sec Truncation Functors} 
 and Section \ref{Sec:Filter Quotients and Truncations}, but they also come up in Section \ref{Sec:Constructing Truncations} 
 and Section \ref{Sec Blakers Massey Theorem for Modalities}.
 Thus, we want to give a quick review of natural number objects. For more details on natural number objects in 
 $(\infty,1)$-toposes see \cite{Ra18c} and for more details about natural number objects in elementary $1$-toposes see 
 \cite{Jo03}.
 
 \begin{remone}
  We will not discuss here how we use natural number objects to define internal sequential colimits.
  As we only need it in Subsection \ref{Subsec:Join and Trunc} we cover it right before in Subsection \ref{Subsec:Sequential Colimits via NNO}.
 \end{remone}

 \begin{defone} \label{Def:NNO}
  Let $\E$ be an $(\infty,1)$-category. A triple $(\mathbb{N}, o: 1 \to \mathbb{N}, s: \mathbb{N} \to \mathbb{N})$ is a 
  {\it natural number object} if it is initial among such triples. More concretely, this means for any other triple $(X, b: 1 \to X, u: X \to X)$
  the space of maps $f: \mathbb{N} \to X$ making the following diagram commute 
  
  \begin{center}
   \begin{tikzcd}[row sep=0.3in, column sep=0.6in]
    & \mathbb{N} \arrow[r, "s"] \arrow[dd, "f", dashed] & \mathbb{N} \arrow[dd, "f", dashed] \\
    1 \arrow[ur, "o"] \arrow[dr, "b"] & & \\
    & X \arrow[r, "u"] & X 
   \end{tikzcd}
  \end{center}
 \end{defone}

 However, there we can define a natural number object in a way that is closer to our intuition about induction.
 
 \begin{theone} \label{The:Peano NNO}
  A triple $(\mathbb{N}, o: 1 \to \mathbb{N}, s: \mathbb{N} \to \mathbb{N})$ is a natural number object if and only if 
  the following hold:
  \begin{enumerate}
   \item $s$ is a mono.
   \item $o$ and $s$ are disjoint subobjects of $\mathbb{N}$
   \item Any subobject $\mathbb{N}'$ closed under $s$ and $o$, meaning we have a commutative diagram of the form
    \begin{center}
     \begin{tikzcd}[row sep=0.3in, column sep=0.6in]
      & \mathbb{N}' \arrow[r, "s"] \arrow[dd, hookrightarrow] & \mathbb{N}' \arrow[dd, hookrightarrow] \\
      1 \arrow[ur, "o"] \arrow[dr, "o"] & & \\
      & \mathbb{N} \arrow[r, "s"] & \mathbb{N} 
     \end{tikzcd}
    \end{center}
   is isomorphic to $\mathbb{N}$.
  \end{enumerate}
 \end{theone}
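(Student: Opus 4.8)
The plan is to prove the two implications separately, and a useful preliminary step is to record (as in \cite{Ra18c}) that a natural number object is automatically $0$-truncated, so that all of the subobject bookkeeping below takes place in the elementary $1$-topos $\tau_0\E$ of $0$-truncated objects and Freyd's classical arguments may be transported verbatim. We will also use twice that a map which is simultaneously $(-1)$-connected and $(-1)$-truncated is an equivalence (Corollary \ref{Cor:N Trunc Conn Equiv}), and that a $(-1)$-truncated map is precisely a monomorphism (Proposition \ref{Prop Conn Eq Mono}).

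For the direction ``natural number object $\Rightarrow$ (1)--(3)'', condition (3) is formal: given a subobject $i\colon\mathbb{N}'\hookrightarrow\mathbb{N}$ closed under $o$ and $s$, the restricted triple receives a morphism of triples $g\colon\mathbb{N}\to\mathbb{N}'$ by initiality; then $ig$ is an endomorphism of the triple $(\mathbb{N},o,s)$, hence homotopic to the identity, and since $i$ is monic this forces $gi\simeq\mathrm{id}_{\mathbb{N}'}$, so $i$ is an equivalence. For (1), run the recursion on $\mathbb{N}$ valued in $\mathbb{N}\times\mathbb{N}$ with start $(o,o)$ and step $(a,b)\mapsto(b,sb)$; the resulting $h\colon\mathbb{N}\to\mathbb{N}\times\mathbb{N}$ has $\pi_2 h$ a triple endomorphism, hence $\pi_2 h\simeq\mathrm{id}$, and then $\pi_1 h\circ s\simeq\pi_2 h\simeq\mathrm{id}$, so $\pi_1 h$ is a retraction of $s$; as $\mathbb{N}$ is $0$-truncated this already makes $s$ monic. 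For (2), run the recursion valued in $1\coprod 1$ with start $\mathrm{inl}$ and step the constant map at $\mathrm{inr}$, obtaining $\chi\colon\mathbb{N}\to 1\coprod 1$ with $\chi o$ factoring through $\mathrm{inl}$ and $\chi s$ through $\mathrm{inr}$; since coproducts in $\E$ are disjoint, the pullback of $o$ along $s$ maps to the initial intersection of $\mathrm{inl}$ and $\mathrm{inr}$, so $o$ and $s$ are disjoint, and each is monic because its composite with the mono $\mathrm{inl}$, respectively $\mathrm{inr}$, is.

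For the converse, assume (1)--(3) and let $(X,b,u)$ be a triple; we must show the space of triple morphisms $\mathbb{N}\to X$ is contractible. For existence, let $R\hookrightarrow\mathbb{N}\times X$ be the least subobject containing the point $(o,b)$ and closed under $s\times u$ (formed by intersecting all such subobjects, using the subobject classifier / power objects). The image of $R\to\mathbb{N}$ is a subobject of $\mathbb{N}$ containing $o$ and closed under $s$, hence equals $\mathbb{N}$ by (3), so $R\to\mathbb{N}$ is $(-1)$-connected. On the other hand, the subobject $R'\subseteq R$ of those pairs whose fibre over $\mathbb{N}$ is subterminal is again closed under $(o,b)$ — here (2) separates the $o$-fibre from the $s$-image — and under $s\times u$ — here one uses (1) — so $R'=R$ by minimality, i.e.\ $R\to\mathbb{N}$ is $(-1)$-truncated. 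Hence $R\to\mathbb{N}$ is an equivalence (Corollary \ref{Cor:N Trunc Conn Equiv}), and $f\colon\mathbb{N}\xrightarrow{\ \simeq\ }R\hookrightarrow\mathbb{N}\times X\xrightarrow{\pi_2}X$ is a triple morphism. For uniqueness, given two triple morphisms $f_0,f_1$, the $(-1)$-truncation of the projection $\mathbb{N}\times_{(f_0,f_1),X\times X,\Delta}X\to\mathbb{N}$ is a subobject of $\mathbb{N}$ containing $o$ and closed under $s$, hence is all of $\mathbb{N}$, giving $f_0\simeq f_1$; iterating the same argument on the loop objects of the mapping space upgrades this to contractibility.

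The hard part will be the $(-1)$-truncatedness of $R\to\mathbb{N}$ in the converse, i.e.\ checking that the minimal closed relation $R$ is really the graph of a function — this is exactly where Peano axioms (1) and (2) do their work, and it is the step most sensitive to the passage from $1$-topos to $(\infty,1)$-topos — together with upgrading the set-level uniqueness of the recursion to genuine contractibility of the mapping space of triple morphisms. By comparison, the coherence of the recursions in the forward direction and the existence of least closed subobjects are routine given the ambient topos structure, and the reduction to $0$-truncated objects is what makes Freyd's arguments available in the first place.
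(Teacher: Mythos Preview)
The paper does not actually prove this theorem: it is stated in the appendix as background material with the reader referred to \cite{Ra18c} for details, so there is no proof in the present paper to compare against. Your outline is the expected one --- reduce to the $0$-truncated world via the fact (from \cite{Ra18c}) that a natural number object is automatically $0$-truncated, and then run Freyd's classical argument --- and this is indeed the approach taken in \cite{Ra18c}.

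That said, two points in your sketch deserve tightening. First, your description of $R'$ is garbled: you write ``the subobject $R'\subseteq R$ of those pairs whose fibre over $\mathbb{N}$ is subterminal'', but what you need is a subobject of $\mathbb{N}$, not of $R$ --- namely the subobject of those $n$ for which the fibre $R_n$ is subterminal (this is where one uses the internal language / subobject classifier to form $\{n : R_n \to 1 \text{ is mono}\}$). Showing this subobject contains $o$ uses (2), and showing it is closed under $s$ uses (1) together with the minimality of $R$; then (3) gives that it is all of $\mathbb{N}$, whence $R\to\mathbb{N}$ is mono. Second, the final clause ``iterating the same argument on the loop objects \dots upgrades this to contractibility'' is doing real work and should not be waved through: the space of triple morphisms $(\mathbb{N},o,s)\to(X,b,u)$ is an iterated pullback of mapping spaces, and while $\mathbb{N}$ being $0$-truncated controls $\mathrm{Map}(\mathbb{N},X)$ only when $X$ is also truncated, the correct argument (carried out in \cite{Ra18c}) shows that the equalizer-type subobject of $\mathbb{N}$ encoding ``$f_0$ and $f_1$ agree with all higher coherences'' is closed under $o$ and $s$, hence all of $\mathbb{N}$. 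Your instinct is right, but the bookkeeping is not quite as automatic as ``iterate on loop objects'' suggests.
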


 \begin{defone} \label{Def:Natural Number}
  A {\it natural number} is a map $1 \to \mathbb{N}$. 
 \end{defone}

 Every natural number object has certain natural numbers, known as the {\it standard natural numbers}.
 
 \begin{exone}
  Let $\mathbb{N}$ be a natural number object. Then any natural number of the form 
  $$s \circ s \circ ... \circ s \circ o :1 \to \mathbb{N}$$
  is called a standard natural number.
 \end{exone}
 
 Sometimes these natural numbers recover all natural numbers in which case our natural number object is standard.
 
 \begin{defone} \label{Def:Standard NNO}
  A natural number object $\mathbb{N}$ is {\it standard} if the family of morphisms 
  $s \circ s \circ ... \circ s \circ o : 1 \to \mathbb{N}$ is jointly surjective. 
 \end{defone}

 \begin{exone}
  Let $\E$ have countable colimits. Then $\coprod_\mathbb{N} 1$ is a natural number object and is always standard.
 \end{exone}
 
 \begin{remone}
  If $\mathbb{N}$ is a standard natural number object in $\E$, then this does not imply that 
  $$Map_\E(1,\mathbb{N}) \cong \{ 0,1,2,...\}$$
  For an example take the category of $\mathbb{R}$ indexed spaces, $\s^{\mathbb{R}}$. Then we have
  $$Map_{\s^\mathbb{R}}(1,\mathbb{N}) \cong \mathbb{N}^\mathbb{R}.$$
  and so a natural number is a function $f:\mathbb{R} \to \mathbb{N}$.
  The standard natural numbers $n$ are exactly the constant sequences. The fact that this natural number object is standard 
  corresponds to the fact that every map $f: \mathbb{R} \to \mathbb{N}$ is determined by its values. 
  In other words we have 
  $$f = \underset{n \in \mathbb{N}}{\coprod} n 1_{\{ f = n\}}.$$
 \end{remone}

 Thus, we can only ever hope to have non-standard natural number objects in a topos without infinite colimits.
 For an example of such a topos with non-standard natural number object see
 Subsection \ref{Subsec:Non Standard Truncations}.

 Finally, we make extensive use of following key result in \cite{Ra18c}.
 
 \begin{theone} \label{The:EHT has NNO}
  Let $\E$ satisfy following conditions:
  \begin{enumerate}
   \item It has finite limits and colimits
   \item It is locally Cartesian closed
   \item It has a subobject classifier
   \item It has a universe closed under finite limits and colimits.
  \end{enumerate}
  Then $\E$ has a natural number object.
 \end{theone}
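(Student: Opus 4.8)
The plan is to construct the natural number object as the initial algebra for the endofunctor $F(X) = 1 \coprod X$ --- equivalently, as the smallest ``inductive'' subobject of a suitably rigidified universe --- and to establish initiality through the Peano-style characterization of Theorem~\ref{The:Peano NNO}. The point is that the classical construction of $\mathbb{N}$ as the least inductive set involves an intersection over a proper class of subobjects, and that this becomes legitimate once one carries it out \emph{internally}, using $\U$ together with local Cartesian closedness, in the spirit of the internal limits of Proposition~\ref{Prop:Sub Univ gives Trunc}.

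First I would build a rigidified universe. Since $\U$ is closed under finite limits and $\E$ is locally Cartesian closed, one can form an object $\U^{\mathrm{lin}}$ whose points are pairs consisting of a $\U$-small object $X$ and a linear order $R \hookrightarrow X \times X$, the latter cut out of the object $\Omega^{X \times X}$ (formed fiberwise over $\U$ via $\Pi$-types) by the usual first-order axioms, all expressible with finite limits and $\Omega$. Using that $\U$ is closed under finite colimits one then obtains $z \colon 1 \to \U^{\mathrm{lin}}$ classifying the empty order, and $\sigma \colon \U^{\mathrm{lin}} \to \U^{\mathrm{lin}}$ classifying ``append a new maximal element'', $(X,R) \mapsto (X \coprod 1, R')$. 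The reason for passing to \emph{ordered} objects is precisely to make $\sigma$ a monomorphism: the appended element is order-theoretically distinguished, hence preserved by any order isomorphism, and ``remove the top'' is a well-defined inverse wherever it applies; the same observation shows the images of $z$ and $\sigma$ are disjoint.

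Next I would set $\mathbb{N}$ to be the internal intersection of all inductive subobjects of $\U^{\mathrm{lin}}$. Subobjects of $\U^{\mathrm{lin}}$ are parametrized by the object $\Omega^{\U^{\mathrm{lin}}}$, and the subcollection $\mathrm{Ind} \hookrightarrow \Omega^{\U^{\mathrm{lin}}}$ of those $S$ with $z \in S$ and $\sigma(S) \subseteq S$ is carved out by a condition using only finite limits and $\Omega$. The intersection $\mathbb{N} := \bigcap_{S \in \mathrm{Ind}} S$ is then obtained by universally quantifying the generic membership relation along $\mathrm{Ind} \to 1$, which exists by local Cartesian closedness --- this is the internal-limit manoeuvre of Proposition~\ref{Prop:Sub Univ gives Trunc}, only much easier here because the subobjects of $\U^{\mathrm{lin}}$ form an internally complete poset rather than a genuine category. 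A direct check shows $\mathbb{N}$ is itself inductive, so $z$ corestricts to $o \colon 1 \to \mathbb{N}$ and $\sigma$ restricts to $s \colon \mathbb{N} \to \mathbb{N}$.

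Finally I would verify the three conditions of Theorem~\ref{The:Peano NNO}. That $s$ is a monomorphism and that $o, s$ have disjoint images is inherited from the corresponding facts about $\sigma$ and $z$ on $\U^{\mathrm{lin}}$; and if $\mathbb{N}' \hookrightarrow \mathbb{N}$ is a subobject closed under $o$ and $s$, then $\mathbb{N}' \hookrightarrow \mathbb{N} \hookrightarrow \U^{\mathrm{lin}}$ is again an inductive subobject of $\U^{\mathrm{lin}}$, hence contains $\bigcap_{S \in \mathrm{Ind}} S = \mathbb{N}$, forcing $\mathbb{N}' = \mathbb{N}$. Theorem~\ref{The:Peano NNO} then identifies $(\mathbb{N}, o, s)$ as a natural number object, and all four hypotheses on $\E$ have been used: finite limits and colimits together with the closure properties of $\U$ to build $\U^{\mathrm{lin}}$ and $\sigma$, the subobject classifier $\Omega$ throughout, and local Cartesian closedness for the internal intersection. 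I expect the main obstacle to be exactly the choice of rigidification --- finding a universe of structured objects on which ``successor'' is literally a monomorphism and whose smallest inductive subobject consists of precisely the finite ordinals, and then checking that in the $(\infty,1)$-categorical setting ``append a maximal element'' really does behave as rigidly as it does for ordinary sets; once that is secured, the rest is a formal internalization of the set-theoretic construction of $\mathbb{N}$.
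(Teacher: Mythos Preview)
The paper does not prove this theorem; it is stated in Appendix~\ref{Sec:Appendix NNO} as a key result imported from \cite{Ra18c}, and only the statement is recorded. So there is no in-paper argument to compare against directly.

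Your strategy is nonetheless the right shape and matches the broad outline of the construction in \cite{Ra18c}: build a carrier object equipped with a zero and a successor that is already a monomorphism, carve out the smallest inductive subobject by an internal intersection over $\Omega^{(-)}$ (using local Cartesian closedness to quantify universally over all inductive subobjects), verify the three Peano conditions, and then invoke Theorem~\ref{The:Peano NNO} for the passage to the Lawvere-style universal property. The choice of carrier in \cite{Ra18c} differs from your $\U^{\mathrm{lin}}$, but the role of the rigidification is exactly what you identify --- arranging that successor is mono and disjoint from zero before any recursion principle is available. Your analogy with Proposition~\ref{Prop:Sub Univ gives Trunc} is apt, and as you note the poset case is far easier than the general localization constructed there.

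The self-identified worry about the rigidification is the one genuine gap. In an $(\infty,1)$-category, the axioms for a ``linear order $R \hookrightarrow X \times X$'' do not obviously force $X$ to be $0$-truncated: reflexivity is a factorization of $\Delta_X$ through the subobject $R$, not itself a subobject inclusion, and antisymmetry as usually phrased (``$R \cap R^{\mathrm{op}}$ equals the diagonal'') presupposes that $\Delta_X$ is already mono. Without $0$-truncatedness of $X$ you cannot conclude that ordered objects are rigid, and without rigidity your argument that $\sigma$ is $(-1)$-truncated breaks down. You would need either to restrict $\U^{\mathrm{lin}}$ explicitly to $0$-truncated $X$ at the outset --- which is expressible using $\Omega$, since $X$ is $0$-truncated exactly when $\Delta_X$ is mono --- or to verify that your axioms on $R$, once written carefully in the $(\infty,1)$-setting, already force this. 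Either route is workable, but neither is a formality; once it is secured the rest of your argument goes through as sketched.
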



\begin{thebibliography}{9}
 
 \bibitem[ABFJ17]{ABFJ17}
 M. Anel, G. Biedermann, E. Finster, A. Joyal
 {\it Generalized Blakers-Massey Theorem} 
 arXiv preprint arXiv:1703.09050 (2017).
 
 \bibitem[DORS18]{DORS18}
 D. Christensen, M. Opie, E. Rijke, L. Scoccola
 {\it Localization in Homotopy Type Theory}
 arXiv preprint arXiv:1807.04155 (2018)
 
 \bibitem[Gr69]{Gr69}
 B. Gray. 
 {\it On the sphere of origin of infinite families in the homotopy groups of spheres.} 
 Topology 8.3 (1969): 219-232.
 
 \bibitem[Jo03]{Jo03}
 P. Johnstone,
 {\it Sketches of an elephant: A topos theory compendium-2 volume set.} 
 Oxford University Press, Jul 2003. ISBN-10:. ISBN-13: 9780198524960 (2003): 1288.
 
 \bibitem[Lu09]{Lu09}
 J. Lurie, 
 {\it Higher Topos Theory},
 Annals of Mathematics Studies 170, Princeton University Press, Princeton, NJ,
 2009, xviii+925 pp. A
 
 \bibitem[Ra17]{Ra17}
 N. Rasekh, 
 {\it Cartesian Fibrations and Representability}. 
 arXiv preprint arXiv:1711.03670 (2017)
 
 \bibitem[Ra18a]{Ra18a}
 N. Rasekh, 
 {\it Complete Segal Objects}. 
 arXiv preprint arXiv:1805.03561 (2018)
 
 \bibitem[Ra18b]{Ra18b}
 N. Rasekh, 
 {\it A Theory of Elementary Higher Toposes}. 
 arXiv preprint arXiv:1805.03805 (2018)
 
 \bibitem[Ra18c]{Ra18c}
 N. Rasekh, 
 {\it Every Elementary Higher Topos has a Natural Number Object}. 
 arXiv preprint arXiv:1809.01734 (2018)
 
 \bibitem[Ra18d]{Ra18d}
 N. Rasekh, 
 {\it Yoneda Lemma for Elementary Higher Toposes}. 
 arXiv preprint arXiv:1809.01736 (2018)
 
 \bibitem[Ra20]{Ra20}
 N. Rasekh,
 {\it Filter Quotients and Non-Presentable $(\infty,1)$-Toposes}
 arXiv preprint arXiv:2001.10088 (2020)
 
 \bibitem[Re01]{Re01}
 C. Rezk, 
 {\it A model for the homotopy theory of homotopy theory}, 
 Trans. Amer. Math.Soc., 353(2001), no. 3, 973-1007.
 
 \bibitem[Re05]{Re05}
 C. Rezk
 {\it Toposes and Homotopy Toposes}
 https://faculty.math.illinois.edu/~rezk/homotopy-topos-sketch.pdf
 
 \bibitem[Ri17]{Ri17}
 E. Rijke, Egbert. 
 {\it The join construction.}
 arXiv preprint arXiv:1701.07538 (2017).
 
 \bibitem[RV16]{RV16}
 E. Riehl,D. Verity. 
 {\it Infinity category theory from scratch.} 
 arXiv preprint arXiv:1608.05314 (2016).
 
 \bibitem[SY19]{SY19}
 T. Schlank, and L. Yanovski. 
 {\it The $\infty$-categorical Eckmann-Hilton argument.} 
 Algebraic $\&$ Geometric Topology 19.6 (2019): 3119-3170.

 \bibitem[UF13]{UF13}
 Univalent Foundations Program. 
 {\it Homotopy type theory: Univalent foundations of mathematics}. 
 Univalent Foundations, 2013.

\end{thebibliography}
\end{document}